\documentclass[11pt,a4paper,oneside,reqno]{amsart}

\usepackage{paquet_en}

\title{Volume of maximal representations into $\SO_0(2,3)$}
\author[T. Lemistre]{Timoth\'e Lemistre}
\address{LJAD, Universit\'e C\^ote d'Azur, Nice, France}
\email{tlemistre@unice.fr}
\date{\today}

\begin{document}

\begin{abstract}
	We study the volume of maximal representations from a surface group into $\SO_0(2,3)$. We show that it is bounded from above, uniformly in the genus of the surface. On the Gothen components, we prove that it is bounded from below by a strictly positive constant. 
\end{abstract}

\maketitle
{\setlength{\parskip}{0pt}
\tableofcontents}

\section*{Introduction}

Teichmüller space parametrises marked hyperbolic structures on the closed oriented surface $\Sigma_g$ of genus $g \geq 2$. By the uniformisation theorem and the work of Goldman (see \cite{G88}), it is also a connected component of the representation variety $\chi(\pi_1(\Sigma_g),G) = \Hom(\pi_1(\Sigma_g),G)/G$ for $G = \SO_0(2,1) = \PSL(2,\R)$. This connected component is homeomorphic to a ball and composed only of faithful and discrete representations. The work of Hitchin in the 1990s (see \cite{Hit92}) has given further examples of connected components homeomorphic to balls for other simple Lie groups $G$. Those connected components have been interpreted geometrically by Labourie (see \cite{Lab06}), who also showed that they contain only faithful and discrete representations. The latter property defines what Wienhard calls higher-rank Teichmüller space \cite{Wie18}. 

These generalisations of Teichmüller space rely on different classes of representations. The main examples are Hitchin representations (see \cite{Hit92}), maximal representations (\cite{BIW10}), positive representations (see \cite{FG06} and \cite{BIW10}), $\H^{p,q}$-convex cocompact representations (see \cite{BK25}) and $\Theta$-positive representations (see \cite{GW18}). We study here representations at the intersection of these notions : maximal representations in $\SO_0(2,3)$. 

Maximal representations of $\pi_1(\Sigma_g)$ into $\SO_0(2,q+1)$ ($q \geq 1$) admit a geometric interpretation using the pseudo-hyperbolic spaces $\H^{2,q}$, which are pseudo-Riemannian generalisations of the hyperbolic spaces. Indeed, they leave invariant particular submanifolds of $\H^{2,q}$ (see \cite{CTT19}), which allows associating them geometric quantities (see for example Mazzoli-Viaggi \cite{MV23}). 

We study here such a geometric quantity: the \emph{volume} of a maximal representation $\rho : \pi_1(\Sigma_g) \to \SO_0(2,q+1)$, defined as follows. By the work Danciger-Guéritaud-Kassel (see \cite{DGK18}), $\pi_1(\Sigma_g)$ preserves properly convex closed subsets of $\H^{2,q}$ on which it acts properly discontinuously and cocompactly. Among these, there exists a unique minimal convex, which we denote by $\cal{C}_\rho$. We call \emph{volume} of $\rho$ the volume of the quotient :
\[\Vol(\rho) = \Vol(\cal{C}_\rho/\pi_1(\Sigma_g)). \]

This notion of volume has been studied by Bonsante-Seppi-Tamburelli for $q=1$. Since $\SO_0(2,2)$ is a double cover of $\SO_0(2,1)^2$, a maximal representation of $\pi_1(\Sigma)$ into $\SO_0(2,2)$ corresponds to a pair of maximal representations into $\SO_0(2,1)$ and a representation $\epsilon$ into $\Z/2\Z$, thus its conjugacy class corresponds to a pair of points $(h,h')$ in Teichmüller space of $\Sigma$ and $\epsilon$ : we denote it $[\rho_{h,h',\epsilon}] \in \chi(\Gamma,\SO_0(2,2))$. They then show (see \cite{BST17}) :
\[\Vol(\rho_{h,h',\epsilon}) \stackrel{d(h,h') \to \infty}{\longrightarrow} +\infty, \]
where $d(h,h')$ is the Weil-Petersson distance between $h$ and $h'$. We show an opposite result for $n=2$. 

\begin{theoremeLettre}\label{théorème:borne_sup_cocompacte}
	There exists a constant $C > 0$ such that for every integer $g \geq 2$ and every maximal representation $\rho$ of $\pi_1(\Sigma_g)$ into $\SO_0(2,3)$, we have 
	\[\Vol(\rho) \leq C\, g. \]
\end{theoremeLettre}

Theorem \ref{théorème:borne_sup_cocompacte} is a particular case of Theorem \ref{theoreme:borne_sup}, which is the technical core of this article and concerns maximal surfaces. Let us denote by $\cal{M}^*_{2,2}$ the space of pointed complete maximal surfaces of $\H^{2,2}$ (see Definition \ref{définition:surface_pointée}). We use the following property, which appears in many similar contexts (see \cite{CT90}, \cite{CL15} or \cite{OT20}). A continuous function $f$ from $\cal{M}^*_{2,2}$ to $\R$ is said to be \emph{exponentially decaying} (see Definition \ref{définition:décroissance_exponentielle}) if there exist constants $C,\alpha > 0$ such that for every complete maximal surface $S$ of $\H^{2,2}$ and every ball $B(x,r)$ of $S$ on which the sectional curvature $K$ satisfies $\abs{K} \leq \frac14$, we have $\abs{f(S,x)} \leq C e^{-\alpha r}$. 

\noindent\textbf{Theorem \ref{theoreme:borne_sup}. }\textit{Let $f : \cal{M}_{2,2}^* \to \R$ be a continuous and exponentially decaying function. There exists a constant $C_f$ satisfying the following property. Let $\Gamma$ be a subgroup of $\PO(2,3)$ preserving a complete maximal surface $S$ of $\H^{2,2}$ and whose action on $S$ is properly discontinuous and free. Suppose that the sectional curvature $K$ of $S/\Gamma$ tends to $0$ at infinity. We have}
\[\int_{S/\Gamma} \abs{f} \leq C_f \int_{S/\Gamma} \abs{K}. \]

Theorem \ref{théorème:borne_sup_cocompacte} follows from Theorem \ref{theoreme:borne_sup} together with differential-geometric considerations (Section \ref{subsection:contrôle_volume}). Here are other applications of Theorem \ref{theoreme:borne_sup}. 

\noindent\textbf{Corollary \ref{corollaire:Moriani_volume}. }Moriani shows that a complete maximal surface $S$ of $\H^{2,2}$ has finite total curvature if, and only if, the boundary of $S$ in $\P(\R^{2,3})$ is the union of a finite number $N$ of projective segments. We then have $\int_S K = -\frac{\pi}{2} (N-4)$ \cite[Main Theorem]{Mor25}. Theorem \ref{theoreme:borne_sup} proves that there exists a constant $C_{\Vol}$ independent of $S$ and $N$ satisfying
\[\Vol(\Conv(S)) \leq C_{\Vol} (N-4). \]

\noindent\textbf{Corollary \ref{corollaire:cocompact_alpha}. }By \cite{CTT19}, every maximal representation $\rho$ from $\Gamma = \pi_1(\Sigma_g)$ to $\SO_0(2,3)$ defines a $\rho$-invariant maximal surface of $\H^{2,2}$ denoted by $S_\rho$ (see Section \ref{subsection:représentations_maximales}). The quotient $S_\rho/\rho(\Gamma)$ is then a compact Riemannian manifold whose sectional curvature we denote by $K_{S_\rho/\rho(\Gamma)}$. Theorem \ref{theoreme:borne_sup} proves that for every real $\alpha > 0$, there exists a constant $C_\alpha$ such that for any maximal representation $\rho : \pi_1(\Sigma_g) \to \SO_0(2,3)$, we have
\[\int_{S/\rho(\Gamma)} \abs{K_{S/\rho(\Gamma)}}^\alpha \leq C_\alpha\, g. \]

Let us turn back to representations. In \cite{Got01}, Gothen discovered unexpected connected components of the space of maximal representations into $\SO_0(2,3)$. On the latter, called Gothen components, the volume $\Vol$ does never vanish. These components are classified by an integer topological invariant denoted by $\deg(\rho)$, satisfying $0 < \deg(\rho) \leq 4g-4$. 
 
\noindent\textbf{Theorem \ref{theoreme:borne_inf}. }\textit{There exists a constant $D > 0$ such that for every integer $g \geq 2$ and every Gothen representation $\rho$ from $\pi_1(\Sigma_g)$ into $\SO_0(2,3)$, we have}
\[\Vol(\rho) \geq D\, \deg(\rho). \]

Hitchin representations, defined by $\deg(\rho) = 4g-4$, thus satisfy bounds of the type $D g \leq \Vol(\rho) \leq C g$. The bounds of Theorems \ref{théorème:borne_sup_cocompacte} and \ref{theoreme:borne_inf} are therefore optimal in $g$.

\textbf{Strategy of the proof of Theorem \ref{théorème:borne_sup_cocompacte}.}
By the work of \cite{CTT19}, to each maximal representation $\rho$ of $\pi_1(\Sigma)$ into $\SO(2,3)$ is associated a maximal surface $S_\rho$ in $\H^{2,2}$ (see Section \ref{subsection:représentations_maximales}). The convex hull of this surface is equal to the convex  $\cal{C}_\rho$ used to define $\Vol(\rho)$. This allows reducing the study of the volume $\Vol(\rho)$ to a problem of analytical nature on $S_\rho$ to which Theorem \ref{theoreme:borne_sup} gives an answer, as follows. First, we define functions on $S_\rho$ reflecting its geometry and satisfying elliptic differential equations (Section \ref{subsection:décomposition}). Using elliptic estimates, we then show the rapid decay of these functions when the curvature is small (\ref{subsection:décroissance_exponentielle}). We define a domain $D$ where the sectional curvature is far from $0$. This definition allows controlling the volume of a slice of $\cal{C}_\rho$ above a point of $S_\rho$, by a function of the distance to $D$ (Section \ref{subsection:contrôle_volume}). We then integrate this estimate (Section \ref{subsection:contrôle_Dt} and appendix \ref{annexe:contrôle_Dt}). We thus prove Theorem \ref{theoreme:borne_sup}, hence Theorem \ref{théorème:borne_sup_cocompacte} (Section \ref{subsection:contrôle_Dt}). 

\textbf{Organisation.}
We recall the classical results and techniques in Section \ref{section:préliminaires}. Section \ref{section:borne_inférieure} is dedicated to the proof of Theorem \ref{theoreme:borne_inf} : one finds there in germ the main ideas of the proof of Theorem \ref{theoreme:borne_sup}. In Section \ref{section:décroissance_exponentielle}, we prove the exponential decay of functions depending only on the germ of the considered surface. We then infer global results in Section \ref{section:borne_supérieure} (Theorem \ref{theoreme:borne_sup}, from which we deduce Theorem \ref{théorème:borne_sup_cocompacte}). 

\textbf{Acknowledgements.} I would like to thank my advisors, Francesco Bonsante and Jérémy Toulisse, for their patience, help and support. I would also like to thank Enrico Trebeschi, Alex Moriani, Andrea Tamburelli, Gabriele Viaggi and Andrea Seppi for certain discussions and the interest they showed for this work.

\section{Preliminaries}
\label{section:préliminaires}

\subsection{Pseudo-hyperbolic space}
\label{subsection:définition_Hpq}

A pseudo-Riemannian metric on a smooth manifold $M$ is a section of the bundle $\Sym^2(\Tan^* M)$ of symmetric bilinear forms on $M$, nowhere degenerate and of constant signature. It allows defining, as for a Riemannian metric, the Levi-Civita connection $\nabla^M$ : it is the unique connection $\nabla$ which is metric ($\nabla g = 0$) and torsion-free. We denote by $\courb \in \Omega^2(\Hom(\Tan M)$ its curvature tensor, defined by $\courb(a,b)X = \nabla_a \nabla_b X - \nabla_b \nabla_a X - \nabla_{[a,b]} X$. Given a non-degenerate plane $P$ for which we choose an orthonormal basis $(e_1,e_2)$, we define the sectional curvature $\kappa(P) = \frac{\scal{\courb(e_1,e_2)e_2}{e_1}}{\scal{e_1}{e_1}\scal{e_2}{e_2}}$. If $M$ is oriented, we can define a volume form $\vol^M$, which is equal to $1$ on any direct orthonormal basis. The corresponding measure and volume functional, denoted by $\mu_M$ and $\Vol^M$, are defined independently of the orientation. 

\textbf{Pseudo-hyperbolic space.}
The study of pseudo-hyperbolic spaces was initiated by the pioneering work of Mess \cite{Mess90} linking geometry of $\H^{2,1}$ and Teichmüller theory (in this direction, see also \cite{BS10} and \cite{BS20}). Those spaces allow interpreting geometrically a large class of representations into $\PO(p,q+1)$, called $\H^{p,q}$-convex-cocompact (see \cite{Mess90}, \cite{Barbot13}, \cite{BM12} and \cite{DGK18}). 

Let henceforth be integers $p > 0$ and $q \geq 0$. We construct the pseudo-hyperbolic space of signature $(p,q)$, which generalises the usual hyperbolic space (obtained with $q=0$) and the anti-de Sitter space (obtained with $q=1$). Let us denote by $\R^{p,q+1}$ the oriented vector space $\R^{p+q+1}$ endowed with the quadratic form of signature $(p,q+1)$ defined by
\[\quadra(x_1,\dots,x_{p+q+1}) = x_1^2 + \dots + x_p^2 - x_{p+1}^2 - \dots - x_{p+q+1}^2. \]
We denote by $(x,y) \mapsto \scal{x}{y}$ the associated bilinear form, which defines a pseudo-Riemannian metric on $\R^{p,q+1}$, and by $\O(p,q+1)$ its orthogonal group. The group $\PO(p,q+1) = \O(p,q+1)/\{\pm \Id\}$ acts on the projective space $\P(\R^{p,q+1})$ preserving an open set, the \emph{pseudo-hyperbolic space}, defined by
\[\H^{p,q} = \{[x] \in \P(\R^{p,q+1}) \mid \quadra(x) < 0\}. \]
Let us define a double covering of $\H^{p,q}$ by
\[\Hc^{p,q} = \{x \in \R^{p,q+1} \mid \quadra(x) = -1\}. \]
We have $\Hc^{p,q}/\{\pm \Id\} = \H^{p,q}$. A tangent space $\Tan_x \Hc^{p,q}$ is identified with $\Vect{x}^\perp$, thus inherits the metric of $\R^{p,q+1}$. This endows $\Hc^{p,q}$ with a metric of signature $(p,q)$ which descends to a metric on $\H^{p,q}$. The metrics of $\Hc^{p,q}$ and $\H^{p,q}$ have constant curvature $-1$ and respective isometry groups $\O(p,q+1)$ and $\PO(p,q+1)$. Their Levi-Civita connections are geodesically complete, the geodesics of $\H^{p,q}$ being its intersections with projective lines. 

\textbf{Connection.}
The submanifold $\Hc^{p,q} \hookrightarrow \R^{p,q+1}$ is umbilic : given a vector $\sf{u}$ in $\Tan_x \H^{p,q}$ and a vector field $\sf{V}$ on $\Hc^{p,q}$, denoting by $\nabla^\R$ and $\nabla^\H$ the connections of $\R^{p,q+1}$ and $\Hc^{p,q}$ respectively, we have
\begin{equation}\label{éq:Hpq_ombilique}
	\nabla^\R_{\sf{u}} \sf{V} = \nabla^\H_{\sf{u}} \sf{V} + \scal{\sf{u}}{\sf{V}} x. 
\end{equation}

\textbf{Geodesics.}
Consider a maximal geodesic of $\H^{p,q}$. Its image is the intersection of $\H^{p,q}$ with a projective line. More precisely, it can be lifted to a complete geodesic of $\Hc^{p,q}$, i.e. of the form $\gamma : \R \to \Hc^{p,q}$. If $\dot{\gamma}(0) = \sf{v} \in \Tan_x \Hc^{p,q}$, then
\begin{itemize}
	\item $\gamma(t) = \cosh(t) x + \sinh(t) \sf{v}$ if $\scal{\sf{v}}{\sf{v}} > 0$ ($\gamma$ is said to be spacelike) ;
	\item $\gamma(t) = x + t \sf{v}$ if $\scal{\sf{v}}{\sf{v}} = 0$ ($\gamma$ is said to be lightlike) ;
	\item $\gamma(t) = \cos(t) x + \sin(t) \sf{v}$ if $\scal{\sf{v}}{\sf{v}} < 0$ ($\gamma$ is said to be timelike). 
\end{itemize}
Two distinct points $u$ and $v$ in $\Hc^{p,q}$ are joined by a geodesic if, and only if, $\scal{u}{v} < 1$. If it exists, this geodesic is unique. 
\begin{itemize}
	\item If $\scal{u}{v} < -1$, it is spacelike, of length $\cosh^{-1}(-{\scal{u}{v}})$. 
	\item If $\scal{u}{v} = -1$, it is lightlike. 
	\item If $-1 < \scal{u}{v} < 1$, it is timelike, of length $\cos^{-1} {\scal{u}{v}}$. 
\end{itemize}
We deduce that two distinct points $u,v \in \H^{p,q}$ are joined by a geodesic, unique up to reparametrisation.

\subsection{Pseudo-Riemannian geometry}
\label{subsection:geo_diff_Hpq}

Let $M$ be a pseudo-Riemannian manifold of signature $(p,q)$ and $\nabla$ its Levi-Civita connection. Let $S$ be a submanifold of $M$ whose induced metric is non-degenerate. Let us consider the restriction of the affine bundle $(\Tan M,\nabla)$ to $S$ : it decomposes as the orthogonal sum $\Tan S \oplus \Nor S$. The second fundamental form of $S$ is a bundle morphism $\II : \Tan S \otimes \Tan S \to \Nor S$, defined by $\II(\sf{U},\sf{V}) = (\nabla_{\sf{U}} \sf{V})^{\Nor S}$ (normal component of $\nabla_{\sf{U}} \sf{V}$). The shape operator is a bundle morphism $B : \Tan S \otimes \Nor S \to \Tan S$, defined by $B(\sf{U},\sf{N}) = (\nabla_{\sf{U}} \sf{N})^{\Tan S}$ (tangent component of $\nabla_{\sf{U}} \sf{N}$). Therefore, we have $\scal{\II(\sf{U},\sf{V})}{\sf{N}} + \scal{\sf{V}}{B(\sf{U},\sf{N})} = 0$ and the restriction of $\nabla$ to $(\Tan M)_{|S}$ decomposes as
\[\nabla = 
\begin{pmatrix}
	\nabla^{\Tan} & -B\\
	\II & \nabla^{\Nor}
\end{pmatrix}. \]

$S$ is said to be spacelike if its induced metric is Riemannian. Let us now suppose that $S$ is spacelike, of dimension $p$. Let $x$ be a point of $S$ and $(e_i)$ ($1 \leq i \leq p$) an orthonormal frame of $\Tan_x S$. We define the norm $\norm{\II}_2$ and the mean curvature $H$ of $S$ by
\begin{align*}
	\norm{\II}_2^2 &= - \sum_{i,j} \scal{\II(e_i,e_j)}{\II(e_i,e_j)} \tag*{and} \\
	H &= \frac{1}{p} \sum_i \II(e_i,e_i). 
\end{align*}
$S$ is said to be maximal if $H = 0$. If $M$ is Riemannian, this corresponds to the classical notion of minimal submanifold. Note that the metrics of $(\Tan S)^2$ and $\Nor S$ allow defining the adjoint $\II^* : \Nor S \to \Tan S \otimes \Tan S$ and that we have $\norm{\II}_2^2 = - \tr [\II^* \II]$. 

\textbf{Sasaki metric.} 
Given a pseudo-Riemannian manifold $(M,g)$, let us construct a metric $g^{\Tan M}$ on the tangent space $\Tan M$ seen as a manifold, called the Sasaki metric, which will be used in Section \ref{subsection:Chern-Weil} and appendix \ref{annexe:nu}. We have two distributions on the total space $\Tan M$ of the bundle $\pi : \Tan M \to M$ : the vertical distribution $\bf{vert} = \ker(\d\pi)$ and the horizontal distribution $\bf{hor}$ given by the Levi-Civita connection (seen as an Ehresmann connection). At a point $v_x$ pf $\Tan_x M$, we define $g^{\Tan M}_{v_x}$ as follows. 

Given $\sf{u} \in \Tan_x M$, let us denote by $t \mapsto \psi_{\sf{u},v_x}(t)$ the path in $\Tan M$ defined by parallel transporting $v_x$ in $\Tan M$ along the path $t \mapsto \exp^M_x(t \sf{u})$ (defined for $t$ close to $0$). We define the Sasaki metric $g^{\Tan M}$ by imposing that $\bf{vert}_{v_x}$ and $\bf{hor}_{v_x}$ are orthogonal ($g^{\Tan M}(\sf{v},\sf{h}) = 0$ for $\sf{v} \in \bf{vert}_{v_x}$, $\sf{h} \in \bf{hor}_{v_x}$), and that the following maps are isometries:
\[\begin{array}{rrcl}
	\iota_{\bf{hor}} : & \Tan_x M & \to & \bf{hor}_{v_x} \\
	\ & \sf{u} & \mapsto & \dtzero \psi_{\sf{u},v_x}(t),
\end{array}\]
\[\begin{array}{rrcl}
	\iota_{\bf{vert}} : & \Tan_x M & \to & \bf{vert}_{v_x} \\
	\ & \sf{u} & \mapsto & \dtzero v_x+t\sf{u}.
\end{array}\]
We deduce the following lemma, which justifies the use of the Sasaki metric in this article.
\begin{lemme}\label{lemme:Sasaki_par_parties}
	Let $(M,g)$ be a pseudo-Riemannian manifold and $S$ a submanifold of $M$ in restriction to which the metric $g$ is non-degenerate. Let us restrict the Sasaki metric to the normal bundle $\Nor S$. Let $E$ be a measurable subset of $\Nor S$. We have
	\[\Vol^{\Nor S}(E) = \int_S \left( \Vol^{\Nor_x S}(E \cap \Nor_x S) \right) \d\mu_S(x). \]
\end{lemme}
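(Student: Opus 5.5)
The plan is a Fubini-type argument: describe the Sasaki metric on $\Nor S$ in local coordinates adapted to the splitting $\bf{hor}\oplus\bf{vert}$, and show that the Sasaki volume form is the product of the volume form of $S$ with the fiberwise volume form of $\Nor_x S$.

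\textbf{Step 1: adapted coordinates.} Fix a point $x_0\in S$ and local coordinates $(x^1,\dots,x^k)$ on a neighbourhood $U\subset S$. Since $g$ is non-degenerate on $\Nor S$, we may choose a local frame $(N_1,\dots,N_m)$ of $\Nor S$ over $U$ which is orthonormal, so $\scal{N_a}{N_b}=\pm\delta_{ab}$. This gives a trivialisation $\Phi:U\times\R^m\to \Nor S_{|U}$,
\[\Phi(x,y)=\sum_a y^a N_a(x).\]

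\textbf{Step 2: compute $\Phi^*g^{\Nor S}$.}

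The coordinate fields $\partial_{y^a}$ are vertical. Their images under $\d\Phi$ are $\iota_{\bf{vert}}(N_a)$, so their Gram matrix is $\diag(\pm1)$, independent of $(x,y)$.

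The fields $\partial_{x^i}$ are not horizontal in general. I will write
\[\d\Phi(\partial_{x^i})=\iota_{\bf{hor}}(\partial_{x^i})+\iota_{\bf{vert}}(w_i),\]
where $w_i=\sum_a y^a\nabla_{\partial_{x^i}}N_a$ is the connection correction. For the restriction to $\Nor S$ one should use the normal connection $\nabla^\Nor$, so $w_i=\sum_a y^a\nabla^{\Nor}_{\partial_{x^i}}N_a$. Only its normal component matters here, because the tangent part $-B$ is horizontal/tangent to $S$. The horizontal lifts are isometric to $\Tan_x S$, and $\bf{hor}\perp\bf{vert}$.

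In the frame $\partial_x,\partial_y$, the metric matrix therefore has the block form
\[\begin{pmatrix} g_S+W^\top JW & W^\top J\\ JW & J\end{pmatrix},\]
where $J=\diag(\pm1)$ and $W$ is the matrix of the $w_i$.

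\textbf{Step 3: the determinant.} This block matrix equals $P^\top\,\diag(g_S,J)\,P$, where $P=\begin{pmatrix} I&0\\ W& I\end{pmatrix}$ is unipotent. Hence
\[\abs{\det}=\det(g_S)\cdot 1.\]
The Sasaki volume density is thus $\d\mu_S(x)\,\d y$, and $\d y$ is exactly the volume density of $\Nor_xS$ with its induced flat metric, since the frame is orthonormal.

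\textbf{Step 4: conclusion.} Apply Fubini on $U\times\R^m$ to $\mathbf 1_E$. This gives the formula for $E\subset \Nor S_{|U}$. For general $E$, use a partition of $S$ into countably many measurable pieces contained in such $U$'s, together with countable additivity.

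\textbf{Main obstacle.} The only delicate point is the correct identification of the horizontal distribution on the submanifold $\Nor S$. The ambient Levi-Civita horizontal lift of a curve in $S$ does not stay in $\Nor S$, which is why the Sasaki metric must be understood as restricted to $\Nor S$. I would handle this by noting that the vertical correction is always a shear, i.e. an off-diagonal unipotent block. Whatever connection is used (ambient or normal), $P$ stays unipotent and the determinant is unaffected, so the formula holds in either interpretation.
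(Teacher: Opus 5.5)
Your route is the paper's. The paper fixes a simply connected $U\subset S$ with orientations, asserts that the volume form of $\Nor U$ splits as $\pi^*(\vol^S)\wedge\phi$ with $\phi$ restricting to the volume form of each fibre, and concludes by Fubini and additivity. Your unipotent change of frame $P$ is exactly the computation that justifies this splitting, and working with densities lets you skip the orientation choices. If the horizontal distribution of $\Nor S$ is defined by the normal connection $\nabla^{\Nor}$, Steps 1--4 are correct. The only cosmetic slip is that the density is $\abs{\det g_S}^{1/2}\,\d x\,\d y$, not $\det(g_S)$.

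What is wrong is the claim that the tangential part of the ambient derivative is ``horizontal'', and that the formula therefore holds ``in either interpretation''. If you literally restrict the Sasaki metric of $\Tan M$ to the submanifold $\Nor S$, then, with $V=\textstyle\sum_a y^aN_a$,
\[
\d\Phi(\partial_{x^i})=\iota_{\bf{hor}}(\partial_{x^i})+\iota_{\bf{vert}}\bigl(B(\partial_{x^i},V)\bigr)+\iota_{\bf{vert}}\bigl(\nabla^{\Nor}_{\partial_{x^i}}V\bigr).
\]
The middle term is vertical, not horizontal. It is orthogonal to every fibre direction $\iota_{\bf{vert}}(N_a)$, so no shear inside $\Tan(\Nor S)$ can absorb it. The base block becomes $g_S(\cdot,(\Id+B_V^2)\,\cdot)$, where $B_V=B(\cdot,V)$ is self-adjoint. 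The density therefore acquires the factor $\abs{\det(\Id+B_V^2)}^{1/2}$, which is $>1$ wherever $S$ is spacelike and $\scal{\II(\cdot,\cdot)}{V}\neq 0$.

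Concretely, take a circle of radius $r$ in Euclidean $\R^3$, where the Sasaki metric of $\Tan\R^3$ is flat. Let $y^1$ be the radial coordinate. The induced density on $\Nor S$ is then $\sqrt{1+(y^1/r)^2}\,\d s\,\d y^1\,\d y^2$, so the formula fails, for instance, for the unit-disc bundle over an arc. The lemma therefore holds only for the Sasaki metric of the metric bundle $(\Nor S,\nabla^{\Nor})$ over $S$. This is also the only reading under which the paper's splitting $\pi^*(\vol^S)\wedge\phi$ is true, and the one the paper implicitly uses in Appendix~\ref{annexe:nu}: there $\iota_{\bf{hor}}(u_i)$ is treated as tangent to $\Nor S$, and normal vectors are transported by the normal connection. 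Replace your robustness remark with an explicit statement that this is the metric you are using.
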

\begin{proof}
	Let us choose a simply connected open set $U$ of $S$ and orientations of $U$ and of the bundle $\Nor U = (\Nor S)_{|U}$. The volume form $\vol^{\Nor U}$ of the total space $\Nor U$ satisfies
	\[\vol^{\Nor U} = \pi^*(\vol^S) \wedge \phi, \]
	where the restriction of the differential form $\phi$ to a fibre $\Nor_x U$ is the volume form $\vol^{\Nor_x U}$ of this fibre. We have, for $\chi$ the indicator function of $E$ :
	\[\int_{\Nor U} \chi\ \d\!\vol^{\Nor U} = \int_U \left( \int_{\Nor_x U} \chi \d\!\vol^{\Nor_x S} \right) \d\!\vol^S(x), \]
	whence the result if $E$ is contained in $\Nor U$, thus in general. 
\end{proof}

\subsection{Complete submanifolds of pseudo-hyperbolic space}
\label{subsection:geom_projective_hpq}

We shall use the following lemmas, which describe the complete maximal surfaces of $\H^{2,q}$. We refer to \cite{CTT19} for many proofs. 

\begin{proposition}[see {\cite[Proposition 3.5]{CTT19}}]\label{proposition:coordonnées_Fermi}
	Let $\R^{2,q+1} = E \oplus F$ be an orthogonal decomposition of $\R^{2,q+1}$ into two subspaces, $E$ being positive definite and $F$ negative definite. Let us denote by $\D$ the open unit ball of $E$ and by $\S^q$ the unit sphere of $F$. Denoting by $g_E$ the metric of $E$, we endow $\D$ with the metric $(g_{\D})_u = \left( \frac{2}{1+\norm{u}^2} \right)^2 (g_E)_u$ and $\S^q$ with the spherical metric $g_\S$ induced by the metric of $F$. The map
\[\begin{array}{rrcl}
	\psi : & \D \times \S^q & \longrightarrow & \Hc^{2,q} \\
	\ & (u,v) & \longmapsto & \frac{2}{1-\norm{u}^2} u + \frac{1+\norm{u}^2}{1-\norm{u}^2} v
\end{array}\]
is a diffeomorphism and we have
\[\psi^* g_{\H^{2,q}} = \left( \frac{1+\norm{u}^2}{1-\norm{u}^2} \right)^2 (g_\D \oplus - g_{\S}). \]
\end{proposition}
\begin{proposition}[see {\cite[Proposition 3.8]{CTT19}}]\label{proposition:complète_implique_entière}
	Let $S$ be a spacelike and complete surface of $\H^{2,q}$. With the notations of Proposition \ref{proposition:coordonnées_Fermi}, $S$ is of the form
	\[S = \{\psi(\xi,\phi(\xi)) \mid \xi \in \D\} \]
	with $\phi : \D \to \S^q$ a strictly $1$-Lipschitz map. 
\end{proposition}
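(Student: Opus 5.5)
The plan is the standard argument (as in CTT19): write $S$ as a graph over the disc $\D$ via the splitting of Proposition \ref{proposition:coordonnées_Fermi}, and let completeness force the graph to be entire. First fix the orthogonal splitting $\R^{2,q+1} = E \oplus F$ and use the diffeomorphism $\psi$ to identify $\Hc^{2,q}$ with $\D \times \S^q$. We may assume $S$ is connected. We then work with a connected lift of $S$ to $\Hc^{2,q}$, using that $S$ is simply connected or passing to a lift component; this lift is again a spacelike complete surface. Let $\pi_1 : \D \times \S^q \to \D$ and $\pi_2 : \D \times \S^q \to \S^q$ be the projections.

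\textbf{Step 1: local graph with strictly $1$-Lipschitz function.} Write $\psi^* g_{\H^{2,q}} = \lambda(u)^2 (g_\D \oplus -g_\S)$. A tangent vector $(a,b)$ to $S$ is spacelike, so $g_\D(a,a) > g_\S(b,b)$. In particular $a \neq 0$, hence $\d\pi_1$ restricted to $\Tan S$ is injective. Since $\dim S = 2 = \dim \D$, the map $\pi_1|_S$ is a local diffeomorphism. Locally $S$ is therefore the graph of a map $\phi$ with $\norm{\d\phi}_{g_\D \to g_\S} < 1$, i.e. $\phi$ is locally strictly $1$-Lipschitz.

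\textbf{Step 2: $\pi_1|_S$ is a covering, hence a diffeomorphism onto $\D$.} This is the main step. On $S$, the induced metric is
\[g_S = \lambda^2 \bigl(g_\D(a,a) - g_\S(b,b)\bigr) \leq \lambda^2 g_\D(a,a).\]
So $\pi_1 : (S,g_S) \to (\D, \lambda^2 g_\D)$ is distance non-increasing, but this is the wrong direction for a covering argument, so a different idea is needed. I would instead show that $\pi_1|_S$ has the path-lifting property. Take a path $c : [0,1) \to \D$ contained in a compact set $K$ of $\D$, and lift it along the local diffeomorphism. The lifted path $\tilde c$ has $g_S$-length at most $\sup_K \lambda^2 \cdot \mathrm{length}_{g_\D}(c) < \infty$. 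By completeness of $S$ (Hopf–Rinow), $\tilde c(t)$ stays in a compact set of $S$ and converges, so the lift extends. A local diffeomorphism with path lifting onto a connected, simply connected base is a covering, hence injective; so $\pi_1|_S$ is a diffeomorphism onto an open and closed subset of $\D$, namely $\D$ itself.

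\textbf{Step 3: global Lipschitz property.} We now have $S = \{\psi(\xi,\phi(\xi))\}$ with $\phi : \D \to \S^q$ smooth and $\norm{\d\phi} < 1$ pointwise. Integrate along $g_\D$-geodesics, which exist since $\D$ with $g_\D$ is a round hemisphere-type metric, geodesically convex. This gives $d_\S(\phi(\xi),\phi(\xi')) < d_\D(\xi,\xi')$ for $\xi \neq \xi'$, where the inequality is strict because the curve is nondegenerate and $\norm{\d\phi}<1$ along it. This is the strictly $1$-Lipschitz statement.

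The obstacle I expect is Step 2: making the path-lifting argument rigorous without a global lower bound on $g_S$ in terms of $g_\D$. The upper bound on lifted length, combined with completeness, is the key point. One must also check that the convergence of $\tilde c(t)$ indeed lies over $\lim c(t)$; this follows by continuity of $\pi_1$.
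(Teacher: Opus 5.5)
Your argument is correct and is essentially the standard proof from \cite{CTT19}, which the paper cites without reproducing. The projection to $\D$ is a local diffeomorphism because $S$ is spacelike, and the induced metric satisfies $g_S \leq \pi_1^*(\lambda^2 g_\D)$, where $\lambda^2 g_\D = \frac{4}{(1-\norm{u}^2)^2}\, g_E$ is the complete hyperbolic metric of the disc; completeness of $S$ then makes $\pi_1|_S$ a covering, hence a diffeomorphism onto $\D$.

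The only slip is your comment in Step 2, which has the direction backwards. That inequality says $\pi_1$ does \emph{not decrease} lengths; it is not distance non-increasing. This is exactly the hypothesis of the classical lemma that a length-non-decreasing local diffeomorphism out of a complete manifold is a covering, and your path-lifting argument is precisely the proof of that lemma. In the length bound the factor should be $\lambda$ rather than $\lambda^2$, which is harmless since $\lambda \geq 1$.
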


\begin{corollaire}\label{corollaire:sous-variété_simplement_connexe}
	Let $S$ be a surface spacelike and complete in $\Hc^{2,q}$. $S$ is simply connected. Moreover, for all points $x,y \in S$, we have
	\[d(x,y) \geq \cosh(-\scal{x}{y}). \]
\end{corollaire}
\begin{proof}
	$S$ is simply connected by Proposition \ref{proposition:complète_implique_entière}. For the inequality, see \cite[Lemma 3.8]{CTT19}. 
\end{proof}

The following technical lemma will be useful in Section \ref{subsection:contrôle_volume}. We recall (Section \ref{subsection:définition_Hpq}) that the type of the geodesic joining two points of $\Hc^{2,q}$ is deduced from their scalar product. 
\begin{lemme}\label{lemme:cône_1_Lipschitz}
	Let $a$ be a point in $\Hc^{2,q}$, $\Omega = \{y \in \Hc^{2,q} \mid \scal{a}{y} < -1\}$ be the set of points joined to $a$ by a spacelike geodesic, $S$ be a spacelike and complete surface of $\Hc^{2,q}$ and $x$ be a point in $S$ joined to $a$ by a timelike geodesic (that is $\abs{\scal{a}{x}} < 1$). Let us denote by $\d$ the distance on $S$ induced by its Riemannian metric. Let $t > 0$ be a real number such that for every point $y$ in $S$ satisfying $\d(x,y) = t$, we have $y \in \Omega$. For every $y$ in $S$ satisfying $\d(x,y) \geq t$, we have $y \in \Omega$. 
\end{lemme}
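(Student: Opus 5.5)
We prove the statement by a connectedness argument on $S$, using the fact that $S$ is a graph over $\D$ (Proposition \ref{proposition:complète_implique_entière}). The key claim is that along a path in $S$ starting at $x$, the function $y \mapsto \scal{a}{y}$ cannot pass from the timelike region $\{\abs{\scal{a}{y}}<1\}$ to the spacelike region $\Omega$ without crossing the sphere $\{\d(x,\cdot)=t\}$. We also need that it cannot reach $\Omega$ through the region $\{\scal{a}{y}\geq 1\}$.

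\textbf{Step 1: local geometry near $a$.} We first show that the set $\{y\in S \mid \scal{a}{y}\geq -1\}$ is compact and contains $x$. Choose the splitting $\R^{2,q+1}=E\oplus F$ of Proposition \ref{proposition:coordonnées_Fermi} with $a\in F$, so $a=\psi(0,v_0)$. In the coordinates $\psi(u,v)$ we have
\[\scal{a}{\psi(u,v)}=\frac{1+\norm{u}^2}{1-\norm{u}^2}\scal{v_0}{v}.\]
The sign of this quantity is governed by $\scal{v_0}{v}_F=-\cos\theta$, where $\theta$ is the spherical angle between $v_0$ and $v$. Hence $y=\psi(\xi,\phi(\xi))$ lies in $\Omega$ if and only if
\[\frac{1+\norm{\xi}^2}{1-\norm{\xi}^2}\cos\theta(\xi)>1.\]
Since $\phi$ is strictly $1$-Lipschitz for $g_\D$ versus $g_\S$, the angle $\theta(\xi)$ stays controlled by $\theta(0)+d_\D(0,\xi)$. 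Using this, I will show that $Z=\{\xi\mid y\notin\Omega\}$ is a connected compact set containing $\xi_x$. The intended mechanism is that the 1-Lipschitz cone around the vertical fibre over $0$ intersects $S$ in a "disc". Concretely, $\{\scal{a}{y}\geq -1\}$ is the complement of $\Omega$, and points with $\scal{a}{y}\geq 1$ are the points timelike-related to $-a$. I would try to show that the set $\{y\in S\mid \scal{a}{y}>-1\}$ is a topological disc, or at least connected, containing $x$. A plausible route is that $\xi\mapsto -\scal{a}{\psi(\xi,\phi(\xi))}$ is proper and has no local maxima outside $\Omega$: maximum-principle-type behaviour along spacelike directions coming from the umbilicity equation \eqref{éq:Hpq_ombilique}, whose Hessian of $\scal{a}{\cdot}$ restricted to $S$ is $\scal{a}{y}g_S+\scal{\II(\cdot,\cdot)}{a}$.

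\textbf{Step 2: connectedness argument.} Let $U$ be the connected component of $S\setminus\Omega$ containing $x$. By Step 1 the whole of $S\setminus\Omega$ is connected, so $U=S\setminus\Omega$. The hypothesis says that the metric sphere $\{\d(x,\cdot)=t\}$ is disjoint from $U$. Since $U$ is connected, contains $x$, and $\d(x,\cdot)$ is continuous on $U$ with $\d(x,x)=0<t$, intermediate values give $\d(x,y)<t$ for all $y\in U$. Here we use that the metric sphere separates $S$: $S$ is complete, so by Hopf–Rinow every point at distance greater than $t$ is joined to $x$ by a path meeting the sphere. Therefore every $y$ with $\d(x,y)\geq t$ lies outside $U$, that is, in $\Omega$.

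\textbf{Main obstacle.} The hard step is Step 1, the connectedness of $S\setminus\Omega$. A priori a complete spacelike surface could leave $\Omega$ and re-enter far away. I would first try to prove this via the graph description, showing that along each radial ray from $\xi_a$ in $\D$ the quantity $\frac{1+r^2}{1-r^2}\cos\theta$ becomes and remains $>1$ once it exceeds $1$, thanks to the strict $1$-Lipschitz bound. This is the classical argument that a spacelike surface cannot come back inside the light cone of a point. If the radial argument fails, the fallback is to exploit that $\partial\Omega\cap S$ consists of points lightlike-related to $a$, and that $S$ meets each lightcone in a connected curve.
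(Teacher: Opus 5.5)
Your Step 2 is correct. If $S\setminus\Omega$ is connected and contains $x$ (it does, since $\scal{a}{x}>-1$), then $d(x,\cdot)$ maps it onto an interval that contains $0$ and misses $t$, so this interval lies in $[0,t)$. The Hopf--Rinow remark is not needed.

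\textbf{The gap is in Step 1.} As written, it asserts things that are false and proposes a route that cannot work. Take $S=\{\psi(u,v_1)\mid u\in\D\}$ totally geodesic, with $a=v_0\perp v_1$ in $F$. Then $\scal{a}{\cdot}\equiv 0$ on $S$. So $S\setminus\Omega=S$ is not compact, $\xi\mapsto-\scal{a}{\psi(\xi,\phi(\xi))}$ is not proper, and every point is a local maximum. The lemma is vacuous for this $S$, but you intend to prove Step 1 before using the hypothesis on the sphere. Moreover, the lemma only assumes $S$ spacelike, so the term $\scal{\II(\cdot,\cdot)}{a}$ in the Hessian has no sign, and no maximum principle is available.

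Step 2 never uses compactness, only connectedness. For connectedness, the radial argument in your last paragraph is the right one. However, you only state its conclusion, without the computation that makes it work. The missing point is that $g_\D$ is the round hemisphere metric. Hence $d^{\D}(0,\xi)=2\arctan\norm{\xi}$ and $\frac{1+\norm{\xi}^2}{1-\norm{\xi}^2}=\frac{1}{\cos d^{\D}(0,\xi)}$. Setting $\theta(\xi)=d^{\S}(v_0,\phi(\xi))$, this gives $\psi(\xi,\phi(\xi))\in\Omega$ if and only if $\theta(\xi)<d^{\D}(0,\xi)$.

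Now move along a ray from $0$:
\begin{itemize}
\item $d^{\D}(0,\cdot)$ increases by exactly the $d^{\D}$-length travelled;
\item $\theta$ changes by at most $d^{\S}$ between the $\phi$-images, and the $1$-Lipschitz property bounds this by the same length.
\end{itemize}
So $\theta-d^{\D}(0,\cdot)$ is non-increasing along rays. Therefore $S\setminus\Omega$ corresponds to a set radially star-shaped about $0$, which it contains since $\theta(0)\geq 0$, and it is connected. With this, Step 2 applies. (The bound $\theta(\xi)\leq\theta(0)+d^{\D}(0,\xi)$ you wrote is not the inequality that is needed.)

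\textbf{Comparison with the paper.} Once repaired, your proof is correct but organised differently from the paper's. The paper uses $\abs{\scal{a}{x}}<1$ to see that $\Vect{a,x}$ is negative definite. It can then choose $F\supset\Vect{a,x}$, placing both $a$ and $x$ over $0\in\D$. The radial path $s\mapsto(s\xi,\phi(s\xi))$ from $x$ to $y$ must then cross the sphere $\{d(x,\cdot)=t\}$ at a point of $\Omega$. The same monotonicity of $\theta-d^{\D}(0,\cdot)$ then carries membership in $\Omega$ out to $y$. One path does both jobs, and no topological step is needed. Your version centres only at $a$ and pays with the connectedness-plus-intermediate-value step. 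In exchange, it uses only $x\notin\Omega$, so it proves the statement for every $x\in S$ with $\scal{a}{x}\geq -1$.
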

\begin{proof}
	Proposition \ref{proposition:complète_implique_entière} identifies $\Hc^{2,q}$ with $\D \times \S^q$ in such a way that the surface $S$ is the graph of a $1$-Lipschitz map $\phi : \D \to \S^q$. Let us choose this identification in such a way that $a = (0,\sigma_a)$ and $x = (0,\sigma_x)$ (for points $\sigma_a, \sigma_x \in \S^q$). Denote by $\d^\D$ and $\d^\S$ the distances induced by $g_\D$ and $g_\S$ respectively. Since $\Omega$ is the set of points joined to $a$ by a spacelike geodesic, we have
	\[\Omega = \{(u,\sigma) \mid \d^\S(\sigma_a,\sigma) < \d^{\D}(0,u)\}. \]
	Let $y$ be a point in $S$ satisfying $\d(x,y) \geq t$ : it is of the form $y = (\xi,\phi(\xi))$ for some $\xi$ in $\D$. We define a path $\gamma : [0;1] \to S$ by $\gamma(s) = (s\xi,\phi(s\xi))$. Since $S$ is complete, there exists a real number $s$, $0 < s \leq 1$, such that $\d(\gamma(s),x) = t$. By hypothesis, we have $\gamma(s) \in \Omega$, that is $\d^\S(\phi(s\xi),\sigma_a) < \d^\D(0,s \xi)$. Since $\phi$ is $1$-Lipschitz, we deduce that $\d^\S(\phi(\xi),\sigma) < \d^\D(0,\xi)$, hence $y$ is in $\Omega$. 
\end{proof}
\begin{lemme}[see {\cite[Proposition 3.8]{CTT19}} and {\cite[remark 3.3]{CTT19}}]\label{lemme:relèvement_unique}
	Let $S$ be a spacelike and complete surface of $\H^{2,q}$. The inverse image of $S$ under the projection $\Hc^{2,q} \to \H^{2,q}$ is the disjoint union of two submanifolds $\hat{S}$ and $-\hat{S}$. The convex hull $\Conv(\hat{S})$ of $\hat{S}$ in $\R^{2,q+1}$ does not contain $0$. 
\end{lemme}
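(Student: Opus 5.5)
We work in the Fermi coordinates of Proposition \ref{proposition:coordonnées_Fermi}. Fix an orthogonal splitting $\R^{2,q+1} = E \oplus F$. By Proposition \ref{proposition:complète_implique_entière}, any lift of $S$ to $\Hc^{2,q}$ is a graph $\{\psi(\xi,\phi(\xi)) \mid \xi \in \D\}$ with $\phi : \D \to \S^q$ strictly $1$-Lipschitz, and the same holds for the full preimage. Because $\psi$ is a diffeomorphism onto $\Hc^{2,q}$ and $\psi(\xi,-\sigma) = -\psi(-\xi,\sigma)$, the preimage of $S$ in $\Hc^{2,q}$ is a closed embedded surface invariant under $x \mapsto -x$, and it double covers $S$. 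By Corollary \ref{corollaire:sous-variété_simplement_connexe}, $S$ (and hence each component of its preimage) is simply connected. So the covering is trivial, and the preimage is the disjoint union of two sheets $\hat S$ and $-\hat S$. To see concretely that the sheets are distinct, pick a lift $\hat S = \{\psi(\xi,\phi(\xi))\}$ given by Proposition \ref{proposition:complète_implique_entière}. Then $-\hat S = \{\psi(\xi,-\phi(-\xi))\}$. These two graphs are disjoint: a common point would require $\phi(\xi) = -\phi(-\xi)$ for some $\xi$, i.e. points at spherical distance $\pi$. But $\d^\S(\phi(\xi),\phi(-\xi)) < \d^\D(\xi,-\xi)$, and this can be large, so that argument alone fails. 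Instead I rely on the covering argument: the two-to-one projection restricted to a simply connected complete connected component is a homeomorphism, so there are exactly two components, exchanged by $-\Id$.

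For the convex hull, I show that $\hat S$ lies in an open half-space $\{x \mid \ell(x) > 0\}$ for some linear form $\ell$. This gives $\Conv(\hat S) \subset \{\ell > 0\}$, which does not contain $0$. The plan is to use the fact that $\phi$ is $1$-Lipschitz from the hyperbolic disc to the round sphere. First I show the image $\phi(\D)$ stays in an open hemisphere. Normalise so that $\phi(0) = \sigma_0$. For $\xi \in \D$, write $r = \d^\D(0,\xi)$ and $\theta = \d^\S(\sigma_0,\phi(\xi)) < r$. In coordinates, with $\norm{\xi} = \tanh(r/2)$, the point is $x = \sinh r\, \hat\xi + \cosh r\, \phi(\xi)$, where $\hat\xi$ is the unit vector. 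Taking $\ell = -\scal{\cdot}{\sigma_0}$, whose sign conventions make $\ell(\sigma_0) = 1$ since $F$ is negative definite, we get $\ell(x) = \cosh r \cos\theta$. This is positive only while $\theta < \pi/2$, and that is not guaranteed when $r$ is large. So a single form $\ell$ supported on $F$ is not enough, and the main obstacle is choosing the form correctly.

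Following \cite[Remark 3.3]{CTT19}, I would instead use the fact that $-\hat S$ and $\hat S$ are separated by a totally geodesic copy of... More robustly, I use the following. For $x,y \in \hat S$, Corollary \ref{corollaire:sous-variété_simplement_connexe} gives $\scal{x}{y} \leq -1$, since the points are joined by a spacelike geodesic or are equal; this is the acausality of the lift. So every pair satisfies $\scal{x}{y} < 0$. Now suppose a convex combination $\sum \lambda_i x_i = 0$ with $x_i \in \hat S$. Pairing with $x_1$ gives $0 = \sum \lambda_i \scal{x_i}{x_1} \leq -\sum \lambda_i < 0$, a contradiction. Hence $0 \notin \Conv(\hat S)$. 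The key step to verify is the bound $\scal{x}{y} \leq -1$ on a single sheet. I obtain it from the Lipschitz graph description: the inequality $\d^\S(\phi(\xi),\phi(\eta)) < \d^\D(\xi,\eta)$ translates, via $\psi$, into the statement that $x$ and $y$ are joined by a spacelike geodesic. This is exactly the description of $\Omega$ used in the proof of Lemma \ref{lemme:cône_1_Lipschitz}, after moving $\xi$ to $0$ by an isometry preserving the splitting.
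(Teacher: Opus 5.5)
Your final argument is correct and follows the route the paper relies on. The splitting is taken from \cite[Proposition 3.8 and Remark 3.3]{CTT19}, and $0 \notin \Conv(\hat S)$ follows from $\scal{\hat x}{\hat y} \leq -1$ on one sheet by pairing a convex combination with one of its points. This is exactly the argument in the proof of Lemma \ref{lemme:épaisseur_pi2}.

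The two routes you discarded fail only because you read $g_\D$ as the hyperbolic metric. In Proposition \ref{proposition:coordonnées_Fermi}, $g_\D = \bigl(\frac{2}{1+\norm{u}^2}\bigr)^2 g_E$ is the round metric of an open hemisphere. Hence $\d^\D(\xi,-\xi) < \pi$ and $\d^\D(0,\xi) = 2\arctan\norm{\xi} < \frac{\pi}{2}$. The strict Lipschitz bound therefore rules out $\phi(-\xi) = -\phi(\xi)$, so your first disjointness argument works. It also gives $\theta < \frac{\pi}{2}$, so $\ell = -\scal{\cdot}{\phi(0)}$ is positive on $\hat S$ and negative on $-\hat S$. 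This proves both assertions at once: the totally geodesic hypersurface $\phi(0)^\perp \cap \Hc^{2,q}$ is the separating copy of $\Hc^{2,q-1}$ you were reaching for.

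There are two smaller slips.
\begin{itemize}
\item An isometry preserving $E \oplus F$ acts linearly on $\D$ and fixes $0$, so it cannot move $\xi$ to $0$. To place a given $x$ at $\xi = 0$ you need an isometry of $\R^{2,q+1}$ sending $x$ into $F$; its image is again a graph by Proposition \ref{proposition:complète_implique_entière}.
\item Corollary \ref{corollaire:sous-variété_simplement_connexe} concerns surfaces in $\Hc^{2,q}$, not $S \subset \H^{2,q}$. Instead of invoking simple connectivity of $S$, apply your bound $\scal{x}{y} \leq -1$ to a component $C$ of the preimage, which is complete as a Riemannian cover of $S$. Since $\scal{x}{-x} = 1$, $C$ contains no antipodal pair. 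So $C \to S$ is an injective covering, hence a homeomorphism, and the preimage is $C \sqcup -C$.
\end{itemize}
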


\begin{definition}[convex hull]\label{définition:enveloppe_convexe}
	Let $S$ be a spacelike type and complete surface of $\H^{2,q}$. Let $\hat{S}$ be a lift of $S$ to $\Hc^{2,q}$ (see Lemma \ref{lemme:relèvement_unique}). We define the \emph{convex hull} $\Conv(S) = \P \Conv(\hat{S})$. 
\end{definition}
Lemma \ref{lemme:relèvement_unique} shows that the convex hull is well defined and independent of the choice of lift. 

\begin{lemme}[see {\cite[Lemma 3.11]{CTT19}}]\label{lemme:épaisseur_pi2}
	Let $S$ be a spacelike and complete surface of $\H^{2,q}$ and $x$ a point in $S$. The convex hull $\Conv(S)$ does not intersect the hyperplane $\Vect{x}^\perp$. 
\end{lemme}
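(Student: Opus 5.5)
We work in the lift $\hat{S}\subset\Hc^{2,q}$ given by Lemma \ref{lemme:relèvement_unique}, fix $x\in\hat{S}$, and aim to show that the linear form $\ell_x(y)=\scal{x}{y}$ satisfies $\ell_x<0$ on $\Conv(\hat{S})$. Once this holds, no point of $\Conv(\hat{S})$ lies in $\Vect{x}^\perp$, and projectivising gives the statement for $\Conv(S)$. Since $\ell_x$ is linear, $\ell_x<0$ on $\hat{S}$ gives $\ell_x<0$ on the convex hull: a convex combination of negative numbers is negative. So everything reduces to proving $\scal{x}{y}<0$ for every $y\in\hat{S}$.

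The natural tool is the Fermi-type chart of Proposition \ref{proposition:coordonnées_Fermi}. Choose the splitting $\R^{2,q+1}=E\oplus F$ with $x\in F$; this is possible because $x$ is a unit timelike vector, so $x^\perp\cap(\text{a negative complement})$ can be arranged. In this chart $x=\psi(0,\sigma_x)$ with $\sigma_x=x$. By Proposition \ref{proposition:complète_implique_entière}, $\hat{S}$ is the graph of a strictly $1$-Lipschitz map $\phi:\D\to\S^q$, and $\phi(0)=\sigma_x$. For $y=\psi(u,\phi(u))$ we compute
\[
\scal{x}{y}=\frac{1+\norm{u}^2}{1-\norm{u}^2}\,\scal{\sigma_x}{\phi(u)}_F ,
\]
because $u\in E\perp x$. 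The first factor is positive. Since $F$ is negative definite and $\sigma_x,\phi(u)$ are unit vectors, $\scal{\sigma_x}{\phi(u)}_F=-\cos d^\S(\sigma_x,\phi(u))$. So it suffices to show $d^\S(\sigma_x,\phi(u))<\pi/2$ for every $u\in\D$.

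This is the key step. The $1$-Lipschitz property is measured with respect to $g_\D$ and $g_\S$. Since $g_\D=\bigl(\tfrac{2}{1+\norm{u}^2}\bigr)^2g_E$ is the spherical metric on a hemisphere (stereographic coordinates), we get $d^\D(0,u)=2\arctan\norm{u}<\pi/2$ for $\norm{u}<1$. Hence $d^\S(\sigma_x,\phi(u))\le d^\D(0,u)<\pi/2$, so $\scal{x}{y}<0$.

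The only delicate point is interpreting the Lipschitz condition with the correct metric $g_\D$ and checking the radius of $\D$ in it. If Proposition \ref{proposition:complète_implique_entière} instead refers to a different normalisation, I would fall back on Corollary \ref{corollaire:sous-variété_simplement_connexe}, or on the argument of Lemma \ref{lemme:cône_1_Lipschitz}, to compare distances. A second subtlety is the sign convention of the chosen lift, but $\scal{x}{x}=-1<0$ fixes it consistently on the connected component $\hat{S}$.
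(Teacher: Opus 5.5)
Your proof is correct and is essentially the paper's argument. Both use the graph description of $\hat S$ in the chart $\psi$ to control the sign of $\scal{x}{y}$ for $y\in\hat S$, then conclude by linearity on convex combinations; the only difference is that the paper records the sharper bound $\scal{\hat x}{\hat y}\le -1$ (existence of a spacelike geodesic), whereas you stop at $\scal{x}{y}<0$ because the spherical distance is less than $\pi/2$. Your computation in fact already gives that sharper bound: the spherical distance from $\sigma_x$ to $\phi(u)$ is at most $2\arctan\norm{u}<\pi/2$ and $\cos(2\arctan\norm{u})=\frac{1-\norm{u}^2}{1+\norm{u}^2}$, so $\scal{x}{y}\le -1$, and unlike $\scal{x}{y}<0$ this inequality persists when passing to the closure of the hull.
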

\begin{proof}
	Let $\hat{S} \subset \Hc^{2,q}$ be a lift of $S$ and let $\hat{x}, \hat{y} \in \hat{S}$. We see from Propositions \ref{proposition:complète_implique_entière} and \ref{proposition:coordonnées_Fermi} that $\hat{x}$ and $\hat{y}$ are connected by a spacelike geodesic, hence satisfy $\scal{\hat x}{\hat y} \leq -1$ (see \cite[Lemma 3.7]{CTT19}). A convex combination of such points $\hat{y}$ therefore cannot be orthogonal to $\hat{x}$ : $\Conv(\hat{S})$ does not intersect $\Vect{\hat{x}}^\perp$. 
\end{proof}

We establish another two lemmas in signature $(p,q)$. They will be used in the proof of Proposition \ref{proposition:Seppi}. 
\begin{lemme}\label{lemme:u_harmonique}
	Let $S$ be a maximal $p$-dimensional submanifold of $\Hc^{p,q}$, whose Laplacian we denote by $\Delta$. Choose a point $h$ in $\R^{p,q+1}$ and define a function $u : S \to \R$ by $u(x) = \scal{h}{x}$. We have $(\Delta - p) u = 0$. 
\end{lemme}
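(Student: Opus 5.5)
The strategy is to differentiate $u$ twice along $S$ and use the umbilicity relation (\ref{éq:Hpq_ombilique}) together with maximality. First, fix a point $x \in S$ and a tangent vector $\sf{v} \in \Tan_x S$. Since $u$ is the restriction to $S$ of the linear function $y \mapsto \scal{h}{y}$ on $\R^{p,q+1}$, we have $\d u(\sf{v}) = \scal{h}{\sf{v}}$. Hence the gradient of $u$ on $S$ is the orthogonal projection of $h$ onto $\Tan_x S$. Concretely, $h$ decomposes as $h = \grad u + h^{\Nor} - u(x)\, x$, where $h^{\Nor}$ lies in the normal bundle of $S$ inside $\Hc^{p,q}$ and $x$ is the unit normal of $\Hc^{p,q}$ in $\R^{p,q+1}$, with $\scal{x}{x} = -1$.

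Next, I compute the Hessian. For tangent vector fields $\sf{U},\sf{V}$ on $S$, we have $\mathrm{Hess}\,u(\sf{U},\sf{V}) = \sf{U}(\sf{V}u) - (\nabla^S_{\sf{U}}\sf{V})u$. Because $h$ is constant, this gives $\mathrm{Hess}\,u(\sf{U},\sf{V}) = \scal{h}{\nabla^\R_{\sf{U}}\sf{V}} - \scal{h}{\nabla^S_{\sf{U}}\sf{V}}$. I then split $\nabla^\R_{\sf{U}}\sf{V}$ in two steps. By (\ref{éq:Hpq_ombilique}), $\nabla^\R_{\sf{U}}\sf{V} = \nabla^\H_{\sf{U}}\sf{V} + \scal{\sf{U}}{\sf{V}}\,x$. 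By the decomposition of Section \ref{subsection:geo_diff_Hpq}, $\nabla^\H_{\sf{U}}\sf{V} = \nabla^S_{\sf{U}}\sf{V} + \II(\sf{U},\sf{V})$. Putting these together yields
\[\mathrm{Hess}\,u(\sf{U},\sf{V}) = \scal{h}{\II(\sf{U},\sf{V})} + \scal{\sf{U}}{\sf{V}}\, u. \]

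Finally, I take the trace over an orthonormal frame $(e_i)$ of $\Tan_x S$; since $S$ is spacelike, $\scal{e_i}{e_i} = 1$. This gives $\Delta u = \scal{h}{\sum_i \II(e_i,e_i)} + p\,u = p\scal{h}{H} + p\,u$. Maximality means $H = 0$, so $\Delta u = p\,u$, that is, $(\Delta - p)u = 0$.

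The only delicate point is the sign convention for $\Delta$. The statement requires $\Delta = \tr \mathrm{Hess}$, the non-positive, analyst's convention; with the geometer's positive Laplacian the relation would read $(\Delta + p)u = 0$. I would adopt $\Delta = \tr\mathrm{Hess}$ and check that this is consistent with how the lemma is used in Proposition \ref{proposition:Seppi}. No other real obstacle is expected: the computation is local, so it works on $S$ itself and requires neither completeness nor a choice of lift.
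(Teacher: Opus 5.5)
Your proof is correct and is essentially the paper's argument. The paper also differentiates the restriction of $\scal{h}{\cdot}$ twice using the umbilicity relation \eqref{éq:Hpq_ombilique}, traces over an orthonormal frame and invokes $H=0$; the only cosmetic difference is that it writes the Hessian as $u\,\scal{e_i}{e_j} - \scal{B(e_i,\sf{n})}{e_j}$, with $\sf{n}$ the normal component of $h$, which is your $u\,\scal{e_i}{e_j} + \scal{h}{\II(e_i,e_j)}$ via $\scal{\II(\sf{U},\sf{V})}{\sf{N}} + \scal{\sf{V}}{B(\sf{U},\sf{N})} = 0$. Your convention $\Delta = \tr \Hess$ is the one the paper uses, and it is what makes the Harnack inequality with $c = -p \leq 0$ applicable in Proposition \ref{proposition:Seppi}.
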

\begin{proof}
	Define a function $U : \Hc^{p,q} \to \R,\ x \mapsto \scal{h}{x}$. We denote by $\grad^\H$ and $\grad^S$ the respective gradients of $\Hc^{p,q}$ and $S$. Define a section $\sf{n}$ of $\Nor S$ as the orthogonal projection of $\grad^{\H} U$ onto $\Nor S$ : we thus have $\sf{n} = \grad^{\H} U - \grad^{S} u$. Choose an orthogonal frame $(e_1,\dots,e_p)$ of $\Tan_x S$ and extend it to vector fields satisfying $\nabla e_k = 0$ at $x$. At the point $x$, we have
	\begin{align*}
		\grad^S\, u &= \grad^\H\, U - \sf{n} \\
		&= h + \scal{h}{x} x - \sf{n}, \tag*{hence} \\
		(\Hess u)(e_i,e_j) &= e_i \cdot (e_j \cdot u) \\
		&= e_i \cdot \scal{h + \scal{h}{x} x - \sf{n}}{e_j} \\
		&= \scal{h}{x} \scal{e_i}{e_j} - \scal{\nabla_{e_i} \sf{n}}{e_j} \\
		&= u(x)\, \scal{e_i}{e_j} - \scal{B(e_i,\sf{n})}{e_j}. 
	\end{align*}
	$S$ being maximal, we have $\sum_k \scal{B(e_i,\sf{n})}{e_i} = 0$ hence $\Delta u = \tr(\Hess  u) = pu$. 
\end{proof}

\begin{lemme}\label{lemme:surface_dans_hyperplan}
	Let $S$ be a maximal complete pointed $p$-dimensional submanifold of $\H^{p,q}$. If there exists a point $x$ in $S \cap \partial \Conv(S)$, then $S$ is contained in a hyperplane of $\R^{p,q+1}$ of signature $(p,q)$. 
\end{lemme}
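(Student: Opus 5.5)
We argue by a supporting hyperplane and a maximum principle, relying on Lemma \ref{lemme:u_harmonique}. Let $\hat{S}$ be a lift of $S$ to $\Hc^{p,q}$ (Lemma \ref{lemme:relèvement_unique}), and let $\hat{x} \in \hat{S}$ be a lift of $x$, so that $\hat{x}$ lies on the boundary of the cone $\R_{>0}\Conv(\hat{S})$. Since this cone is convex, there is a supporting linear hyperplane through $\hat{x}$. Concretely, there is a nonzero $h \in \R^{p,q+1}$ with $\scal{h}{y} \leq 0$ for all $y \in \Conv(\hat{S})$ and $\scal{h}{\hat{x}} = 0$. The linear form $\scal{h}{\cdot}$ is used because the bilinear form is nondegenerate, so every linear form has this shape.

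Consider $u : \hat{S} \to \R$, $u(y) = \scal{h}{y}$. By construction $u \leq 0$ on $\hat{S}$ and $u(\hat{x}) = 0$, so $u$ attains an interior maximum at $\hat{x}$. By Lemma \ref{lemme:u_harmonique}, $\Delta u = p u$. Hence $\Delta u - p u = 0$ with $u \leq 0$. For operators $L = \Delta + c$ with $c \leq 0$ (here $c = -p$), the strong maximum principle states that a function with $Lu \geq 0$ attaining a nonnegative maximum at an interior point is constant. Here the maximum value is $0$, so $u \equiv 0$ on the connected surface $\hat{S}$ (connected since $S$ is simply connected, Corollary \ref{corollaire:sous-variété_simplement_connexe}). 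Therefore $\hat{S} \subset h^\perp$.

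It remains to identify the signature of $h^\perp$. The first candidate step is to show that $h$ is not isotropic and that $h^\perp$ has signature $(p,q)$. Since $\hat{S}$ is spacelike of dimension $p$ and contained in $h^\perp$, the tangent spaces $\Tan_y \hat{S} \subset h^\perp$ give a $p$-dimensional positive definite subspace of $h^\perp$. Also $y \in h^\perp$ with $\scal{y}{y} = -1$, and $y \perp \Tan_y\hat{S}$, so $h^\perp$ contains a positive $p$-plane plus an orthogonal negative vector. If $h$ were timelike, $h^\perp$ would have signature $(p,q)$, as desired. If $h$ were spacelike, $h^\perp$ would have signature $(p-1,q+1)$, which cannot contain a positive definite $p$-plane; this case is excluded. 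The main obstacle is to exclude $h$ lightlike, where $h^\perp$ is degenerate with signature $(p-1,q,1)$: its maximal positive definite subspaces have dimension $p-1$, so again no positive $p$-plane fits. Thus $h$ is timelike and $S \subset \P(h^\perp)$ with $h^\perp$ of signature $(p,q)$.

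Two points need care in the write-up. First, positivity counts must use the fact that a degenerate space with radical $\R h$ has positive index equal to that of $h^\perp/\R h$. Second, when $h$ is timelike, the nonzero vector $y \in h^\perp$ with $\scal{y}{y} = -1$ confirms that $h^\perp \cap \Hc^{p,q}$ is nonempty, consistent with the statement.
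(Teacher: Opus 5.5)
Your proof is correct and follows the paper's argument. Both take a supporting hyperplane $h^\perp$ at the lift of $x$, use the function $u=\scal{h}{\cdot}$, which has constant sign on $S$, vanishes at $\hat x$ and satisfies $(\Delta-p)u=0$ by Lemma \ref{lemme:u_harmonique}, and apply a strong maximum principle (the paper cites \cite[Lemma A.3]{CTT19}) to get $u\equiv 0$; your explicit count excluding spacelike and lightlike $h$ just spells out the paper's one-line remark that $H$ has signature $(p,q)$ because it contains $\Tan_x S$.
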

\begin{proof}
	Under this hypothesis, we have a supporting hyperplane of $\Conv(S)$ at $x$, denoted by $H$. Since $H$ contains $\Tan_x S$, it has signature $(p,q)$. Lift everything to $\Hc^{p,q}$ : we thus obtain a point $h$ in $\R^{p,q+1}$ such that $H = \{h\}^\perp$, hence the function $u = \scal{h}{\cdot} : S \to \R$ satisfies $u \geq 0$. We have $(\Delta^S - p) u = 0$ by Lemma \ref{lemme:u_harmonique} and $u(x) = 0$, hence $u = 0$ (see \cite{CTT19}, Lemma A.3), in other words $S \subset H$. 
\end{proof}

\subsection{Complexification of a real vector bundle of rank two}
\label{subsection:fibrés_holomorphes}

This section describes how to decompose the complexification of a real vector bundle over a Riemann surface, of rank $2$ and endowed with a metric and an orientation. These results will be used in section \ref{subsection:décomposition}. 

\begin{definition}\label{définition:complexification}
	Let $E \to S$ be a vector bundle over a Riemann surface. Suppose $E$ is endowed with a metric $g$. Endow the bundle $\cal{E} = E \otimes_\R \C$, called complexification of $E$, with the $\C$-linear extension of $g$ and with the $\C$-antilinear involution equal to the identity on $E$, denoted by $x \mapsto \bar{x}$. We define a hermitian metric $h$ on $\cal{E}$ by
	\[h (\sf{u},\sf{v}) = g(\sf{u},\bar{\sf{v}}), \]
	as well as the norm $\norm{x} = \sqrt{h(x,x)}$. If moreover $E$ is of rank $2$ and endowed with an almost complex structure $J \in \O(E)$, define the volume form $\vol^E$ of $\cal{E}$ by
	\[\vol^E(\sf{u},\sf{v}) = g(J \sf{u},\sf{v}). \]
\end{definition}

\begin{fait}\label{fait:involutions}
	If $E,F$ are real bundles, the involutions of the complexifications of $E \otimes F$ and $E^*$ are given, for $x \in E^\C$, $y \in F^\C$ and $f \in (E^*)^\C$, by
	\begin{align*}
		\bar{x \otimes y} &= \bar{x} \otimes \bar{y} \tag*{and} \\
		\bar{f(x)} &= \bar{f}(\bar{x}). 
	\end{align*}
\end{fait}

Suppose now that $E$ is endowed with a metric connection $\nabla$. The $\C$-linear extension of $\nabla$ to $\cal{E}$ is still a metric connection, whose $(0,1)$ part, denoted by $\nabla^{(0,1)}$, defines a structure of holomorphic vector bundle on $\cal{E}$. Suppose moreover that $E$ is endowed with an almost complex structure $J \in \O(E)$ : this is equivalent to the data of a principal $\SO(2,\R)$-bundle $P$, such that $E$ is the associated bundle $P[\R^2]$. Let $\nabla$ be a metric connection on $E$ : this data is equivalent to that of a principal connection on $P$. 

\begin{lemme}[decomposition of an oriented metric vector bundle of rank $2$]\label{lemme:décomposition_fibré_rang_2}
	We have a canonical decomposition $\cal{E} = \ker(J - i \Id) \oplus \ker(J + i \Id)$, where $\ker(J - i \Id) = \cal{L}$ and $\ker(J + i \Id)$ are holomorphic subbundles of $\cal{E}$ orthogonal to each other and preserved by $\nabla$. Endow these bundles with the metrics and connections inherited from $\cal{E}$. $\cal{L} \otimes \ker(J + i \Id)$ is trivial as a hermitian holomorphic vector bundle endowed with a metric connection (hence an identification of hermitian holomorphic vector bundles endowed with a metric connection between $\cal{L}^{-1}$ and $\ker(J + i \Id)$). 
\end{lemme}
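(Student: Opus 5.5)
We work fibrewise and then check compatibility with the connection. At a point $x \in S$, $J_x$ is a real orthogonal endomorphism of $E_x$ with $J_x^2 = -\Id$. Its $\C$-linear extension to $\cal{E}_x$ is therefore diagonalisable with eigenvalues $\pm i$, so $\cal{E}_x = \ker(J - i\Id) \oplus \ker(J + i\Id)$. Since the conjugation commutes with $J$, it exchanges the two eigenspaces, so both are lines.

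\textbf{Orthogonality and isotropy.} If $J\sf{u} = i\sf{u}$ and $J\sf{v} = i\sf{v}$, then $g(\sf{u},\sf{v}) = g(J\sf{u},J\sf{v}) = -g(\sf{u},\sf{v})$, so each eigenline is isotropic for the $\C$-bilinear form $g$. For $\sf{u} \in \cal{L}$ and $\sf{w} \in \ker(J+i\Id)$ we have $\bar{\sf{w}} \in \cal{L}$, hence $h(\sf{u},\sf{w}) = g(\sf{u},\bar{\sf{w}}) = 0$. This gives $h$-orthogonality.

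\textbf{Preservation by $\nabla$.} Since $\nabla$ is metric and $E$ has rank $2$, $\nabla$ preserves the orientation, so $\nabla J = 0$: the connection is induced by a principal $\SO(2)$-connection, and $\SO(2)$ commutes with $J$. Hence for a section $\sf{u}$ of $\cal{L}$,
\[J \nabla \sf{u} = \nabla (J\sf{u}) = i \nabla \sf{u},\]
so $\nabla$ preserves $\cal{L}$, and likewise $\ker(J+i\Id)$. In particular $\nabla^{(0,1)}$ preserves both summands, so they are holomorphic subbundles. The metrics and connections they inherit are metric, and the splitting is compatible with them.

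\textbf{Triviality of the tensor product.} Use the pairing $\cal{L} \otimes \ker(J+i\Id) \to \C$, $\sf{u} \otimes \sf{w} \mapsto g(\sf{u},\sf{w})$. This is nondegenerate: if $\sf{u} \neq 0$ then $\bar{\sf{u}} \in \ker(J+i\Id)$ and $g(\sf{u},\bar{\sf{u}}) = \norm{\sf{u}}^2 > 0$. So the pairing gives an isomorphism $\Phi : \cal{L} \otimes \ker(J+i\Id) \to \underline{\C}$ onto the trivial line bundle.

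\begin{itemize}
	\item $\Phi$ is parallel because $\nabla g = 0$. Hence it intertwines the induced connection with the trivial connection $\d$, and in particular it is holomorphic.
	\item $\Phi$ is an isometry up to a universal constant. Take a unit $\sf{u} \in \cal{L}$ and set $\sf{w} = \bar{\sf{u}}$, which is a unit vector of $\ker(J+i\Id)$. Then $\norm{\sf{u}\otimes\sf{w}} = 1$ and $\abs{g(\sf{u},\sf{w})} = 1$.
\end{itemize}

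So $\cal{L} \otimes \ker(J+i\Id)$ is trivial as a hermitian holomorphic bundle with metric connection, and correspondingly $\ker(J+i\Id) \cong \cal{L}^{-1} = \cal{L}^*$. This identification is either $\sf{w} \mapsto g(\cdot,\sf{w})$ or the conjugation map; both are canonical and parallel.

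The only step needing care is $\nabla J = 0$. It follows from the fact that $\SO(2)$ is the identity component of $\O(2)$ and commutes with $J$, which is the principal-bundle formulation already assumed in the setup.
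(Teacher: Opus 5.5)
Your proof is correct and is essentially the paper's argument. The paper transfers everything from the model action of $\SO(2,\R)$ on $\C^2$: the eigenlines $L^\pm$ of $j$, the characters $e^{\pm i\theta}$, and connection forms valued in $\frak{so}(2,\R)=\R j$. You verify the same facts intrinsically through $J$, the $\C$-bilinear form $g$ and $\nabla J = 0$. Your one addition is an explicit trivialisation of $\cal{L}\otimes\ker(J+i\Id)$ by the pairing $g$, checked directly to be parallel, holomorphic and isometric (with constant exactly $1$, not just "up to a universal constant"). The paper leaves this implicit in the triviality of the product cocycle.
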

\begin{proof}
	All the geometric data of $E = P[\R^2]$ and $\cal{E} = P[\C^2]$ depend on the action of $\SO(2,\R)$ on $\R^2$ and $\C^2$ respectively : the lemma thus follows from the properties of this action. 
	The canonical orientation of $\R^2$ is defined by $j = \left( \begin{smallmatrix} 0 & -1 \\ 1 & 0 \end{smallmatrix} \right) \in \frak{so}(2,\R)$. Notice that $j$ is diagonalisable over $\C$ and its eigenspaces $\ker(j \pm i \Id)$ are the isotropic lines of $\C^2$ for the canonical quadratic form, denoted by $L^\pm$. The action of $e^{i \theta} \in \SO(2,\R)$ on $\C^2$ preserves each of these two lines, acting on $\ker(j \pm i \Id)$ by multiplication by $e^{\pm i \theta}$. We deduce that the cocycles defining $E$ preserve two line subbundles of $\cal{E} = E \otimes_\R \C$ defined by $\ker(J \pm i \Id) = \cal{L}^\pm$, and that the cocycle defining $\cal{L}^+ \otimes \cal{L}^-$ is trivial, hence the identification $\cal{L}^{-1} \approx \ker(J+i \Id)$ as holomorphic vector bundles. 
	
	The lines $L^\pm$ are orthogonal for the canonical hermitian metric of $\C^2$ : the corresponding subbundles are therefore orthogonal for the hermitian metric on $\cal{E}$ deduced from $g$. Similarly, a principal connection on $P$ takes values in $\frak{so}(2,\R)$, which is generated by $j$, whose action on $\C^2$ preserves each of the lines $\ker(j \pm i \Id)$. In particular, the $\ker(J \pm i \Id)$ are preserved by $\nabla$ thus by $\nabla^{(0,1)}$ : these are holomorphic subbundles. As $tj$ acts by $\pm i t$ on $\ker(j \pm i \Id)$, passing to the associated bundles, we see that the connection $\nabla$ preserves the $\ker(J \pm i \Id) = \cal{L}^{\pm 1}$ and that the connection $\cal{L} \otimes \cal{L}^{-1}$ is trivial. 
\end{proof}

\begin{remarque}\label{remarque:changement orientation}
	The other possible orientation on the metric bundle $E$ is $-J$. The previous construction, carried out with $-J$, would exchange the bundles $\cal{L}$ and $\cal{L}^{-1}$ of the decomposition into lines. 
\end{remarque}

The decomposition $(\Tan U)^\C = \cal{T} \oplus \cal{T}^{-1}$ is classical : $\cal{T}$ is often called the holomorphic tangent bundle and denoted by $\Tan^{(1,0)} S$. Similarly, the complexification of $\Tan^* S$ (oriented by $-\transp{J_{\Tan}}$) decomposes as $\canon \oplus \canon^{-1}$, where $\canon$ is the dual of $\cal{T}$, called the canonical bundle of $S$. A tensor $\phi : (\Tan S)^{\otimes d} \to \C$ thus extends $\C$-linearly into a section of $(\canon \oplus \canon^{-1})^{\otimes d} \cong \bigoplus_{p+q=d} (\canon^{p-q})^{\oplus \binom{d}{p}}$, the component corresponding to $(p,q)$ being denoted by $\phi^{(p,q)}$. The involution $x \mapsto \bar{x}$ of $(\Tan S)^\C$ sends $\canon$ onto $\canon^{-1}$. 

Let $(\cal{L},h)$ be a holomorphic hermitian line bundle over a Riemann surface $S$. We denote by $\nabla$ its Chern connection and $\courb \in \Omega^2(S,\End(\cal{L})) = \Omega^2(S,\C)$ its curvature. Let moreover $s$ be a holomorphic nowhere vanishing section of $\cal{L})$ : this defines a function $u$ on $S$ by $e^u = h(s,s)$. 
\begin{lemme}\label{lemme:courbure_fibré_droites}
	Let $z = x+iy$ be a local holomorphic coordinate and define $\partial_z = \frac12 (\partial_x - i\partial_y)$, $\partial_{\bar z} = \frac12 (\partial_x + i\partial_y)$. With the previous notations, we have $\nabla_{\partial_z} s = u_z\, s$ and $\courb(\partial_z, \partial_{\bar{z}}) = -u_{z \bar{z}}\ \Id_\cal{L}$. In particular, if $S$ is endowed with a metric (inducing the complex structure) defining a Laplacian $\Delta$ and a volume form $\omega$, then $\courb = \frac{i}{2} \Delta u\ \Id_{\cal{L}} \otimes \omega$. 
\end{lemme}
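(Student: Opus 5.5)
Since $\nabla$ is the Chern connection, it is compatible with the holomorphic structure and with $h$. I will compute directly in the local coordinate $z$ and then globalise.

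\textbf{Connection.} Holomorphicity of $s$ means $\nabla^{(0,1)} s = 0$, i.e. $\nabla_{\partial_{\bar z}} s = 0$. Since $\cal{L}$ has rank one, write $\nabla_{\partial_z} s = \theta\, s$ for a local function $\theta$. Differentiating $e^u = h(s,s)$ along $\partial_z$ and using metric compatibility (with $h$ $\C$-linear in the first slot and antilinear in the second) gives
\[\partial_z e^u = h(\nabla_{\partial_z} s, s) + h(s, \nabla_{\partial_{\bar z}} s) = \theta e^u + 0 .\]
Hence $\theta = u_z$, so $\nabla_{\partial_z} s = u_z\, s$.

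\textbf{Curvature.} Since $[\partial_z,\partial_{\bar z}]=0$,
\[\courb(\partial_z,\partial_{\bar z})s = \nabla_{\partial_z}\nabla_{\partial_{\bar z}} s - \nabla_{\partial_{\bar z}}\nabla_{\partial_z} s = 0 - \nabla_{\partial_{\bar z}}(u_z s) = -u_{z\bar z}\, s .\]
Here I used $\nabla_{\partial_{\bar z}} s = 0$ once more. As $\cal{L}$ is a line bundle, this gives $\courb(\partial_z,\partial_{\bar z}) = -u_{z\bar z}\,\Id_{\cal{L}}$.

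\textbf{Translation into $\Delta$ and $\omega$.} Take a conformal metric $\lambda^2(dx^2+dy^2)$ inducing the complex structure. Then:
\begin{itemize}
\item $\Delta u = \lambda^{-2}(u_{xx}+u_{yy}) = 4\lambda^{-2} u_{z\bar z}$;
\item $\omega = \lambda^2\, dx\wedge dy = \frac{i}{2}\lambda^2\, dz\wedge d\bar z$, so $\omega(\partial_z,\partial_{\bar z}) = \frac{i}{2}\lambda^2$.
\end{itemize}
A $2$-form is determined by its value on $(\partial_z,\partial_{\bar z})$, so it suffices to check that the proposed formula $\frac{i}{2}\Delta u\, \omega$ takes the value $-u_{z\bar z}$ there. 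Substituting the two items above gives $\frac{i}{2}\cdot 4\lambda^{-2}u_{z\bar z}\cdot\frac{i}{2}\lambda^2 = -u_{z\bar z}$. This matches. Both sides are intrinsic and agree in every coordinate chart, so the identity holds globally.

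\textbf{Main obstacle.} The only delicate point is keeping the sign and normalisation conventions consistent: $\partial_z$ is defined with a minus sign here, and $h$ must be antilinear in its second slot. I will fix these conventions first and then run the short verification above.
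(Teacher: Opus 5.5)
Your proof is correct and follows the paper's argument step by step. You use metric compatibility with $\nabla_{\partial_{\bar z}} s = 0$ to get $\nabla_{\partial_z} s = u_z s$, then the same one-line curvature computation, then convert to $\Delta$ and $\omega$ via the conformal factor. Your explicit evaluation $\omega(\partial_z,\partial_{\bar z}) = \frac{i}{2}\lambda^2$ fills in the final identification, which the paper only asserts with ``we deduce''.
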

\begin{proof}
	Let us define (locally) $\alpha$ by $\nabla_{\partial_z} s = \alpha\ s$. Since $\nabla_\sf{v} h = 0$ for $\sf{v}$ in $\Tan S$, we have for every complex tangent vector $\sf{v}$ in $\Tan^\C S$ : $\sf{v} \cdot h(X,Y) = h(\nabla_\sf{v} X,Y) + h(X,\nabla_{\bar{\sf v}} Y)$. We deduce $\partial_z \cdot e^u = h(\nabla_{\partial_z} s,s) + h(s,\nabla_{\partial_{\bar{z}}} s) = \alpha e^u$ since $\nabla_{\partial_{\bar{z}}} s = 0$, hence $\alpha = \partial_z \cdot u$ and the desired expression for $\nabla_{\partial_z} s$. Finally, 
	\begin{align*}
		\courb(\partial_z,\partial_{\bar{z}}) s &= \nabla_{\partial_z} \nabla_{\partial_{\bar{z}}} s - \nabla_{\partial_{\bar{z}}} \nabla_{\partial_z} s - \nabla_{[\partial_z,\partial_{\bar{z}}]} s \\
		&= -\nabla_{\partial_{\bar{z}}} \nabla_{\partial_z} s \tag{since $s$ is holomorphic} \\
		&= -\nabla_{\partial_{\bar{z}}} (u_z\ s) \\
		&= -u_{z \bar{z}}\ s.  
	\end{align*}
	Let us assume $S$ endowed with a metric. The Laplacian of the flat metric $\abs{\d z^2}$ is given by $\Delta_0 u = u_{xx} + u_{yy} = 4 u_{z \bar{z}}$, thus the Laplacian $\Delta$ is given by $\Delta u = \frac12 \norm{\partial_z}^{-2} \Delta_0 u = 2 \norm{\partial_z}^{-2} u_{z \bar{z}}$. We deduce the equality $\courb = \frac{i}{2} \Delta u\ \Id_{\cal{L}} \omega$. 
\end{proof}

\subsection{Gau\ss, Ricci and Codazzi equations}
\label{subsection:équations_fondamentales}

We establish three fundamental equations relating the connection of a spacelike submanifold $S$ of $\H^{p,q}$ to the ambient connection, which will be useful in Section \ref{subsection:décomposition}. The sectional curvature of $\H^{p,q}$ is constant, equal to $-1$, from which we classically deduce the expression of its curvature $\courb$ : $\courb(\sf{u},\sf{v})X = \scal{\sf{u}}{X} \sf{v} - \scal{\sf{v}}{X} \sf{u}$ (see \cite{Petersen16}). In particular, for all $\sf{u},\sf{v} \in \Tan_x S$ and $X,Y \in \Tan_x \H^{p,q}$, we obtain
\begin{equation*}
	\scal{\sf{u}}{X} \scal{\sf{v}}{Y} - \scal{\sf{v}}{X} \scal{\sf{u}}{Y} = \scal{\nabla_\sf{u} \nabla_\sf{v} X - \nabla_\sf{v} \nabla_\sf{u} X - \nabla_{[\sf{u},\sf{v}]} X}{Y}. 
\end{equation*}
Using the decomposition of $\nabla$ (section \ref{subsection:geo_diff_Hpq}) and taking each of the vectors $X$ and $Y$ in $\Tan S$ or $\Nor S$, the previous equation simplifies using the curvature $\courb^{\Nor}$ of the normal bundle and the curvature of the tangent bundle $\Tan S$, equivalent to the sectional curvature $K$ of $S$. When $\sf{u} = Y$ and $\sf{v} = X$ form an orthonormal basis of a plane $P$ in some fiber $\Tan_x S$, we obtain the Gau\ss\ equation :
\begin{align}
	-1 &= K(P) - \scal{\II(\sf{v},\sf{v})}{\II(\sf{u},\sf{u})} + \scal{\II(\sf{u},\sf{v})}{\II(\sf{v},\sf{u})}. \tag{Gau\ss} \label{éq:Gauss_0}
\end{align}
By considering $X,Y$ in the normal bundle $\Nor S$, we obtain the Ricci equation :
\begin{align}
	0 &= \scal{\courb^{\Nor}(\sf{u},\sf{v}) X}{Y} - \scal{B(\sf{v},X)}{B(\sf{u},Y)} + \scal{B(\sf{u},X)}{B(\sf{v},Y)}. \tag{Ricci} \label{éq:Ricci_0}
\end{align}
Finally, taking $X \in \Tan S$ and $Y \in \Nor S$ and denoting by $\nabla \II$ the derivative of $\II$ as a section of $(\Tan^* S)^{\otimes 2} \otimes \Nor S$ (where $\nabla$ is induced by the connections on $\Tan S$ and $\Nor S$), we obtain the Codazzi equation :
\begin{align}
	0 &= \scal{\II(\sf{u},\nabla_\sf{v}^{\Tan} X) + \nabla_\sf{u}^{\Nor} (\II(\sf{v},X)) - \II(\sf{v},\nabla_\sf{u}^{\Tan} X) - \nabla_\sf{v}^{\Nor}(\II(\sf{u},X)) - \II([\sf{u},\sf{v}],X)}{Y} \nonumber \\
	&= \scal{(\nabla_\sf{u} \II)(\sf{v},X) - (\nabla_\sf{v} \II)(\sf{u},X)}{Y}. \tag{Codazzi} \label{éq:Codazzi_0}
\end{align}

\textbf{Case of a maximal surface.}
Let us specialise these formulas by assuming that $S$ is a maximal surface in $\H^{2,2}$. Suppose the vector bundles $\Tan S$ and $\Nor S$ are oriented : this defines complex structure $J_{\Tan} \in \End(\Tan S)$ and $J_{\Nor} \in \End(\Nor S)$ (we shall apply the following results to a maximal complete surface in $\H^{2,2}$, in which case this latter hypothesis is indeed verified). The curvature $\courb^{\Tan}$ of the tangent bundle is a section of the bundle $\Lambda^2(\Tan^* S) \otimes \frak{so}(\Tan S)$. As the volume form $\omega$ of $S$ generates at every point the line bundle $\Lambda^2(\Tan S)$, we can define $\frac{\courb^{\Tan}}{\omega} \in \frak{so}(\Tan S)$ by the formula $\courb^{\Tan} = \omega \frac{\courb^{\Tan}}{\omega}$. We define similarly $\frac{\courb^{\Nor}}{\omega} \in \frak{so}(\Nor S)$. Extend $\II$ $\C$-linearly into a morphism from the complexification of $(\Tan S)^{\otimes 2}$ to the complexification of $\Nor S$. Denote by $\Tan^1 S$ the unit tangent bundle of $S$, fix a vector $\sf{u} \in \Tan^1_x S$ and define vectors in the complexification of $\Tan S$ by $\partial_z = \frac{1}{\sqrt 2} (\sf{u} - iJ_{\Tan} \sf{u})$ and $\partial_{\bar{z}} = \frac{1}{\sqrt 2} (\sf{u} + iJ_{\Tan} \sf{u})$. The mean curvature of $S$ (see Section \ref{subsection:geo_diff_Hpq}) is then given by
\[H = \frac12 [\II(\sf{u},\sf{u}) + \II(J_{\Tan} \sf{u},J_{\Tan} \sf{u})] = \II(\partial_z,\partial_{\bar{z}}). \]
In other words, the maximality of $S$ is equivalent to $\II^{(1,1)} = 0$. Consider the basis $(b_1,b_2) = (\partial_z,\partial_{\bar z})$ of the complexification of $\Tan S$ : it is orthonormal with respect to the hermitian metric deduced from the metric on $\Tan S$. The $b_i \otimes b_j$ ($1 \leq i,j \leq 2$) hence form an orthonormal basis of the complexification of $(\Tan S)^{\otimes 2}$, so that :
\begin{align*}
	\norm{\II}^2 &= -\tr (\II^* \II) \\
	&= \sum_{i,j} h(\II(b_i \otimes b_j),\II(b_i \otimes b_j)) \tag*{by definition of $\II^*$} \\
	&= \sum_{i=j} h(\II(b_i \otimes b_j),\II(b_i \otimes b_j)) \tag*{because $\II(\partial_z \otimes \partial_{\bar z}) = 2H = 0$} \\
	&= \norm{\II(\partial_z,\partial_z)}^2 + \norm{\II(\partial_{\bar z},\partial_{\bar z})}^2. 
\end{align*}
The $\C$-linear extension of $\II$ (still denoted by $\II$) comes from a morphism of real vector bundles, so it satisfies $\bar{\II} = \II$. By Fact \ref{fait:involutions}, we have
\begin{equation}\label{éq:II_symétrie_conjuguée}
	\II(\partial_{\bar{z}}^2) = \II(\bar{\partial_z^2}) = \bar{\II(\partial_z^2)}. 
\end{equation}
The previous equality thus rewrites as
\[\norm{\II}^2 = 2 \norm{\II(\partial_z,\partial_z)}^2. \]
By summing equation \eqref{éq:Gauss_0} for $\sf{u}$ and $\sf{v}$ in an orthonormal basis, we obtain a reformulation of the Gau\ss{} equation: 
\begin{align}
	K &= -1 + \frac{1}{2} \norm{\II}_2^2 \nonumber \\
	&= -1 + \norm{\II(\partial_z,\partial_z)}^2. \label{éq:Gauss}
\end{align}
To simplify the Codazzi equation, let us take $\sf{u} = X = \partial_z$ and $\sf{v} = \partial_{\bar{z}}$ in equation \eqref{éq:Codazzi_0}. Taking into account $\II^{(1,1)} = 0$ (thus $(\nabla \II)^{(1,1)} = 0$), we obtain that $\II^{(2,0)}$ satisfies
\begin{align}
	0 &= \nabla_{\partial_{\bar{z}}} \II^\C(\partial_z,\partial_z), \tag* {that is} \nonumber \\
	0 &= \bar{\partial} \II^{(2,0)}, \label{éq:Codazzi}
\end{align}
where $\bar{\partial}$ is the operator $\nabla^{(0,1)}$ on $\canon^2 \otimes (\Nor S)^\C$. Let $x$ be a point in $S$ and $(e_1,e_2)$ be an orthonormal basis of $\Tan_x S$. The metric of $(\Tan S)^{\otimes 2}$ restricts to the space of symmetric traceless $2$-tensors, denoted by $\Sym^2_0(\Tan S)$. An orthogonal basis of this space is given by
\[(f_1,f_2) = \left( \frac{1}{\sqrt{2}} (e_1 \otimes e_1 - e_2 \otimes e_2),\frac{1}{\sqrt{2}} (e_1 \otimes e_2 + e_2 \otimes e_1) \right). \]
We define this basis as direct, which orients $\Sym^2_0(\Tan S)$. The symmetry of $\II$ and the fact that $H=0$ imply that $\II$ vanishes on the orthogonal of $\Sym^2_0 (\Tan S)$ in $(\Tan S)^{\otimes 2}$. All the information of $\II$ is therefore contained in its restriction to $\Sym^2_0 (\Tan S)$, which we will denote by $\II_0$ or simply $\II$. $\Sym^2_0(\Tan S)$ and $\Nor S$ now have volume forms, which allows defining $\det(\II_0)$ by $\II_0^* \vol^{\Nor S} = \det(\II_0)\vol^{\Sym^2_0(\Tan S)}$. Recall that $\sf{u}$ is a unitary vector in $\Tan_x S$ defining vectors $\partial_z = \frac{1}{\sqrt 2} (\sf{u} - iJ_{\Tan} \sf{u})$ and $\partial_{\bar{z}} = \frac{1}{\sqrt 2} (\sf{u} + iJ_{\Tan} \sf{u})$. 
\begin{lemme}\label{lemme:Ricci}
	The Ricci formula can be rewritten as
	\begin{equation}\label{éq:Ricci}
		\frac{\courb^{\Nor}}{\omega} = (\det \II_0) J_{\Nor}.
	\end{equation}
	Moreover, we have $\det \II_0 = -i h(J \II(\partial_z,\partial_z),\II(\partial_z,\partial_z))$. 
\end{lemme}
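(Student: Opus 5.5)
We prove both assertions pointwise, using an orthonormal frame adapted to the orientations. Fix $x \in S$, a direct orthonormal basis $(e_1,e_2)$ of $\Tan_x S$ with $e_2 = J_{\Tan} e_1$, and a direct orthonormal basis $(n_1,n_2)$ of $\Nor_x S$ with $n_2 = J_{\Nor} n_1$. Here ``orthonormal'' refers to the negative definite metric on $\Nor S$, so $\scal{n_i}{n_j} = -\delta_{ij}$; this sign has to be tracked carefully throughout. Since $\frak{so}(\Nor_x S)$ is one-dimensional, generated by $J_{\Nor}$, it suffices to compute the single number $\scal{\courb^{\Nor}(e_1,e_2)n_1}{n_2}$ and compare it with $\det \II_0$.

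\textbf{Step 1: Ricci equation in the frame.} Apply \eqref{éq:Ricci_0} with $\sf{u}=e_1$, $\sf{v}=e_2$, $X=n_1$, $Y=n_2$. This gives
\[\scal{\courb^{\Nor}(e_1,e_2)n_1}{n_2} = \scal{B(e_2,n_1)}{B(e_1,n_2)} - \scal{B(e_1,n_1)}{B(e_2,n_2)}.\]
Next expand $B(e_i,n_a) = -\sum_j \scal{\II(e_i,e_j)}{n_a}\, e_j$, using $\scal{\II(\sf{U},\sf{V})}{\sf{N}} + \scal{\sf{V}}{B(\sf{U},\sf{N})}=0$. Write $\II(e_1,e_1) = -\II(e_2,e_2) = \alpha$ and $\II(e_1,e_2)=\beta$, which uses $H=0$, and set $\alpha_a = \scal{\alpha}{n_a}$, $\beta_a = \scal{\beta}{n_a}$. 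The right-hand side then becomes a $2\times 2$ determinant in $(\alpha_a,\beta_a)$, namely $\pm 2(\alpha_1\beta_2-\alpha_2\beta_1)$ up to sign.

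\textbf{Step 2: compute $\det \II_0$.} By definition, $\det \II_0 = \vol^{\Nor}(\II(f_1),\II(f_2))$. We have $\II(f_1) = \frac{1}{\sqrt2}(\alpha-(-\alpha)) = \sqrt2\,\alpha$ and $\II(f_2) = \sqrt2\,\beta$. Hence $\det\II_0 = 2\,\vol^{\Nor}(\alpha,\beta)$, and expressing $\vol^{\Nor}(\alpha,\beta)$ in the coordinates $\alpha_a,\beta_a$ (with the signs coming from $\scal{n_a}{n_a}=-1$) yields the same determinant as in Step 1. On the other hand, $\courb^{\Nor}(e_1,e_2) = \frac{\courb^{\Nor}}{\omega}$ because $\omega(e_1,e_2)=1$. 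Moreover, an element $\lambda J_{\Nor}$ satisfies $\scal{\lambda J_{\Nor} n_1}{n_2} = -\lambda$. Matching the constants then gives \eqref{éq:Ricci}.

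\textbf{Step 3: the complex formula.} Write $\partial_z = \frac1{\sqrt2}(e_1 - i e_2)$. Then $\II(\partial_z,\partial_z) = \frac12(\alpha - (-\alpha) - 2i\beta) = \alpha - i\beta$. Now compute $h(J\II(\partial_z,\partial_z),\II(\partial_z,\partial_z)) = g(J(\alpha-i\beta),\alpha+i\beta)$ by $\C$-bilinearity. The real terms $g(J\alpha,\alpha)$ and $g(J\beta,\beta)$ vanish since $J$ is skew. What remains is $i\,[g(J\alpha,\beta) - g(J\beta,\alpha)] = 2i\,g(J\alpha,\beta) = 2i\,\vol^{\Nor}(\alpha,\beta)$. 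Multiplying by $-i$ gives $2\vol^{\Nor}(\alpha,\beta) = \det\II_0$.

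\textbf{Main obstacle.} The only real difficulty is sign bookkeeping: the negative definite normal metric, the sign convention for $B$, the orientation of $\Sym^2_0$ through $(f_1,f_2)$, and the convention $\vol^E(\sf u,\sf v)=g(J\sf u,\sf v)$. I would check consistency on the single explicit example $\alpha = n_1$, $\beta = n_2$ before trusting the general identity.
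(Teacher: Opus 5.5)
Your route is correct, and it differs from the paper's in both halves.

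For \eqref{éq:Ricci}, the paper quotes \cite[Proposition 4.6]{LT22} to rewrite the Ricci equation as $\scal{\frac{\courb^{\Nor}}{\omega}\II(f_1)}{\II(f_2)} = -\norm{\II(f_1)\wedge\II(f_2)}^2$. It then sets $\frac{\courb^{\Nor}}{\omega} = \lambda J_{\Nor}$, obtains $\lambda\,\vol^{\Nor S}(\II(f_1),\II(f_2)) = \vol^{\Nor S}(\II(f_1),\II(f_2))^2$, and divides. That division only works where $\II(f_1)\wedge\II(f_2)\neq 0$. This fails identically, for example, for surfaces lying in a totally geodesic $\H^{2,1}$, so a separate remark is needed there. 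You instead read $\lambda$ off \eqref{éq:Ricci_0} linearly in an adapted frame. This is self-contained (no appeal to \cite{LT22}) and handles the degenerate points for free.

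For the complex formula, the paper evaluates $\II_0^*\vol^{\Nor S} = \det(\II_0)\vol^{\Sym^2_0(\Tan S)}$ on $(\partial_z^2,\partial_{\bar z}^2)$, using $\vol^{\Sym^2_0(\Tan S)}(\partial_z^2,\partial_{\bar z}^2) = i$. You expand $\II(\partial_z,\partial_z) = \alpha - i\beta$ and land on $2\vol^{\Nor S}(\alpha,\beta)$ from your Step 2. Both work: yours recycles Step 2, while the paper's is frame-free.

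What you still have to do is actually fix the sign in Steps 1--2. The sign is the whole content of \eqref{éq:Ricci}, since it feeds the sign of $\courb^{\cal{N}}$ in \eqref{éq:courbures_uv}. It does close. From $B(e_1,n_a) = -(\alpha_a e_1 + \beta_a e_2)$ and $B(e_2,n_a) = -(\beta_a e_1 - \alpha_a e_2)$ you get
\[\scal{\courb^{\Nor}(e_1,e_2)n_1}{n_2} = (\alpha_2\beta_1-\alpha_1\beta_2) - (\alpha_1\beta_2-\alpha_2\beta_1) = 2(\alpha_2\beta_1-\alpha_1\beta_2).\]
Since $\scal{\lambda J_{\Nor}n_1}{n_2} = -\lambda$, this gives $\lambda = 2(\alpha_1\beta_2-\alpha_2\beta_1)$.

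On the other side, the coordinates of $\alpha$ in $(n_1,n_2)$ are $(-\alpha_1,-\alpha_2)$, and similarly for $\beta$. Hence $\vol^{\Nor S}(\alpha,\beta) = \alpha_1\beta_2-\alpha_2\beta_1$, and $\det\II_0 = 2\vol^{\Nor S}(\alpha,\beta) = \lambda$. Your test case $\alpha = n_1$, $\beta = n_2$ gives $\lambda = 2 = \det\II_0$. Since both sides are alternating bilinear in $(\alpha,\beta)$, that single check would already have settled the sign.

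All of this uses $\vol^{\Nor S}(\sf{u},\sf{v}) = g(J\sf{u},\sf{v})$ with $g = -\scal{\cdot}{\cdot}$ on $\Nor S$, as you say. Be aware that the formula $\vol^{\Nor S} = -\scal{\cdot}{J\cdot}$ displayed in the paper's proof is the opposite of this; the paper's next line is only correct with $g(J\cdot,\cdot)$. If you used that displayed formula, the lemma would come out with the wrong sign.
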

\begin{remarque}
	In the equation \eqref{éq:Ricci}, everything in the left-hand term is independent of the choice of $J_{\Nor}$, whereas $\det \II$ changes sign when $J_{\Nor}$ is reversed. 
\end{remarque}
\begin{proof}
	We have $\II(f_1) = \sqrt{2} \II(e_1,e_1)$ and $\II(f_2) = \sqrt{2} \II(e_1,e_2)$ so according to \cite[Proposition 4.6]{LT22}, equation \eqref{éq:Ricci_0} can be rewritten as
	\[\scal{\frac{\courb^{\Nor}}{\omega}(\II(f_1))}{\II(f_2)} = -\norm{\II(f_1) \wedge \II(f_2)}^2. \]
	Since $\frac{\courb^{\Nor}}{\omega}$ is a section of $\frak{so}(\Nor S)$, we have $\frac{\courb^{\Nor}}{\omega} = \lambda J_{\Nor}$ for some function $\lambda$. This latter equality can be rewritten using the volume form $\vol^{\Nor S} = -\scal{\cdot}{J \cdot}$ :
	\begin{align*}
		\lambda \vol^{\Nor S}(\II(f_1),\II(f_2)) &= \norm{\II(f_1) \wedge \II(f_2)}^2 \\
		&= [\vol^{\Nor S}(\II(f_1),\II(f_2))]^2. 
	\end{align*}
	We thus obtain $\lambda = \vol^{\Nor S}(\II(f_1),\II(f_2))$. Since we also have $\vol(f_1,f_2) = 1$ by definition, this implies $\lambda = \det \II_0$. 
	
	Let us move to the expression of $\det \II_0$. We use the notations of Definition \ref{définition:complexification}. The vectors $\partial_z$ and $\partial_{\bar z}$ are unitary and satisfy $\bar{\partial_z} = \partial_{\bar{z}}$. The vectors $\partial_z^2 = \partial_z \otimes \partial_z$ and $\partial_{\bar{z}}^2 = \partial_{\bar{z}} \otimes \partial_{\bar{z}}$ still satisfy $\bar{\partial_z^2} = \partial_{\bar{z}}^2$ (fact \ref{fait:involutions}) and form a basis of the complexification of $\Sym^2_0(\Tan S)$. We deduce (see Definition \ref{définition:complexification}) :
\begin{align*}
	\vol^{\Sym^2_0(\Tan S)}(\partial_z^2,\partial_{\bar{z}}^2) &= h_{\Sym^2_0(\Tan S)}(J \partial_z^2,\bar{\partial_{\bar{z}}^2}) \\
	&= h_{\Sym^2_0(\Tan S)}(i \partial_z^2, \partial_z^2) = i. 
\end{align*}
By equation \eqref{éq:II_symétrie_conjuguée}, we have $\II(\partial_{\bar{z}}^2) = \bar{\II(\partial_z^2)}$, hence
\begin{align*}
	\vol^{\Nor S}(\II(\partial_z^2),\II(\partial_{\bar{z}}^2)) &= \vol^{\Nor S}(\II(\partial_z^2),\bar{\II(\partial_z^2)}) \\
	&= h_{\Nor S}(J \II(\partial_z,\partial_z),\II(\partial_z,\partial_z)) \tag*{by definition of $\vol^{\Nor S}$.} 
\end{align*}
We recall that $\det \II_0$ is defined by $\II_0^* \vol^{\Nor S} = \det(\II_0) \vol^{\Sym^2_0(\Tan S)}$ (equality between $2$-forms). Evaluate this equality at $(\partial_z^2,\partial_{\bar{z}}^2)$ : the previous volume computations show that $\det \II_0 = -i h_{\Nor S}(J \II(\partial_z,\partial_z),\II(\partial_z,\partial_z))$. 
\end{proof}

\subsection{Maximal representations in $\SO_0(2,n)$ and volume}
\label{subsection:représentations_maximales}

Let $\Sigma$ be a closed oriented surface of genus $g \geq 2$. Several properties of the Lie group $\SO_0(2,q+1)$ allow defining particular connected components of the space of representations of $\pi_1(\Sigma)$ into this group. 
For $q>1$, $\SO_0(2,q+1)$ is simple and hermitian : this allows defining maximal representations. These have been studied from the point of view of Higgs bundles (by Garc\'{i}a-Prada, Bradlow and Gothen, see \cite{BGPG06} and \cite{Got01}) and more geometrically (see in particular \cite{BIW10} and \cite{BLIW05}). This group is also that of (orientation-preserving) isometries of $\H^{2,q}$, so that one can consider $\H^{2,q}$-convex cocompact representations (see \cite{DGK18}). When $q=2$, this group is also real split, so that there exists a Hitchin component of $\chi(\Gamma,\SO_0(2,3))$ (see \cite{Hit92}). Hitchin representations are maximal and maximal representations are $\H^{2,2}$-convex-cocompact.

We will mainly use the geometric interpretation of a maximal representation given by the following theorem. 
\begin{theoreme}[{\cite[Theorem 8]{CTT19}}]\label{theoreme:existence_unicité_CTT}
	For every maximal representation $\rho : \pi_1(\Sigma_g) \to \SO_0(2,q+1)$ ($g \geq 2$), there exists a unique $\rho$-equivariant embedding $\tilde{\Sigma} \to \H^{2,q}$ whose image is a maximal surface. 
\end{theoreme}

The set of maximal representations of $\Gamma = \pi_1(\Sigma)$ is a union of connected components of the character variety $\chi(\Gamma,\SO_0(2,q+1))$, denoted by $\chi^{max}(\Gamma,\SO_0(2,q+1))$. For $q=0$, we thus recover Teichmüller space $\rm{Teich}(\Sigma)$. For $q=1$, since $\SO_0(2,2)$ is a double cover of $\SO_0(2,1)^2$, a maximal representation of $\pi_1(\Sigma)$ into $\SO_0(2,2)$ corresponds to a pair of maximal representations into $\SO_0(2,1)$ and a representation $\epsilon$ into $\Z/2\Z$. We deduce that
\[\chi^{max}(\Gamma,\SO_0(2,2)) = \rm{Teich}(\Sigma)^2 \times (\Z/2\Z)^{2g}. \]

For $q \geq 2$, given a maximal representation $f : \Gamma \to \SO_0(2,1)$ and a representation $\alpha : \Gamma \to \O(n)$, we define a representation of $\Gamma$ into $\SO_0(2,q+1)$ by $(f \otimes \det(\alpha)) \oplus \alpha$ (see \cite{CTT19}) : 
\begin{equation}\label{diagramme:représentation_fuchsienne}
	\begin{tikzcd}[row sep=tiny]
		&& {\O(2,1)} && \\
		{\Gamma} && \bigoplus && {\SO_0(2,q+1)} \\
		&& {\O(q)}
		\arrow["{f\ \otimes\ \det(\alpha)}", from=2-1, to=1-3]
		\arrow["\alpha"', from=2-1, to=3-3]
		\arrow[from=2-3, to=2-5]
	\end{tikzcd}
\end{equation}
Given a maximal representation $\rho$, we denote by $S_\rho$ the maximal surface given by Theorem \ref{theoreme:existence_unicité_CTT}. The normal bundle $\Nor S_\rho$ descends to a vector bundle $\Nor S_\rho/\Gamma$ over the quotient $\Sigma = S_\rho/\Gamma$. For example, if $\rho$ is given by Diagram \eqref{diagramme:représentation_fuchsienne}, then this bundle is the flat bundle over $\Sigma$ with holonomy $\alpha$. Let us denote by $\rm{sw}_1(\rho) \in \cohom^1(\Sigma,\Z/2\Z)$ and $\rm{sw}_2(\rho) \in \cohom^2(\Sigma,\Z/2\Z)$ the Stiefel--Whitney classes of this bundle. 

When $q \geq 3$, every connected component of $\chi^{max}(\Gamma,\SO_0(2,q+1))$ contains a unique connected component of representations of the form \eqref{diagramme:représentation_fuchsienne} (see \cite{BGPG06}). The work of \cite{CTT19} gives the following geometric interpretation : two maximal representations $\rho_1$ and $\rho_2$ are in the same connected component if, and only if, the normal bundles $\Nor S_{\rho_1}/\Gamma$ and $\Nor S_{\rho_2}/\Gamma$ are isomorphic as vector bundles over $\Sigma$. We deduce that when $q \geq 3$, the connected components of $\chi^{max}(\Gamma,\SO_0(2,q+1))$ are classified by $\rm{sw}_1$ and $\rm{sw}_2$ : this space hence has $2^{2g+1}$ connected components. 

In the case $q=2$, when $\rm{sw}_1(\rho) = 0$, the bundle $\Nor S_\rho/\Gamma$ is orientable. Its Euler class (well defined modulo choice of orientation) defines an element of $\cohom^2(\Sigma,\Z) = \Z$, called degree of $\rho$ and denoted by $\deg(\rho)$ (we shall choose the orientation to have $\deg(\rho) \geq 0$), whose reduction in $\cohom^2(\Sigma,\Z/2\Z)$ is $\rm{sw}_2(\rho)$. The space $\chi^{max}(\Gamma,\SO_0(2,q+1))$ then has $2(2^{2g}-1)$ connected components on which $\rm{sw}_1$ does not vanish, and $4g-3$ connected components on which it vanishes. The latter are classified by the degree, which satisfies $0 \leq \deg \leq 4g-4$ : we denote $\chi^{max}_d(\Gamma,\SO_0(2,3))$ the connected component on which the degree is $d$. The space $\chi^{max}(\Gamma,\SO_0(2,3))$ then has $2(2^{2g} -1) + (4g-3)$ connected components :
\[\bigsqcup_{\rm{sw}_1 \neq 0,\ \rm{sw}_2} \chi^{max}_{\rm{sw}_1,\ \rm{sw}_2}(\Gamma,\SO_0(2,3))\ \sqcup \bigsqcup_{0 \leq d \leq 4g-4} \chi^{max}_d(\Gamma,\SO_0(2,3)). \]
The connected component corresponding to $d = 4g-4$ is called Hitchin component. Those corresponding to $0 < d \leq 4g-4$, discovered by Gothen (see \cite{Got01}), are called Gothen components. They are characterised by the fact that they contain only irreducible representations.

We now define the volume of a maximal representation $\rho$ from $\Gamma$ to $\SO_0(2,q+1)$. There exist properly convex closed subsets of $\H^{2,q}$ preserved by $\Gamma$ and on which $\Gamma$ acts properly discontinuously and cocompactly (see \cite{DGK18}). Among these, there is a unique minimal convex set, which coincides with the convex hull of the limit set of $\rho$. This convex set is also equal to $\Conv(S_\rho)$ : in practice, this is the definition we will use. As $\Gamma$ acts on $\Conv(S_\rho)$ properly discontinuously, cocompactly and by isometries, the quotient is equipped with a volume form. 

\begin{definition}
	We call \emph{volume} of $\rho$ the volume of the quotient :
	\[\Vol(\rho) = \Vol(\Conv(S_\rho)/\Gamma). \]
\end{definition}

Note that a representation $\rho$ given by Diagram \eqref{diagramme:représentation_fuchsienne} stabilises $\R^{2,1}$ and its limit set lies in the projectivisation $\P(\R^{2,1})$ : this projectivisation also contains the convex hull of the limit set, so we have $\Vol(\rho) = 0$. On the contrary, the Gothen components contain only irreducible representations, hence of non-zero volume. These are therefore the only connected components of the $\chi^{max}_d(\Gamma,\SO_0(2,q+1))$ ($q \geq 1$) on which the volume is nowhere vanishing, as mentioned in the introduction. We show in fact that their volume is bounded below by a strictly positive constant (Theorem \ref{theoreme:borne_inf}).

\subsection{Compactness results}
\label{subsection:compacité}

We collect compactness results which will be of regular use. To establish Proposition \ref{proposition:Seppi} in full generality, we state them in signature $(p,q)$. They will however be mostly used in signature $(2,q)$ (in that case, most were proved in \cite{LT22}). 
\begin{definition}\label{définition:surface_pointée}
	Let us endow the set $\mathcal{M}_{p,q}$ of complete maximal $p$-submanifolds $S$ of $\H^{p,q}$ with the $\cal{C}_{loc}^\infty$ topology (see \cite{SST23}, Section 5). We define similarly $\mathcal{M}^*_{p,q}$, the set of pairs $(S,x)$, with $S$ in $\mathcal{M}_{p,q}$ and $x$ in $S$, endowed with the topology inherited from $\mathcal{M}_{p,q} \times \H^{p,q}$. 
\end{definition}
Define a map $\tau$ from $\mathcal{M}^*_{p,q}$ into the Grassmannian $\mathscr{G}_p(\R^{p,q+1})$ of positive definite $p$-subspaces of $\R^{p+q+1}$, associating to $(S,x)$ its tangent space $\Tan_x S$. 
\begin{theoreme}[\cite{SST23}, Theorem 5.3]\label{theoreme:compacité_SST}
	The function $\tau : \mathcal{M}_{p,q}^* \to \mathscr{G}_p(\R^{p,q+1})$ is proper. 
\end{theoreme}
\begin{corollaire}\label{corollaire:compacité_fonction_invariante}
	Any continuous function from $\mathcal{M}_{p,q}^*$ to $\R$ which is invariant under the action of $\O(p,q+1)$ has compact image. 
\end{corollaire}
This compactness result allows obtaining uniform bounds on certain quantities defined on $\mathcal{M}_{p,q}^*$ (see for example Proposition \ref{proposition:courbure_petite_contrôle_uv}). The main ingredient of these compactness results is the following bound on the second fundamental form, due to Ishihara (Theorem 1.2 of \cite{Ish88}), which uses the Omori-Yau maximum principle. The effective bound was established by \cite{Cheng94} in $\H^{2,1}$ and by \cite{MT25} for general $p,q$. The completeness of $S$ is crucial here since it implies that the second fundamental form $\II$ is bounded. 
\begin{theoreme}\label{theoreme:Ishihara}
	Let $S$ be a complete maximal $p$-submanifold of $\H^{p,q}$. Its second fundamental form $\II$ satisfies $\norm{\II}_2^2 \leq p \min(p-1,q)$. 
\end{theoreme}
\begin{theoreme}[Omori-Yau maximum principle]\label{théorème:Omori_Yau}
	Let $M$ be a complete Riemannian manifold whose Ricci curvature is bounded from below. Let $f : M \to \R$ of class $\cal{C}^2$ and bounded from above. For every $\epsilon > 0$, there exists a point $x \in M$ at which
	\[f(x) \geq \sup f - \epsilon, \norm{\nabla f} \leq \epsilon,\ \Delta f \leq \epsilon. \]
\end{theoreme}
\begin{remarque}\label{remarque:Omori_Yau}
	The Omori-Yau maximum principle also applies when taking $f$ with values in $\R \cup \{-\infty\}$, continuous, bounded from above and of class $\cal{C}^2$ on $f^{-1}(\R)$. To see it, apply the classical Omori-Yau maximum principle to the function $g$ defined by $g(x) = f(x)$ if $f(x) \geq \sup f -1$, $g(x) = f(x)$ otherwise, then made $\cal{C}^2$. 
\end{remarque}
Submanifolds realising the case of equality in Theorem \ref{theoreme:Ishihara} play a particular role. We shall only need some of them, studied in \cite{Barbot13} in $\H^{2,1}$, which we call Barbot surfaces (following \cite{LT22}). 
\begin{exemple}[Barbot surfaces]\label{exemple:couronne}
	Let $q$ be an integer, $q \geq 1$. Choose an isometric embedding $f : \R \hookrightarrow \Hc^{1,1} \hookrightarrow \R^{1,1}$ and decompose $\R^{2,q+1}$ into an orthogonal sum:
	\[\R^{2,q+1} = \R^{1,1} \oplus \R^{1,1} \oplus \R^{0,q-1}. \]
	We define an embedding of $\R^2$ into $\Hc^{2,q}$ by
	\[\iota : (t,s) \mapsto \frac{1}{\sqrt{2}} (f(t) \oplus f(s) \oplus 0). \]
	The map $\R^2 \stackrel{\iota}{\to} \Hc^{2,q} \to \H^{2,q}$ is an embedding. Let us denote its image by $S$ and endow it with the metric induced by $\H^{2,q}$. This embedding is an isometry of $\R^2$ onto $S$, which is therefore spacelike and flat. Moreover, $\iota$ conjugates the action of $\R^2$ on itself by isometries to the action of $\R^2$ on $\H^{2,q}$ by $\R^2 \to O(1,1)^2 \hookrightarrow O(2,q+1)$. Using the symmetries of $S$, we show that it is a maximal submanifold. We call Barbot surface a surface of $\H^{2,q}$ equal to $S$ modulo isometry of $\H^{2,q}$. 
\end{exemple}

The following proposition is due to \cite{Barbot13} for $\H^{2,1}$ and \cite[Corollary 5.7]{LT22} for $\H^{2,q}$. Let us recall (Theorem \ref{theoreme:Ishihara}) that the sectional curvature $K$ of a complete maximal surface of $\H^{2,q}$ satisfies $K \leq 0$. 
\begin{proposition}\label{proposition:rigidité_couronnes}
	Let $q \geq 1$ be an integer and $S$ be a complete maximal surface of $\H^{2,q}$. If its sectional curvature vanishes at a point, then $S$ is a Barbot surface (see Example \ref{exemple:couronne}). 
\end{proposition}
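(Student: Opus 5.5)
The plan is to exploit the holomorphic quantity $\II^{(2,0)}$ and a maximum principle. By the Gau\ss{} equation \eqref{éq:Gauss} (which holds verbatim in $\H^{2,q}$ for a maximal surface), $K = -1 + \norm{\alpha}^2$ with $\alpha = \II(\partial_z,\partial_z)$. Theorem \ref{theoreme:Ishihara} with $p=2$ gives $\norm{\II}_2^2 \leq 2$, hence $\norm{\alpha}^2 \leq 1$ and $K \leq 0$. Therefore the hypothesis that $K$ vanishes at a point $x_0$ says exactly that the function $v = \norm{\alpha}^2$ attains its maximum value $1$ at an interior point. The goal is to propagate this to $v \equiv 1$, i.e. $K \equiv 0$, and then identify the surface.

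\textbf{Step 1: differential inequality.} By the Codazzi equation \eqref{éq:Codazzi}, $\alpha$ is a holomorphic section of $\canon^2 \otimes (\Nor S)^\C$ for the holomorphic structure $\nabla^{(0,1)}$. I will compute $\Delta \log v$ (away from zeros of $\alpha$) by the standard Bochner-type formula for holomorphic sections of a hermitian bundle, in the spirit of Lemma \ref{lemme:courbure_fibré_droites}. This gives
\[\Delta \log v \geq -4K - 2\,\frac{h(\courb^{\Nor}\alpha,\alpha)}{\omega\, v}, \]
where the first term comes from the curvature of $\canon^2$ and the second from the normal curvature. The normal curvature term is expressed through the Ricci equation; in the case $q=2$ this is Lemma \ref{lemme:Ricci}, which gives $\courb^{\Nor}/\omega = (\det \II_0) J_{\Nor}$. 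I then want to reorganize the right-hand side as a nonnegative multiple of $1-v = -K$ up to a term controlled by $-K$. Concretely, the inequality to aim for is
\[\Delta(-K) \leq C\,(-K) \quad \text{near } x_0, \]
with $C$ a constant; the bound $v \leq 1$ makes all quantities uniformly controlled. The key algebraic point is that when $v=1$ the normal-curvature contribution must exactly balance the tangential one, so the defect is of order $1-v$. For this one decomposes $\alpha$ along isotropic directions of $(\Nor S)^\C$, and uses a Kato-type estimate $\norm{\nabla \alpha} \geq \norm{\d \norm{\alpha}}$ to absorb gradient terms.

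\textbf{Step 2: rigidity of the curvature.} Given Step 1, the function $w = -K \geq 0$ satisfies $\Delta w - Cw \leq 0$ and vanishes at $x_0$. The strong maximum principle (Hopf) then gives $w \equiv 0$ on a neighbourhood of $x_0$. The set $\{K=0\}$ is closed, and by the same argument it is open, so $K \equiv 0$ on the connected surface $S$. Thus $S$ is a complete flat surface, simply connected by Corollary \ref{corollaire:sous-variété_simplement_connexe}, hence isometric to $\R^2$; moreover $\norm{\alpha} \equiv 1$.

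\textbf{Step 3: identification.} With $v$ constant, the equality case of the Bochner formula forces $\alpha$ to be parallel and pins down the normal curvature. So the whole structure $(g, \Nor S, \nabla^{\Nor}, \II)$ is parallel with the same constant values as for the Barbot surface of Example \ref{exemple:couronne}. In particular the image of $\II$ spans a parallel flat rank-$2$ subbundle of $\Nor S$, and its orthogonal complement is totally geodesic, so $S$ lies in a copy of $\H^{2,2}$. By the fundamental theorem of submanifolds (Gau\ss{}, Ricci and Codazzi data determine the immersion up to ambient isometry on a simply connected domain), $S$ coincides with a Barbot surface up to isometry of $\H^{2,q}$.

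The main obstacle is Step 1. Getting the sign-correct inequality, with the normal curvature term exactly compensated at $v=1$, requires a careful pointwise algebraic lemma on $\alpha$ in $(\Nor S)^\C$. This is delicate especially for $q>2$, where the Ricci equation is not reduced to a single scalar.
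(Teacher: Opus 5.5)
Your overall strategy is the expected one. The paper gives no proof of this statement and defers to \cite{Barbot13} and \cite[Corollary 5.7]{LT22}. Your Step 1 target $(\Delta-4)(-K)\leq 0$ is exactly the inequality the paper later quotes from \cite[equation (20)]{LT22} in Lemma \ref{lemme:Harnack_K}, and Step 2 (strong maximum principle applied to $-K\geq 0$) is fine.

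\textbf{Step 1 is not established, and the sketch has a sign error.} For the holomorphic section $\alpha$ of $\canon^2\otimes(\Nor S)^\C$ and a unitary $\partial_z$, one has $\Delta\norm{\alpha}^2 = 2\norm{\nabla_{\partial_z}\alpha}^2 + 2h\big(\frac{\courb}{i\omega}\alpha,\alpha\big)$. The $\canon^2$ factor contributes $+4K$ to $\Delta\log\norm{\alpha}^2$, not $-4K$; for $q=2$ this is \eqref{éq:Hitchin_uv}, which reads $\Delta u = 4K+2\det\II$.

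For the normal factor, put $a=\II(e_1,e_1)=-\II(e_2,e_2)$ and $b=\II(e_1,e_2)$, so that $\alpha=a-ib$. Derived as in Section \ref{subsection:équations_fondamentales}, the Ricci equation gives in any codimension $\courb^{\Nor}(e_1,e_2)X = 2(\langle a,X\rangle b-\langle b,X\rangle a)$, with $\langle\cdot,\cdot\rangle=-g$ on $\Nor S$. Hence $h\big(\frac{\courb^{\Nor}}{i\omega}\alpha,\alpha\big)=4\norm{a\wedge b}^2\geq 0$, where $\norm{a\wedge b}^2=\langle a,a\rangle\langle b,b\rangle-\langle a,b\rangle^2$. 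For $q=2$ this gives $8\norm{a\wedge b}^2=2(e^u-e^v)^2$. Therefore
\[\Delta\norm{\alpha}^2 = 2\norm{\nabla_{\partial_z}\alpha}^2+4K\norm{\alpha}^2+8\norm{a\wedge b}^2\geq 4K\norm{\alpha}^2 ,\]
and since $\norm{\alpha}^2=1+K\in[0,1]$, this gives $\Delta K\geq 4K(1+K)\geq 4K$ on all of $S$.

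Nothing has to balance at $\norm{\alpha}=1$. The normal-curvature term has a favourable sign in every codimension, so you need no Kato inequality and no logarithm (hence zeros of $\alpha$ cause no trouble). The case $q>2$ is no harder.

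\textbf{Step 3 misidentifies the rigid structure.} Once $K\equiv0$, the identity above forces $\nabla_{\partial_z}\alpha=0$, hence $\nabla\alpha=0$ by Codazzi. It also forces $a\wedge b=0$, so $\courb^{\Nor}=0$ and $\II_x$ takes values in a single line. Your claim that the image of $\II$ spans a flat rank-$2$ subbundle is impossible. By the Ricci formula, a rank-$2$ image means $a\wedge b\neq0$, hence $\courb^{\Nor}\neq0$. Consistently with this, the Barbot surface of Example \ref{exemple:couronne} lies in $\P(\R^{1,1}\oplus\R^{1,1})\cap\H^{2,q}\cong\H^{2,1}$. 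For $q=2$ it has $u=v$, $\det\II_0=0$, and its minor axis vanishes.

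The correct ending runs as follows.
\begin{enumerate}
\item Write $\alpha=\phi\otimes n$ with $n$ a unit normal field; this is global because $S$ is simply connected.
\item From $\nabla\alpha=0$ and $\nabla^{\Nor}n\perp n$, deduce $\nabla\phi=0$ and $\nabla^{\Nor}n=0$.
\item Then $\mathrm{span}(x,\Tan_xS,n_x)$ is a constant subspace of signature $(2,2)$, so $S$ lies in a totally geodesic $\H^{2,1}$.
\item There the data are a complete flat simply connected metric, a parallel quadratic differential $\phi$ of norm $1$, and a trivial normal line. These are unique up to isometry, and the fundamental theorem of submanifolds identifies $S$ with a Barbot surface.
\end{enumerate}
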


Proposition \ref{proposition:rigidité_couronnes} and Corollary \ref{corollaire:compacité_fonction_invariante} lead to the following principle : the closer the curvature at $x$ of a pointed complete maximal submanifold $S$ of $\H^{2,q}$ is to $0$, the more $S$ resembles the Barbot surface in a neighbourhood of $x$. We shall need a quantitative version of this convergence : this is the object of Section \ref{subsection:décroissance_exponentielle}. 

\subsection{Uniform elliptic estimates}
\label{subsection:estimées_elliptiques_uniformes}

In this section, we establish elliptic estimates that will be crucial for the proof of Proposition \ref{proposition:Seppi} (which is why we state them in any signature) and Section \ref{subsection:décroissance_exponentielle}. Let $p \geq 1$ and $q \geq 0$ be integers and $(S,x) \in \cal{M}^*_{p,q}$ denote a maximal complete pointed $p$-submanifold of $\H^{p,q}$ (see Definition \ref{définition:surface_pointée}). We denote by $\nabla^{S}$ the connection of $S$. 
\begin{definition}\label{définition:norme_Ck}
	Given a function $u : S \to \R$, we denote by $\bar u$ the function
	\[\bar u : \Tan_x S \to \R,\, z \mapsto u(\exp_x(z)). \]
	Given an open subset $U$ of an Euclidean space and $f : U \to \R$, we also denote $\norm{f}_{\mathscr{C}^k(U)} = \sum_{\abs {\alpha} \leq k} \norm{\rm{D}^\alpha f}_{\mathscr{C}^0(U)}$. 
\end{definition}
The Laplacian of $S$, denoted by $\Delta = \Delta^S$, allows defining a differential operator on $\cal{C}^{\infty}(\Tan_x S)$ by $L_{(S,x)} = \exp_x^* \Delta^S$. Given a real number $c$, write
\[L_{(S,x)} + c \Id = \sum_{i,j} a_{ij} \rm{D}_{ij} + \sum_i b_i \rm{D}_i + c \Id \]
where the $a_{ij}$ and $b_i$ are $\cal{C}^\infty$ functions from $\Tan_x S$ to $\R$. The matrix $(a_{ij})$ is symmetric, with eigenvalues denoted by $\lambda_1(a_{ij}) \leq \lambda_2(a_{ij})$. To prove uniform elliptic estimates, we shall use a control on the coefficients of $L_{(S,x)} + c \Id$ of the following form:
\begin{equation}\label{éq:uniforme_ellipticité}
	\begin{aligned}
		\lambda \leq \lambda_1(a_{ij}) &&\text{(uniform ellipticity)} \\
		\norm{a_{ij}}_{\cal{C}^1}, \norm{b_i}_{\cal{C}^1}, \norm{c}_{\cal{C}^1} \leq \Lambda &&\text{($\cal{C}^1$ bounded coefficients)}
	\end{aligned}
\end{equation}
\begin{remarque}
In general, $\exp_x$ could not be a local diffeomorphism : we do not know if the sectional curvature of a maximal complete $p$-submanifold of $\H^{p,q}$ is non-positive for $p > 2$. The pullback $\exp_x^* \Delta^S$ is then not well-defined. However, it is always a diffeomorphism from a ball centred at $0$ (whose radius depends only on $\sup \abs{K}$) onto its image, which will suffice for us. We therefore ignore this problem of definition.
\end{remarque}

\begin{lemme}\label{cor:coefficients_laplacien_bornés}
	For all $R > 0$ and all $c$ in $\R$, there exist constants $\lambda, \Lambda > 0$ such that for any pointed $p$-submanifold $(S,x)$ in $\cal{M}^*_{p,q}$, we have the bounds \eqref{éq:uniforme_ellipticité} on the ball $\rm{B}(0,R)$ of $\Tan_x S$. 
\end{lemme}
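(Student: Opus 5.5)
We prove the lemma by compactness, combining Corollary \ref{corollaire:compacité_fonction_invariante} with continuity of the coefficients in the $\cal{C}^\infty_{loc}$ topology. The coefficients $a_{ij},b_i$ of $L_{(S,x)}$ in the normal coordinates $\exp_x : \Tan_x S \to S$ are determined by the metric $g = \exp_x^* g_S$ through the usual formulas $a_{ij} = g^{ij}$ and $b_i = -g^{jk}\Gamma^i_{jk}$. These formulas require choosing an orthonormal basis of $\Tan_x S$, but the two bounds in \eqref{éq:uniforme_ellipticité} do not depend on that choice, since changing the basis only rotates the coefficients. We therefore set
\[\Phi(S,x) = \max\Big(\sup_{\rm{B}(0,R)} \big(\norm{a_{ij}}_{\cal{C}^1},\norm{b_i}_{\cal{C}^1}\big),\ \big(\inf_{\rm{B}(0,R)}\lambda_1(a_{ij})\big)^{-1}\Big).\]
This is a well-defined function on $\cal{M}^*_{p,q}$, and it is invariant under $\O(p,q+1)$ because isometries of $\H^{p,q}$ intertwine the exponential maps and Laplacians. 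The term $c$ is constant, so $\norm{c}_{\cal{C}^1} = \abs{c}$ and it can simply be added to $\Lambda$.

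The plan has three steps. First, we check that normal coordinates are well defined on $\rm{B}(0,R)$ for every $(S,x)$, with uniform control. Theorem \ref{theoreme:Ishihara} bounds $\norm{\II}$, so the Gau\ss\ equation bounds the curvature tensor of $S$ uniformly. The Codazzi equation together with the standard elliptic regularity of maximal submanifolds (which is what the compactness Theorem \ref{theoreme:compacité_SST} encodes) then bounds all covariant derivatives of $\II$, and hence of the curvature. With bounded curvature, Rauch comparison gives a uniform lower bound on the radius on which the metric coefficients are controlled. To reach the arbitrary radius $R$, we can pass to the pulled-back metric on $\rm{B}(0,R)\subset\Tan_x S$, on which $\exp_x$ is a local diffeomorphism as long as there are no conjugate points. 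If conjugate points are a concern, the remark preceding the lemma lets us treat $\exp_x^*\Delta^S$ as the operator of the pulled-back metric, which is all that is needed.

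Second, we show that $\Phi$ is continuous. If $(S_n,x_n) \to (S,x)$ in $\cal{M}^*_{p,q}$, then $S_n \to S$ in $\cal{C}^\infty_{loc}$, and after composing with isometries close to the identity we may assume $x_n = x$ and $\Tan_{x_n} S_n \to \Tan_x S$. Smooth dependence of geodesics on the metric data then gives $\exp_{x_n}^* g_{S_n} \to \exp_x^* g_S$ in $\cal{C}^2$ on $\bar{\rm{B}}(0,R)$. Consequently $a_{ij}$ and $b_i$ converge in $\cal{C}^1$, and $\lambda_1(a_{ij})$ converges uniformly to a positive limit, because the pulled-back metric is positive definite wherever $\exp_x$ is a local diffeomorphism. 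Hence $\Phi(S_n,x_n)\to\Phi(S,x)$.

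Third, Corollary \ref{corollaire:compacité_fonction_invariante} shows that $\Phi$ has compact image, so $\Phi \leq M$ for some constant $M$. Taking $\lambda = M^{-1}$ and $\Lambda = M + \abs{c}$ gives the lemma. The main obstacle is the first step: making sure that $\exp_x^* g_S$ is nondegenerate on the whole ball $\rm{B}(0,R)$, since $\lambda_1(a_{ij})$ could otherwise tend to $0$ near a conjugate point. Two routes are available here. In the case $p=2$, which is the one used later, $K \leq 0$, so there are no conjugate points and the issue disappears. For general $p$, I would first try to reduce to a small radius $r_0$, fixed by the uniform curvature bound, and then cover $\rm{B}(0,R)$ by finitely many overlapping normal charts, with the number of charts bounded in terms of $R$ and the curvature bound.
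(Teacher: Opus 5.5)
Your proposal is correct and follows essentially the paper's argument: the $\mathcal{C}^1$ norms of the coefficients and $\inf \lambda_1(a_{ij})$ are continuous $\mathrm{O}(p,q+1)$-invariant functions on $\mathcal{M}^*_{p,q}$, so Corollary~\ref{corollaire:compacité_fonction_invariante} bounds them, and ellipticity comes from $a_{ij}=g^{ij}$ being the inverse of a positive definite matrix. The conjugate-point issue is set aside by the paper in the remark preceding the lemma, and it does not arise for $p=2$ since $K\leq 0$, as you note. One small correction: near a conjugate point it is the $\mathcal{C}^0$ bound on $a_{ij}=g^{ij}$ that fails, because its largest eigenvalue is $1/\lambda_{\min}(g_{ij})\to\infty$, whereas $\lambda_1(a_{ij})=1/\lambda_{\max}(g_{ij})$ stays bounded below; so nondegeneracy of $\exp_x^* g_S$ is needed for $\Lambda$, not for $\lambda$.
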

\begin{proof}
	The various functions to be bounded above and below by constants are continuous functions of $\norm{a_{ij}}_{\cal{C}^1}$, $\norm{b_i}_{\cal{C}^1}$ and $\norm{c}_{\cal{C}^1}$ (the norm $\norm{\cdot}_{\cal{C}^1}$ is taken on the ball $\rm{B}(0,R) \subset \Tan_x S$). These quantities are functions of $(S,x) \in \cal{M}_{p,q}^*$, continuous (by definition of the topology on $\mathcal{M}_{p,q}^*$) and invariant under the action of $\O(p,q+1)$. Corollary \ref{corollaire:compacité_fonction_invariante} therefore asserts that their respective images are compact subsets of $\R$. This proves the second inequality \eqref{éq:uniforme_ellipticité}. For the first it suffices to show that $\lambda_1(a_{ij}) > 0$, which follows from the following computation. \\
	If we consider the Hessian $\Hess f$ as a function from $(\Tan S)^{\otimes 2}$ to $\R$, then it is defined by $(\Hess f) (\sf{u},\sf{v}) = \scal{\nabla_{\sf{u}} \grad f}{\sf{v}}$. In normal coordinates $(e_i)$ whose dual basis is denoted by $(e^i)$, it is hence $\Hess f = \sum_{i,j} (\partial_{ij} f + \sum_k \Gamma_{i,j}^k \partial_k f) e^i \otimes e^j$ (with $\Gamma_{ij}^k$ the Christoffel symbols). 
	Let us use the metric to view $\Hess f$ as a function from $\Tan S \otimes \Tan^* S$ to $\R$. The metric $g$ is the isomorphism from $\Tan S$ to $\Tan^* S$ given by $g(x)(x) = \scal{x}{y}$ : denote by $(g_{ij})$ its matrix in the bases $(e_i)$ and $(e^i)$. Denoting by $(g^{ij})$ the coefficients of $g^{-1}$, we obtain
	\[\Hess f = \sum_{i,j} (\partial_{ij} f + \sum_k \Gamma_{ij}^k \partial_k f) (\sum_k g^{ik} e^k) \otimes e_j. \]
	Taking the trace, we obtain :
	\[\Delta f = \tr_g \Hess f = \sum_{i,j} (\partial_{ij} f + \sum_k \Gamma_{ij}^k \partial_k f) g^{ij}. \]
	We therefore have $a_{ij} = g^{ij}$ and $b_k = \sum_k (\sum_{i,j} \Gamma_{ij}^k g^{ij})$. The matrix $(a_{ij}) = (g^{ji})$ is the inverse of $(g_{ij})$, which is symmetric positive definite, hence $\lambda_1(a_{ij}) > 0$. 
\end{proof}

\begin{proposition}[uniform Schauder estimate]\label{prop:schauder}
	Given integers $p \geq 1$ and $q \geq 0$ and real numbers $R > 0$ and $c$, there exists a constant $C_S > 0$ such that for every $(S,x) \in \mathcal{M}^*_{p,q}$ and every function $u$ in $\mathscr{C}^2(S,\R)$ satisfying $(\Delta^S + c) u = 0$, $\bar u$ satisfies the Schauder estimate
	\[\norm{\bar u}_{\mathscr{C}^2(\rm{B}(0,R))} \leq C_S \norm{\bar u}_{\mathscr{C}^0(\rm{B}(0,2R))}. \]
\end{proposition}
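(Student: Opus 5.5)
The plan is to pull the equation back to the ball $\rm{B}(0,2R)$ of $\Tan_x S$ via $\exp_x$ and apply the classical interior Schauder estimate there. The point is that the constant in that estimate depends only on ellipticity and coefficient bounds, and Lemma \ref{cor:coefficients_laplacien_bornés} makes these uniform in $(S,x)$.

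First, since $(\Delta^S + c)u = 0$ on $S$, the function $\bar u = u \circ \exp_x$ satisfies $(L_{(S,x)} + c)\bar u = 0$ on $\rm{B}(0,2R)$. Write
\[L_{(S,x)} + c = \sum_{i,j} a_{ij}\rm{D}_{ij} + \sum_i b_i \rm{D}_i + c.\]
As in the remark preceding Lemma \ref{cor:coefficients_laplacien_bornés}, we may need $\exp_x$ to be a diffeomorphism only on a ball of radius controlled by $\sup\abs{K}$. That quantity is uniformly bounded by Theorem \ref{theoreme:Ishihara} together with the Gauss equation. If necessary, we cover $\rm{B}(0,R)$ by finitely many small balls (a number depending only on $R$ and $p$) and recentre at nearby points of $S$; this works because $\cal{M}^*_{p,q}$ is stable under changing the base point.

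Second, apply Lemma \ref{cor:coefficients_laplacien_bornés} with radius $2R$ and the given $c$. This yields $\lambda,\Lambda$ independent of $(S,x)$ with $\lambda_1(a_{ij}) \geq \lambda$ and $\cal{C}^1$ norms of $a_{ij}, b_i, c$ at most $\Lambda$ on $\rm{B}(0,2R)$. In particular the coefficients are uniformly bounded in $\cal{C}^{0,\alpha}$ for any fixed $\alpha\in(0,1)$, since a $\cal{C}^1$ bound controls the Hölder seminorm on a ball of fixed size.

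Third, invoke the interior Schauder estimate (Gilbarg--Trudinger, Theorem 6.2, or Corollary 6.3 on concentric balls):
\[\norm{\bar u}_{\cal{C}^{2,\alpha}(\rm{B}(0,R))} \leq C\,\norm{\bar u}_{\cal{C}^0(\rm{B}(0,2R))},\]
where $C$ depends only on $p$, $\alpha$, $R$, $\lambda$ and $\Lambda$. The $\cal{C}^2$ norm is dominated by the $\cal{C}^{2,\alpha}$ norm, so this gives the claim with $C_S = C$.

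A technical point is the regularity assumption: $u$ is only $\mathscr{C}^2$. However, $u$ solves an elliptic equation with smooth coefficients, so it is smooth by elliptic regularity, and in any case the Schauder estimates apply to $\cal{C}^2$ solutions.

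The main obstacle is the uniformity of the constants, and Lemma \ref{cor:coefficients_laplacien_bornés} (resting on the compactness Corollary \ref{corollaire:compacité_fonction_invariante}) is exactly what provides it. The only remaining care is making sure $\exp_x$ is a diffeomorphism on $\rm{B}(0,2R)$, or else using the covering argument just described.
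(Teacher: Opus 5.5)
Your proposal is correct and follows the same route as the paper. You pull the equation back by $\exp_x$, take uniform ellipticity and uniform $\mathcal{C}^1$ (hence $\mathcal{C}^{0,\alpha}$) coefficient bounds from the lemma on the coefficients of the pulled-back Laplacian (which rests on the compactness corollary), and apply the classical interior Schauder estimate of Gilbarg--Trudinger; your extra remarks on elliptic regularity and on where $\exp_x$ is a local diffeomorphism add care on points the paper explicitly sets aside, without changing the argument.
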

\begin{proof}
	Consequence of the classical Schauder estimate, which requires uniform ellipticity and a $\cal{C}^{0,\alpha}$ control of the coefficients of the operator $L = \exp_x^* (\Delta^S + c)$ for some $\alpha > 0$ (see \cite{GT01}, Theorem 6.2). These controls are given by Lemma \ref{cor:coefficients_laplacien_bornés}. $C_S$ depends only on $(p,\lambda,\Lambda)$, hence on $(p,q,R,c)$ by Lemma \ref{cor:coefficients_laplacien_bornés}. 
\end{proof}
\begin{proposition}[uniform Harnack inequality]\label{prop:harnack}
	Given integers $p \geq 1$ and $q \geq 0$ and real numbers $R > 0$ and $c \leq 0$, there exists a constant $C_H > 0$ such that for every $(S,x) \in \mathcal{M}^*_{p,q}$ and every function $u \in \mathscr{C}^2(S,\R)$ satisfying $u \geq 0$ and $(\Delta^S + c) u = 0$, $\bar u$ satisfies the Harnack inequality
	\[\norm{\bar u}_{\rm{L}^{\infty}(\rm{B}(0,R))} \leq C_H \bar{u}(0). \]
\end{proposition}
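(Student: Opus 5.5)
The plan is to follow the proof of Proposition \ref{prop:schauder}: pull the equation back to normal coordinates on $\Tan_x S$ and invoke the classical Harnack inequality for uniformly elliptic operators. The only new point is that the constants must not depend on $(S,x)$, and Lemma \ref{cor:coefficients_laplacien_bornés} provides this.

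Fix $p,q,R$ and $c \leq 0$, and apply Lemma \ref{cor:coefficients_laplacien_bornés} on the ball $\rm{B}(0,2R)$ (or a slightly larger ball). This gives $\lambda,\Lambda>0$, independent of $(S,x)\in\mathcal{M}^*_{p,q}$, such that the operator
\[L = \exp_x^*(\Delta^S + c) = \sum_{i,j} a_{ij}\rm{D}_{ij} + \sum_i b_i \rm{D}_i + c\]
satisfies the bounds \eqref{éq:uniforme_ellipticité}. As in the remark preceding that lemma, $\exp_x$ is a diffeomorphism onto its image on a ball whose radius depends only on $\sup\abs{K}$. By Theorem \ref{theoreme:Ishihara} and the Gau\ss{} equation this supremum is uniformly bounded, so the radius is uniformly bounded below. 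If $2R$ exceeds this radius, I would cover $\rm{B}(0,R)$ by finitely many small balls and chain the local estimates, as explained at the end.

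The function $\bar u \geq 0$ satisfies $L\bar u = 0$ with $c \leq 0$, so it is a nonnegative solution of a uniformly elliptic equation with bounded coefficients. The classical Harnack inequality (\cite{GT01}, Theorem 8.20 in divergence-free form, or the Krylov--Safonov version for non-divergence operators, which only needs bounded measurable coefficients) gives
\[\sup_{\rm{B}(0,R)} \bar u \leq C\inf_{\rm{B}(0,R)} \bar u,\]
with $C$ depending only on $p$, $\lambda$, $\Lambda$ and $R$. Since $0\in\rm{B}(0,R)$, we have $\inf_{\rm{B}(0,R)} \bar u \leq \bar u(0)$, which yields the claim with $C_H = C$. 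Because the coefficients are $\mathscr{C}^1$, one may also rewrite $L$ in divergence form, $\sum_{i,j}\rm{D}_i(a_{ij}\rm{D}_j) + \sum_i \tilde b_i\rm{D}_i + c$ with $\tilde b_i = b_i - \sum_j \rm{D}_j a_{ij}$ still bounded by a constant depending on $\Lambda$, and apply Gilbarg--Trudinger Theorem 8.20 directly.

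The main obstacle is uniformity when $\exp_x$ fails to be injective on $\rm{B}(0,2R)$. To handle it, I would first apply the Harnack inequality on balls of a fixed small radius $r_0$, using normal coordinates centred at varying points $y \in S$; the constant is uniform in $y$ because Lemma \ref{cor:coefficients_laplacien_bornés} holds for every pointed surface. I would then chain along a path of at most $\lceil R/r_0\rceil+1$ steps from $x$ to any point of $\exp_x(\rm{B}(0,R))$. This gives $u(y) \leq C^{N} u(x)$ with $N$ depending only on $R$ and $r_0$, and hence the uniform estimate for $\bar u$.
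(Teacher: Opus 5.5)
Your proposal is correct and takes the same route as the paper. Both pull the equation back by $\exp_x$, take uniform ellipticity and coefficient bounds from Lemma \ref{cor:coefficients_laplacien_bornés}, apply Theorem 8.20 of \cite{GT01}, and finish with $\inf \bar u \leq \bar u(0)$. Your rewriting into divergence form and your chaining argument for when $\exp_x$ is not a local diffeomorphism are careful additions to steps the paper glosses over (it explicitly ignores the latter issue); they do not change the method.
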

\begin{proof}
	As for \ref{prop:schauder}, consequence of the classical Harnack inequality, which requires a $\rm{L}^{\infty}$ control of the coefficients of $L = \exp_x^* (\Delta^S + c)$, that $L$ be uniformly elliptic and that the coefficient of $\Id$ in the expression of $L$, here equal to $c$, satisfies $c \leq 0$ (see \cite{GT01}, Theorem 8.20). The necessary controls are given by Lemma \ref{cor:coefficients_laplacien_bornés}. $C_S$ depends only on $(p,\frac{\Lambda}{\lambda},R)$, hence on $(p,q,R,c)$ by Lemma \ref{cor:coefficients_laplacien_bornés}. 
\end{proof}

The last uniform estimate will be useful only for surfaces ($p=2$), in section \ref{subsection:décroissance_exponentielle}. We hence establish it only in that case. 
\begin{proposition}[elliptic $\rm{W}^{2,\bar{p}}$ estimate and uniform Sobolev inequality]\label{proposition:estimées_W2p}
	Given an integer $q \geq 0$ and real numbers $\bar{p} > 2$ and $c$, there exists $C > 0$ such that for every $(S,x) \in \mathcal{M}^*_{2,q}$ and every functions $u,f$ from the ball $B(x,1)$ of $S$ to $\R$, bounded, of class $\cal{C}^2$ and satisfying $(\Delta^S - c) u = f$, we have
	\[\norm{(\nabla u)_x} \leq C \left[ \norm{\bar{u}}_{\rm{L}^{\bar{p}}(B(0,1))} + \norm{\bar{f}}_{\rm{L}^{\bar{p}}(B(0,1))} \right]. \]
\end{proposition}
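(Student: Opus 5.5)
The plan is to follow the same pattern as Propositions \ref{prop:schauder} and \ref{prop:harnack}. We pull the equation back to $\Tan_x S$ via $\exp_x$, apply a classical interior estimate for the operator $L = \exp_x^*(\Delta^S - c)$, and control its constants using Lemma \ref{cor:coefficients_laplacien_bornés}, which rests on the compactness in Corollary \ref{corollaire:compacité_fonction_invariante}. On $\rm{B}(0,1) \subset \Tan_x S$ (with $R=1$, or slightly larger to have room), the pulled-back functions satisfy $L\bar u = \bar f$, where
\[L = \sum_{i,j} a_{ij}\rm{D}_{ij} + \sum_i b_i \rm{D}_i - c,\]
and the coefficients satisfy the bounds \eqref{éq:uniforme_ellipticité} with $\lambda,\Lambda$ depending only on $q$ and $c$.

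First, I would apply the interior Calderón--Zygmund $\rm{W}^{2,\bar p}$ estimate (\cite{GT01}, Theorem 9.11). It needs continuous leading coefficients with a uniform modulus of continuity, which the $\cal{C}^1$ bound provides, together with bounded lower-order coefficients. It gives
\[\norm{\bar u}_{\rm{W}^{2,\bar p}(\rm{B}(0,1/2))} \leq C_1\left(\norm{\bar u}_{\rm{L}^{\bar p}(\rm{B}(0,1))} + \norm{\bar f}_{\rm{L}^{\bar p}(\rm{B}(0,1))}\right),\]
where $C_1$ depends only on $(2,\bar p,\lambda,\Lambda)$, hence only on $(q,\bar p,c)$.

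Second, since $\bar p > 2$ equals more than the dimension, the Sobolev--Morrey embedding $\rm{W}^{1,\bar p}(\rm{B}(0,1/2)) \hookrightarrow \cal{C}^0$ applies to the first derivatives of $\bar u$. This yields $\abs{\rm{D}\bar u(0)} \leq C_2 \norm{\bar u}_{\rm{W}^{2,\bar p}(\rm{B}(0,1/2))}$ with a universal constant $C_2$. Finally, $\rm{d}(\exp_x)_0 = \Id$ is an isometry, so $\norm{(\nabla u)_x} = \abs{\rm{D}\bar u(0)}$, and the two inequalities combine to give the claimed bound.

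The main obstacle is the domain. The functions live only on the ball $B(x,1)$ of $S$, and we need $\exp_x$ to be a diffeomorphism from $\rm{B}(0,1)$ onto $B(x,1)$. For $p = 2$ this holds because the sectional curvature of a complete maximal surface is nonpositive (Theorem \ref{theoreme:Ishihara} with the Gauß equation \eqref{éq:Gauss}), and $S$ is simply connected (Corollary \ref{corollaire:sous-variété_simplement_connexe}), so $\exp_x$ is a global diffeomorphism by Cartan--Hadamard. A secondary point is that the $\rm{L}^{\bar p}$ norms are taken with respect to Lebesgue measure on $\Tan_x S$ rather than $\mu_S$. This causes no difficulty, since the estimate is stated in those terms, and in any case the two measures are uniformly comparable on $\rm{B}(0,1)$ by the bounds on $(g_{ij})$ from the same compactness argument.
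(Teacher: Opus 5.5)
Your proposal is correct and follows the paper's proof essentially step by step. You pull back by $\exp_x$, apply the interior $\mathrm{W}^{2,\bar p}$ estimate of \cite{GT01} (Theorem 9.11) with constants controlled by the uniform coefficient bounds, then use the Morrey embedding together with $\mathrm{d}(\exp_x)_0 = \mathrm{Id}$. Your Cartan--Hadamard argument that $\exp_x$ maps $\mathrm{B}(0,1)$ diffeomorphically onto $B(x,1)$ (using $K \leq 0$ and simple connectedness) is a welcome addition, since the paper only sidesteps this point in a remark.
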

\begin{proof}
	Let us first apply an elliptic $\rm{W}^{2,\bar{p}}$ estimate (see \cite{GT01}, Theorem 9.11). The necessary controls are given by Lemma \ref{cor:coefficients_laplacien_bornés} and we indeed have $\exp_x^* u \in \rm{W}^{1,\bar{p}}_{\rm{loc}}(B(0,1)) \cap \rm{L}^{\bar{p}}(B(0,1))$ and $\exp_x^* f \in \rm{W}^{2,p}(B(x,1))$. There therefore exists $C_W$, depending only on $p=2$, $\bar{p}$, $\lambda$, $\Lambda$ in \eqref{éq:uniforme_ellipticité} and on the modulus of continuity of the $a_{ij}$ on $B(0,1)$, hence on $(q,\bar{p},c)$ by Lemma \ref{cor:coefficients_laplacien_bornés}, satisfying
	\[\norm{\bar{u}}_{\rm{W}^{2,\bar{p}}(B(0,\frac12))} \leq C_W \left[ \norm{\bar{u}}_{\rm{L}^{\bar{p}}(B(0,1))} + \norm{\bar{f}}_{\rm{L}^{\bar{p}}(B(0,1))} \right]. \]
	We then apply the Sobolev inequality to $\bar{u}$, which gives, for a constant $C_{\rm{Sob}}$ depending only on $\bar{p}$ :
	\[\norm{\bar{u}}_{\cal{C}^{1,2-\frac{2}{\bar{p}}}(B(0,\frac12))} \leq C_{\rm{Sob}} \norm{\bar{u}}_{\rm{W}^{2,\bar{p}}(B(0,\frac12))}. \]
	Let us further remark that we have:
	\[\norm{(\nabla u)_x} \leq \norm{\bar{u}}_{\cal{C}^1(B(0,\frac12))}. \]
	We conclude the proof by combining these three inequalities. 
\end{proof}

\section{Lower bound on the volume}
\label{section:borne_inférieure}

We prove here Theorem \ref{theoreme:borne_inf}. The idea is the following. Given a complete surface $S$ of $\H^{2,2}$, we begin by relating $\Conv(S)$ to the second fundamental form $\II$ of $S$ (Proposition \ref{proposition:Seppi}). The Ricci equation then relates $\II$ to the curvature of the normal bundle $\courb^{\Nor}$, itself related to $\deg(\rho)$ by the Chern-Weil isomorphism. 

\subsection{Seppi's estimates}

We begin by drawing a geometric consequence (Proposition \ref{proposition:Seppi}) from the elliptic estimates of section \ref{subsection:estimées_elliptiques_uniformes}. Let $h$ be a point in $\R^{p,q+1}$, which defines a function
\[u : S \to \R,\ x \mapsto \scal{x}{h}. \]
\begin{lemme}\label{lemme:majoration_seconde_forme_fondamentale}
	We denote as previously $\bar{u} = u \circ \exp^S_x$. For all $R > 0$ and all unit tangent vector $\sf{v} \in \Tan^1_x S$, we have:
	\[\abs{\scal{\II_x(\sf{v},\sf{v})}{h}} \leq \norm{\bar u}_{\mathscr{C}^2(\rm{B}(0,R))}. \]
\end{lemme}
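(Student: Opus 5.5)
We compute the second derivative of $\bar u$ at $0$ along the radial direction $\sf{v}$ and compare it with the Hessian of $u$, reusing the computation from the proof of Lemma \ref{lemme:u_harmonique}.

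\textbf{Second derivative along the geodesic.} Let $\gamma(t) = \exp^S_x(t\sf{v})$, which is a unit-speed geodesic of $S$. Then $\bar u(t\sf{v}) = u(\gamma(t))$, so
\[\frac{\d^2}{\d t^2}\Big|_{t=0} \bar u(t\sf{v}) = (\Hess u)_x(\sf{v},\sf{v}) = \rm{D}^2\bar u_0(\sf{v},\sf{v}).\]
This quantity is bounded in absolute value by $\norm{\bar u}_{\mathscr{C}^2(\rm{B}(0,R))}$, since $\sf{v}$ is a unit vector and $0 \in \rm{B}(0,R)$. If one prefers to avoid the norm conventions on $\rm{D}^\alpha$, one can take $\sf{v}$ as the first vector of an orthonormal basis of $\Tan_x S$; the second derivative is then a single coordinate derivative $\rm{D}_{11}\bar u(0)$.

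\textbf{The ambient computation.} View $\gamma$ as a curve in $\R^{p,q+1}$. Since $\gamma$ is a geodesic of $S$, its acceleration in $\Hc^{p,q}$ is normal to $S$: $\nabla^\H_{\dot\gamma}\dot\gamma = \II(\dot\gamma,\dot\gamma)$. By the umbilicity formula \eqref{éq:Hpq_ombilique},
\[\ddot\gamma(0) = \II_x(\sf{v},\sf{v}) + \scal{\sf{v}}{\sf{v}}\, x = \II_x(\sf{v},\sf{v}) + x.\]
Hence
\[\frac{\d^2}{\d t^2}\Big|_{t=0} \scal{\gamma(t)}{h} = \scal{\II_x(\sf{v},\sf{v})}{h} + u(x).\]
This agrees with the Hessian formula of Lemma \ref{lemme:u_harmonique}, since $\scal{\II(\sf{v},\sf{v})}{h} = \scal{\II(\sf{v},\sf{v})}{\sf{n}} = -\scal{B(\sf{v},\sf{n})}{\sf{v}}$.

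\textbf{Conclusion and main obstacle.} Combining the two steps gives
\[\scal{\II_x(\sf{v},\sf{v})}{h} = \rm{D}^2\bar u_0(\sf{v},\sf{v}) - \bar u(0).\]
The main obstacle is the extra zeroth-order term $u(x)$. Bounding it naively yields only $\abs{\scal{\II_x(\sf{v},\sf{v})}{h}} \leq 2\norm{\bar u}_{\mathscr{C}^2}$, which is a constant factor away from the stated inequality. I would try to remove this term using the direction $J\sf{v}$: by maximality, $\II(J\sf{v},J\sf{v}) = -\II(\sf{v},\sf{v})$, so taking half the difference of the second derivatives along $\sf{v}$ and $J\sf{v}$ cancels $u(x)$ exactly:
\[\scal{\II_x(\sf{v},\sf{v})}{h} = \tfrac12\left[\rm{D}^2\bar u_0(\sf{v},\sf{v}) - \rm{D}^2\bar u_0(J\sf{v},J\sf{v})\right].\]
The right-hand side is then bounded by $\norm{\bar u}_{\mathscr{C}^2(\rm{B}(0,R))}$, with the bound holding exactly if the $\mathscr{C}^2$ norm sums (or maximises) second derivatives over an orthonormal frame containing $\sf{v}$ and $J\sf{v}$. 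In higher dimension $p$, I would instead choose an orthonormal frame containing $\sf{v}$ and use a trace-free combination; alternatively I would accept a harmless constant factor, which is irrelevant for the later use of this lemma in Proposition \ref{proposition:Seppi}.
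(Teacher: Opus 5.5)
Your identity $\scal{\II_x(\sf{v},\sf{v})}{h} = f''(0) - \bar u(0)$, with $f(t) = \bar u(t\sf{v})$ and obtained from the umbilicity formula, is exactly the paper's computation. You diverge only in the last step, and the ``main obstacle'' you identify is not actually there.

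The norm of Definition \ref{définition:norme_Ck} is a \emph{sum} $\sum_{\abs{\alpha}\leq 2}\norm{\rm{D}^\alpha \bar u}_{\mathscr{C}^0(\rm{B}(0,R))}$, and it includes the term $\alpha = 0$. So $\abs{\bar u(0)}$ and the second derivative along $\sf{v}$ (for instance $\rm{D}_{11}\bar u(0)$ in an orthonormal frame starting with $\sf{v}$) are bounded by two \emph{different} summands. Hence $\abs{f''(0)} + \abs{\bar u(0)} \leq \norm{\bar u}_{\mathscr{C}^2(\rm{B}(0,R))}$ holds directly, with no factor $2$. Your ``naive'' bound already is the stated inequality, and this is where the paper's proof stops.

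Your cancellation via $J\sf{v}$ for $p = 2$ is also correct. For general $p$ you need the trace-free combination $\rm{D}_{11}\bar u(0) - \frac1p \sum_i \rm{D}_{ii}\bar u(0)$, since the lemma feeds Proposition \ref{proposition:Seppi} in all dimensions. That combination equals $\scal{\II_x(\sf{v},\sf{v})}{h}$ because $\frac1p\sum_i \rm{D}_{ii}\bar u(0) = u(x)$ by maximality. This route gives a bound by second derivatives alone, so it does not depend on the zeroth-order term being part of the norm. The cost is that it invokes maximality at this step, whereas the paper's argument uses only the umbilicity of $\Hc^{p,q}$ and the definition of the norm. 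Either way the lemma follows.
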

\begin{proof}
	Let $\gamma : t \mapsto \exp^S_x(t\sf{v})$ be the geodesic of $S$ starting from $x$ with speed $\sf{v}$. Let us study the function $f : t \mapsto \scal{\gamma(t)} h = \bar u (t\sf{v})$. We have $f'(t) = \scal{\dot \gamma(t)} h$ and
	\begin{align*}
		f''(t) &= \scal{\nabla^{\R}_{\dot \gamma} \dot \gamma} h = \scal{\nabla^{\H}_{\dot \gamma} \dot \gamma + \scal{\dot \gamma}{\dot \gamma} \gamma} h \tag*{by equation \eqref{éq:Hpq_ombilique}} \\
		&= \scal{\II(\dot \gamma, \dot \gamma) + \gamma} h = \scal{\II(\sf{v},\sf{v})} h + \bar u (t\sf{v}).
	\end{align*}
	Since $f''(0) = \ddtzero{\bar{u}(t\sf{v})}$, we have $\abs{f''(0)} + \abs{\bar{u}(0)} = \abs{\ddtzero{\bar{u}(t\sf{v})}} + \abs{\bar{u}(0)} \leq \norm{\bar u}_{\mathscr{C}^2(\rm{B}(0,R))}$ (see Definition \ref{définition:norme_Ck}), whence the announced inequality.
\end{proof}

The following proposition is inspired by \cite[Theorem 1.A]{Sep17} and generalises it. 
\begin{proposition}[Seppi's estimates]\label{proposition:Seppi}
	For all integers $p \geq 1$ and $q \geq 0$, there exists a constant $C > 0$ such that for any pointed maximal complete $p$-submanifold $(S,x)$ of $\Hc^{p,q}$ and any $\sf{v} \in \Tan_x^1 S$, we have $\exp(C\, \II(\sf{v},\sf{v})) \in \Conv(S)$. 
\end{proposition}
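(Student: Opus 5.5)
The plan is to argue by duality. The linear functionals that are nonnegative on a lift of $S$ control $\Conv(S)$. For such a functional $u=\scal{h}{\cdot}$, Lemma \ref{lemme:u_harmonique} gives $(\Delta-p)u=0$. Harnack then bounds $u$ near $x$ in terms of $u(x)$, Schauder bounds its $\mathscr{C}^2$ norm, and Lemma \ref{lemme:majoration_seconde_forme_fondamentale} converts this into a bound on $\scal{\II(\sf{v},\sf{v})}{h}$. Throughout, $\exp$ denotes the exponential map of $\Hc^{p,q}$ at $x$. Set $\sf{n}=\II(\sf{v},\sf{v})$, a normal and hence timelike (or zero) vector, with $\norm{\sf{n}}=\sqrt{-\scal{\sf{n}}{\sf{n}}}$. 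Then
\[y_C := \exp(C\sf{n}) = \cos(C\norm{\sf{n}})\,x + \sin(C\norm{\sf{n}})\,\frac{\sf{n}}{\norm{\sf{n}}}\]
(with $y_C = x$ if $\sf{n}=0$). Theorem \ref{theoreme:Ishihara} bounds $\norm{\sf{n}}$ by a constant $N=N(p,q)$. Choose $C$ so that $CN\le \pi/3$; then $\cos(C\norm{\sf{n}})\ge \frac12$ and $\sin(C\norm{\sf{n}})/\norm{\sf{n}}\le C$.

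Fix the lift $\hat S$ containing $x$ and work in $\R^{p,q+1}$. Suppose $y_C\notin\Conv(S)$, that is, $y_C$ is not in the cone $\R_{>0}\Conv(\hat S)$. Hahn--Banach then gives $h\neq 0$ with $u=\scal{h}{\cdot}\ge 0$ on $\hat S$ and $\scal{h}{y_C}\le 0$. A priori the separation is only weak, since the convex hull need not be closed; but the argument below produces a definite positive lower bound for $\scal{h}{y_C}$, so weak separation already yields a contradiction (working with the closure, and then using openness of the strict inequality in $h$ if necessary).

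There are two cases. If $u(x)=0$, then Lemma \ref{lemme:surface_dans_hyperplan} (or its proof via \cite[Lemma A.3]{CTT19}) gives $u\equiv0$. So $S\subset h^\perp$, the second fundamental form of $S$ takes values in $h^\perp$, and hence $\scal{h}{y_C}=0$. This case needs separate care: either exclude it by slightly perturbing $h$, or run the argument within $h^\perp\cap\Hc^{p,q}\cong\Hc^{p,q-1}$, enlarging $C$'s constraints finitely in $q$. Otherwise normalise $u(x)=1$. Proposition \ref{prop:harnack} with $c=-p\le 0$ and radius $2R$ (say $R=1$) gives $\norm{\bar u}_{\mathrm{L}^\infty(B(0,2R))}\le C_H$. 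Proposition \ref{prop:schauder} then gives $\norm{\bar u}_{\mathscr{C}^2(B(0,R))}\le C_SC_H=:M$, and Lemma \ref{lemme:majoration_seconde_forme_fondamentale} gives $\abs{\scal{\sf{n}}{h}}\le M$. Therefore
\[\scal{h}{y_C}\ \ge\ \cos(C\norm{\sf{n}}) - \frac{\sin(C\norm{\sf{n}})}{\norm{\sf{n}}}\,M\ \ge\ \tfrac12 - CM,\]
which is positive once $C<1/(2M)$. This contradicts $\scal{h}{y_C}\le 0$. All constants depend only on $(p,q)$ by the uniform estimates, so $C$ is uniform.

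The main obstacle is the bookkeeping around the separation argument rather than the estimates. One must make sure that the projective convex hull membership is exactly detected by functionals nonnegative on $\hat S$, using Lemma \ref{lemme:relèvement_unique} so that $0\notin\Conv(\hat S)$ and the cone is proper. One must also handle non-closedness and the degenerate case $u(x)=0$. I would first try to obtain strictness directly: apply the estimate to all $h$ in a small neighbourhood, since the bound $\frac12-CM>0$ is robust. This shows $y_C$ lies in the interior of the closed dual-cone description, and hence in $\Conv(S)$ itself.
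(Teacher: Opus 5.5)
Your argument is correct and is essentially the paper's proof. It uses the same nonnegative function $u=\scal{h}{\cdot}$ attached to a supporting/separating hyperplane, the same chain Harnack $\Rightarrow$ Schauder $\Rightarrow$ Lemma \ref{lemme:majoration_seconde_forme_fondamentale}, and the same induction on $q$ through Lemma \ref{lemme:surface_dans_hyperplan} when $u(x)=0$ (of your two options there, the reduction to $\Hc^{p,q-1}$ is the one to keep). The only difference is packaging: the paper takes a supporting hyperplane at the first exit point of the normal geodesic from $\Conv(S)$ and derives $s\le C_SC_H\tan d$, whereas you separate $\exp(C\,\II(\sf{v},\sf{v}))$ directly from the cone, which is slightly cleaner; weak separation together with the cone property already gives the contradiction, so your final ``robustness'' paragraph is unnecessary.
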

\begin{proof}
	The idea is the following. The function $u$ defined above satisfies $(\Delta - p) u = 0$ according to Lemma \ref{lemme:u_harmonique}. We can therefore apply to it the uniform Schauder estimate and Harnack inequality (section \ref{subsection:estimées_elliptiques_uniformes}), from which we then draw geometric consequences. 
	
	Let us simplify the problem. If $q=0$, the statement is trivial. Let $\sf{v}$ in $\Tan_x S$ and $\cal{C} = \Conv(S)$. If $x$ is in $\cal{C}$, then $S$ is contained in a hyperplane $H$ of $\R^{p,q+1}$ of signature $(p,q)$ (Lemma \ref{lemme:surface_dans_hyperplan}) so we are reduced to proving the same statement in $H \cap \Hc^{p,q} \cong \Hc^{p,q-1}$ and we conclude by induction on $q$. We now assume $x$ is not in $\partial \cal{C}$. 
	
	Let us write $\II(\sf{v},\sf{v}) = s \sf{n}$ with $s$ in $\R_{\geq 0}$ and $\sf{n}$ in $\Nor_x S$ of norm $\norm{\sf{n}} = 1$. The geodesic $\gamma : t \mapsto \exp_x^\H (t\sf{n})$, starting from $x$ with velocity $\sf{n}$, reaches the boundary of $\cal{C}$ in a time $d > 0$ (since $x \notin \partial \cal{C}$), at a point $y$ where we can find a supporting hyperplane of $\cal{C}$, of the form $H = \{h\}^{\perp}$ : $S$ is therefore on one side of the hyperplane $H$ in $\R^{p,q+1}$. Up to multiplying $h$ by $-1$, the function $u = \scal{h}{\cdot} : S \to \R$ is non-negative. 
	
	The goal is now to find an inequality of the form $C s \leq d$. Since $s$ is bounded ($s \leq \norm{\II}_\infty \leq \norm{\II}_2 < \sqrt{pq}$, see Theorem \ref{theoreme:Ishihara}), it suffices to prove such an equality assuming $d \leq \frac{\pi}{4}$, which we shall do from now on. Lemma \ref{lemme:majoration_seconde_forme_fondamentale}, the Schauder estimates (Proposition \ref{prop:schauder}) and the Harnack inequality (Proposition \ref{prop:harnack}) already allow us to obtain the following inequalities : for all $R > 0$, there exist constants $C_S, C_H > 0$ depending only on $p$, $q$ and $R$ and satisfying
	\begin{align}
		\abs{\scal{\II(\sf{v},\sf{v})}{h}} \stackrel{\ref{lemme:majoration_seconde_forme_fondamentale}}{\leq} \norm{\bar u}_{\mathscr{C}^2(\rm{B}(0,R))} &\stackrel{\ref{prop:schauder}}{\leq} C_S \norm{\bar u}_{\mathscr{C}^0(\rm{B}(0,2R))} \stackrel{\ref{prop:harnack}}{\leq} C_S C_H \bar{u}(0), \tag*{hence} \\
		\abs{\scal{\II(\sf{v},\sf{v})}{h}} &\leq C_S C_H \scal{x}{h}. \label{éq:ineg_seppi}
	\end{align}
Let us now use the definition of $d$ : 
\begin{align}
	0 &= \scal{\gamma(d)}{h} \tag*{hence} \nonumber \\
	0 &= \cos(d)\scal{x}{h} + \sin(d) \scal{\sf{n}}{h}. \label{éq:demo_seppi}
\end{align}
Since $x$ is not in $\partial \cal{C}$, we have $\scal{x}{h} \neq 0$. Since moreover $0 \leq d \leq \frac{\pi}{4}$, we also have $\cos(d) \neq 0$ so equality \eqref{éq:demo_seppi} shows $\scal{\sf{n}}{h} \neq 0$. By definition of $s$, we have
\begin{align*}
	s \abs{\scal{\sf{n}}{h}} &= \abs{\scal{\II(\sf{v},\sf{v})}{h}} \\
	&\leq C_S C_H \scal{x}{h} \tag*{by \eqref{éq:ineg_seppi}} \\
	&\leq C_S C_H \tan(d) \abs{\scal{\sf{n}}{h}} \tag*{by \eqref{éq:demo_seppi}.}
\end{align*}
Since $\scal{\sf{n}}{h}$ is non-zero, we obtain
\begin{align*}
	s &\leq C_S C_H \tan(d) \\
	&\leq \frac{4}{\pi} C_S C_H d \tag*{since $\tan(d) \leq \frac{4d}{\pi}$ for $0 \leq d \leq \frac{\pi}{4}$.}
\end{align*}
We obtain an inequality of the type $C\ s \leq d$, under the hypothesis $d \leq \frac{\pi}{4}$. We had reduced the problem to proving such an inequality, which therefore concludes the proof.
\end{proof}

\subsection{Ricci equations, Chern-Weil isomorphism}
\label{subsection:Chern-Weil}

This section is devoted to the proof of Theorem \ref{theoreme:borne_inf}, for which we set the following definition. 
\begin{definition}\label{définition:volume_tranche}
Let $S$ be a maximal surface in $\H^{p,q}$. Denote by $\Nor_x^0 S \subset \Nor_x S$ the connected component of $0$ in $\{\sf{n} \in \Nor_x S \mid \exp_x(\sf{n}) \in \Conv(S)\}$ and denote
\[\bar{\bf{V}}_S(x) = \Vol \left( \Nor_x^0 S \right). \]
The volume $\Vol$ is that of the Euclidean space $\Nor_x S$. 
\end{definition}

\begin{theoremeLettre}\label{theoreme:borne_inf}
	There exists a constant $C > 0$ such that for every integer $g \geq 2$ and every Gothen representation $\rho$ of $\pi_1(\Sigma_g)$ into $\SO_0(2,3)$, we have $\Vol(\rho) \geq C\, \deg(\rho)$. 
\end{theoremeLettre}
\begin{proof}
Let $\Sigma$ be a closed oriented surface of genus $g \geq 2$ and $\rho$ a Gothen representation from $\Gamma = \pi_1(\Sigma)$ to $\SO_0(2,3)$ (see Section \ref{subsection:représentations_maximales}). By Theorem \ref{theoreme:existence_unicité_CTT}, there exists a unique embedding $u$ from the universal cover $\tilde{\Sigma}$ into $\H^{2,2}$, $\rho$-equivariant and whose image is a maximal surface $S$. 

Let $s_0>0$ be a real number smaller than $\frac{2}{\sqrt3}$ and than the constant $c$ of Lemma \ref{lemme:jacobienne_minorée}. Let us define a subset of the bundle $\Nor S$ by
\[\Nor^{\leq} S = \cup_{x \in S} \{\sf{n} \in \Nor_x^0 S \mid \norm{n} \leq s_0\}. \]
We have $\Conv(S) = \cup_{x \in S} \exp(\Nor^0_x S)$ thus
\begin{align}
	\Vol(\rho) &= \Vol(\Conv(S)/\Gamma) \nonumber \\
	&\geq \Vol \left[ \exp \left( \Nor^{\leq} S \right) / \Gamma \right]. \label{éq:démo_borne_inf_volume}
\end{align}
Let us choose a fundamental domain $\cal{F} \subset S$ for the action of $\Gamma$. Lemma \ref{lemme:nu_injective} states that the restriction of $\exp$ to $\Nor^{\leq} S$ (denoted by $\nu$) is injective. The Sasaki metric on $\Tan \H^{2,2}$, defined in Section \ref{subsection:geo_diff_Hpq}, restricts to a metric on $\Nor S$, whose associated measure we denote by $\mu_{\Nor S}$. We also denote by $\mu_\H$ the measure on $\H^{2,2}$ induced by its metric. Denoting $(\Nor^{\leq} S)_{|\cal{F}} = (\Nor^{\leq} S) \cap (\Nor S)_{|\cal{F}}$, we then have
\[\Vol \left[ \exp \left( \Nor^{\leq} S \right) / \Gamma \right] = \int_{(\Nor^{\leq} S)_{|\cal{F}}} \frac{\nu^* \mu_\H}{\mu_{\Nor S}} \d\mu_{\Nor S}(\sf{n}), \]
the Radon-Nikodym derivative $\frac{\nu^* \mu_\H}{\mu_{\Nor S}}$ being defined in Appendix \ref{annexe:nu}. 
Lemma \ref{lemme:jacobienne_minorée} therefore gives the following upper bound:
\begin{equation}\label{éq:démo_borne_inf_jacobien}
	\Vol \left[ \exp \left( \Nor^{\leq} S \right) / \Gamma \right] \geq \int_{(\Nor^{\leq} S)_{|\cal{F}}} \frac12 \d\mu_{\Nor S}(\sf{n}). 
\end{equation}
Let us now use Proposition \ref{proposition:Seppi} : there exists a universal constant $C > 0$, which we can assume to be less than $s_0$, such that for all $\sf{v} \in \Tan S$ of norm $\norm{\sf{v}} \leq 1$, we have $\exp(C\, \II(\sf{v},\sf{v})) \in \Conv(S)$. Let us denote by $\D_x \subset \Sym^2_0(\Tan_x S)$ the unit disc, so that this proposition states that $C \II(\D_x)$ is included in $\Nor^{\leq} S$, so that we have 
\[\int_{(\Nor^{\leq} S)_{|\cal{F}}} \frac12 \d\mu_{\Nor S}(\sf{n}) \geq \int_{\bigcup_{x \in \cal{F}} C \II(\D_x)} \frac12 \d\mu_{\Nor S}(\sf{n}). \]
Lemma \ref{lemme:Sasaki_par_parties} states that to compute the volume of $\bigcup_{x \in \cal{F}} C \II(\D_x)$, we can integrate the volume of the slice $C \II(\D_x)$ for $x$ in $\cal{F}$. This gives : 
\begin{align}
	\int_{(\Nor^{\leq} S)_{|\cal{F}}} \frac12 \d\mu_{\Nor S}(\sf{n}) &\geq \frac12 \int_{\cal{F}} \Vol^{\Nor_x S} ( C \II(\D_x) ) \d\mu_S(x) \nonumber \\
	&= \frac{C^2}{2} \int_{\cal{F}} \Vol ( \II(\D_x) ) \d\mu_S(x). \label{éq:démo_borne_inf_Sasaki}
\end{align}

We then define the bundle $\Nor \Sigma$ over $\Sigma = S/\Gamma$ as the quotient $(\Nor S)/\Gamma$. This bundle is orientable (see section \ref{subsection:représentations_maximales}) : we choose an almost complex structure $J$, which lifts to an almost complex structure $J_{\Nor}$ on $\Nor S$. This defines a Euler class $\bf{eu}(\Nor \Sigma) \in \cohom^2(\Sigma,\Z)$. The connection of $\Nor S$ descends to $\Nor \Sigma$ and we denote by $\courb^{\Nor S}$ and $\courb^{\Nor \Sigma}$ their respective curvatures. Similarly, the metric of $S$ descends to $\Sigma$. Let us choose an orientation of $S$, which defines a volume $\omega^S$ on $S$. Let us note that $\II(\bb{D}_x)$ is the image of the unit disc of $\Sym^2_0(\Tan_x S)$. Its volume is therefore $\pi \abs{\det \II}$, so that the Ricci formula \eqref{éq:Ricci} writes here
\begin{equation}\label{éq:Ricci_disque}
	\courb^{\Nor S}_x = \pm \pi [\Vol \II(\bb{D}_x)] J_{\Nor} \otimes \omega^S_x. 
\end{equation}
Let us recall that the Pfaffian $\rm{Pf}$ is defined on $\frak{so}_2(\R)$ by
\[\rm{Pf} \begin{pmatrix}
	0 & -a \\
	a & 0
\end{pmatrix} = a\]
and that the Chern-Weil isomorphism states that the class $\bf{eu}(\Nor \Sigma) \in \cohom^2(\Sigma,\R)$ is represented by the $2$-form given at a point $y$ of $\Sigma$ by :
\begin{align*}
	&\rm{Pf}(\frac{1}{2\pi}\courb^{\Nor \Sigma}_y) \\
	= &\rm{Pf}(\frac{1}{2\pi} \courb^{\Nor S}_x) \tag*{for any lift $x \in S$ of $y$.}
\end{align*}
Finally, let us recall that the degree of $\rho$ is defined by
\begin{align*}
	\deg(\rho) &= \abs{\int_\Sigma \bf{eu}(\Nor \Sigma)} \\
	&= \abs{\int_{\cal{F}} \rm{Pf}(\frac{1}{2\pi} \courb^{\Nor S})} \tag*{by the previous equality.}
\end{align*}
Using the expressions of $\courb^{\Nor S}$ (equation \eqref{éq:Ricci_disque}) and of the Pfaffian, we infer
\begin{align}
	\deg (\rho) &\leq \abs{\int_{\cal{F}} \pm \frac12 [\Vol \II(\bb{D}_x)] \omega^S_x} \nonumber \\
	&\leq \int_{\cal{F}} \abs{\frac12 \Vol \II(\bb{D}_x)} \d\mu_S \label{éq:démo_borne_inf_Euler}
\end{align}
by the triangle inequality. Putting together the inequalities \eqref{éq:démo_borne_inf_volume}, \eqref{éq:démo_borne_inf_jacobien}, \eqref{éq:démo_borne_inf_Sasaki} and \eqref{éq:démo_borne_inf_Euler}, we indeed obtain
\[\Vol(\rho) \leq C^2 \deg(\rho), \]
which proves Theorem \ref{theoreme:borne_inf}. 
\end{proof}

\section{Exponential decay}
\label{section:décroissance_exponentielle}

Let $S$ be a maximal surface in $\H^{2,2}$. We shall define a pair of functions on $S$ reflecting the second fundamental form and satisfying a system of elliptic differential equations (section \ref{subsection:décomposition}), which we shall use in section \ref{subsection:décroissance_exponentielle} to quantify the following fact: if the curvature of $S$ at a point $x$ is almost zero, then $S$ looks like a Barbot surface (Example \ref{exemple:couronne}) in a neighbourhood of $x$. This will allow to estimate the volume of the convex hull (Definition \ref{définition:enveloppe_convexe}) above a point $x$ (section \ref{subsection:contrôle_volume}) and then to integrate this estimate over $S$ (section \ref{subsection:contrôle_Dt}) in order to prove Theorem \ref{theoreme:borne_sup}. 

\subsection{Decomposition of the normal bundle and differential equations}
\label{subsection:décomposition}

In this section, we define functions $u$ and $v$ encoding the second fundamental form and use the Gau\ss, Ricci and Codazzi equations to establish a system of elliptic equations on $(u,v)$. 

Let $g$ denote the pseudo-Riemannian metric of $\H^{2,2}$ and $\nabla$ its Levi-Civita connection. We define Riemannian metrics on the tangent bundle $\Tan S$ and the normal bundle $\Nor S$, by restricting $g$ and $-g$ respectively. By projecting $\nabla$, we define metric connections on $\Tan S$ and $\Nor S$, of respective curvatures $\courb^{\Tan}$ and $\courb^{\Nor} \in \Gamma(\Lambda^2 (\Tan^* S) \otimes \End(\Nor S))$. Let henceforth $U$ be an open subset of $S$ and suppose given orientations of the vector bundles $\Tan U$ (thus $\Tan^* U$) and $\Nor U$. Lemma \ref{lemme:décomposition_fibré_rang_2} allows to decompose their complexifications as a sum of line bundles:
\[(\Tan U)^\C = \cal{T} \oplus \cal{T}^{-1},\ (\Tan^* U)^\C = \canon \oplus \canon^{-1},\ (\Nor U)^\C = \cal{N} \oplus \cal{N}^{-1}. \]
$\cal{T}$ and $\canon$ are usually called the holomorphic tangent bundle and the canonical bundle of $S$. These decompositions are orthogonal for the hermitian metrics on these bundles defined by extending sesquilinearly the metrics on $\Tan S$, $\Tan^* S$ and $\Nor S$, and each line bundle is preserved by the complexified ambient connection (see section \ref{subsection:fibrés_holomorphes}). Consider the second fundamental form $\II$ as a section of the bundle $\Tan^* S \otimes \Tan^* S \otimes \Nor S$ and decompose its $\C$-linear extension into types (see section \ref{subsection:fibrés_holomorphes}) :
\[\II^\C = \II^{(2,0)} + \II^{(1,1)} + \II^{(0,2)}, \]
where, for example, $\II^{(2,0)}$ is a section of $\canon^2 \otimes (\Nor S)^\C$. The second fundamental form $\II$ being defined on real bundles, we have $\bar{\II} = \II$ (see section \ref{subsection:fibrés_holomorphes}) : we deduce the equality $\II^{(0,2)} = \bar{\II^{(2,0)}}$.  Since the surface $S$ is maximal, $\II^{(1,1)}$ vanishes (see section \ref{subsection:équations_fondamentales}). $\II$ is therefore determined by $\II^{(2,0)}$ :
\[\II^\C = \II^{(2,0)} + \bar{\II^{(2,0)}}. \]
The decomposition $(\Nor U)^\C = \cal{N} \oplus \cal{N}^{-1}$ allows writing $\II^{(2,0)} = \alpha + \beta$, with $\alpha$ and $\beta$ sections of $\canon^2 \cal{N}$ and $\canon^2 \cal{N}^{-1}$ respectively. The Codazzi equation \eqref{éq:Codazzi} is written $\bar\partial \II^{(2,0)} = 0$ and $\cal{N}$, $\cal{N}^{-1}$ are holomorphic subbundles of $(\Nor U)^\C$ : we deduce that $\alpha$ and $\beta$ are holomorphic sections. Let $U_0$ denote the set of points of $U$ where $\alpha \beta \in \cohom^0(\canon^4)$ does not vanish. We recall that $\alpha \beta$ is a section of $\canon^2 \cal{N} \canon^2 \cal{N}^{-1}$, identified with $\canon^4$ as a hermitian holomorphic bundle since $\cal{N} \otimes \cal{N}^{-1} = \cal{O}$ as hermitian holomorphic bundles (Lemma \ref{lemme:décomposition_fibré_rang_2}). Define $u,v : U_0 \to \R$ by
\begin{align*}
	e^u &= \norm{\alpha}^2, \\
	e^v &= \norm{\beta}^2.
\end{align*}

\begin{remarque}\label{remarque:changement orientation_normal}
By remark \ref{remarque:changement orientation}, the other choice of orientation on $\Nor S$ would exchange the bundles $\cal{N}$ and $\cal{N}^{-1}$, hence would exchange $u$ and $v$. 
\end{remarque}

Let $K$ denote the sectional curvature of $S$, $\omega$ its volume form and $J_{\Tan} \in \End(\Tan S)$, $J_{\Tan^*} = -\transp{J_{\Tan}} \in \End(\Tan^* S)$, $J_{\Nor} \in \End(\Nor S)$ defining the orientations. We have
\begin{align*}
	\frac{\courb^{\Tan}}{\omega} &= -K J_{\Tan} \tag*{by definition, and} \\
	\frac{\courb^{\Nor}}{\omega} &= (\det \II) J_{\Nor} \tag*{by the Ricci equation \eqref{éq:Ricci}.}
\end{align*}

\begin{lemme}
	We have $\det \II = e^u - e^v$ and $K = -1 + e^u + e^v$. 
\end{lemme}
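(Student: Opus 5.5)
Both identities will be obtained pointwise on $U_0$ from the two formulas already established in Section \ref{subsection:équations_fondamentales}: the Gau\ss{} equation \eqref{éq:Gauss}, $K = -1 + \norm{\II(\partial_z,\partial_z)}^2$, and the expression of Lemma \ref{lemme:Ricci}, $\det \II_0 = -i\, h(J_{\Nor}\II(\partial_z,\partial_z),\II(\partial_z,\partial_z))$. Fix $x \in U_0$ and a unit vector $\sf{u} \in \Tan_x S$, giving $\partial_z = \frac{1}{\sqrt2}(\sf{u} - iJ_{\Tan}\sf{u})$. This $\partial_z$ is a unit vector of $\cal{T}$. Then $\partial_z^2$ is a unit vector of $\cal{T}^2$, which is dual to $\canon^2$. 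Hence evaluation at $\partial_z^2$ sends $\canon^2\otimes\cal{N}^{\pm1}$ to $\cal{N}^{\pm 1}$ isometrically. Setting $a = \alpha(\partial_z,\partial_z) \in \cal{N}_x$ and $b = \beta(\partial_z,\partial_z) \in \cal{N}^{-1}_x$, we get $\norm{a}^2 = e^u$ and $\norm{b}^2 = e^v$. I must check that the hermitian norm on $\canon^2\cal{N}$ used to define $u$ is the one induced from $\Tan^* S$ and $\Nor S$, so that no normalisation factor appears. Since $\partial_z$ has norm $1$ for $h$ (as noted before Lemma \ref{lemme:Ricci}), this should be automatic.

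For the Gau\ss{} identity, write $\II(\partial_z,\partial_z) = \II^{(2,0)}(\partial_z,\partial_z) = a + b$. This uses $\II^{(1,1)} = 0$ and the fact that $\II^{(0,2)}$ vanishes on $(\partial_z,\partial_z)$. By Lemma \ref{lemme:décomposition_fibré_rang_2}, $\cal{N}$ and $\cal{N}^{-1}$ are $h$-orthogonal, so Pythagoras gives $\norm{a+b}^2 = e^u + e^v$. Plugging this into \eqref{éq:Gauss} yields $K = -1 + e^u + e^v$.

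For the determinant, use that $J_{\Nor}$ acts by $+i$ on $\cal{N} = \ker(J_{\Nor} - i\Id)$ and by $-i$ on $\cal{N}^{-1}$. Thus $J_{\Nor}(a+b) = ia - ib$. By orthogonality, and with $h$ sesquilinear as in Definition \ref{définition:complexification},
\[h(ia - ib, a+b) = i\norm{a}^2 - i\norm{b}^2 .\]
Multiplying by $-i$ gives $\det \II_0 = e^u - e^v$.

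The only real obstacle is sign bookkeeping. The orientation conventions for $J_{\Nor}$ and for $\vol^{\Nor S}$ (note that the proof of Lemma \ref{lemme:Ricci} uses $\vol^{\Nor S} = -\scal{\cdot}{J\cdot}$ while Definition \ref{définition:complexification} uses $g(J\cdot,\cdot)$), together with the metric on $\Nor S$ being $-g$, must combine so that the result is $e^u - e^v$ rather than $e^v - e^u$. I would check this directly on one explicit unit vector of $\cal{N}$. Any residual global sign is harmless in any case: by Remark \ref{remarque:changement orientation_normal}, reversing $J_{\Nor}$ swaps $u$ and $v$ and changes the sign of $\det\II$, so the sign convention can be fixed to make the stated formula hold.
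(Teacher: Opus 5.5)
Your proposal is correct and is essentially the paper's proof. You decompose $\II(\partial_z,\partial_z)$ into its $\cal{N}$ and $\cal{N}^{-1}$ components, apply Pythagoras in the Gau\ss{} equation \eqref{éq:Gauss}, and use that $J_{\Nor}$ acts by $\pm i$ in the formula of Lemma \ref{lemme:Ricci}.

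The sign worry in your last paragraph is unnecessary. Once Lemma \ref{lemme:Ricci} is taken as given with $\cal{N} = \ker(J_{\Nor} - i\Id)$, your computation gives exactly $e^u - e^v$. Your normalisation $\norm{a}^2 = e^u$ is the right one; the paper's $\norm{\alpha_0} = e^u$ is a typo.
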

\begin{proof}
	Let $\sf{u} \in \Tan^1_x S$ be a unit tangent vector. We define unit vectors $\partial_z = \frac12 (\sf{u} - iJ\sf{u}) \in \cal{T} = \Tan^{(1,0)}_x S$ and $\partial_{\bar{z}} = \frac12 (\sf{u} + iJ\sf{u}) \in \cal{T}^{-1} = \Tan^{(0,1)}_x S$. We have a decomposition $\II^\C(\partial_z,\partial_z) = \alpha_0 + \beta_0$, with $\alpha_0$ in $\cal{N}_x$ and $\beta_0$ din $\cal{N}^{-1}_x$ verifying $\norm{\alpha_0} = e^u$ and $\norm{\beta_0} = e^v$. Lemma \ref{lemme:Ricci} therefore gives:
	\begin{align*}
		\det \II &= -i h(J \II(\partial_z,\partial_z),\II(\partial_z,\partial_z)) \\
		&= -i h(i \alpha_0 - i \beta_0,\alpha_0 + \beta_0) \tag*{since $\cal{N}^{\pm 1} = \ker(J \mp i)$} \\
		&= h(\alpha_0,\alpha_0) - h(\beta_0,\beta_0) \\
		&= e^u - e^v. 
	\end{align*}
	Let us now turn to the expression of $K$. By equation \eqref{éq:Gauss}, we have
	\begin{align*}
		K &= -1 + \norm{\alpha_0 + \beta_0}^2 \\
		&= -1 + \norm{\alpha_0}^2 + \norm{\beta_0}^2 \tag*{since $h(\alpha_0,\beta_0) = 0$}
	\end{align*}
	as required. 
\end{proof}

Since $\nabla$ preserves $\cal{N}$ and $\cal{T}$ and $J_{\Tan}$, $J_{\Nor}$ act on these bundles by multiplication by $i$, we deduce
\begin{equation}\label{éq:courbures_uv}
	\begin{aligned}
		\frac{\courb^\cal{T}}{i\omega} &= -K \Id_{\cal{T}} = (1 -e^u -e^v) \Id_{\cal{T}}, \\
		\frac{\courb^\cal{N}}{i\omega} &= (\det \II) \Id_{\cal{N}} = (e^u - e^v) \Id_{\cal{N}}.
	\end{aligned}
\end{equation}

We now apply Lemma \ref{lemme:courbure_fibré_droites} to $\alpha \in \cohom^0(\canon^2 \otimes \cal{N})$ and $\beta \in \cohom^0(\canon^2 \otimes \cal{N}^{-1})$ : this gives
\begin{align*}
	\Delta u &= 2\frac{\courb^{\canon^2 \cal{N}}}{i\omega} \tag*{and} \\
	\Delta v &= 2 \frac{\courb^{\canon^2 \cal{N}^{-1}}}{i\omega}. 
\end{align*}
Since $\courb^{\canon^2 \cal{N}^{\pm 1}} = 2\courb^\canon \pm \courb^\cal{N} = -2\courb^\cal{T} \pm \courb^\cal{N}$, we obtain
\begin{equation}\label{éq:Hitchin_uv}
	\begin{aligned}
		\Delta u = 6e^u + 2e^v -4, \\
		\Delta v = 2e^u + 6e^v -4. 
	\end{aligned}
\end{equation}

\subsection{Exponential decay}
\label{subsection:décroissance_exponentielle}

We henceforth assume that $S$ is complete (in order to use the compactness results of section \ref{subsection:compacité}). We begin by deducing from the equations \eqref{éq:Hitchin_uv} that $u$ and $v$ tend rapidly towards a limiting value when the curvature $K$ is close to $0$ (Proposition \ref{proposition:contrôle_mu}), which justifies the title of this section. We then deduce a similar control of $\II$ and of $\nabla \II$ (Corollary \ref{corollaire:contrôle_nablaII}) by means of elliptic estimates. The main ideas leading to Proposition \ref{proposition:contrôle_mu} are adapted from arguments of \cite{CL15} and \cite{OT20}, which use similar results formulated in terms of Higgs bundles. Exponential decay has been used for even longer (for example in \cite{CT90}). We choose here a more geometric formulation. 

Let us begin by rewriting the equations \eqref{éq:Hitchin_uv}. Lemma \ref{corollaire:sous-variété_simplement_connexe} shows that $S$ is contractible. One can therefore choose orientations for $\Tan S$ and $\Nor S$ : $\alpha$ and $\beta$, defined in Section \ref{subsection:décomposition}, are thus \emph{global} holomorphic sections of holomorphic bundles over $S$, and the functions $u$ and $v$ are defined away from their zeroes, that is on $S_0 = \{x \in S \mid (\alpha \beta)_x \neq 0\}$. Define a new couple of functions from $S_0$ to $\R$ by
\[\mu_1 = \frac{u+v}{4}, \mu_2 = \frac{u-v}{2}. \]
They can be interpreted as follows : $\alpha \beta$ is a section of $\canon^4$, which locally admits roots $(\alpha \beta)^{\frac14} \in \cohom^0(\canon)$, which have square norm $e^{\mu_1}$. Similarly, the roots $\alpha^{\frac12} \beta^{-\frac12} \in \cohom^0(\cal{N})$ (which exist locally) have square norm $e^{\mu_2}$. In particular, the equations \eqref{éq:courbures_uv} and Lemma \ref{lemme:courbure_fibré_droites} give
\begin{equation}\label{éq:Hitchin_mu}
	\begin{aligned}
		K = \frac{\courb^{\canon}}{i\omega} &= \frac12 \Delta \mu_1, \\
		\det \II = \frac{\courb^{\cal{N}}}{i\omega} &= \frac12 \Delta \mu_2. 
	\end{aligned}
\end{equation}
The equations \eqref{éq:Hitchin_uv} can be rewritten in terms of the $\mu_k$:
\begin{equation}\label{éq:Hitchin_mu_alt}
	\begin{aligned}
		\Delta \mu_1 &= 4 e^{2\mu_1} \cosh(\mu_2) -2, \\
		\Delta \mu_2 &= 4 e^{2\mu_1} \sinh(\mu_2). 
	\end{aligned}
\end{equation}

\begin{remarque}
	A change of orientation of $\Nor S$ exchanges $u$ and $v$, hence does not change $\mu_1$ but multiplies $\mu_2$ by $-1$: the geometrically relevant quantities are indeed $\mu_1$ and $\abs{\mu_2}$ (see remark \ref{remarque:fonctions_géométriques_axes}) and only the absolute value $\abs{\mu_2}$ will be used. 
\end{remarque}

Recall that we have $K \leq 0$ (Theorem \ref{theoreme:Ishihara}). Moreover, if $K$ vanishes at a point, then $S$ is a Barbot surface (Proposition \ref{proposition:rigidité_couronnes}): we shall in fact show that when $K$ tends to $0$, the geometry of $S$ tends towards that of the Barbot surface. Note already that on the Barbot surface $u$ and $v$ are constant, so equation \eqref{éq:Hitchin_uv} gives $u = v = \ln(\frac12)$. Recall also that for any pointed maximal surface $(S,x) \in \cal{M}_{2,2}^*$, $u(x) = u(S,x)$ and $v(x) = v(S,x)$ are well defined up to permutation (see remark \ref{remarque:changement orientation_normal}). 
\begin{proposition}\label{proposition:courbure_petite_contrôle_uv}
	For every $\epsilon > 0$, there exists $k(\epsilon) > 0$ such that for every pointed complete maximal surface $(S,x) \in \cal{M}_{2,2}^*$ in $\H^{2,2}$ whose sectional curvature $K_S$ satisfies $\abs{K_S(x)} \leq k(\epsilon)$, we have $\abs{u(S,x) - \ln\frac12} \leq \epsilon$ and $\abs{v(S,x) - \ln\frac12} \leq \epsilon$. 
\end{proposition}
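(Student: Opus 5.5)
We argue by contradiction, using the compactness of Corollary \ref{corollaire:compacité_fonction_invariante} (via Theorem \ref{theoreme:compacité_SST}) and the rigidity of Proposition \ref{proposition:rigidité_couronnes}. Suppose there are $\epsilon > 0$ and a sequence $(S_n,x_n) \in \cal{M}^*_{2,2}$ with $K_{S_n}(x_n) \to 0$ but $\abs{u(S_n,x_n) - \ln\frac12} > \epsilon$ or $\abs{v(S_n,x_n) - \ln\frac12} > \epsilon$ for all $n$. The quantities $u,v$ are defined only up to permutation and only where $\alpha\beta \neq 0$. We therefore rephrase the conclusion through the symmetric, globally defined and $\O(2,3)$-invariant quantities
\[\norm{\alpha}^2+\norm{\beta}^2 = e^u+e^v = 1 + K, \qquad \abs{\det \II} = \abs{e^u - e^v}.\]
Both are continuous functions of $(S,x)$, since they are computed from $\II$ at $x$, which depends continuously on $(S,x)$ in the $\cal{C}^\infty_{loc}$ topology. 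Knowing $e^u+e^v$ and $\abs{e^u-e^v}$ determines the unordered pair $\{e^u,e^v\}$. So it suffices to show that $\abs{\det \II}(S,x) \to 0$ whenever $K_S(x) \to 0$. Indeed, then $e^u$ and $e^v$ both tend to $\frac12$, and continuity of $\ln$ at $\frac12$ gives the $\epsilon$-bounds. In particular, for $K$ small, $e^u,e^v$ are bounded away from $0$, so $x \in S_0$ and $u,v$ are actually defined there.

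The key step is to apply the $\O(2,3)$-action to normalise the sequence. After composing with isometries, we may assume $\Tan_{x_n} S_n$ is a fixed positive $2$-plane, with $x_n$ a fixed point. By Theorem \ref{theoreme:compacité_SST}, a subsequence converges in $\cal{M}^*_{2,2}$ to some $(S_\infty,x_\infty)$. The curvature $K$ at the base point is continuous on $\cal{M}^*_{2,2}$, so $K_{S_\infty}(x_\infty) = 0$. Proposition \ref{proposition:rigidité_couronnes} then says that $S_\infty$ is a Barbot surface.

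On the Barbot surface, $u$ and $v$ are constant and equal to $\ln\frac12$ by \eqref{éq:Hitchin_uv}. This is the computation recalled just before the statement: constancy forces $6e^u+2e^v = 4 = 2e^u+6e^v$. Hence $\det\II = 0$ everywhere on $S_\infty$. By continuity, $\abs{\det \II}(S_n,x_n) \to 0$ and $1 + K(S_n,x_n) \to 1$, so $e^{u}$ and $e^{v}$ at $(S_n,x_n)$ tend to $\frac12$. This contradicts the choice of the sequence.

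The main obstacle is justifying that $u,v$ are constant on the Barbot surface without circularity. To do this I would use the homogeneity in Example \ref{exemple:couronne}: $\R^2$ acts transitively on $S$ by ambient isometries preserving $S$, and these isometries lie in the identity component, so they preserve the chosen orientations. Hence every invariant scalar built from $\II$, in particular $\norm{\alpha}^2$ and $\norm{\beta}^2$, is constant. A secondary point is the continuity of $(S,x) \mapsto \II_x$, which follows from the definition of the $\cal{C}^\infty_{loc}$ topology.
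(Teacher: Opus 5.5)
Your proposal is correct and follows essentially the paper's argument. The paper packages the same compactness-plus-rigidity reasoning more compactly: the continuous $\SO(2,3)$-invariant map $(S,x)\mapsto\bigl(\min(\norm{\alpha}^2,\norm{\beta}^2),\max(\norm{\alpha}^2,\norm{\beta}^2)\bigr)$ has compact image by Corollary \ref{corollaire:compacité_fonction_invariante}, and that image meets $\{e^u+e^v=1\}$ only at $(\frac12,\frac12)$ by Proposition \ref{proposition:rigidité_couronnes}. You instead unpack that corollary into an explicit normalise-and-extract argument via Theorem \ref{theoreme:compacité_SST}, use the equivalent invariants $e^u+e^v$ and $\abs{\det\II}$, and justify by homogeneity that $u=v=\ln\frac12$ on the Barbot surface, a point the paper only asserts.
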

\begin{proof}
	For every $(S,x)$ in the space $\mathcal{M}_{2,2}^*$ of pointed complete maximal surfaces in $\H^{2,2}$ (see section \ref{subsection:compacité}), according to remark \ref{remarque:changement orientation_normal}, $(\alpha,\beta) = (\alpha^S,\beta^S)$ is well defined up to permutation. One can therefore define $\phi : \mathcal{M}_{2,2}^* \to \R^2$ by
	\[\phi(S,x) = (\min(\norm{\alpha^S_x}^2,\norm{\beta^S_x}^2),\max(\norm{\alpha^S_x}^2,\norm{\beta^S_x}^2)). \]
	Define $\kappa : \R^2 \to \R,\ (x,y) \mapsto x+y-1$ (compare with the formula $K = e^u + e^v -1$). The map $\phi$ is continuous and invariant under $\SO(2,3)$, hence Corollary \ref{corollaire:compacité_fonction_invariante} asserts that its image is a compact subset of $\R^2$. This compact is contained in $\kappa^{-1}(\R_{\leq 0})$ (Theorem \ref{theoreme:Ishihara}) and intersects $\kappa^{-1}(0)$ only at $(\frac12,\frac12)$ (Proposition \ref{proposition:rigidité_couronnes}). Thus, if a sequence $(S_n,x_n)$ in $\mathcal{M}_{2,2}^*$ satisfies $K(S_n,x_n) = \kappa(\phi(S_n,x_n)) \to 0$, then $\phi(S_n,x_n) \to (\frac12,\frac12)$, which concludes the proof. 
\end{proof}

The aim of this section is to explicitly control the function $k(\epsilon)$ appearing in the statement of Proposition \ref{proposition:courbure_petite_contrôle_uv} ; to this end, let us prove the following technical lemma (compare \cite[Lemma 2.3]{CT90}). Suppose $\mu_1$ bounded on a ball $B(x,r)$ : the formulas \eqref{éq:Hitchin_mu_alt} give an inequality of the type $\Delta \abs{\mu_2} \geq C \sinh(\abs{\mu_2})$ on this ball, which is the hypothesis of the following lemma. Note that in the points where $\mu_2$ cancels, this inequality is only true in the sense of distributions but trivial. 
\begin{lemme}[exponential decay]\label{lemme:décroissance_exponentielle}
	Let $\kappa,c>0$ be real numbers, $M$ a complete simply connected Riemannian $n$-manifold satisfying $\Ric \geq -(n-1) \kappa$, a ball $B(x,r)$ and a function $f : B(x,r)\to \R_{\geq 0}$ of class $\cal{C}^2$, bounded and satisfying the following equation :
	\[(\Delta - c) f \geq 0. \]
	There exists $\alpha > 0$, depending only and explicitly on $(n,\kappa,c)$, such that
	\[f(x) \leq 2 e^{-\alpha r}\ \sup_{B(x,r)} f. \]
\end{lemme}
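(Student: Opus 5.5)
We compare $f$ with a radial supersolution of $(\Delta - c)w \leq 0$ on $B(x,r)$ that is small at the centre and at least $\sup f$ on the boundary sphere, and then apply the maximum principle to $f - w$. Write $M_0 = \sup_{B(x,r)} f$ and $\rho = d(x,\cdot)$.

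The candidate barrier is
\[w(y) = M_0\, \frac{\cosh(\alpha \rho(y))}{\cosh(\alpha r)},\]
which satisfies $w \geq M_0 \geq f$ on $\partial B(x,r)$ and $w(x) = M_0/\cosh(\alpha r) \leq 2 e^{-\alpha r} M_0$. This is exactly where the factor $2$ in the statement comes from. We will need $(\Delta - c) w \leq 0$. For $\phi(\rho) = \cosh(\alpha\rho)$ we have, at least where $\rho$ is smooth,
\[\Delta \phi = \phi''(\rho) + \phi'(\rho)\,\Delta\rho = \alpha^2\cosh(\alpha\rho) + \alpha\sinh(\alpha\rho)\,\Delta \rho .\]
The Laplacian comparison theorem under $\Ric \geq -(n-1)\kappa$ gives
\[\Delta\rho \leq (n-1)\sqrt{\kappa}\coth(\sqrt\kappa\,\rho).\]
Since $\phi' \geq 0$, and using $\sinh(\alpha\rho)\coth(\sqrt\kappa\rho) \leq \cosh(\alpha\rho)\,\max(1,\alpha/\sqrt\kappa)$ (from $\tanh(\alpha\rho) \leq \min(1,\alpha\rho)$ and $\coth(s) \leq 1 + 1/s$), we get
\[\Delta\phi \leq \bigl(\alpha^2 + (n-1)(\alpha\sqrt\kappa + \alpha^2)\bigr)\phi .\]
It therefore suffices to choose $\alpha$ explicitly in terms of $(n,\kappa,c)$ with $\alpha^2 n + (n-1)\alpha\sqrt\kappa \leq c$, for instance the positive root of this quadratic.

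The main obstacle is that $\rho$ need not be smooth, since the cut locus may lie inside $B(x,r)$. Simple connectedness of $M$ does not remove this by itself. In the intended application (maximal surfaces of $\H^{2,2}$, where $K \leq 0$) the manifold is Cartan--Hadamard and $\rho$ is smooth away from $x$. In general I would handle it in one of two ways. The first is to note that the comparison holds in the barrier (Calabi) sense, which suffices for the maximum principle. The second is to use instead a smooth exhaustion function $\tilde\rho$ with $|\tilde\rho - \rho| \leq 1$, $|\nabla \tilde\rho| \leq 2$ and $\Delta\tilde\rho \leq C(n,\kappa)$, adjusting constants (and absorbing the shift into $\alpha$ when $r$ is large; for $r$ small the statement is trivial if $2e^{-\alpha r} \geq 1$). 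Near $x$, the function $\cosh(\alpha\rho)$ is smooth because it is even in $\rho$, so the centre causes no problem.

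We then conclude by the maximum principle. Set $h = f - w$. Then $h \leq 0$ on $\partial B(x,r)$ and $(\Delta - c)h \geq 0$ in $B(x,r)$. At an interior positive maximum of $h$ we would have $\Delta h \leq 0$, while $\Delta h \geq c h > 0$, a contradiction. If $f$ is only defined on the open ball, we run the same argument on $B(x,r')$ for $r' < r$ and let $r' \to r$, which works since $f$ is bounded. Hence $h \leq 0$, and evaluating at $x$ gives
\[f(x) \leq w(x) \leq 2e^{-\alpha r}\sup_{B(x,r)} f .\]
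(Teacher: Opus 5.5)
Your proposal is correct and follows essentially the same route as the paper: the same barrier $\cosh(\alpha\rho)/\cosh(\alpha r)$ (scaled by $\sup f$), the Laplacian comparison $\Delta\rho \leq (n-1)\sqrt{\kappa}\coth(\sqrt{\kappa}\rho)$, and the maximum principle, with the factor $2$ coming from $1/\cosh(\alpha r) \leq 2e^{-\alpha r}$. The differences are minor. The paper imposes $\alpha \leq \sqrt{\kappa}$ to get $\tanh(\alpha\rho)\coth(\sqrt{\kappa}\rho)\leq 1$, whereas your displayed estimate bounds this product by $1+\alpha/\sqrt{\kappa}$ (not by the $\max$ you state, though that is also true). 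Your remarks on the cut locus, where simple connectedness indeed does not help although the application is Cartan--Hadamard, and on passing to $B(x,r')$, handle points the paper passes over silently.
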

\begin{proof}
	We may suppose $\sup_{B(x,r)} f = 1$. The function $\rho = d(x,\cdot)$ satisfies (\cite[Lemma 7.1.2]{Petersen16}) :
	\[\Delta \rho \leq (n-1) \sqrt{\kappa} \coth(\sqrt{\kappa} \rho). \]
	We define a barrier function $b = \frac{\cosh(\alpha \rho)}{\cosh(\alpha r)}$, $\alpha > 0$ remaining to be chosen. Since $b = 1$ on $\partial B(x,r)$, we only need to have $(\Delta - c) b \leq 0$ in order to conclude that $b \geq f$ on $B(x,r)$. Now we have
	\begin{align*}
		(\Delta - c) b &= \left( \alpha^2 + \frac{\alpha}{\coth(\alpha \rho)} \Delta \rho - c \right) b \\
		&\leq \left( \alpha^2 + \alpha (n-1) \sqrt{\kappa} \frac{\coth(\sqrt{\kappa} \rho)}{\coth(\alpha \rho)} - c \right) b. 
	\end{align*}
	By imposing $\alpha \leq \sqrt{\kappa}$, we obtain as soon as $\rho > 0$ :
	\[(\Delta - c) b \leq \left( \alpha^2 + \alpha (n-1) \sqrt{\kappa} - c \right) b. \]
	Let us choose $\alpha > 0$ small enough so that the right-hand term is negative : we obtain $(\Delta - c) b \leq 0$, as required. The function $b$ is therefore a barrier : we have $f(x) \leq b(x) = \frac{1}{\cosh(\alpha r)} \leq 2e^{-\alpha r}$. 
\end{proof}

We now wish to apply Lemma \ref{lemme:décroissance_exponentielle} to the functions $f = \mu_k$ (see Proposition \ref{proposition:contrôle_mu}). For this, we shall need a rough control given by the following lemma. 
\begin{lemme}\label{lemme:estimées_mu}
	We have $u,v \leq \ln(\frac23)$ and $\mu_1 = 4(u+v) \leq \frac12 \ln(\frac12)$. 
\end{lemme}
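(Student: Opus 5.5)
The plan is to get the bound on $\mu_1$ pointwise from the Gau\ss{} equation and the bounds on $u$ and $v$ from a maximum principle applied to \eqref{éq:Hitchin_uv}. I read the formula for $\mu_1$ in the statement as the definition $\mu_1 = \frac{u+v}{4}$ given earlier in this section.

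\emph{Bound on $\mu_1$.} Theorem \ref{theoreme:Ishihara} gives $K \leq 0$, and $K = -1 + e^u + e^v$, so $e^u + e^v \leq 1$ on $S_0$. By the arithmetic--geometric mean inequality,
\[2 e^{\frac{u+v}{2}} \leq e^u + e^v \leq 1 .\]
Hence $\frac{u+v}{2} \leq \ln\frac12$, that is $\mu_1 = \frac{u+v}{4} \leq \frac12 \ln\frac12$. The same inequality $e^u + e^v \leq 1$ also shows that $u$ and $v$ are bounded above by $0$, which is needed next.

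\emph{Bound on $u$ (and symmetrically on $v$).} I would apply the Omori--Yau maximum principle (Theorem \ref{théorème:Omori_Yau}) on the complete surface $S$. Its Ricci curvature is bounded below, since $K \geq -1$ by the Gau\ss{} equation. The function is $u = \ln \norm{\alpha}^2$, extended by $-\infty$ at the zeros of $\alpha$. It is continuous with values in $\R \cup \{-\infty\}$, smooth where $\alpha \neq 0$, and bounded above, so Remark \ref{remarque:Omori_Yau} applies. If $\alpha \equiv 0$ there is nothing to prove.

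One point to check is that the first equation of \eqref{éq:Hitchin_uv} holds on all of $\{\alpha \neq 0\}$, including where $\beta = 0$. Its derivation only used Lemma \ref{lemme:courbure_fibré_droites} for the holomorphic section $\alpha$ together with the curvature identities \eqref{éq:courbures_uv}, with $e^v$ read as $\norm{\beta}^2$, which still makes sense where $\beta$ vanishes. So on $\{\alpha \neq 0\}$ we have $\Delta u = 6e^u + 2\norm{\beta}^2 - 4$.

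Now fix $\epsilon > 0$. Omori--Yau gives a point $x$ with $u(x) \geq \sup u - \epsilon$ and $\Delta u(x) \leq \epsilon$; such a point automatically satisfies $\alpha(x) \neq 0$. At $x$,
\[6 e^{u(x)} \leq 4 + \epsilon - 2\norm{\beta_x}^2 \leq 4 + \epsilon .\]
Hence $\sup u \leq \ln\left(\frac23 + \frac{\epsilon}{6}\right) + \epsilon$, and letting $\epsilon \to 0$ gives $u \leq \ln\frac23$. For $v$, exchange the roles of $\alpha$ and $\beta$ and use the second equation of \eqref{éq:Hitchin_uv}, or invoke Remark \ref{remarque:changement orientation_normal}.

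The only delicate step is this justification that Omori--Yau can be applied to a function taking the value $-\infty$, and that the equation remains valid at the zeros of $\beta$. Both are handled by Remark \ref{remarque:Omori_Yau} and by the pointwise form of Lemma \ref{lemme:courbure_fibré_droites}.
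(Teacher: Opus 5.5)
Your proof is correct. For $u$ and $v$ you use the same argument as the paper: Omori--Yau applied to \eqref{éq:Hitchin_uv}.

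You are in fact more careful than the paper on two points. First, the paper defines $u$ only on $S_0$. Your extension of $u=\ln\norm{\alpha}^2$ to all of $\{\alpha\neq 0\}$, with $e^v$ read as $\norm{\beta}^2$, is what makes $u$ continuous with values in $\R\cup\{-\infty\}$. That continuity is needed for Remark \ref{remarque:Omori_Yau} to apply; setting $u=-\infty$ outside $S_0$ would create jumps at zeros of $\beta$ where $\alpha\neq 0$. Second, you check the hypotheses $K\geq -1$ and $u\leq 0$, which the paper leaves implicit.

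For $\mu_1$ your route differs. The paper applies Omori--Yau a second time, to $\mu_1$, using $\Delta\mu_1=4e^{2\mu_1}\cosh(\mu_2)-2$ and $\cosh\geq 1$. You instead get the bound pointwise from $K\leq 0$ and AM--GM. Since $4e^{2\mu_1}\cosh(\mu_2)-2=2(e^u+e^v)-2=2K$, both arguments rest on the same inequality. Yours shows that no maximum principle is needed for this part: $\mu_1\leq\frac12\ln\frac12$ holds at every point once $K\leq 0$ is known from Theorem \ref{theoreme:Ishihara}. It also avoids the $-\infty$ technicality at the zeros of $\alpha\beta$.
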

\begin{proof}
	Let us apply the Omori-Yau maximum principle (Theorem \ref{théorème:Omori_Yau} and remark \ref{remarque:Omori_Yau}) to $u$. For every $\epsilon>0$, we thus have at a certain point $x$ such that $u(x) \geq \sup u - \epsilon$ and $\Delta u \geq \epsilon$ :
\[\epsilon \geq 6e^u + 2e^v -4. \]
In particular, using equation \eqref{éq:Hitchin_uv}, we obtain :
\[\epsilon \geq 6e^{\sup u - \epsilon} -4. \]
	Letting $\epsilon$ tend to $0$, we see that $\sup u \leq \ln(\frac23)$. Exchanging $u$ and $v$, we prove the bound on $v$. Similarly, applying the Omori-Yau maximum principle to $\mu_1  = \frac{u+v}{4}$, we obtain for every $\epsilon > 0$ a point $x$ satisfying
	\begin{align*}
		\epsilon &\geq \Delta \mu_1 \\
		&= 4e^{2\mu_1} \cosh(\mu_2) -2 \tag*{by equation \eqref{éq:Hitchin_mu_alt}} \\
		&\geq 4e^{2(\sup \mu_1 - \epsilon)} -2,
	\end{align*}
	thus $\sup \mu_1 \leq \frac12 \ln(\frac12)$ by letting $\epsilon$ tend to $0$. 
\end{proof}

We now wish to quantify the convergence of $S$ towards a Barbot surface when the curvature of $S$ is close to $0$ ; for this purpose, we set the following definition. 
\begin{definition}[exponential decay]\label{définition:décroissance_exponentielle}
	Fix a real number $-\frac13 < k < 0$. For every complete maximal surface $S$ in $\H^{2,2}$, denote $D^k_S = \{x \in S \mid K(x) < k\}$ and define
	\[\rho : \mathcal{M}_{2,2}^* \to \R_+ \cup \{+\infty\},\ (S,x) \mapsto d(x,D^k_S) \]
	the distance to $D^k_S$. We say that a function $f$, defined on a subset of $\mathcal{M}_{2,2}^*$ and with values in $\R$, is \emph{exponentially decaying} if there exist constants $C,\alpha,\rho_0 > 0$ such that $f$ is defined on the set of points $(S,x) \in \mathcal{M}_{2,2}^*$ satisfying $\rho(S,x) \geq \rho_0$ and satisfies on this set
	\begin{equation}\label{éq:décroissance_exponentielle}
		\abs{f(S,x)} \leq C e^{-\alpha \rho}. 
	\end{equation}
\end{definition}
\begin{remarque}\label{remarque:décroissance_exponentielle_partout}
	With the notations of Corollary \ref{corollaire:compacité_fonction_invariante}, suppose that $f$ is a continuous function from $\mathcal{M}_{2,2}^*$ to $\R$, $\O(2,3)$-invariant and exponentially decaying. Then $f$ is bounded by Corollary \ref{corollaire:compacité_fonction_invariante} thus up to modifying $C$, we obtain the inequality $f(x,S) \leq C e^{-\alpha \rho}$ without the hypothesis $\rho \geq \rho_0$. 
\end{remarque}
\begin{remarque}\label{remarque:D_vide}
	If $D^k_S$ is empty (that it $\rho = +\infty$), then $S$ is a Barbot surface (Proposition \ref{proposition:rigidité_couronnes}). In that case, any exponentially decaying function is zero on $S$. 
\end{remarque}
\begin{remarque}
	We will see (see proof of Corollary \ref{corollaire:contrôle_nablaII}) that the sectional curvature $K$ is exponentially decaying. This shows that Definition \ref{définition:décroissance_exponentielle} is independent of the choice of $k$. 
\end{remarque}

\begin{proposition}[control of the $\mu_k$]\label{proposition:contrôle_mu}
	The functions $\tilde{\mu}_1 = \mu_1 - \frac12 \ln(\frac12)$ and $\mu_2$ are exponentially decaying. 
\end{proposition}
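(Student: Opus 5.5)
We will apply Lemma \ref{lemme:décroissance_exponentielle} on a ball $B(x,r)$ with $r=\rho(S,x)$. On this ball $K \geq k > -\frac13$. The first task is to check that $\mu_1,\mu_2$ are well defined there, i.e.\ that $\alpha\beta$ has no zero. Since $K = -1+e^u+e^v$, a zero of $\alpha$ or $\beta$ forces $K \leq -1 + \frac23 = -\frac13$ by Lemma \ref{lemme:estimées_mu} (if $\alpha=0$ then $e^v \leq \frac23$). Hence $B(x,r)\subset S_0$. Moreover $e^u+e^v = 1+K \geq 1+k > \frac23$ on the ball, and together with $e^u,e^v\leq \frac23$ this gives $e^u,e^v \geq 1+k-\frac23 =: m>0$. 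Therefore $u,v$, and hence $\mu_1,\mu_2$, are bounded on $B(x,r)$ by constants depending only on $k$. In particular $e^{2\mu_1}\geq m$ there.

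\textbf{Decay of $\mu_2$.} At points where $\mu_2\neq 0$, \eqref{éq:Hitchin_mu_alt} gives
\[\Delta\abs{\mu_2} = 4e^{2\mu_1}\sinh\abs{\mu_2} \geq 4m\abs{\mu_2}.\]
To avoid the distributional issue at zeros of $\mu_2$, I would instead apply the lemma to $f=\mu_2^2$, which is $\cal{C}^2$ and satisfies
\[\Delta f = 2\abs{\nabla\mu_2}^2 + 2\mu_2\Delta\mu_2 \geq 8m\,\mu_2\sinh\mu_2 \geq 8m f.\]
The surface $S$ is complete and simply connected (Corollary \ref{corollaire:sous-variété_simplement_connexe}) with $K\geq -1$ (Theorem \ref{theoreme:Ishihara}), so $\Ric\geq -1$. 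Lemma \ref{lemme:décroissance_exponentielle} with $c=8m$ then gives $\mu_2(x)^2 \leq 2e^{-\alpha r}\sup_{B(x,r)}\mu_2^2 \leq C e^{-\alpha r}$, hence exponential decay of $\mu_2$.

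\textbf{Decay of $\tilde\mu_1$.} Write $\mu_1 = \frac12\ln\frac12 + \tilde\mu_1$, so that $4e^{2\mu_1} = 2e^{2\tilde\mu_1}$. Then
\[\Delta\tilde\mu_1 = 2e^{2\tilde\mu_1}\cosh\mu_2 - 2 = 2(e^{2\tilde\mu_1}-1) + 2e^{2\tilde\mu_1}(\cosh\mu_2-1).\]
By Lemma \ref{lemme:estimées_mu}, $\tilde\mu_1\leq 0$. Set $f=-\tilde\mu_1 \geq 0$, which is bounded on the ball by the lower bound above. We have $\Delta f = 2(1-e^{-2f}) - 2e^{-2f}(\cosh\mu_2-1)$, and $1-e^{-2f}\geq c_0 f$ for $f$ in a bounded range. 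The error term is at most $C\mu_2^2 \leq C'e^{-\alpha' d(\cdot,D^k_S)}$ by the previous step. This is the main obstacle: $f$ is not a clean subsolution, only $(\Delta-2c_0)f \geq -C'e^{-\alpha' \rho}$. I plan to handle it in one of two ways.
\begin{itemize}
\item Work on the half ball $B(x,r/2)$, where the error is at most $C'e^{-\alpha' r/2}$, and compare $f$ with the barrier $b$ of Lemma \ref{lemme:décroissance_exponentielle} plus the constant $C'e^{-\alpha'r/2}/(2c_0)$. By the maximum principle, $f(x)\leq 2e^{-\alpha r/2}\sup f + C''e^{-\alpha' r/2}$.
\item Alternatively, use the combination $f + A\mu_2^2$ for large $A$, whose Laplacian absorbs the $\mu_2^2$ error, and apply the lemma directly.
\end{itemize}
Either way $\tilde\mu_1$ decays exponentially in $r=\rho(S,x)$.

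\textbf{Domain of definition.} The definition requires the functions to be defined when $\rho\geq\rho_0$. This holds for any $\rho_0>0$, since such an $x$ lies outside $\bar D^k_S$ and hence in $S_0$ by the first step.
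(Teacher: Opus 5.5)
Your argument is correct and is essentially the paper's proof. It uses the same rough two-sided bounds on $u,v$ over $B(x,\rho(S,x))$, coming from $K\geq k$ and $u,v\leq\ln\frac23$. It then applies the barrier lemma first to (a function of) $\mu_2$, and then to a combination of $-\tilde\mu_1$ with a function of $\mu_2$ that absorbs the quadratic error $\cosh\mu_2-1$.

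The differences are only technical refinements. You work with $\mu_2^2$ rather than $\abs{\mu_2}$, and with $-\tilde\mu_1+A\mu_2^2$ (or your inhomogeneous barrier) rather than the paper's $\frac12\abs{\mu_2}\pm\tilde\mu_1$ on a sub-ball where $\mu_2$ is small. This avoids the non-smoothness of $\abs{\mu_2}$ at its zeros and keeps the test function nonnegative, a point the paper's displayed computation (which contains sign slips) glosses over.
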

\begin{proof}
	We have $\min(u,v) = 2\mu_1 -\abs{\mu_2}$ and Lemma \ref{lemme:estimées_mu} asserts that $\max(u,v) \leq \ln(\frac23)$, thus
	\[-K = 1 -e^u -e^v \geq \frac13 -e^{2\mu_1 -\abs{\mu_2}}. \]
	When $-\frac13 < k < K$, we deduce
	\begin{equation}\label{éq:contrôle_grossier_k_mu}
		\ln(\frac13 + k) \leq 2\mu_1 -\abs{\mu_2}
	\end{equation}
	Let $(S,x)$ be a pointed complete maximal surface of $\H^{2,2}$ with $x$ not belonging to $D_S^k$. 
	On the ball $B(x,\rho(S,x))$, we deduce a rough control of the $\mu_k$ from the inequality $\mu_1 \leq \frac12 \ln(\frac12)$ (Lemma \ref{lemme:estimées_mu}) and equation \eqref{éq:contrôle_grossier_k_mu} :
	\begin{equation}\label{éq:encadrement_grossier_mu}
		\begin{aligned}
			\abs{\mu_2} &\leq \ln(\frac12) - \ln(\frac13 + k), \\
			\frac12 \ln(\frac13 + k) \leq \mu_1 &\leq \frac12 \ln(\frac12). 
		\end{aligned}
	\end{equation}
	Let us insert the first inequality into the equations \eqref{éq:Hitchin_mu_alt}, which gives
	\begin{equation}\label{éq:Delta_mu2}
		\Delta \abs{\mu_2} = 4e^{2\mu_1} \sinh(\abs{\mu_2}) \geq 4C_0 \abs{\mu_2}
	\end{equation}
	with $C_0 = \frac13 + k > 0$. We can therefore apply Lemma \ref{lemme:décroissance_exponentielle} to the function $\abs{\mu_2}$, which gives the desired upper bound for $\abs{\mu_2}$. To control $\tilde{\mu}_1$, define $f = \frac12 \abs{\mu_2} + \tilde{\mu}_1$ and apply the equations \eqref{éq:Hitchin_mu_alt} : we obtain
	\[\Delta f = \frac12 \Delta \abs{\mu_2} - 2e^{-2\tilde{\mu}_1} (\cosh(\mu_2)-1) - 2(e^{-2\tilde{\mu}_1}-1). \]
	From the rough bounds \eqref{éq:encadrement_grossier_mu} and from $\tilde{\mu}_1 \leq 0$, we deduce bounds
	\begin{align*}
		e^{-2\tilde{\mu}_1} (\cosh(\mu_2)-1) &\leq C_1 \mu_2^2, \\
		e^{-2 \tilde{\mu}_1} -1 &\leq -C_2 \tilde{\mu}_1. 
	\end{align*}
	Inserting these bounds and equation \eqref{éq:Delta_mu2} into the expression of $\Delta f$, we find
	\[\frac12 \Delta f \geq C_0 \abs{\mu_2} - C_1 \mu_2^2 + C_2 \tilde{\mu}_1. \]
	We have already shown a formula of the type $\abs{\mu_2} \leq C e^{-\alpha \rho}$ : $\abs{\mu_2}$ is therefore less than $\frac{C_0}{2C_1}$ on a sub-ball $B(x,\rho(S,x)-\rho_0)$ (with $\rho_0$ depending only on the previous constants). On $B(x,\rho(S,x)-\rho_0)$, we thus obtain an inequality of the type $\Delta f \geq C_3 f$. The Lemma \ref{lemme:décroissance_exponentielle} then shows the exponential decay of $f$, thus that of $\tilde{\mu}_1 = f - \frac12 \mu_2$. 
\end{proof}

We now use the Bochner formula to control the derivative of the second fundamental form.
\begin{proposition}[see {\cite[Proposition 4.2]{MT25}}, and {\cite[section 5.1]{LT22}} for $\H^{2,n}$]\label{proposition:Bochner_II}
	Let $M$ be a complete maximal $p$-submanifold of $\H^{p,q}$, whose second fundamental form $\II$ is viewed as a $1$-form with values in $\Omega^1(\Nor M)$, and $\Scal$ the scalar curvature (equal to the sectional curvature $K$ if $p=2$). We have
	\[\frac12 \Delta \norm{\II}^2 + 2p \Scal \geq \norm{\nabla \II}^2. \]
\end{proposition}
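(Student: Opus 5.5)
This is a Simons-type identity for the second fundamental form viewed as a normal-bundle-valued $1$-form: $\omega = \II \in \Omega^1(M,\Hom(\Tan M,\Nor M))$. The plan is to write a Weitzenböck formula for $\omega$ with respect to the connection induced on $\Tan^*M\otimes\Nor M$, use Codazzi and maximality to make $\omega$ harmonic, and then bound the curvature term with Gauss and Ricci.

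\textbf{Step 1: harmonicity.} By \eqref{éq:Codazzi_0}, $\d^\nabla \omega = 0$. Maximality ($\tr \II = 0$) gives $\d^{\nabla *} \omega = -\sum_i (\nabla_{e_i}\II)(e_i,\cdot) = -\nabla(\tr\II) = 0$. The middle equality uses the symmetry of $\nabla \II$ that follows from Codazzi.

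\textbf{Step 2: Bochner--Weitzenböck.} For a harmonic bundle-valued $1$-form we have
\[
\tfrac12 \Delta \norm{\omega}^2 = \norm{\nabla \omega}^2 + \scal{\mathfrak{R}(\omega)}{\omega},
\]
with the sign convention in which $\Delta$ is nonnegative on subharmonic functions. The Weitzenböck term splits as
\[
\scal{\mathfrak{R}(\omega)}{\omega} = \sum_i \scal{\omega(\Ric(e_i))}{\omega(e_i)} + \sum_{i,j} \scal{(\courb^{\Nor}(e_i,e_j) - \courb^{\Hom}\ \text{tangent part})\,\omega(e_i)}{\omega(e_j)}.
\]

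\textbf{Step 3: curvature terms via Gauss and Ricci.} We substitute \eqref{éq:Gauss_0} and \eqref{éq:Ricci_0}. Gauss gives $\Ric(X,Y) = -(p-1)\scal XY + \sum_i \scal{\II(X,e_i)}{\II(Y,e_i)}$, with a sign fixed by the negativity of $\Nor M$. Ricci expresses $\courb^{\Nor}$ through the shape operators $B$. Collecting terms, $\scal{\mathfrak R(\omega)}{\omega}$ becomes a quadratic part $-p\norm{\II}^2$ plus quartic terms in $\II$: sums of $\norm{[A_n,A_{n'}]}^2$ and $\tr(A_nA_{n'})^2$ type, where $A_n = \scal{\II}{n}$. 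With the pseudo-Riemannian signs, the quartic terms combine so that
\[
\scal{\mathfrak R(\omega)}{\omega} \geq -p\norm{\II}^2 - (\text{positive multiple of } \norm{\II}^4).
\]
This must be arranged as $\geq -2p\,\Scal$ after using $\Scal = -p(p-1) + \norm{\II}^2$, which is Gauss traced twice with the paper's normalisation.

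\textbf{Step 4: conclusion.} Rearranging gives $\frac12\Delta\norm{\II}^2 + 2p\Scal \geq \norm{\nabla\II}^2$. The main obstacle is Step 3, namely controlling the quartic normal terms with the correct signs. Here the signature flips the usual Simons inequality, so the estimate $\sum\norm{[A_n,A_{n'}]}^2 + \sum \tr(A_nA_{n'})^2 \leq c\norm{\II}^4$ has to be used in the favorable direction. If the coefficient does not match exactly, the fallback is to use Theorem~\ref{theoreme:Ishihara}, i.e. $\norm{\II}^2 \leq p\min(p-1,q)$, to absorb the quartic terms into the quadratic ones.

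For $p=2$, $q=2$, as used later in the paper, the computation can instead be done directly with $u,v$. We have $\norm{\II}^2 = 2(e^u+e^v)$ and $\norm{\nabla\II}^2 = 2(e^u\norm{\nabla u}^2 + e^v\norm{\nabla v}^2)/4$, with the factor to be checked. Then $\Delta e^u = e^u(\Delta u + \norm{\nabla u}^2)$ together with \eqref{éq:Hitchin_uv} gives the inequality up to the term $2\Delta(e^u+e^v)\geq\ldots$, and a direct check closes it.
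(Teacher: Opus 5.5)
Your Steps 1--2 are the standard route: this is the Simons identity behind \cite[Proposition 4.2]{MT25} and \cite[section 5.1]{LT22}, and the paper gives no proof of its own. But Step 3 cannot be completed, because the inequality you are aiming at is false as printed. Take the totally geodesic copy of $\H^p$ in $\H^{p,q}$. It is complete and maximal with $\II\equiv 0$, so $\frac12\Delta\norm{\II}^2=\norm{\nabla\II}^2=0$ and your curvature term vanishes, whereas $\Scal<0$ (for $p=2$, $\Scal=K=-1$). Your required bound ``curvature term $\geq -2p\Scal$'' then reads $0\geq$ a positive constant. This holds whatever the normalisation of $\Scal$ and whatever the sign convention for $\Delta$. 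No appeal to Theorem \ref{theoreme:Ishihara} can help, since the deficit is a constant and not a quartic term. Your $p=2$ fallback fails for the same reason. With the correct factor $\norm{\nabla\II}^2=e^u\norm{\nabla u}^2+e^v\norm{\nabla v}^2$, the equations \eqref{éq:Hitchin_uv} give exactly
\[
\tfrac12\Delta\norm{\II}^2-\norm{\nabla\II}^2=6e^{2u}+6e^{2v}+4e^{u+v}-4(e^u+e^v)=4K+4K^2+2(\det\II)^2.
\]
Hence $\frac12\Delta\norm{\II}^2+4K-\norm{\nabla\II}^2=4K(K+2)+2(\det\II)^2$. This is negative wherever $\det\II=0$ and $K<0$, for instance on any maximal surface contained in a copy of $\H^{2,1}$ other than a Barbot surface.

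What Steps 1--2 actually give is $\frac12\Delta\norm{\II}^2=\norm{\nabla\II}^2-p\norm{\II}^2+Q$, where $Q=\sum\norm{[A_n,A_{n'}]}^2+\sum\tr(A_nA_{n'})^2\geq0$. Here $Q$ enters with a \emph{plus} sign because $\Nor M$ is negative definite; this is what drives Ishihara's bound. So your lower bound $-p\norm{\II}^2-c\norm{\II}^4$ has the quartic term backwards, and no upper bound on $Q$ is relevant. For $p=2$ in $\H^{2,2}$ one gets $Q=\norm{\II}^4+2(\det\II)^2$, which together with $\norm{\II}^2=2(1+K)$ is the identity above. It yields $\frac12\Delta\norm{\II}^2-4K-\norm{\nabla\II}^2=4K^2+2(\det\II)^2\geq0$. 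In other words, the statement holds with the $2p\Scal$ term moved to the other side: $\frac12\Delta\norm{\II}^2-2p\Scal\geq\norm{\nabla\II}^2$. This implies the inequality $\Delta K\geq 4K$ of \cite{LT22} used in Lemma \ref{lemme:Harnack_K}. It is also all that Corollary \ref{corollaire:contrôle_nablaII} needs, namely $\norm{\nabla\II}^2\leq\frac12\Delta\norm{\II}^2+C\abs{K}$. So the defect is the sign of the target, which you inherited from the statement, not a missing estimate. Note also that your $\Scal=-p(p-1)+\norm{\II}^2$ is the usual scalar curvature, i.e.\ twice the $\Scal$ of the statement, which equals $K$ when $p=2$.
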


\begin{corollaire}[control of $\nabla \II$]\label{corollaire:contrôle_nablaII}
	The function $\norm{\nabla \II}$ is exponentially decaying (see Definition \ref{définition:décroissance_exponentielle}), where $\II$ is considered as a section of $\Tan^S \otimes \Tan^ S \otimes \Nor S$ (or, equivalently, as a $1$-form with values in $\Tan^* S \otimes \Nor S$).
\end{corollaire}
\begin{proof}
	We show successively that the following functions are exponentially decaying (see Definition \ref{définition:décroissance_exponentielle}).
	\begin{itemize}
		\item The functions $\tilde{\mu}_1$ and $\mu_2$: this is Proposition \ref{proposition:contrôle_mu}.
		\item Their Laplacians $\Delta \mu_1 = 2K$ and $\Delta \mu_2$ by the equations \eqref{éq:Hitchin_mu_alt}.
		\item Their derivatives $\norm{\nabla \tilde{\mu}_1}$ and $\norm{\nabla \mu_2}$ by Proposition \ref{proposition:estimées_W2p} (elliptic estimates and Sobolev inequality).
		\item The Laplacian $\Delta \norm{\II}^2$, which is equal by the Gau\ss\ equation \eqref{éq:Gauss} to $\Delta K = (\Delta u + \norm{\nabla u}^2) e^u + (\Delta v + \norm{\nabla v}^2) e^v$.
	\end{itemize}
	We can then conclude using Proposition \ref{proposition:Bochner_II}, which bounds $\norm{\nabla \II}^2$ by quantities whose exponential decay has just been shown.
\end{proof}

\section{Upper bound on the volume}
\label{section:borne_supérieure}

We establish here two results. The first is Proposition \ref{proposition:décroissance_volume}, which states that the function $\bar{\bf{V}}$, which associates to a point of a maximal surface $S$ of $\H^{2,2}$ the volume of the slice of $\Conv(S)$ above this point, is exponentially decaying (see Definition \ref{définition:volume_tranche}). The second is Theorem \ref{theoreme:borne_sup}, which gives an upper bound on the integral of an exponentially decaying function and from which we deduce Theorem \ref{théorème:borne_sup_cocompacte}.

\subsection{Volume above a point}
\label{subsection:contrôle_volume}

The proof of Proposition \ref{proposition:décroissance_volume} goes as follows. Given a point $x$ at large distance $\rho = \rho(S,x)$ from the set $D^k_S$ (which is equivalent to $K(x)$ being close to $0$), we find a unit vector to which the plane $\Tan_y S$ remains almost orthogonal for $y$ in the ball $B(x,\rho)$ (Lemma \ref{lemme:décroissance_angles}). Thus, the convex hull of $S$ remains close to the orthogonal of this vector on this ball. It is therefore almost contained in a totally geodesic subspace of $\H^{2,2}$, hence an upper bound on $\bar{\bf{V}}$.

Let us begin by defining this unit vector. We can consider the second fundamental form $\II$ of $S$ as a bundle morphism from $\Sym^2_0(\Tan S)$ to $\Nor S$ (see section \ref{subsection:équations_fondamentales}). These bundles being of dimension $2$ and endowed with Riemannian metrics, this bundle morphism admits a polar decomposition. In particular, if $\II_x : \Sym^2_0(\Tan_x S) \to \Nor_x S$ is not a conformal map, then the set of unit vectors of $\Sym^2_0(\Tan_x S)$ is sent to an ellipse of $\Nor_x S$ (possibly degenerate), which has a major axis $A = \II(\sf{V},\sf{V})$ and a minor axis $a = \II(\sf{V},J\sf{V})$ (well defined up to sign), these formulas defining $\sf{V} \in \Tan^1_x S$ (up to replacement by $J\sf{V}$, $-\sf{V}$ or $-J\sf{V}$).

\begin{lemme}\label{lemme:longueurs_axes}
	The axes $A,a$ of $\II$ satisfy $\sqrt2 \norm{A} = e^\frac{u}{2} + e^\frac{v}{2}$ and $\sqrt2 \norm{a} = \abs{e^\frac{u}{2} - e^\frac{v}{2}}$. 
\end{lemme}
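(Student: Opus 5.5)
The plan is to compute the image of the unit circle of $\Sym^2_0(\Tan_x S)$ under $\II$ explicitly in terms of the components $\alpha_0 \in \cal{N}_x$ and $\beta_0 \in \cal{N}^{-1}_x$ of $\II^\C(\partial_z,\partial_z)$. This reduces the statement to the elementary fact that the sum of two circular motions turning in opposite directions traces an ellipse whose semi-axes are the sum and the difference of the two radii. The normalisation to use is the one coming from the definitions $e^u = \norm{\alpha}^2$ and $e^v = \norm{\beta}^2$, that is $\norm{\alpha_0} = e^{\frac u2}$ and $\norm{\beta_0} = e^{\frac v2}$ (with $\partial_z = \frac{1}{\sqrt2}(\sf{u} - iJ\sf{u})$ unit).

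\textbf{Step 1: parametrising the unit circle.} Fix a unit vector $\sf{u} \in \Tan^1_x S$ and set $\sf{w}_\theta = \cos\theta\, \sf{u} + \sin\theta\, J\sf{u} = \frac{1}{\sqrt2}(e^{i\theta}\partial_z + e^{-i\theta}\partial_{\bar z})$. Using $\II^{(1,1)} = 0$ and equation \eqref{éq:II_symétrie_conjuguée}, I obtain
\[\II(\sf{w}_\theta,\sf{w}_\theta) = \Re\left(e^{2i\theta}\, \II(\partial_z,\partial_z)\right) = \Re\left(e^{2i\theta}(\alpha_0 + \beta_0)\right).\]
The unit vectors of $\Sym^2_0(\Tan_x S)$ are exactly the tensors $\frac{1}{\sqrt2}(\sf{w}_\theta \otimes \sf{w}_\theta - J\sf{w}_\theta \otimes J\sf{w}_\theta)$, and $\II$ maps each of them to $\sqrt2\, \II(\sf{w}_\theta,\sf{w}_\theta)$, exactly as $\II(f_1) = \sqrt2\, \II(e_1,e_1)$ in the proof of Lemma \ref{lemme:Ricci}. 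So the ellipse is the curve $\phi \mapsto \sqrt2\, \Re(e^{i\phi}(\alpha_0 + \beta_0))$ with $\phi = 2\theta$.

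\textbf{Step 2: the two circular motions.} Since $\cal{N} = \ker(J_{\Nor} - i)$, I can write $\alpha_0 = \frac{1}{\sqrt2}(\sf{x} - iJ\sf{x})$ for some real $\sf{x} \in \Nor_x S$, and then $h(\alpha_0,\alpha_0) = \norm{\sf{x}}^2$. A direct computation gives
\[\sqrt2\, \Re(e^{i\phi}\alpha_0) = \cos\phi\, \sf{x} + \sin\phi\, J\sf{x},\]
which is a circle of radius $\norm{\alpha_0} = e^{\frac u2}$ traversed positively. In the same way, writing $\beta_0 = \frac{1}{\sqrt2}(\sf{y} + iJ\sf{y})$, one gets a circle of radius $e^{\frac v2}$ traversed negatively. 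The sum of the two rotating vectors has maximal norm $e^{\frac u2} + e^{\frac v2}$, attained when they are aligned, and minimal norm $\abs{e^{\frac u2} - e^{\frac v2}}$, attained a quarter turn later in $\phi$, that is an eighth of a turn later in $\theta$.

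\textbf{Step 3: matching with $A$ and $a$.} Choose $\sf{V} = \sf{w}_{\theta_0}$ at the maximum. Then $A = \II(\sf{V},\sf{V})$, and the image of the unit tensor attached to $\sf{V}$ is $\sqrt2 A$, so $\sqrt2\norm{A} = e^{\frac u2} + e^{\frac v2}$. The orthogonal unit tensor $\frac{1}{\sqrt2}(\sf{V} \otimes J\sf{V} + J\sf{V} \otimes \sf{V})$ is the $\theta_0 + \frac{\pi}{4}$ element of the family, i.e. $\phi$ shifted by $\frac{\pi}{2}$, and its image is $\sqrt2\, \II(\sf{V},J\sf{V}) = \sqrt2\, a$. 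Hence $\sqrt2\norm{a} = \abs{e^{\frac u2} - e^{\frac v2}}$.

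The only delicate point is bookkeeping the factors of $\sqrt2$. This involves the normalisation of $\partial_z$, the identification of a unit element of $\cal{N}$ with a real vector, and the passage from $\II(\sf{w},\sf{w})$ to $\II$ evaluated on the unit tensor. I would check these factors against the Gau\ss{} equation $K = -1 + e^u + e^v$, since summing the squares of the two semi-axes must give $\frac12\norm{\II}^2$. The degenerate case where $\alpha_0$ or $\beta_0$ vanishes is covered by the same formulas.
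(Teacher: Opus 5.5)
Your proof is correct and takes the same route as the paper. You parametrise the unit circle through $\partial_z,\partial_{\bar z}$, use $\II^{(1,1)}=0$ and the splitting $\II(\partial_z,\partial_z)=\alpha_0+\beta_0$, and read off the extremal norms $e^{u/2}\pm e^{v/2}$; your two counter-rotating circles are a geometric version of the paper's expansion $\norm{\alpha_0}^2+\norm{\beta_0}^2+2\Re(\cdots)$, and they keep the normalisations consistent ($\partial_z$ unit, $\norm{\alpha_0}=e^{u/2}$, the $\sqrt2$ from the unit tensor, and the minimum attained exactly at $\II(\sf{V},J\sf{V})$).
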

\begin{proof}
	Let us note that the image of the unit circle by $\II : \Sym^2_0(\Tan_x S) \to \Nor S$ corresponds to the set of $\II(\sf{u},\sf{u})$ with $\sf{u} \in \Tan_x^1 S$ unit tangent (where this time we consider $\II$ as a morphism from $\Tan S \otimes \Tan S$ into $\Nor S$). Let $\sf{u} \in \Tan^1_x S$ be a unit tangent vector. We define unit vectors $\partial_z = \frac12 (\sf{u} - iJ\sf{u}) \in \Tan^{(1,0)}_x S$ and $\partial_{\bar{z}} = \frac12 (\sf{u} + iJ\sf{u}) \in \Tan^{(0,1)}_x S$. In particular, the decomposition $\II^\C(\partial_z,\partial_z) = \alpha_0 + \beta_0$ (with $\alpha_0 \in \cal{N}$ and $\beta_0 \in \cal{N}^{-1}$) satisfies $\norm{\alpha_0} = e^u$ and $\norm{\beta_0} = e^v$. Let us denote, for all $a \in \C$, $\sf{t}_a = a \partial_z + \bar{a} \partial_{\bar{z}}$ : $\Tan_x S$ is the set of $\sf{t}_a$, for $a$ in $\C$. The equality $h(\sf{t}_a,\sf{t}_a) = \abs{a}^2$ shows that the unit tangent $\Tan^1_x S$ is the set of $\sf{t}_u$, for $u$ in the set $\bb{U}$ of complex numbers of norm one. For $u$ in $\bb{U}$, denoting by $h$ the hermitian metric of $(\Nor S)^\C$, we have
	\begin{align*}
		&2 \norm{\II(\sf{t}_u,\sf{t}_u)}^2 = \frac12 \norm{u^2 \II(\partial_z,\partial_z) + \bar{u}^2 \II(\partial_{\bar{z}},\partial_{\bar{z}})}^2 \tag*{since $\II(\partial_z,\partial_{\bar z}) = 0$} \\
		&= h(u^2 \alpha_0 + \bar{u}^2 \beta_0, u^2 \alpha_0 + \bar{u}^2 \beta_0) \\
		&= \norm{\alpha_0}^2 + \norm{\beta_0}^2 + u^4 h(\alpha_0,\beta_0) + \bar{u}^4 h(\beta_0,\alpha_0) \\
		&= \norm{\alpha_0}^2 + \norm{\beta_0}^2 + 2 \Re \left[ u^4 h(\alpha_0,\beta_0) \right]. 
	\end{align*}
	By definition of the axes $A$ and $a$, the maximum (for $u \in \bb{U}$) of the previous expression is $2 \norm{A}^2$ and its minimum is $2 \norm{a}^2$. These extrema are equal to $\norm{\alpha_0}^2 + \norm{\beta_0}^2 \pm 2 \norm{\alpha_0} \norm{\beta_0}$, hence
	\begin{align*}
		2\norm{A}^2 &= \left[ \norm{\alpha_0}^2 + \norm{\beta_0}^2 + 2 \norm{\alpha_0} \norm{\beta_0} \right] = (e^\frac{u}{2} + e^\frac{v}{2})^2 \tag*{and} \\
		2\norm{a}^2 &= \left[ \norm{\alpha_0}^2 + \norm{\beta_0}^2 - 2 \norm{\alpha_0} \norm{\beta_0} \right] = (e^\frac{u}{2} - e^\frac{v}{2})^2, 
	\end{align*}
	as desired. 
\end{proof}

\begin{remarque}\label{remarque:fonctions_géométriques_axes}
	We deduce from Lemma~\ref{lemme:longueurs_axes} that if the axes have the same length at a point $x$, then $u$ or $v$ is not defined : morally, we have $e^u = 0$ or $e^v = 0$. In particular, the sectional curvature satisfies $K(x) = -1 + e^u + e^v \leq -\frac13$, hence $x$ is in $D^k$ : the axes are therefore well defined outside $D^k$. Let us also note that we could define $\mu_1$ and $\abs{\mu_2}$ more geometrically via the equalities $\norm{A} = \sqrt2 e^{\mu_1} \cosh(\frac{\mu_2}{2})$ and $\norm{a} = \sqrt2 e^{\mu_1} \sinh(\frac{\abs{\mu_2}}{2})$. 
\end{remarque}

From now on, let un consider a ball $B(x_0,R)$ of $S$ not intersecting $D^k$. This ball is simply connected (since $K \leq 0$ by Theorem~\ref{theoreme:Ishihara}) and $\II$ cannot be conformal on any point of $B(x_0,R)$ by remark~\ref{remarque:fonctions_géométriques_axes}. The curvature satisfies $K = -1 + \norm{A}^2 + \norm{a}^2$ (Lemma~\ref{lemme:longueurs_axes} and equation \eqref{éq:courbures_uv}) as well as $K \geq k \geq -\frac13$ on $B(x_0,R)$ and we have $\norm{A} \geq \norm{a}$, hence on this ball we have
\begin{equation}\label{éq:borne_inf_A}
	\norm{A}^2 \geq \frac13. 
\end{equation}
We can therefore choose a vector field $\sf{V}$ on $B(x_0,R)$ such that at each point, $\II(\sf{V},\sf{V}) = A$ is the major axis and $\II(\sf{V},J\sf{V}) = a$ the minor axis (both being well defined up to sign). The idea of this section is to show that if the curvature $K$ is close to $0$ around $x_0$, then $S$ remains close to the orthogonal of $a_{x_0}$ (minor axis at $x_0$) around $x_0$. For this, let us lift $S$ to a complete maximal surface in $\Hc^{2,2}$ still denoted by $S$ (this is possible according to Lemma~\ref{lemme:relèvement_unique}) and introduce the concept of \emph{angle}. 

For any definite vector subspace $E$ of $\R^{p,q}$, let us denote $\epsilon(E) = +1$ if $E$ is positive, $\epsilon(E) = -1$ if $E$ is negative. Let $E$ and $F$ be two definite vector subspaces of $\R^{p,q}$ and $(e_i)$ an orthonormal basis of $E$. Given a morphism $f \in \Hom(E,F)$, we define its adjoint $f^* : F \to E$ by $\scal{f(u)}{v} = \scal{u}{f^*(v)}$, as well as its norm :
\[\norm{f}_2^2 = \sum_i \norm{f(e_i)}^2 = \epsilon(E) \epsilon(F) \tr(f \circ f^*). \]
The second equality shows $\norm{f}_2 = \norm{f^*}_2$. Let us denote by $\pi_{E \to F} : E \to F$ the orthogonal projection. We define the \emph{angle} $\theta(E,F) \in \R_+$ between $E$ and $F$ by
\[\theta(E,F)^2 = \norm{\pi_{E \to F}}_2^2. \]
We have $\pi_{E \to F}^* = \pi_{F \to E}$, hence $\theta(E,F) = \theta(F,E)$. Given two definite vector subspaces $E_1,E_2$ of $\R^{p,q}$ and vectors $v_1 \in E_1$, $v_2 \in E_2$, we have by definition:
\begin{equation}\label{éq:angles_scalaire_projections}
	\abs{\scal{\pi_{E_1 \to F}(v_1)}{\pi_{E_2 \to F}(v_2)}} \leq \theta(E_1,F) \theta(E_2,F) \norm{v_1} \norm{v_2}. 
\end{equation}
\begin{lemme}\label{lemme:angles_somme}
	Let $E_1, \dots, E_n$ and $F$ be definite subspaces of $\R^{p,q}$ such that the $E_i$ are pairwise orthogonal and satisfy $\bigoplus_{i=1}^n E_i = \R^{p,q}$. We have
	\begin{equation}
		\epsilon(F) \dim(F) = \sum_i \epsilon(E_i)\, \theta(E_i,F)^2. 
	\end{equation}
\end{lemme}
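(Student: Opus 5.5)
We write $\epsilon(F)\dim(F)$ as a trace and decompose the identity of $F$ along the orthogonal splitting $\R^{p,q} = \bigoplus_i E_i$. Fix an orthonormal basis $(f_j)$ of $F$, so that $\scal{f_j}{f_j} = \epsilon(F)$, and orthonormal bases $(e^{(i)}_k)_k$ of the $E_i$, so that $\scal{e^{(i)}_k}{e^{(i)}_k} = \epsilon(E_i)$. Because the $E_i$ are pairwise orthogonal, definite and span $\R^{p,q}$, the union of these bases is an orthonormal basis of the whole space. For every vector $w$ we therefore have the expansion
\[w = \sum_i \pi_{\R^{p,q}\to E_i}(w) = \sum_{i,k} \epsilon(E_i)\scal{w}{e^{(i)}_k} e^{(i)}_k .\]

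Taking $w = f_j$ and pairing with $f_j$ gives
\[\epsilon(F) = \scal{f_j}{f_j} = \sum_{i,k} \epsilon(E_i)\scal{f_j}{e^{(i)}_k}^2 .\]
Summing over $j$, the left-hand side becomes $\epsilon(F)\dim F$. On the right-hand side, for fixed $i$, we compute
\[\sum_{j,k}\scal{f_j}{e^{(i)}_k}^2 = \sum_k \sum_j \scal{e^{(i)}_k}{f_j}^2 .\]
We now identify this with $\theta(E_i,F)^2$. For $e \in E_i$, the projection is $\pi_{E_i\to F}(e) = \sum_j \epsilon(F)\scal{e}{f_j} f_j$. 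Hence its squared norm, taken with the definite sign of $F$ as in the definition of $\norm{\cdot}_2$, is $\sum_j \scal{e}{f_j}^2$. This is the one point that needs care: the definition $\norm{f}_2^2 = \sum_i\norm{f(e_i)}^2$ must be read with $\norm{\cdot}$ the positive norm $\abs{\scal{\cdot}{\cdot}}^{1/2}$ on the definite space $F$. With that reading it is consistent with $\epsilon(E)\epsilon(F)\tr(ff^*)$, and every term is nonnegative. Summing over $k$ then gives
\[\sum_k\sum_j \scal{e^{(i)}_k}{f_j}^2 = \norm{\pi_{E_i\to F}}_2^2 = \theta(E_i,F)^2 .\]

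Combining the two computations,
\[\epsilon(F)\dim F = \sum_i \epsilon(E_i)\,\theta(E_i,F)^2 ,\]
which is the claim. The only real obstacle is the sign bookkeeping. We should double-check it on a simple case: $F = E_1$ gives $\theta(E_1,E_1)^2 = \dim E_1$ and $\theta(E_i,E_1) = 0$ for $i\neq 1$, so both sides equal $\epsilon(E_1)\dim E_1$, as required.
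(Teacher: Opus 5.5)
Your proof is correct and is essentially the paper's argument: expand each $f_j$ along the orthogonal splitting $\bigoplus_i E_i$, pair with $f_j$ and sum over $j$. Your reading $\|v\|^2=\epsilon(F)\langle v,v\rangle$ is the one consistent with $\|f\|_2^2=\epsilon(E)\epsilon(F)\,\mathrm{tr}(f\circ f^*)$. The only difference is cosmetic: the paper arrives at $\epsilon(E_i)\,\theta(F,E_i)^2$ via the projections $\pi_{F\to E_i}$ and then invokes the symmetry $\theta(F,E_i)=\theta(E_i,F)$. Your coordinate sum $\sum_{j,k}\langle f_j,e^{(i)}_k\rangle^2$ is manifestly symmetric and gives $\theta(E_i,F)^2$ directly.
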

\begin{proof}
	Let $(f_i)$ be an orthonormal basis of $F$. We have
	\begin{align*}
		\epsilon(F) \dim(F) &= \sum_i \scal{f_i}{f_i} \\
		&= \sum_{i,j} \scal{\pi_{F \to E_j} (f_i)}{\pi_{F \to E_j} (f_i)} \tag*{since $\stackrel{\perp}{\bigoplus} E_j = \R^{p,q}$} \\
		&= \sum_j \epsilon(E_j) \theta(F,E_j)^2 \tag*{by definition,}
	\end{align*}
	whence the result given the symmetry $\theta(E_i,F) = \theta(F,E_i)$. 
\end{proof}
Let us fix from now on $F = \Vect{J A_{x_0}}$, equal to $\Vect{a_{x_0}}$ if $a_{x_0}$ is non-zero, and let us denote $\pi : \R^{2,3} \to F$ the orthogonal projection. Let $\gamma : \R \to B(x_0,\frac{R}{2})$ be a path such that $\gamma(0) = x_0$, parametrised by arc length ($\norm{\dot{\gamma}} = 1$). We will apply the notion of angle to compare $F$ with one of the definite subspaces $\Vect{\gamma(t)}$, $\Tan_{\gamma} S$, $\Vect{A_{\gamma(t)}}$ and $\Vect{J A_{\gamma(t)}}$ (equal to $\Vect{a_{\gamma(t)}}$ if $a_{\gamma(t)}$ is non-zero). This defines functions on $\R$:
\begin{equation*}
	\left\{
	\begin{aligned}
		\theta_{\gamma}(t) &= \theta(F, \Vect{\gamma(t)}) \\
		\theta_{\Tan}(t) &= \theta(F, \Tan_{\gamma(t)} S) \\
		\theta_A(t) &= \theta(F, \Vect{A_{\gamma(t)}}) \\
		\theta_a(t) &= \theta(F, \Vect{J A_{\gamma(t)}})
	\end{aligned}
	\right.
\end{equation*}

\begin{lemme}\label{lemme:inégalité_différentielle_theta}
There exist constants $R_0, C, \alpha \in \R_{>0}$ (depending only on $k$ in Definition \ref{définition:décroissance_exponentielle}) such that if $R \geq R_0$, then we have
\begin{equation*}
	\left\{
	\begin{aligned}
		& \left| \theta_\gamma' \right| \leq \theta_{\Tan} &&\text{when } \theta_\gamma > 0 \\
		& \left| \theta_{\Tan}' \right| \leq \theta_\gamma + \norm{A} \theta_{A} + \norm{a} \theta_{a} &&\text{when } \theta_{\Tan} > 0 \\
		& \left| \theta_{A}' \right| \leq \theta_{\Tan} + (\theta_a + \theta_A) C e^{-\alpha R} &&\text{when } \theta_{A} > 0
	\end{aligned}
	\right.
\end{equation*}
\end{lemme}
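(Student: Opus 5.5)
Each of the three inequalities has the same shape: differentiate the squared angle $\theta_\bullet^2 = \norm{\pi_{E\to F}}_2^2$ along $\gamma$, and bound the resulting cross term using \eqref{éq:angles_scalaire_projections}. Since $F = \Vect{JA_{x_0}}$ is one-dimensional, negative and fixed, write $f$ for a unit generator. Then $\theta(F,E)^2 = \sum_i \scal{f}{e_i}^2$ for an orthonormal frame $(e_i)$ of $E$, up to the signs $\epsilon$, which are constant along $\gamma$. Differentiating,
\[
2\theta_\bullet\theta_\bullet' = 2\sum_i \scal{f}{e_i}\scal{f}{e_i'}.
\]
Wherever $\theta_\bullet>0$ we may divide by $\theta_\bullet$. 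Cauchy--Schwarz then gives $\abs{\theta_\bullet'} \le \bigl(\sum_i \scal{f}{\pi(e_i')}^2\bigr)^{1/2}$, which is the norm of $\pi$ restricted to the derivative of the frame. So in each case the task is to compute $e_i'$ in $\R^{2,3}$ using the ambient flat connection and to decompose it into the pieces $\Vect{\gamma}$, $\Tan S$, $\Vect{A}$, $\Vect{JA}$.

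For $\theta_\gamma$, take the frame $e=\gamma(t)$, which lies in $\Hc^{2,2}$ and is unit negative. Then $e' = \dot\gamma \in \Tan_{\gamma}S$ is a unit vector, so $\abs{\scal{f}{\dot\gamma}}\le \theta_{\Tan}$ and the first inequality follows.

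For $\theta_{\Tan}$, take a parallel orthonormal frame $(e_1,e_2)$ of $\Tan S$ along $\gamma$, so $\nabla^{\Tan}_{\dot\gamma}e_i=0$. Equation \eqref{éq:Hpq_ombilique} combined with the decomposition of $\nabla$ gives
\[
e_i' = \II(\dot\gamma,e_i) + \scal{\dot\gamma}{e_i}\,\gamma .
\]
The $\gamma$-component contributes at most $\theta_\gamma$, because $\sum_i\scal{\dot\gamma}{e_i}^2=1$. For the $\II$-component, expand $\II(\dot\gamma,e_i)$ in the orthogonal basis $(A/\norm{A},\,JA/\norm{JA})$ of $\Nor S$. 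By definition of the axes, $\II$ maps the unit circle of $\Sym^2_0$ into an ellipse with semi-axes $\norm{A}$ and $\norm{a}$. Hence the $A$-coefficient of $\II(\dot\gamma,e_i)$ is at most $\norm{A}$ and the $JA$-coefficient is at most $\norm{a}$, with the squares summed over $i$ controlled appropriately. (The constants must be kept sharp here; if they are not, the $\sqrt2$ normalisation between $\Sym^2_0$ and $\Tan\otimes\Tan$ needs to be tracked.) This yields $\norm{A}\theta_A + \norm{a}\theta_a$.

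For $\theta_A$, take $e = A/\norm{A}$, which is well defined and smooth on the ball thanks to \eqref{éq:borne_inf_A}. Split its derivative as $e' = (\nabla^{\Nor}_{\dot\gamma}e) - B(\dot\gamma,e)$. The shape-operator term lies in $\Tan S$, and since $\norm{B}=\norm{\II}\le\sqrt2$ by Theorem \ref{theoreme:Ishihara}, it contributes at most $\theta_{\Tan}$. We also need the constant $1$ here, which should follow from the same ellipse argument, since $\scal{B(\dot\gamma,e)}{w}=-\scal{e}{\II(\dot\gamma,w)}$. The normal term is a rotation of a unit vector in the plane $\Nor S$, so it equals $\lambda\, JA/\norm{A}$ plus the contribution of $\dot\gamma\cdot\log\norm{A}$, which has no effect on the normalised $e$. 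Its contribution is therefore bounded by $\abs{\lambda}(\theta_a+\theta_A)$, the $\theta_A$ part accounting for the non-orthonormality of the renormalised frame.

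The main obstacle is bounding the angular velocity $\lambda$ of the major axis inside $\Nor S$ by $Ce^{-\alpha R}$. I would obtain it from Corollary \ref{corollaire:contrôle_nablaII}: differentiating the defining relation $A=\II(\sf V,\sf V)$ shows that $\lambda$ is controlled by $\norm{\nabla\II}$ divided by the gap $\norm{A}^2-\norm{a}^2$ of the ellipse. By Lemma \ref{lemme:longueurs_axes}, this gap equals $2e^{(u+v)/2}$, which is bounded below on the ball by \eqref{éq:encadrement_grossier_mu}. Meanwhile $\norm{\nabla\II}(\gamma(t))\le Ce^{-\alpha\rho(\gamma(t))}$, with $\rho(\gamma(t))\ge R/2$ because $\gamma$ stays in $B(x_0,R/2)$ and $B(x_0,R)$ avoids $D^k$. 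Choosing $R\ge R_0$ with $R_0 \geq 2\rho_0$ places us in the range where the exponential decay of Definition \ref{définition:décroissance_exponentielle} holds; renaming $\alpha/2$ as $\alpha$ gives the third inequality.
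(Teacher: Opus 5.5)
Your proposal is correct and follows essentially the paper's route: you differentiate the squared angles along $\gamma$ with a $\nabla^{\Tan}$-parallel frame and the decomposition of the ambient connection along $S$, you expand $\II(\dot\gamma,e_i)$ in the basis $(A,JA)$, and you bound the rotation of the major axis by $\norm{\nabla \II}$ divided by the gap $\norm{A}^2-\norm{a}^2$, which is the paper's Lemma \ref{lemme:contrôle_direction_A} combined with Corollary \ref{corollaire:contrôle_nablaII}. The constant you left hedged is exact: writing $\dot\gamma=\cos\phi\,\sf{V}+\sin\phi\,J\sf{V}$ gives $\II(\dot\gamma,\sf{V})=\cos\phi\,A+\sin\phi\,a$ and $\II(\dot\gamma,J\sf{V})=\cos\phi\,a-\sin\phi\,A$, so in any orthonormal frame the $A$- and $a$-coefficients have square sums exactly $\norm{A}^2$ and $\norm{a}^2$, which yields the sharp second inequality and, since $\norm{A}^2+\norm{a}^2=1+K\leq 1$, the bound $\norm{B(\dot\gamma,A/\norm{A})}=\norm{A}\leq 1$ needed in the third.
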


\begin{proof}
Recall (see section \ref{subsection:geom_projective_hpq}) that for $\sf{v} \in \Tan_x \Hc^{p,q}$ and $X$ a vector field on $\Hc^{p,q}$ in a neighbourhood of $x$, the connections $\nabla^\R$ of $\R^{p,q+1}$ and $\nabla^\H$ of $\Hc^{p,q}$ satisfy $\nabla^{\R}_\sf{v} X = \scal{\sf{v}}{X} x + \nabla^{\H}_\sf{v} X$ (equation \eqref{éq:Hpq_ombilique}). If moreover $x \in S$ and $v \in \Tan_x S$, then in the decomposition $\Tan_x \Hc^{p,q} = \Tan_x S \oplus \Nor_x S$, we have
\[ \nabla^{\H}_\sf{v} = 
\begin{pmatrix}
	\nabla^{\Tan}_\sf{v} & B(\sf{v}, \cdot)\\
	\II(\sf{v},\cdot) & \nabla^{\Nor}_\sf{v}
\end{pmatrix}. \]
By definition, we have $(\theta_{\gamma})^2 = -\scal{\pi(\gamma)}{\pi(\gamma)}$ (where $\pi$ still denotes the orthogonal projection on $F$). Differentiating this expression gives
\begin{align*}
	\dt[(\theta_{\gamma})^2] &= -2 \scal{\pi(\nabla^{\R}_{\dot \gamma} \gamma)}{\pi(\gamma)} \\
	&= -2 \scal{\pi(\dot \gamma)}{\pi(\gamma)}
\end{align*}
thus formula \eqref{éq:angles_scalaire_projections} gives the desired inequality. For $\theta_{\Tan}'$, same idea: fix an orthonormal basis of $\Tan_{\gamma(0)} S$, then transport it along $\gamma$ thanks to the connection $\nabla^{\Tan}$ on the bundle $\Tan S$. We obtain a basis $(\rm{t}_i(t))$ of $\Tan_{\gamma(t)} S$, which allows writing the definition $\theta_{\Tan}^2 = -\sum_i \scal{\pi(\rm{t}_i)}{\pi(\rm{t}_i)}$. From now on, we omit the indices : for exemple, $A_{\gamma(t)}$ will be simply denoted by $A$. Let us differentiate, taking into account $\nabla^{\R}_{\dot{\gamma}} \sf{t} = \scal{\dot{\gamma}}{\sf{t}} \gamma + \II(\sf{t},\dot{\gamma})$ :
\begin{align}
	\dt[(\theta_{\Tan}^2)] &= -2 \sum_i \scal{\pi(\nabla^{\R}_{\dot \gamma} \rm{t}_i)}{\pi(\rm{t}_i)} \nonumber \\
	&= -2 \sum_i \scal{\dot \gamma}{\rm{t}_i} \scal{\pi(\gamma)}{\pi(\rm{t}_i)} -2 \sum_i \scal{\pi(\II(\dot \gamma,\rm{t}_i))}{\pi(\rm{t}_i)} \nonumber \\
	&= -2 \scal{\pi(\gamma)}{\pi(\dot \gamma)} -2 \sum_i \scal{\pi(\II(\dot \gamma,\rm{t}_i))}{\pi(\rm{t}_i)} \label{éq:inégalité_différentielle_theta_T}
\end{align}
To control the second term, set $\II(\sf{t}_i,\dot{\gamma}) = u_i \frac{A}{\norm{A}} + v_i \frac{a}{\norm{a}}$, where $\abs{u_i} \leq \norm{A}$ and $\abs{v_i} \leq \norm{a}$, so that
\begin{align*}
	\sum_i \scal{\pi(\II(\dot \gamma,\rm{t}_i))}{\pi(\rm{t}_i)} &= \sum_i \scal{\pi(u_i \frac{A}{\norm{A}})}{\pi(\rm{t}_i)} + \sum_i \scal{\pi(v_i \frac{a}{\norm{a}})}{\pi(\rm{t}_i)} \\
	&= \sum_i u_i \scal{\pi(\frac{A}{\norm{A}})}{\pi(\rm{t}_i)} + \sum_i v_i \scal{\pi(\frac{a}{\norm{a}})}{\pi(\rm{t}_i)}. 
\end{align*}
Taking into account that $\abs{u_i} \leq \norm{A}$, $\abs{v_i} \leq \norm{a}$ and using inequality \eqref{éq:angles_scalaire_projections}, we find :
\[\abs{\sum_i \scal{\pi(\II(\dot \gamma,\rm{t}_i))}{\pi(\rm{t}_i)}} \leq \norm{A} \theta_A \theta_{\Tan} + \norm{a} \theta_a \theta_{\Tan}. \]
Rewrite equation \eqref{éq:inégalité_différentielle_theta_T} using this upper bound, controlling the first term with inequality \eqref{éq:angles_scalaire_projections} : we obtain
\[\abs{\theta_{\Tan} \theta_{\Tan}'} \leq \theta_{\Tan} \theta_{\gamma} + \norm{A} \theta_A \theta_{\Tan} + \norm{a} \theta_a \theta_{\Tan}, \]
hence the announced inequality on $\theta_{\Tan}'$. We bound $\theta_A'$ and $\theta_a'$ using the following lemma. 

\begin{lemme}\label{lemme:contrôle_direction_A}
	The function $\norm{\nabla^{\Tan} \sf{V}}$ is exponentially decaying. 
\end{lemme}
\begin{proof}
	Recall that the axes are given by $\II(\sf{V},\sf{V}) = A$ (major axis) and $\II(\sf{V},J\sf{V}) = a$ (minor axis). Let $\nabla^{\Tan} \sf{V} = \omega \otimes J(\sf{V})$ with $\omega \in \Omega^1(U)$. We have
	\begin{align*}
		0 &= \nabla \scal{\II(\sf{V},\sf{V})}{\II(\sf{V},J\sf{V})} \\
		&= \scal{(\nabla \II)(\sf{V},\sf{V})}{a} + 2\scal{\II(\sf{V},\nabla \sf{V})}{a} \\
		&+ \scal{A}{(\nabla \II)(\sf{V},J\sf{V})} + \scal{A}{\II(\nabla \sf{V},J\sf{V})} + \scal{A}{\II(\sf{V},\nabla (J\sf{V}))} \\
		&= \scal{(\nabla \II)(\sf{V},\sf{V})}{a} + 2\omega \scal{a}{a} \\
		&+ \scal{A}{(\nabla \II)(\sf{V},J\sf{V})} + \omega \scal{A}{\II(J\sf{V},J\sf{V})} + \omega \scal{A}{\II(\sf{V},J^2 \sf{V})} \tag*{since $\nabla J = 0$ and $\nabla \sf{V} = \omega \cdot J\sf{V}$} \\
		&= \scal{(\nabla \II)(\sf{V},\sf{V})}{A} + \scal{A}{(\nabla \II)(\sf{V},J\sf{V})} + 2\omega \norm{a}^2 - 2\omega \norm{A}^2 \tag*{since $J^2 = -\Id$ and $\II(J\sf{u},\sf{v}) = -\II(\sf{u},J\sf{v})$, hence}\\
		\omega &= \frac{\scal{(\nabla \II)(\sf{V},\sf{V})}{a} + \scal{A}{(\nabla \II)(\sf{V},J\sf{V})}}{\norm{A}^2 -\norm{a}^2}.
	\end{align*}
	Taking into account $\norm{A}^2 -\norm{a}^2 = 2e^{u+v}$, we obtain
	\begin{align*}
		\norm{\nabla \sf{V}} &\leq \frac{\norm{\nabla \II}_\infty (\norm{A} + \norm{a})}{2e^{u+v}}. 
	\end{align*}
	Corollary \ref{corollaire:contrôle_nablaII} asserts that $\nabla \II$ is exponentially decaying. We also control $e^{u+v}$ (which tends to $\frac14$ for large $\rho$) and by equation \eqref{éq:borne_inf_A}, we have $\frac{1}{\sqrt3} \leq \norm{A} \leq \norm{A} + \norm{a} \leq 2$, which suffices to conclude that $\nabla \sf{V}$ is exponentially decaying. 
\end{proof}

From the previous lemma, Corollary \ref{corollaire:contrôle_nablaII} (exponential decay of $\norm{\nabla \II}$) and the fact that the image of $\gamma$ is contained in the ball $B(x_0,\frac{R}{2})$ (thus remains at distance at least $\frac{R}{2}$ from $D^k$), we deduces that there exist constants $R_0, C, \alpha \in \R_{>0}$ (depending only on $k$ in Definition \ref{définition:décroissance_exponentielle}) such that if $R \geq R_0$, then we have at the point $\gamma(t)$ (for any $t$) :
\begin{align}
	\norm{\nabla^{\Nor} A} &= \norm{\nabla^{\Nor} (\II(\sf{V},\sf{V}))} \nonumber \\
	&\leq \norm{\nabla \II} + 2 \norm{\II}_\infty \norm{\nabla^{\Tan} \sf{V}} \nonumber \\
	&\leq C e^{-\alpha R} \label{éq:borne_nabla_A}. 
\end{align}
We now turn to the control of
\begin{align*}
	\dt[\scal{\pi(A)}{\pi(A)}] &= 2 \scal{\pi(B(\dot \gamma,A))}{\pi(A)} +2 \scal{\pi(\nabla^{\Nor}_{\dot \gamma} A)}{\pi(A)}. 
\end{align*}
By Theorem \ref{theoreme:Ishihara} and the correspondence between $\II$ and $B$, we have $\norm{B}_\infty \leq 1$. As moreover $\norm{\dot \gamma} = 1$, inequality \eqref{éq:angles_scalaire_projections} gives:
\[\abs{\scal{\pi(B(\dot \gamma,A))}{\pi(A)}} \leq \theta_{\Tan} \theta_A \norm{A}^2. \]
Moreover, we have
\begin{align}
	\abs{\scal{\pi(\nabla^{\Nor}_{\dot \gamma} A)}{\pi(A)}} &\leq \theta_A \theta_a \norm{\nabla^{\Nor}_{\dot \gamma} A} \tag*{thus by \eqref{éq:borne_nabla_A}} \nonumber \\
	&\leq \theta_A \theta_a C e^{-\alpha R} \tag*{thus finally} \nonumber \\
	\abs{\dt[\scal{\pi(A)}{\pi(A)}]} &\leq 2\theta_A [\theta_{\Tan} \norm{A} + \theta_a C e^{-\alpha R}]. \label{éq:angles_contrôle_A}
\end{align}
Finally control $\theta_A$: by definition, $\theta_A^2 = \frac{\scal{\pi(A)}{\pi(A)}}{\scal{A}{A}}$, thus
\begin{align*}
	\dt[(\theta_A^2)] &= \frac{1}{\norm{A}^2} \dt[\scal{\pi(A)}{\pi(A)}] - 2 \scal{\pi(A)}{\pi(A)} \frac{\scal{\nabla^{\Nor} A}{A}}{\norm{A}^4}. 
\end{align*}
On the ball $B(x_0,R)$, we have seen (equation \eqref{éq:borne_inf_A}) that we have $\frac{1}{\sqrt3} \leq \norm{A} \leq 1$, thus $\frac{\scal{\nabla^{\Nor} A}{A}}{\norm{A}^4}$ is exponentially decaying by \eqref{éq:borne_nabla_A}. Equation \eqref{éq:angles_contrôle_A} controls the first term, which gives the expected control on $\theta_A$. This concludes the proof of Lemma \ref{lemme:inégalité_différentielle_theta}. 
\end{proof}

We now deduce an upper bound on the angles $\theta_A$. Let $u = \theta_\gamma + \theta_{\Tan} + \theta_A$, $\epsilon = 2C e^{-\alpha R}$ (which should be thought of as close to $0$) and $d = \sqrt{1-\epsilon^2}$. 
\begin{lemme}\label{lemme:décroissance_angles}
	There exists a constant $R_0$ (depending only on $k$ in Definition \ref{définition:décroissance_exponentielle}) such that if $R \geq R_0$, then for any $t \geq 0$ satisfying $e^{2dt} \leq \frac{1+d}{1-d}$, we have
	\[u(t) \leq \frac{1}{\epsilon} \frac{e^{2dt} -1}{\frac{1}{1-d} - \frac{e^{2dt}}{1+d}}. \]
\end{lemme}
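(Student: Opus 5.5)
The plan is to sum the three differential inequalities of Lemma \ref{lemme:inégalité_différentielle_theta} into one scalar Riccati-type inequality for $u = \theta_\gamma + \theta_{\Tan} + \theta_A$, and then compare $u$ with the explicit solution of the corresponding Riccati ODE. The right-hand side of the statement is exactly such a solution. Indeed, the roots of $\epsilon y^2 + 2y + \epsilon$ are $y_\pm = \frac{-1 \pm d}{\epsilon}$, and the solution of
\[y' = \epsilon(1+y^2) + 2y, \qquad y(0) = 0\]
is $y(t) = \frac{1}{\epsilon} \frac{e^{2dt}-1}{\frac{1}{1-d} - \frac{e^{2dt}}{1+d}}$. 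It is finite as long as $e^{2dt} < \frac{1+d}{1-d}$, and one checks $y'(0) = \epsilon$. So the target is
\begin{equation*}
	u' \leq 2u + \epsilon(1+u^2), \qquad u(0) = 0.
\end{equation*}

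\textbf{Initial value.} Since $F = \Vect{J A_{x_0}}$ lies in $\Nor_{x_0} S$ and is orthogonal to $A_{x_0}$, it is orthogonal to $x_0$, to $\Tan_{x_0} S$ and to $A_{x_0}$. Hence $\theta_\gamma(0) = \theta_{\Tan}(0) = \theta_A(0) = 0$, so $u(0) = 0$.

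\textbf{Controlling $\theta_a$ through $u$.} Apply Lemma \ref{lemme:angles_somme} to the orthogonal decomposition
\[\R^{2,3} = \Vect{\gamma(t)} \oplus \Tan_{\gamma(t)} S \oplus \Vect{A} \oplus \Vect{JA},\]
with $F$ negative of dimension $1$. This gives $-1 = -\theta_\gamma^2 + \theta_{\Tan}^2 - \theta_A^2 - \theta_a^2$, hence
\[\theta_a^2 = 1 + \theta_{\Tan}^2 - \theta_\gamma^2 - \theta_A^2 \leq 1 + u^2,\]
and therefore $\theta_a \leq \sqrt{1+u^2} \leq 1 + u^2$ (up to harmless constants).

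\textbf{Summing.} Add the three inequalities of Lemma \ref{lemme:inégalité_différentielle_theta}, using $\norm{A} \leq 1$. By Lemma \ref{lemme:longueurs_axes}, $\norm{a} = \frac{1}{\sqrt2}\abs{e^{u/2} - e^{v/2}}$, which is controlled by $\abs{\mu_2}$ and so is exponentially small in $R$ on $B(x_0,\frac R2)$ by Proposition \ref{proposition:contrôle_mu}. This yields
\[u' \leq 2\theta_{\Tan} + \theta_\gamma + \theta_A + \big(\norm{a} + Ce^{-\alpha R}\big)\theta_a + Ce^{-\alpha R}\theta_A.\]
The first three terms are at most $2u$. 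Bounding $\theta_a$ by $1+u^2$ and $\theta_A$ by $u \leq \frac12(1+u^2)$, and enlarging $C$ and shrinking $\alpha$ if needed (this is where $R \geq R_0$ enters, together with the choice $\epsilon = 2Ce^{-\alpha R}$), the remaining terms are at most $\epsilon(1+u^2)$.

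\textbf{Comparison and the main obstacle.} The delicate point is regularity. The individual inequalities of Lemma \ref{lemme:inégalité_différentielle_theta} hold only where the corresponding angle is positive, and the angles are only Lipschitz, since they are square roots of smooth non-negative functions. I would handle this by working with upper Dini derivatives. At a time where some angle vanishes, that angle is at its minimum and its upper Dini derivative is bounded by the Lipschitz bound coming from the differentiated squared expression. So the summed inequality holds for $D^+u$ everywhere. A standard comparison argument then applies: suppose $u > y$ somewhere before the blow-up time, take the first time where $u$ catches up with $y + \eta$ for a small $\eta$, and let $\eta \to 0$. This gives $u \leq y$ on $\{e^{2dt} \leq \frac{1+d}{1-d}\}$, with the endpoint case obtained by continuity.
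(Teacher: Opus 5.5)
Your proposal is correct and follows the paper's proof: you sum the three inequalities of Lemma~\ref{lemme:inégalité_différentielle_theta}, bound $\theta_a$ by $1+u^2$ via Lemma~\ref{lemme:angles_somme} and the decay of $\norm{a}$, and compare $u$ with the explicit solution of the Riccati equation $y' = 2y + \epsilon(1+y^2)$, $y(0)=0$. Your check that $u(0)=0$ and your Dini-derivative comparison argument supply details the paper leaves implicit. The one small difference is how the $Ce^{-\alpha R}\theta_A$ term is handled: the paper implicitly absorbs it into $2u$, using $\norm{A} + Ce^{-\alpha R} \leq 2$, and so keeps $\epsilon = 2Ce^{-\alpha R}$ without enlarging any constants.
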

\begin{proof}
	Lemma \ref{lemme:longueurs_axes} (which expresses $\norm{A} \leq 1$ and $\norm{a}$ in terms of the $\mu_k$) and Proposition \ref{proposition:contrôle_mu} (which states that the $\mu_k$ decay exponentially) show that $\norm{a}$ decays exponentially: on the ball $B(x_0,\frac{R}{2})$, we have
	\[\norm{a} \leq C e^{-\alpha R}. \]
	Take $R_0$ large, so that for $R \geq R_0$ we have $\epsilon < 1$. The previous lemma (Lemma \ref{lemme:inégalité_différentielle_theta}) then allows controlling the derivative of $u$ :
	\[\abs{u'} \leq 2u + \epsilon \theta_a. \]
	Note furthermore that by Lemma \ref{lemme:angles_somme}, we have
	\begin{align*}
		(1+\theta_a) \abs{1-\theta_a} &= \abs{1-\theta_a^2} \\
		&= \abs{\theta_{\Tan}^2 - \theta_\gamma^2 - \theta_A^2} \\
		&\leq u^2 \tag*{thus as $1+\theta_a \geq 1$,} \\
		\abs{1-\theta_a} &\leq u^2. 
	\end{align*}
	We finally obtains an inequality only in terms of $u$: $\abs{u'} \leq 2u + \epsilon (1+u^2)$. Since $u(0) = 0$, $u$ is bounded, for $t \geq 0$, by the function $u_m$ satisfying:
	\[u_m(0) = 0, \quad u_m' = 2u_m + \epsilon(1+u_m^2). \]
	This is a Riccati differential equation, which we now recall how to solve. $v = \epsilon u_m$ satisfies $v' = 2v + \epsilon^2 + v^2$. Chooses a function $\phi$ such that $v = -\frac{\phi'}{\phi}$. We obtain $v' = -\frac{\phi''}{\phi} + v^2$, so that
	\[\frac{\phi''}{\phi} = v^2 - v' = -2v + \epsilon^2 = 2\frac{\phi'}{\phi} + \epsilon^2, \]
	which gives a linear differential equation:
	\[\phi'' -2\phi' + \epsilon^2 \phi = 0. \]
	The solutions of this equation are given by the functions
	\[\phi : t \mapsto \lambda e^{(1+d)t} + \mu e^{(1-d)t}, \]
	for all real $\lambda$ and $\mu$. Taking into account the condition $v(0) = 0$, one can express $v$ explicitly. The inequality $\epsilon u \leq v$ then rewrites as the inequality of Lemma \ref{lemme:décroissance_angles}. 
\end{proof}

We now deduce the exponential decay of the volume $\bar{\bf{V}}$ above a point (see Definition \ref{définition:volume_tranche}). 
\begin{proposition}\label{proposition:décroissance_volume}
	$\bar{\bf{V}}$ is exponentially decaying. 
\end{proposition}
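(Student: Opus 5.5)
Fix $(S,x_0)$ with $\rho = \rho(S,x_0)$ large, set $R = \rho$, and lift $S$ to $\Hc^{2,2}$. The plan is to show that $\Conv(S)$, near $x_0$, lies in a thin slab around the hyperplane $F^\perp$, where $F = \Vect{JA_{x_0}}$. This slab contains the totally geodesic copy $F^\perp \cap \Hc^{2,2} \cong \Hc^{2,1}$. The normal slice $\Nor^0_{x_0} S$ will then be squeezed into a thin strip in the direction $a_{x_0}/\norm{a_{x_0}}$. Its other direction is bounded, since $\exp_{x_0}(\sf{n}) \in \Conv(S)$ forces $\norm{\sf{n}} < \frac{\pi}{2}$ by Lemma \ref{lemme:épaisseur_pi2}. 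So the slice has area at most of order the strip width, and the whole task is to make that width exponentially small in $\rho$.

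\textbf{Near region.} Let $F$ be spanned by the unit negative vector $f$. Take $t_0 = c\rho$ with $c > 0$ small, chosen so that $e^{2dt_0} \leq \frac{1+d}{1-d}$ fails to bind and the bound of Lemma \ref{lemme:décroissance_angles} gives $u(t) \leq C' \epsilon e^{2t}$ for $t \leq t_0$; for instance $c < \alpha/4$. With $\epsilon = 2Ce^{-\alpha R}$ this yields $\theta_\gamma(t) \leq C'' e^{-\alpha' \rho}$ along every unit-speed geodesic $\gamma$ from $x_0$ of length $t \leq t_0$. Since $\theta_\gamma = \abs{\scal{\gamma(t)}{f}}$, we get $\abs{\scal{y}{f}} \leq \delta := C'' e^{-\alpha'\rho}$ for every $y \in B(x_0,t_0)$.

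\textbf{Far region.} For points $y$ with $d(x_0,y) \geq t_0$, we cannot control $\scal{y}{f}$ directly, since it may be large. We use instead that such points are far away in the "spacelike" sense. Consider the points $a_\pm = \exp_{x_0}(\pm s f')$, where $f'$ is the unit normal along $f$ and $s$ is a fixed small time. Use Lemma \ref{lemme:cône_1_Lipschitz}: by Corollary \ref{corollaire:sous-variété_simplement_connexe}, points at distance $t_0$ from $x_0$ satisfy $\scal{x_0}{y} \leq -\cosh$-type growth, so for $t_0$ large they are joined to $a_\pm$ by spacelike geodesics. Hence every $y$ with $d(x_0,y) \geq t_0$ lies in $\Omega_{a_\pm}$.

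\textbf{Cutting out the slice.} The set $\Conv(S)$ is the convex hull of the near part and the far part. A point $\exp_{x_0}(\sf{n})$ with $\sf{n}$ having $f'$-component larger than roughly $\delta$ should be excluded by a supporting-hyperplane argument. We take the hyperplanes $\{\scal{\cdot}{h_\pm} = 0\}$ with $h_\pm$ close to $\pm f$, tilted by an $O(\delta)$ amount towards $x_0$. We want them to separate that point from both pieces: the near piece by the $\delta$ bound, and the far piece because points of $\Omega_{a_\pm}$ satisfy the correct sign after the tilt, which must be uniform as $y \to \infty$.

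This separation step is the main obstacle: making the tilted hyperplane simultaneously handle the whole far region with only an $O(\delta)$ cost. I would first try the explicit choice $h_\pm = \pm f + \lambda x_0$ with $\lambda \sim \delta$. On the far region $-\scal{y}{x_0}$ is huge, so the $\lambda$ term dominates once $\abs{\scal{y}{f}} \lesssim -\scal{y}{x_0}$, which holds since $y$ is spacelike-related to $x_0$. The result is that the slice lies in a strip of width $O(\delta)$ and length $\leq \pi$, so $\bar{\bf{V}}_S(x_0) \leq C e^{-\alpha'\rho}$. For $\rho < \rho_0$ nothing is needed, by Definition \ref{définition:décroissance_exponentielle}.
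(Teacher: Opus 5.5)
Your near-region estimate is sound and matches the paper: Lemma \ref{lemme:décroissance_angles} is applied up to a distance $t_0$ linear in $\rho$. Your tilted hyperplanes $h_\pm = \pm f + \lambda x_0$ are also, up to scaling, the paper's $a^1_s$ with $\lambda = \tan s$.

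\textbf{The gap is in the far region.} You place the apex $a_\pm = \cos(s)\,x_0 \pm \sin(s)\,f$ at a small fixed distance $s$ from $x_0$. The conclusion $y \in \Omega_{a_\pm}$ then reads $\cos(s)\scal{y}{x_0} \pm \sin(s)\scal{y}{f} < -1$. This only gives $\abs{\scal{y}{f}} < \cot(s)\,(-\scal{y}{x_0})$, a ratio bounded by the \emph{large} constant $\cot s$. But $\scal{y}{h_\pm} \leq 0$ requires $\abs{\scal{y}{f}} \leq \lambda\,(-\scal{y}{x_0})$ with $\lambda \approx \delta$ exponentially small.

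Your justification, that the $\lambda$ term dominates once $\abs{\scal{y}{f}}$ is at most of order $-\scal{y}{x_0}$, conflates the two bounds. The size of $-\scal{y}{x_0}$ is irrelevant, since $\scal{y}{f}$ may grow at the same rate. Being spacelike-related to $x_0$ gives no control on $\scal{y}{f}$ at all. So, as written, the far part of $S$ is not separated and no width bound on the slice follows.

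\textbf{The missing idea} is to put the apex of the cone at (essentially) the dual point of the separating hyperplane, near $\pm f$ rather than near $x_0$. Take $a^\pm_\sigma = \sin(\sigma)\,x_0 \pm \cos(\sigma)\,f = \exp_{x_0}(\pm(\frac{\pi}{2}-\sigma)f)$ with $\sin\sigma = 4e^{-t_0}$. Then:
\begin{itemize}
\item Since $\abs{\scal{a^\pm_\sigma}{x_0}} = \sin\sigma < 1$, Lemma \ref{lemme:cône_1_Lipschitz} applies.
\item On the sphere $d(x_0,y) = t_0$, your near bound and $\scal{x_0}{y} \leq -\frac12 e^{t_0}$ give $\scal{a^\pm_\sigma}{y} \leq \delta - \frac{\sin\sigma}{2}e^{t_0} \leq -1$ for $\delta \leq 1$.
\item The lemma propagates this to every $y$ with $d(x_0,y) \geq t_0$.
\item Since $\scal{y}{a^\pm_\sigma} = \cos(\sigma)\scal{y}{h_\pm}$ with $\lambda = \tan\sigma$, the inequality $\scal{y}{a^\pm_\sigma} < 0$ is literally $\scal{y}{h_\pm} < 0$.
\end{itemize}
Here $\tan\sigma \approx 4e^{-c\rho}$ is exponentially small precisely because $t_0$ is linear in $\rho$. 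This is why the near and far regions must be coupled through the same $t_0$, with $\sigma$ chosen as a function of $t_0$ rather than fixed.

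Taking $\lambda = \max(\delta,\tan\sigma)$ handles both regions at once; near points use $-\scal{y}{x_0} \geq 1$. The rest of your argument then goes through: $\abs{\scal{n}{f}}\tan d \leq \lambda$ on the slice, which gives a strip of width $O(\lambda)$ and bounded length. This is the paper's proof.
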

\begin{proof}
	Given Definition \ref{définition:décroissance_exponentielle}, we must show that there exist constants $R_0,C,\alpha$ (depending only on $k$ in Definition \ref{définition:décroissance_exponentielle}) such that for any ball $B(x,R)$ not intersecting $D^k$ and satisfying $R \geq R_0$, we have $V(x) \leq C e^{-\alpha R}$ ; thus we can assume $R$ large. The vector $a^1_0 = \frac{J A_x}{\norm{A_x}}$ is unitary and parallel to the minor axis $a_x$ ; for any real $s$, let $a^1_s = \sin(s)\, x + \cos(s)\, a^1_0$. This is a point in $\Hc^{2,2}$, which defines a half-space
	\[E_s = \{y \in \Hc^{2,2} \mid \scal{y}{a^1_s} \leq 0\}. \]
	We shall show (step \ref{étape:contrôle_loin}) that for a suitably chosen $s_1>0$ (roughly equal to $e^{-\alpha R}$ for some $\alpha > 0$), all $y \in S$ at distance $d(x,y) \geq t_0 = t_0(R)$ from $x$ lie in $E_{s_1} \cap E_{\pi-s_1}$, for some $t_0$ verifying $t_0 \stackrel{R \to \infty}{\sim} \frac14 \alpha R$. 

We shall then show (step \ref{étape:contrôle_près}) that for a suitably chosen $s_2>0$ (roughly equal to $e^{-\alpha' R}$ for some $\alpha' > 0$), all $y \in S$ at distance $d(x,y) \leq t_0$ from $x$ lie in $E_{s_2} \cap E_{\pi-s_2}$. Note that the map $s \mapsto E_s \cap E_{\pi-s}$ is increasing for inclusion (for $0 \leq s < \frac{\pi}{2}$) : indeed, for all $y \in \H^{2,2}$ we have
\begin{align*}
	&y \in E_s \cap E_{\pi-s} \\
	\Leftrightarrow &\scal{y}{\pm \cos(s)\, a^1_0 + \sin(s)\, x} \leq 0 \\
	\Leftrightarrow &\abs{\scal{y}{a^1_0}} \leq -\tan(s) \scal{y}{x},
\end{align*}
and this condition indeed becomes less restrictive as $s$ increases. Combining steps \ref{étape:contrôle_loin} and \ref{étape:contrôle_près} and setting $s = \max(s_1,s_2)$, we thus obtain
\[S \subset E_s \cap E_{\pi-s}. \]
The convex hull $\Conv(S)$ is hence also contained in the intersection of half-spaces $E_s \cap E_{\pi-s}$. Let now a unit normal vector $\sf{n} \in \Nor_x S$ and $d \geq 0$ satisfying $d \sf{n} \in \Nor_x^0 S$ (hence $\exp_x(d\sf{n}) \in \Conv(S)$, see Definition \ref{définition:décroissance_exponentielle}). By definition of $s$, we have
\begin{align*}
	0 &\geq \scal{\exp_x(d\sf{n})}{a^1_s} \\
	&= \scal{\cos(d)\, x + \sin(d)\, n}{\cos(s)\, a^1_0 + \sin(s)\, x} \\
	&= -\cos(d) \sin(s) + \sin(d) \cos(s) \scal{\sf{n}}{a^1_0}, \tag*{thus} \\
	&\scal{\sf{n}}{a^1_0} \tan(d) \leq \tan(s). 
\end{align*}
Replacing $a^1_s$ by $a^1_{\pi-s}$, we similarly obtain :
\[\scal{\sf{n}}{a^1_0} \tan(d) \leq -\tan(s). \]
The two preceding inequalities show that if we write $\sf{n} \in \Nor_x^0 S$ in the form
\[\sf{n} = u_1 \frac{A_x}{\norm{A_x}} + u_2 J\frac{A_x}{\norm{A_x}}, \]
then we have $\abs{u_2} \leq s$. Moreover, since $\Nor_x^0 S$ is contained in the ball centred at $0$ of radius $\frac{\pi}{2}$ (Lemma \ref{lemme:épaisseur_pi2}), we have $\abs{u_1} \leq \frac{\pi}{2}$ : thus
\[\bar{\bf{V}}_S(x) = \Vol \Nor_x^0 S \leq \frac{\pi}{2} s. \]
Since we have an inequality of the type $s = \max(s_1,s_2) \leq Ce^{-\alpha R}$, this concludes the proof of Proposition \ref{proposition:décroissance_volume}. 

\etape[control far from $x$]\label{étape:contrôle_loin}
For any real $s$ satisfying $-\frac{\pi}{2}<s<\frac{\pi}{2}$, the set of points of $\Hc^{2,2}$ connected to $a^1_s$ by a spacelike geodesic (see section \ref{subsection:définition_Hpq}) is
\[\Omega_s = \{y \in \Hc^{2,2} \mid \scal{y}{a^1_s} < -1\}. \]
We shall show that for a suitably chosen $t_0 \leq R$ and $s>0$ (with $s$ close to $0$), all $y \in S$ at distance $t_0 = d(x,y)$ from $x$ lie in $\Omega_s$. Lemma \ref{lemme:cône_1_Lipschitz} then shows that all $y \in S$ satisfying $d(x,y) \geq t_0$ lie in this set, and thus in particular satisfy $\scal{y}{a^1_s} < 0$, which will conclude step \ref{étape:contrôle_loin}. Let us therefore find such $s$ and $t_0$. We can thus assume $R$ large. We have
\begin{align*}
	\scal{a^1_s}{y} &= \cos(s) \scal{\frac{J A_x}{\norm{A_x}}}{y} + \sin(s) \scal{x}{y}.
\end{align*}
Given two points $x$ and $y$ of $S$, we have $\scal{x}{y} \leq -\frac12 e^{d(x,y)}$ (Lemma \ref{corollaire:sous-variété_simplement_connexe}). We seek to apply Lemma \ref{lemme:décroissance_angles}: this is possible for $0 \leq d(x,y) \leq \frac{1}{2d} \ln(\frac{1+d}{1-d})$. From the definition $\epsilon = 2C e^{-\alpha R}$, we deduce the equivalents
\begin{align}
	1-d = 1-\sqrt{1-\epsilon^2} &\stackrel{R \to \infty}{\sim} \frac{\epsilon^2}{2}, \label{éq:équivalent_d} \\
	\frac{1}{2d} \ln(\frac{1+d}{1-d}) &\stackrel{R \to \infty}{\sim} \frac12 \ln(\frac{1}{\epsilon^2}) \stackrel{R \to \infty}{\sim} \alpha R. \nonumber
\end{align}
In particular, for $R$ sufficiently large, if the distance $d(x,y) = t$ between a point $y \in S$ and $x$ satisfies $t \leq \frac12 \alpha R$, then we can apply Lemma \ref{lemme:décroissance_angles}, which gives:
\begin{align*}
	\scal{a^1_s}{y} &= \cos(s) \scal{\frac{J A_x}{\norm{A_x}}}{y} + \sin(s) \scal{x}{y} \\
	&\leq \abs{\scal{\frac{J A_x}{\norm{A_x}}}{y}} - \frac{\sin(s)}{2} e^t \\
	&\leq \left( \frac{1}{\epsilon} \frac{e^{2dt} -1}{\frac{1}{1-d} - \frac{e^{2dt}}{1+d}} \right)^{\frac12} - \frac{\sin(s)}{2} e^t \tag*{by Lemma \ref{lemme:décroissance_angles}.}
\end{align*}
Let us search for the maximal distance $t = t_0$ such that the first term is less than $\epsilon^{\frac14}$:
\begin{align*}
	&0 \leq \left( \frac{1}{\epsilon} \frac{e^{2dt} -1}{\frac{1}{1-d} - \frac{e^{2dt}}{1+d}} \right)^{\frac12} \leq \epsilon^{\frac14} \\
	&\Leftrightarrow e^{2dt} \left( 1 +\frac{\epsilon^{\frac32}}{1+d} \right) \leq 1 + \frac{\epsilon^{\frac32}}{1-d} \\
	&\Leftrightarrow t \leq \frac{1}{2d} \ln \left( \frac{1 + \frac{\epsilon^{\frac32}}{1-d}}{1 +\frac{\epsilon^{\frac32}}{1+d}} \right). 
\end{align*}
We now impose that the distance $d(x,y)$ is equal to $t_0 = \frac{1}{2d} \ln \left( \frac{1 + \frac{\epsilon^{\frac32}}{1-d}}{1 +\frac{\epsilon^{\frac32}}{1+d}} \right)$. Taking into account the equivalent \eqref{éq:équivalent_d}, we have $t_0 \stackrel{R \to \infty}{\sim} \frac12 \ln(\frac{1}{\epsilon^{\frac12}}) \stackrel{R \to \infty}{\sim} \frac{\alpha}{4} R$ (which is fully compatible with the hypothesis $t \leq \frac12 \alpha R$ made previously). Given $y \in S$ at a distance from $x$, we thus have
\begin{align}
	\scal{a_s}{y} &\leq \epsilon^{\frac12} - \frac{\sin(s)}{2} e^{t_0} \label{éq:contrôle_scalaire_as} \\
	&= 1 - \frac{\sin(s)}{2} e^{t_0}. \nonumber
\end{align}
Setting $s_1 = \sin^{-1}(4 e^{-t_0})$, we indeed obtain
\[\scal{a_s}{y} \leq -1. \]
Taking into account the equivalent $t_0 \stackrel{R \to \infty}{\sim} \frac{\alpha}{4} R$, this also proves a bound of the type $s \leq C' e^{-\beta R}$ for $R$ sufficiently large, which concludes step \ref{étape:contrôle_loin}. 

\etape[control near $x$]\label{étape:contrôle_près}
Set $s_2 = \sin^{-1}(2\sqrt{2C} e^{-\frac{\alpha}{2} R})$. Given $y \in S$ at distance $d(x,y) \leq t_0$ from $x$, equation \eqref{éq:contrôle_scalaire_as} rewrites as
\begin{align*}
	\scal{a_{s_2}}{y} &\leq \sqrt{2C} e^{-\frac{\alpha}{2} R} - \frac{\sin(s_2)}{2} \\
	&\leq 0. 
\end{align*}
Moreover, for $R_0$ sufficiently large and $R \geq R_0$, we have $s_2 \leq 4\sqrt{C} e^{-\frac{\alpha}{2} R}$ as desired. This concludes step \ref{étape:contrôle_près}, and therefore the proof of Proposition \ref{proposition:décroissance_volume}. 
\end{proof}

\subsection{Integration of an exponentially decaying function}
\label{subsection:contrôle_Dt}

In this section, we use the previous estimates to conclude the proof of Theorem \ref{theoreme:borne_sup}, from which we deduce Theorem \ref{théorème:borne_sup_cocompacte}. We first recall the framework of Theorem~\ref{theoreme:borne_sup}. 

\begin{definition}
Let $\Gamma$ be a subgroup of $\PO(2,3)$ preserving a complete maximal surface $S$ of $\H^{2,2}$ and whose action on $S$ is properly discontinuous and free : $S/\Gamma$ is therefore a Riemannian manifold. Let $f$ be a continuous function from $\cal{M}_{2,2}^*$ to $\R$, invariant under the action of $\PO(2,3)$. Given a fundamental domain $\cal{F}$ of $S$ for the action of $\Gamma$, we denote
\[\int_{S/\Gamma} f = \int_{\cal{F}} f(S,x)\ \d\mu_S(x) \]
when this latter integral is absolutely convergent. We say that the sectional curvature $K_{S/\Gamma}$ tends towards $0$ at infinity (denoted $K_{S/\Gamma}~\stackrel{\infty}{\to}~0$) if every neighbourhood of $0$ contains the values of $K_{S/\Gamma}$ outside a compact of $S/\Gamma$. 
\end{definition}

\begin{theoremeLettre}\label{theoreme:borne_sup}
Let $f : \cal{M}_{2,2}^* \to \R$ be a continuous and exponentially decaying function. There exists a constant $C_f$ satisfying the following property. Let $\Gamma$ be a subgroup of $\PO(2,3)$ preserving a complete maximal surface $S$ of $\H^{2,2}$ and whose action on $S$ is properly discontinuous and free. Suppose that $K_{S/\Gamma}$ tends towards $0$ at infinity. We have
\[\int_{S/\Gamma} \abs{f} \leq C_f \int_{S/\Gamma} \abs{K}. \]
\end{theoremeLettre}

\textbf{Plan of the proof.} We begin by defining a subset $\cal{D}$ of $\Sigma$ on which the curvature is large (Definition \ref{définition:domaine_D}). We then control the number of connected components of $\cal{D}$ (Lemma \ref{lemme:zéros=courbure}). Finally, we use the exponential decay of the function $f$ to integrate it on $\Sigma$ and conclude.

\begin{proof}
Let us recall that for all $k<0$, we denote by $D^k_S$ the set of points $x$ of $S$ satisfying $K_S(x) < k$. Choose, for the remainder of this section, a real number $-\frac13 < k_0 < 0$. By Proposition \ref{proposition:courbure_petite_contrôle_uv}, for all $k < 0$ sufficiently close to $0$, if at a point $x$ of $S$ the sectional curvature $K_S$ satisfies $k \leq K_S(x)$, then we have
\[\abs{u(x)-\ln\frac12}, \abs{v(x)-\ln\frac12} \leq \epsilon = -\ln(1+k_0) \]
thus $\mu_1(x) \geq \frac12 \ln\frac12 - \frac{\epsilon}{2}$. Let us choose such a $k$, imposing moreover that $k_0 \leq k$ and that $D^k_S$ has a piecewise $\cal{C}^\infty$ boundary (this is the case for generic $k$ by Sard's lemma).

Let $C$ be a connected component of $D^k_S$ not containing any zero of $\alpha \beta$ : in other words, on $C$, the function $\mu_1 = \frac12 \ln(\norm{\alpha \beta})$ is well-defined. From the formulas \eqref{éq:Hitchin_mu} ($\frac12 \Delta \mu_1 = K_S$) and from the inequality $K_S \leq 0$ (Theorem \ref{theoreme:Ishihara}), we deduce that $\mu_1$ is superharmonic on $C$:
\[\Delta \mu_1 \leq 0. \]
We moreover have the inequality $\mu_1(x) \geq \frac12 \ln\frac12 - \frac{\epsilon}{2}$ on $\partial C$. The maximum principle therefore implies that on $C$, we have $\mu_1(x) \geq \frac12 \ln\frac12 - \frac{\epsilon}{2}$. We conclude that on $C$, we have
\[K_S = 2e^{2\mu_1} \cosh(\mu_2) -1 \geq e^{-\epsilon}-1 = k_0. \]
\begin{definition}\label{définition:domaine_D}
	Let $D$ denote the union of the connected components of $D^k_S$ containing a zero of $\alpha \beta$. The open set $D$ descends to an open set of $\Sigma$ that we denote by $\cal{D}$.
\end{definition}
We have just shown that $k_0 \leq K_S$ on $D^k_S \setminus D$. As by hypothesis we have $k_0 \leq k \leq K_S$ on $S \setminus D_S^k$, we obtain $k_0 \leq K_S$ on $S \setminus D_S^k$. This inequality descends to $\Sigma$ : on $\Sigma \setminus \cal{D}$, we have
\[k_0 \leq K_\Sigma. \]
Let us notice that if $D_S^k$ is empty, then $S$ is a Barbot surface (remark \ref{remarque:D_vide}) : in this case we have $\int_{S/\Gamma} f = 0$ and the theorem is trivial. We henceforth assume $D_S^k$ non-empty. We also assume that the holomorphic section $\alpha \beta$ is non-zero : the case $\alpha \beta = 0$, which is simpler, will be treated separately (remark \ref{remarque:alpha_beta_nul}). The function $\norm{\alpha \beta}$ descends to a function on $\Sigma$ whose zeros are isolated and contained in the compact $\bar{\cal{D}}$. It therefore has a finite non-zero number of zeros, which we denote by $Z$. 

Let us recapitulate : the set $\cal{D}$ is non-empty, has at most $Z$ connected components by definition, and the hypothesis $K_\Sigma(x) \stackrel{x \to \infty}{\to} 0$ implies that it is relatively compact. 

Let us denote by $\d^\Sigma$ the distance on $\Sigma$ and for every real $t > 0$, let us define the set $\cal{D}_t$ of points $x$ of $\Sigma$ satisfying $\d^\Sigma(x,\cal{D}) < t$. By Proposition \ref{proposition:régularité_bord_Dt}, for $t$ in an open set $U$ of $\R_{>0}$ with complement of measure zero, the boundary of $\cal{D}_t$ is piecewise $\cal{C}^\infty$. By Corollary \ref{corollaire:contrôle_bord_Dt}, for $t$ and $s$ two elements of $U$ satisfying $t \leq s$, we have
\begin{align*}
	\Vol(\partial \cal{D}_s) - \Vol(\partial \cal{D}_t) &\leq \left( 4\pi \Card \pi_0(\bar{\cal{D}}) + \int_\Sigma \abs{K_\Sigma} \right) (s-t) \\
	&\leq \left( 4\pi Z + \int_\Sigma \abs{K_\Sigma} \right) (s-t) \tag*{by definition of $\cal{D}$.}
\end{align*}
Let us set $T = 4\pi Z + \int_\Sigma \abs{K_\Sigma}$. With $s=1$ (or $s$ in $U$ arbitrarily close to $1$), we obtain for $0 \leq t \leq 1$ :
\[\Vol(\partial \cal{D}_t) \geq \Vol(\partial \cal{D}_1) - T. \]
By integrating the expression $\frac{\d \Vol(\cal{D}_s)}{\d s}\bigg|_{t} = \Vol(\partial \cal{D}_t)$ (valid on the open set $U$, with complement of measure zero), we deduce
\begin{equation}\label{éq:démo_volume_Dt}
	\Vol(\cal{D}_1) \geq \Vol(\cal{D}_0) - T + \Vol(\partial \cal{D}_1). 
\end{equation}
In order to bound above the volume of $\cal{D}_1$, we establish the following lemma. 
\begin{lemme}\label{lemme:Harnack_K}
	There exists a \emph{universal} constant $C_H \geq 1$ such that for every complete maximal surface $S$ of $\H^{2,2}$, every point $x$ of $S$ and every point $y$ in the ball $B(x,1)$ of $S$, the sectional curvature $K$ satisfies
	\[K(x) \leq C_H K(y). \]
\end{lemme}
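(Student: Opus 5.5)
Since $K\le 0$ (Theorem~\ref{theoreme:Ishihara}), I read the lemma as the Harnack inequality $\abs{K(y)}\le C_H\abs{K(x)}$ for $y\in B(x,1)$. This is what is needed to bound $\Vol(\cal{D}_1)$ by $\int\abs{K}$. The plan is to write $w=-K\ge0$, find a linear elliptic inequality for $w$ with uniformly bounded zeroth-order coefficients, and then apply the uniform Harnack machinery of Section~\ref{subsection:estimées_elliptiques_uniformes}. Its constants depend only on $(\lambda,\Lambda)$, which Lemma~\ref{cor:coefficients_laplacien_bornés} bounds universally on $\exp_x$-charts of radius $2$.

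\textbf{Step 1 (reduction to small curvature).} Since $\abs{K}\le 1$ everywhere, the inequality is trivial with $C_H=1/k_1$ whenever $\abs{K(x)}\ge k_1$. So we may assume $\abs{K(x)}$ is small. By compactness (Corollary~\ref{corollaire:compacité_fonction_invariante}, applied to the continuous invariant function $(S,x)\mapsto\sup_{B(x,3)}\abs{K}$) together with rigidity (Proposition~\ref{proposition:rigidité_couronnes}), a small $\abs{K(x)}$ forces $\sup_{B(x,3)}\abs{K}$ to be small. Then Proposition~\ref{proposition:courbure_petite_contrôle_uv} gives that $u$, $v$ lie near $\ln\frac12$ and that $\mu_2$, $\tilde\mu_1$ are small on $B(x,3)$. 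In particular $u$, $v$ are smooth there.

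\textbf{Step 2 (supersolution bound).} The Bochner formula (Proposition~\ref{proposition:Bochner_II}) with $\norm{\II}^2=2(1+K)$ gives $\Delta K\ge \norm{\nabla\II}^2-4K$. Hence
\[(\Delta+4)w\le -\norm{\nabla \II}^2\le 0 .\]
The weak Harnack inequality (Theorem 8.18 of \cite{GT01}, made uniform by Lemma~\ref{cor:coefficients_laplacien_bornés}) then yields $\norm{w}_{\rm{L}^1(B(x,2))}\le C\inf_{B(x,1)}w\le C\,w(x)$.

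\textbf{Step 3 (subsolution bound, the main obstacle).} We need $\sup_{B(x,1)}w\le C\norm{w}_{\rm{L}^1(B(x,2))}$. This follows from the local maximum principle (Theorem 8.17 of \cite{GT01}) once we show $\Delta w\ge -Cw$. From \eqref{éq:Hitchin_uv} one computes
\[\Delta w=4w(e^u+e^v)-2(e^u-e^v)^2-\norm{\nabla u}^2e^u-\norm{\nabla v}^2e^v .\]
So the difficulty is to dominate $(e^u-e^v)^2\approx \frac14\mu_2^2$ and the gradient terms by $Cw$.

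For the first term, write $w=(1-2e^{2\mu_1})-2e^{2\mu_1}(\cosh\mu_2-1)$. Since $w\ge0$, this gives $\mu_2^2\lesssim 1-2e^{2\mu_1}$, but not directly $\mu_2^2\lesssim w$. I would instead use the sub-solution $\Delta\abs{\mu_2}\ge c\abs{\mu_2}$ from \eqref{éq:Delta_mu2} together with $\Delta\mu_1=-2w$. Then I would look for a combination $\Phi=w+c_1\mu_2^2+c_2\norm{\nabla\mu_1}^2+c_3\norm{\nabla\mu_2}^2$ which satisfies $\Delta\Phi\ge -C\Phi$ and is comparable to $w$. Comparability would come from the identity $\norm{\nabla\II}^2\approx \norm{\nabla u}^2e^u+\norm{\nabla v}^2e^v$ and from Step 2, which controls $\norm{\nabla\II}^2\le -(\Delta+4)w$. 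Running Steps 2–3 on $\Phi$ in place of $w$ then gives the Harnack inequality for $w$.

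If this algebra fails, the fallback is a contradiction and compactness argument on normalised sequences. Take $(S_n,x_n,y_n)$ with $w(y_n)/w(x_n)\to\infty$, normalise $\tilde w_n=w_n/w_n(x_n)$, and use the linearisation of \eqref{éq:Hitchin_mu_alt} at the Barbot surface. The linearised system for $(\tilde\mu_1,\mu_2)$ is $\Delta\tilde\mu_1\approx4\tilde\mu_1$, $\Delta\mu_2\approx2\mu_2$, and $w$ is linear in $\tilde\mu_1$ to first order. In the limit this yields a nonnegative solution of a constant-coefficient elliptic equation violating the classical Harnack inequality, which is a contradiction.
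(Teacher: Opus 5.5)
Step 1 is fine, and you read the inequality as $\abs{K(y)}\le C_H\abs{K(x)}$, which is the intended meaning. You also locate the difficulty correctly: a nonnegative supersolution satisfies only the \emph{weak} Harnack inequality. The paper's one-line proof applies Proposition \ref{prop:harnack} ``a fortiori'' to the supersolution $-K$ of $\Delta-4$. That inference is invalid in general: a smoothed truncation of a Green's function with pole near $y$ is a nonnegative superharmonic function, hence a supersolution of $\Delta-4$, yet $\sup_{B(x,1)}u/\inf_{B(x,1)}u$ is unbounded.

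One correction to Step 2: the inequality $(\Delta+4)w\le 0$ that you take from Proposition \ref{proposition:Bochner_II} as printed is false. Integrated over a closed quotient it would give $\int w\le 0$, contradicting Gauss--Bonnet. Your own Step 3 formula gives the correct inequality $(\Delta-4)w=-4w^2-2(e^u-e^v)^2-e^u\norm{\nabla u}^2-e^v\norm{\nabla v}^2\le 0$. The weak Harnack inequality applies equally to this, so the conclusion of Step 2 survives.

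The genuine gap is Step 3, which you do not carry out. The ansatz $\Phi$ presupposes what is missing. Saying $\Phi$ is comparable to $w$ \emph{is} the pointwise bound $\mu_2^2+\norm{\nabla\mu_1}^2+\norm{\nabla\mu_2}^2\le Cw$. Step 2 only bounds $\norm{\nabla\II}^2$ by $-(\Delta-4)w$, which is not pointwise controlled by $w$.

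The fallback has the same defect. Expanding, $w=1-e^{2\tilde\mu_1}\cosh\mu_2\approx-2\tilde\mu_1-\frac12\mu_2^2$. Since $w\ge0$ forces $\mu_2^2\le -C\tilde\mu_1$, the dangerous regime is exactly $\mu_2^2\approx-4\tilde\mu_1\gg w$. There the quadratic term in $\mu_2$ is of the same order as $\tilde\mu_1$, so no linear limit equation for the normalised $w$ appears. Moreover, normalising by $w_n(x_n)$ produces functions that blow up at $y_n$, so there is no limit to extract.

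The missing idea is to prove Harnack for $F=-\tilde\mu_1\ge 0$ and then show $F\le Cw$.
\begin{itemize}
\item Since $w=1-e^{-2F}\cosh\mu_2\le 2F$ and $\Delta F=2w$, you get $0\le\Delta F\le 4F$. So $F$ is a subsolution of $\Delta$ and a supersolution of $\Delta-4$.
\item Theorems 8.17 and 8.18 of \cite{GT01} therefore combine into $\sup_{B(x,2)}F\le C\,F(x)$, uniformly in your Step 1 regime.
\item Since $2F=\ln\cosh\mu_2-\ln(1-w)$, it then suffices to prove $\mu_2(x)^2\le C\,w(x)$.
\end{itemize}

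This bound comes from the term you discarded: $(\Delta-4)w\le-2e^{4\tilde\mu_1}\sinh^2\mu_2$. Suppose $w(x)\le\eta\gamma^2$ with $\gamma=\abs{\mu_2(x)}$.
\begin{itemize}
\item Then $F(x)\le C\gamma^2$, hence $F\le C\gamma^2$ on $B(x,2)$ by the Harnack bound for $F$.
\item Because $\cosh\mu_2\le e^{2F}$, this gives $\abs{\mu_2}\le C\gamma$ on $B(x,2)$.
\item $W^{2,p}$ estimates for $\Delta\mu_2=2e^{2\tilde\mu_1}\sinh\mu_2$ then give $\norm{\nabla\mu_2}\le C\gamma$ on $B(x,1)$.
\item Hence $\abs{\mu_2}\ge\gamma/2$ on a ball $B(x,r_0)$ of universal radius, on which $(\Delta-4)w\le-c\gamma^2$.
\item Compare $w$ with the barrier $b=a\,(r_0^2-d(x,\cdot)^2)$, where $a=c\gamma^2/(4+2r_0+4r_0^2)$, using $\Delta d(x,\cdot)^2\le 4+2d(x,\cdot)$, which holds because $-1\le K\le 0$.
\item The maximum principle for $\Delta-4$ gives $w(x)\ge a r_0^2$, which contradicts $w(x)\le\eta\gamma^2$ once $\eta$ is small.
\end{itemize}
Hence $F\le Cw$, and $\sup_{B(x,1)}w\le 2\sup_{B(x,2)}F\le C\,w(x)$.
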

\begin{proof}
	The function $u = -K \geq 0$ satisfies $(\Delta -4 \Id) u \leq 0$ (\cite[équation (20)]{LT22}). The Harnack inequality (Proposition \ref{prop:harnack}) applies to the solutions of $(\Delta -4 \Id) u = 0$, hence \emph{a fortiori} to super-solutions : we therefore have, for every $x \in S$, $\sup_{B(x,1)} K \leq C_H K(x)$. 
\end{proof}
We deduce from the previous lemma that we have $\abs{K_\Sigma} \geq \frac{\abs{k}}{C_H}$ on $\cal{D}_1$, therefore
\begin{equation}\label{éq:majoration_volume_D1}
	\frac{\abs{k}}{C_H} \Vol(\cal{D}_1) \leq \int_{\cal{D}_1} \abs{K_\Sigma} \leq T.
\end{equation}
We can then bound above $\Vol(\partial \cal{D}_1)$ by rewriting the formula \eqref{éq:démo_volume_Dt} :
\[\Vol(\partial \cal{D}_1) \leq \Vol(\cal{D}_1) + T \leq \frac{C_H}{\abs{k}} T + T. \]
We therefore obtain, by integrating \eqref{éq:périmètre_boule_croissance}, a bound on $\Vol(\partial \cal{D}_t)$ for $t \geq 1$ :
\begin{equation}\label{éq:croissance_linéaire_bord}
	\Vol(\partial \cal{D}_t) \leq T\ t + \frac{T C_H}{\abs{k}}. 
\end{equation}

\begin{lemme}[control of the number of connected components of $\cal{D}$]\label{lemme:zéros=courbure}
	With the previous hypotheses and notations, $Z$ is given by
	\[Z = \frac{1}{4\pi} \int_\Sigma \abs{K_\Sigma} \d\!\vol^\Sigma. \]
\end{lemme}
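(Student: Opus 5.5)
The plan is to read $Z$ as a degree computed by Gauss–Bonnet/Poincaré–Lelong, applied to the holomorphic section $\alpha\beta$ of $\canon^4$, with zeros counted with multiplicity. Recall $\mu_1 = \frac{u+v}{4}$ and $e^{4\mu_1} = \norm{\alpha\beta}^2$, and by \eqref{éq:Hitchin_mu} we have $\frac12 \Delta \mu_1 = K$ on the complement of the zeros.

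\textbf{Local computation at a zero.} Let $p$ be a zero of multiplicity $m_p$ and take a local holomorphic coordinate $z$ centred at $p$. Then $\norm{\alpha\beta}^2 = \abs{z}^{2m_p} e^{\psi}$ with $\psi$ smooth, so
\[
\mu_1 = \tfrac{m_p}{2}\ln\abs{z} + (\text{smooth}).
\]
Since $\Delta \ln\abs{z} = 2\pi\delta_p$ in the sense of distributions, this gives the identity of currents on $\Sigma$
\[
\Delta \mu_1 \, \d\!\vol = 2K\, \d\!\vol + c \sum_p m_p\, \delta_p ,
\]
for a universal constant $c$ fixed by the normalisations of Section \ref{subsection:fibrés_holomorphes}. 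This step is just Lemma \ref{lemme:courbure_fibré_droites} in distributional form. In practice I would excise small discs $B(p,r)$ around the finitely many zeros, all contained in $\bar{\cal D}$, and compute the flux of $\nabla\mu_1$ through $\partial B(p,r)$ as $r \to 0$. That flux equals $c\, m_p$.

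\textbf{Integration and vanishing of the flux at infinity.} Apply Green's formula to $\mu_1$ on $\cal D_t \setminus \bigcup_p B(p,r)$, for $t \in U$ so that $\partial\cal D_t$ is piecewise smooth. This gives
\[
2\int_{\cal D_t} K = -c\,Z + \int_{\partial \cal D_t} \partial_\nu \mu_1 .
\]
The key step is to show the boundary term tends to $0$ as $t \to \infty$ along $U$. For $x \in \partial\cal D_t$ we have $\rho(S,x) \gtrsim t$. The proof of Corollary \ref{corollaire:contrôle_nablaII} (via Proposition \ref{proposition:estimées_W2p}) shows that $\norm{\nabla\tilde\mu_1} = \norm{\nabla \mu_1}$ is exponentially decaying, so $\norm{\nabla\mu_1} \le C e^{-\alpha t}$ on $\partial \cal D_t$. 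By \eqref{éq:croissance_linéaire_bord}, $\Vol(\partial\cal D_t)$ grows at most linearly in $t$, so the flux is $O(t e^{-\alpha t}) \to 0$.

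One point needs care: the exponential decay is stated relative to $D^k_S$, whereas $\cal D$ is only the union of the components containing zeros. On the remaining components I would use the bound $K \ge k_0$ established just before Definition \ref{définition:domaine_D}. Choosing $k_0$ as the threshold $k$ of Definition \ref{définition:décroissance_exponentielle}, the distance to $D^{k_0}_S$ is at least the distance to $D$, so decay in $\rho$ controls decay in $t$.

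Since $K \le 0$ is integrable by monotone convergence, letting $t \to \infty$ gives $c\,Z = -2\int_\Sigma K = 2\int_\Sigma \abs{K}$. Tracking the constant $c$ through the paper's conventions (the norms on $\canon$ and $\cal N$, and the factor in Lemma \ref{lemme:courbure_fibré_droites}) should yield the stated normalisation $Z = \frac{1}{4\pi}\int_\Sigma\abs{K_\Sigma}$.

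\textbf{Expected obstacle.} I expect the main difficulty to be the vanishing of the flux at infinity. The rest is just a careful bookkeeping of constants in the local model at a zero. The degenerate case $\alpha\beta = 0$ is treated separately (Remark \ref{remarque:alpha_beta_nul}) and is excluded here.
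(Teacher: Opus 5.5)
\textbf{Verdict: the argument is sound, but its last step cannot work.} You follow the paper's own route: the distributional identity for $\Delta\mu_1$ at the zeros of $\alpha\beta$, Green's formula on $\mathcal{D}_t$, and killing the flux by the exponential decay of $\lVert\nabla\mu_1\rVert$ against the linear growth \eqref{éq:croissance_linéaire_bord}. Your remark that $K\geq k_0$ off $D$ gives $D^{k_0}_S\subset D$, so that decay in $\rho$ becomes decay in $t$, is correct. It makes explicit a step the paper skips.

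\textbf{The constant is not yours to choose.} You defer $c$ and claim it ``should yield'' $\frac{1}{4\pi}$, but your own local model already fixes it. From $\mu_1=\frac{m_p}{2}\ln\lvert z\rvert+(\text{smooth})$ and $\Delta\ln\lvert z\rvert=2\pi\delta_p$ (as measures, independently of the conformal factor), you get $c=\pi$. That is, $\frac12\Delta\mu_1=K+\frac{\pi}{2}\sum_p m_p\,\delta_p$. This is just $2\pi\cdot\frac14\,\mathrm{ord}$, because $\mu_1$ is the log of the squared norm of the local fourth root $(\alpha\beta)^{1/4}$. Your limit $cZ=2\int_\Sigma\lvert K_\Sigma\rvert$ then reads $Z=\frac{2}{\pi}\int_\Sigma\lvert K_\Sigma\rvert$, not $\frac{1}{4\pi}\int_\Sigma\lvert K_\Sigma\rvert$.

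\textbf{The compact case confirms this.} If $\Sigma$ is closed of genus $g$, then $\alpha\beta$ is a nonzero holomorphic section of $\canon^4$, so it has $8g-8$ zeros counted with multiplicity, while $\int_\Sigma\lvert K_\Sigma\rvert=4\pi(g-1)$. The formula $\frac{2}{\pi}\int\lvert K\rvert$ gives $8g-8$; the stated one gives $g-1$. For Moriani's polygonal surfaces it gives $Z=N-4$, the degree of the corresponding polynomial quartic differential. So no bookkeeping can produce the stated normalisation: the statement is off by a factor $8$. The paper's proof has the same kind of slip. The coefficient $2\pi\,\mathrm{ord}_x(\alpha\beta)$ in \eqref{éq:laplacien_mu1} should be $\frac{\pi}{2}\,\mathrm{ord}_x(\alpha\beta)$, and the integrated identity written afterwards does not match that formula either.

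\textbf{What your argument does prove is enough downstream.} It proves $Z=\frac{2}{\pi}\int_\Sigma\lvert K_\Sigma\rvert$ with multiplicity, hence at most that many distinct zeros. In the proof of Theorem \ref{theoreme:borne_sup}, this replaces $T=2\int_\Sigma\lvert K_\Sigma\rvert$ by $T=9\int_\Sigma\lvert K_\Sigma\rvert$, which only changes $C_f$.

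\textbf{A smaller point.} \eqref{éq:croissance_linéaire_bord} is vacuous unless $T<\infty$. So finiteness of $\int_\Sigma\lvert K_\Sigma\rvert$ is an input to your flux estimate, not something monotone convergence hands you at the end. This is harmless, since Theorem \ref{theoreme:borne_sup} is trivial when the total curvature is infinite.
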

\begin{proof}
	If the holomorphic section $\alpha \beta$ vanishes at a point $x$ of $S$, let us denote by $\rm{ord}_x(\alpha \beta)$ the order of this zero. Since the function $\mu_1$ is (locally) the norm of a holomorphic section of the canonical bundle of $S$, we classically have
	\begin{equation}\label{éq:laplacien_mu1}
		\frac12 \Delta \mu_1 = K_S + 2\pi \sum_{(\alpha \beta)_x = 0} \rm{ord}_x(\alpha \beta)\, \delta_x.
	\end{equation}
	Let $t > 1$ be a real number such that $\cal{D}_t$ is piecewise $\cal{C}^\infty$ (this is the case for almost every $t$ by Proposition \ref{proposition:régularité_bord_Dt}). By smoothing $\cal{D}_t$, we define an open set $\cal{D}_t'$ with $\cal{C}^\infty$ boundary satisfying $\cal{D}_{t-1} \subset \cal{D}_t' \subset \cal{D}_t$ and $\abs{\Vol(\partial \cal{D}_t') - \Vol(\partial \cal{D}_t)} \leq 1$. The equation \eqref{éq:croissance_linéaire_bord} then gives :
	\[\Vol(\partial \cal{D}_t') \leq \Vol(\partial D_t) + 1 \leq  T\ t + \frac{T C_H}{\abs{k}} + 1. \]
	The function $\mu_1$ descends to $\Sigma$ as a function which we denote $\mu_1^\Sigma$. We know that the functions $\mu_1$ and $\mu_2$ are exponentially decaying (Proposition \ref{proposition:contrôle_mu}) hence that $\Delta \mu_1$ is (equations \eqref{éq:Hitchin_mu_alt}). Proposition \ref{proposition:estimées_W2p} then proves $\norm{\nabla \mu_1}$ is exponentially decaying (see the proof of Corollary \ref{corollaire:contrôle_nablaII}). We obtain that there exist universal constants $C,\alpha > 0$ such that for every real $t$ greater than a universal constant, we have on $\partial \cal{D}_t$ :
	\[\norm{\nabla \mu_1^\Sigma} \leq C e^{-\alpha t}. \]
	The equality \eqref{éq:laplacien_mu1} descends to an equality between distributions on $\Sigma$. Let us integrate it over $\cal{D}_t'$ taking into account the bounds on $\Vol(\partial \cal{D}_t')$ and $\norm{\nabla \mu_1^\Sigma}$. We obtain
	\begin{align*}
		\abs{Z + 2\pi \int_{\cal{D}_t'} K_\Sigma} &= \abs{\int_{\cal{D}_t'} \Delta \mu_1^\Sigma} \tag*{since $\cal{D}_t'$ contains the zeros of $\alpha \beta$} \\
		&\leq \int_{\partial \cal{D}_t'} \norm{\nabla \mu_1^\Sigma} \tag*{by Stokes} \\
		&\leq \Vol(\partial \cal{D}_t') C e^{\alpha (t-1)}
	\end{align*}
	by the estimate of $\norm{\nabla \mu_1^\Sigma}$ and $\cal{D}_{t-1} \subset \cal{D}_t' \subset \cal{D}_t$. Taking into account the estimate of $\Vol(\partial \cal{D}_t')$, we obtain :
	\[\abs{Z + 2\pi \int_{\cal{D}_t'} K_\Sigma} \stackrel{t \to \infty}{\longrightarrow} 0. \]
	This proves Lemma \ref{lemme:zéros=courbure}. 
\end{proof}
To finish the proof of Theorem \ref{theoreme:borne_sup}, we still have to integrate $f$ over $\Sigma$ taking into account its exponential decay of $f$ and equation \eqref{éq:croissance_linéaire_bord}. To that end, we use the following classical lemma. 
\begin{lemme}[co-area formula]
	Let $M$ and $N$ be smooth Riemannian manifolds whose measures are denoted by $\mu_M$ and $\mu_N$. Let $u : M \to N$ be a map, smooth in a neighbourhood of almost every point of $M$ and such that $\d u$ is almost everywhere surjective. Let $\phi : M \to \R_{\geq 0}$ be a measurable function. Let us denote by $\abs{\Jac^\perp(u,x)}$ the absolute value of the determinant (of the matrix in orthonormal frames) of the restriction of $\d u$ to the orthogonal of its kernel : this function is well defined as soon as $u$ is smooth in a neighbourhood of $x$ and $\d u$ surjective, hence for almost every $x$. We have:
	\[\int_M \phi\ \abs{\Jac^\perp(u,x)}\ \d\mu_M(x) = \int_{y \in N} \left( \int_{x \in u^{-1}(y)} \phi(x)\ \d\mu_{u^{-1}(y)}(x) \right) \d\mu_N(y), \]
	the integral $\int_{x \in u^{-1}(y)} \phi(x)\ \d\mu_{u^{-1}(y)}(x)$ being well defined when $y$ is a regular value, hence for almost every $y$. 
\end{lemme}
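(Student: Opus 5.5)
The plan is to reduce to a local computation on the open set where $u$ is a smooth submersion, handle that case with the submersion normal form and Fubini, and then dispose of the exceptional set. Let $M_0 \subset M$ be the open set of points near which $u$ is smooth and $\d u$ is surjective. By hypothesis $M \setminus M_0$ is $\mu_M$-null, and on $M_0$ the function $\abs{\Jac^\perp(u,\cdot)}$ is well defined and continuous. Since $\phi \geq 0$, both sides of the formula are integrals of non-negative functions. So I would first prove the formula for $\phi$ supported in small coordinate patches of $M_0$. I would then pass to a general $\phi$ supported in $M_0$ by a countable locally finite partition of unity and the monotone convergence theorem, applied on both sides; the inner integral over $u^{-1}(y)$ is additive in $\phi$.

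\textbf{Local step.} Fix $x_0 \in M_0$ and set $m = \dim M$, $n = \dim N$. By the implicit function theorem (normal form of a submersion), there are coordinates $(y,z) \in V \times W \subset \R^n \times \R^{m-n}$ near $x_0$ in which $u(y,z) = y$, with $y$ also a chart of $N$. In these coordinates the fibres are the slices $\{y\} \times W$. Write $\d\mu_M = \sqrt{\det g_M}\,\d y\,\d z$, $\d\mu_N = \sqrt{\det g_N}\,\d y$, and let $\d\mu_{u^{-1}(y)} = \sqrt{\det g_{\rm fib}}\,\d z$ with $g_{\rm fib}$ the restriction of $g_M$ to $\ker \d u = \mathrm{span}(\partial_z)$. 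The key pointwise identity to verify is
\[\abs{\Jac^\perp(u,x)} \sqrt{\det g_M} = \sqrt{\det g_N}\,\sqrt{\det g_{\rm fib}}.\]
This is linear algebra at a single point. Split $\Tan_x M = \ker \d u \oplus (\ker \d u)^\perp$. The volume of the parallelepiped spanned by $\partial_{y_i}, \partial_{z_j}$ equals the volume spanned by the $\partial_{z_j}$ (that is, $\sqrt{\det g_{\rm fib}}$) times the volume spanned by the orthogonal projections of the $\partial_{y_i}$ onto $(\ker \d u)^\perp$. The latter are mapped by $\d u$ onto the coordinate vectors $\partial_{y_i}$ of $N$, so that volume equals $\sqrt{\det g_N}/\abs{\Jac^\perp}$. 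With this identity, Fubini in $(y,z)$ gives the formula for $\phi$ supported in the patch.

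\textbf{Exceptional set.} This is the step I expect to be the main obstacle. The left-hand side does not see $M \setminus M_0$, since it is null. On the right, one must check that for $\mu_N$-almost every $y$ the set $u^{-1}(y) \setminus M_0$ has zero $(m-n)$-dimensional measure. I would first try to use that in our application $u$ is locally Lipschitz (it is a distance function $\d^\Sigma(\cdot,\cal{D})$). For Lipschitz $u$, Eilenberg's inequality gives
\[\int_N \mathcal{H}^{m-n}\bigl(E \cap u^{-1}(y)\bigr)\,\d\mu_N(y) \leq C\,\mathrm{Lip}(u)^n \mu_M(E),\]
and this vanishes for the null set $E = M \setminus M_0$. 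Failing a Lipschitz assumption, I would read the right-hand inner integral as taken over $u^{-1}(y) \cap M_0$. By Sard's theorem applied on $M_0$, almost every $y$ is then a regular value, so the fibre measure is well defined, and the formula holds as proved in the local step. This is the form in which the lemma will be used: with $u = \d^\Sigma(\cdot,\cal{D})$ and $\abs{\Jac^\perp} = \norm{\nabla u} = 1$ almost everywhere, which turns $\int f$ into $\int_t \int_{\partial \cal{D}_t} f$.
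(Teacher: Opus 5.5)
The paper gives no proof of this lemma. It quotes it as classical and applies it only to the $1$-Lipschitz function $u=\mathrm{d}^\Sigma(\cdot,\mathcal{D})$, so what it is really invoking is Federer's co-area formula for Lipschitz maps.

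Your argument is a correct, self-contained alternative. On the open full-measure set $M_0$ where $u$ is a smooth submersion, you combine four ingredients:
\begin{itemize}
\item the normal form $u(y,z)=y$;
\item your pointwise identity $\abs{\Jac^\perp}\sqrt{\det g_M}=\sqrt{\det g_N}\sqrt{\det g_{\mathrm{fib}}}$, which is right because $\mathrm{d}u$ sends the projections of the $\partial_{y_i}$ onto $(\ker \mathrm{d}u)^\perp$ to the coordinate vectors of $N$;
\item Tonelli;
\item a countable partition of unity with monotone convergence.
\end{itemize}
Together these prove the formula with the fibre integral taken over $u^{-1}(y)\cap M_0$. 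The citation buys generality: it needs no knowledge that $u$ is smooth off a closed null set, which for a distance function is a cut-locus statement. Your route is elementary and isolates exactly the one extra input required, namely $\mathcal{H}^{m-n}(u^{-1}(y)\setminus M_0)=0$ for almost every $y$.

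That input is genuinely necessary, not an artefact of your method: as literally stated, the lemma is false. Take $M=(0,1)\times\R$, $N=\R$, $C$ the Cantor set, $F$ the Cantor function and $u(x_1,x_2)=x_1+F(x_1)$.
\begin{itemize}
\item $u$ is continuous, and smooth with $\mathrm{d}u=\mathrm{d}x_1$ off the closed null set $C\times\R$.
\item Every nonempty fibre is a vertical line.
\item $x\mapsto x+F(x)$ sends $C$ onto a set of measure $1$.
\end{itemize}
For $\phi=\mathbf{1}_{C\times[0,1]}$ the left side is $0$ and the right side is $1$.

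So one of your two fixes has to be part of the statement, and both are adequate. Eilenberg's inequality, applied in charts, handles locally Lipschitz $u$. Granting the lemma's smoothness hypothesis on $u$, even the reading over $u^{-1}(y)\cap M_0$ gives $\int_\Sigma\phi\le\int_{\mathcal{D}_1}\phi+\int_{t>1}\int_{\partial\mathcal{D}_t}\phi\,\mathrm{d}t$. That inequality is the only direction used in the proof of Theorem~\ref{theoreme:borne_sup}.

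Two small corrections:
\begin{itemize}
\item Sard's theorem is vacuous in your last step. Since $u|_{M_0}$ is a submersion, every $y$ is already a regular value of it.
\item A ``regular value'' of $u$ in the strong sense $u^{-1}(y)\subset M_0$ cannot be what the statement means. In the application, the corners of $\partial\mathcal{D}_t$ are points where $u$ is not smooth, and they typically persist over whole intervals of $t$.
\end{itemize}
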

Let us apply the co-area formula to $u = \d^\Sigma(\cdot,\cal{D}) : \Sigma \setminus \cal{D}_1 \to \R_{>0}$ (with $\d^\Sigma$ the distance on $\Sigma$), which satisfies the previous assumptions with $\abs{\Jac^\perp(u,x)} = 1$ almost everywhere. We obtain that every measurable function $\phi : \Sigma \to \R_{\geq 0}$ satisfies
\begin{equation}\label{éq:co-aire}
	\int_\Sigma \phi\ \d\mu_\Sigma = \int_{\cal{D}_1} \phi\ \d\mu_\Sigma + \int_{t>1} \left( \int_{\partial \cal{D}_t} \phi(x)\ \d\mu_{\partial \cal{D}_t}(x) \right) \d t
\end{equation}
the integral $\int_{\partial D_t} \phi(x)\ \d\mu_{\partial D_t}(x)$ being well defined for $t \in U$, hence for almost every $t$. The function $f$, which is exponentially decaying, is $\Gamma$-invariant and thus descends to a function on $\Sigma$ denoted by $f_\Sigma$. The definition of exponential decay (Definition \ref{définition:décroissance_exponentielle}) and remark \ref{remarque:décroissance_exponentielle_partout} show that there exist constants $C,\alpha > 0$ (depending only on $k$ and $f$) satisfying, for all $x \in \Sigma$ :
\[\abs{f_\Sigma(x)} \leq Ce^{-\alpha \d^{\Sigma}(x,\cal{D})}. \]
Let us apply formula \eqref{éq:co-aire} to $\phi = f_\Sigma$ taking into account this inequality and inequality \eqref{éq:croissance_linéaire_bord}. We obtain
\begin{align}
	\int_\Sigma \abs{f_\Sigma} &\leq C \vol(\cal{D}_1) + \int_{t>1} (T\ t + \frac{T C_H}{\abs{k}}) Ce^{-\alpha t}\ \d t \tag*{thus by \eqref{éq:majoration_volume_D1},} \nonumber \\
	&\leq T \left[ C \frac{C_H}{\abs{k}} + \int_{t>1} (t + \frac{C_H}{\abs{k}}) Ce^{-\alpha t}\ \d t \right]. \label{éq:majoration_intégrale_VSigma}
\end{align}
In this last upper bound, the constants $C$, $C_H$ and $k$ depend only on $k_0$ (which we have chosen arbitrarily) and on $f$ : they are in particular independent of $S$ and $\Gamma$. Recall that we have 
\begin{align*}
	T &= 4\pi Z + \int_\Sigma \abs{K_\Sigma} \tag*{by definition} \\
	&=2 \int_\Sigma \abs{K_\Sigma} \d\!\vol^\Sigma \tag*{by Lemma \ref{lemme:zéros=courbure}.}
\end{align*}
Equation \eqref{éq:majoration_intégrale_VSigma} can hence be written as
\[\int_\Sigma \abs{f_\Sigma} \leq C_f \int_\Sigma \abs{K_\Sigma}, \]
with $C_f$ a constant depending only on $k_0$ and $f$. This concludes the proof of Theorem \ref{theoreme:borne_sup}.
\end{proof}

\begin{remarque}\label{remarque:alpha_beta_nul}
	If $\alpha \beta = 0$, then $\alpha = 0$ or $\beta = 0$. We have therefore $\cal{D} = \Sigma$, that is $K_\Sigma \leq -\frac13$ on $S$, which gives an upper bound for the area of $\Sigma$ : $\Vol(\Sigma) \leq 3 \int_\Sigma \abs{K_\Sigma} \d\!\vol^\Sigma$. We thus obtain, instead of the equation \eqref{éq:majoration_intégrale_VSigma} : $\int_\Sigma f_\Sigma \leq 3C T$. The rest is unchanged. 
\end{remarque}

\begin{proof}[Proof of Theorem \ref{théorème:borne_sup_cocompacte}]
Let $\rho$ be a maximal representation from $\Gamma = \pi_1(\Sigma_g)$ into $\SO_0(2,3)$, $S$ be the maximal surface associated to $\rho$ by Theorem \ref{theoreme:existence_unicité_CTT} and $\cal{F}$ be a fundamental domain of $S$ for the action of $\Gamma$. We use Proposition \ref{proposition:volume_tranche}. With the notations of this proposition, we have
\begin{align*}
	\Vol \left[ \Conv(S)/\Gamma \right] &= \int_{x \in \cal{F}} \left( \int_{\Nor_x^0 S} \frac{\nu^* \mu_\H}{\mu_{\Nor S}}\ \d\mu_{\Nor S}(\sf{n}) \right) \d\mu_S(x) \\
	&\leq C_2 \int_{\cal{F}} \bar{\bf{V}}(S,x)\ \d\mu_S(x) \tag*{by Proposition \ref{proposition:volume_tranche}}\\
	&= C_2 \int_{S/\Gamma} \bar{\bf{V}}. 
\end{align*}
As the function $f = \bar{\bf{V}}$ is exponentially decaying (Proposition \ref{proposition:décroissance_volume}), we can apply Theorem \ref{theoreme:borne_sup}. We obtain :
\[\int_{S/\Gamma} \bar{\bf{V}} \leq C_{\bar{\bf{V}}} \int_{S/\Gamma} \abs{K} \]
Since $K$ is of constant sign, the Gau\ss-Bonnet formula writes as
\[\int_{S/\Gamma} \abs{K} = 2\pi \abs{\chi(\Sigma)} = 4\pi (g-1). \]
We thus obtain
\[\Vol(\rho) = \Vol \left[ \Conv(S)/\Gamma \right] \leq C_2 C_{\bar{\bf{V}}} 4 \pi (g-1) \]
thus, taking into account $g \geq 2$, an inequality of the type $\Vol(\rho) \leq C g$ with $C$ a universal constant. This proves Theorem \ref{théorème:borne_sup_cocompacte}.
\end{proof}

Let us finally prove a consequence of Lemma \ref{lemme:Harnack_K} which gives examples to which Theorem \ref{theoreme:borne_sup} can be applied. 
\begin{lemme}\label{lemme:rayon_injectivité_gr}
	Let $S$ be a complete maximal surface of $\H^{2,2}$ and $\Gamma$ a subgroup of $\PO(2,3)$ acting properly discontinuously and freely on $S$. Let us suppose that the injectivity radius of $S/\Gamma$ is strictly positive and that the total curvature $\int_{S/\Gamma} K$ is finite. Then $S/\Gamma$ is asymptotically flat (in other words, we have $K_{S/\Gamma} \stackrel{\infty}{\to} 0$). 
\end{lemme}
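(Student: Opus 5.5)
The plan is to argue by contradiction, using the Harnack-type inequality of Lemma \ref{lemme:Harnack_K} to convert a pointwise lower bound on $\abs{K}$ into a lower bound on the total curvature over a ball. Suppose $K_{S/\Gamma}$ does not tend to $0$ at infinity. Then there exist $\eta > 0$ and a sequence of points $y_n \in S/\Gamma$ leaving every compact set with $\abs{K_{S/\Gamma}(y_n)} \geq \eta$. Let $r_0 > 0$ be smaller than both $1$ and the injectivity radius of $S/\Gamma$. Passing to a subsequence, we may assume the balls $B(y_n,r_0)$ are pairwise disjoint: since $y_n \to \infty$, we can choose $y_{n+1}$ at distance greater than $2r_0$ from $y_1,\dots,y_n$.

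Next we lift each $y_n$ to a point $x_n \in S$. Because $r_0$ is below the injectivity radius of the quotient, the covering map $S \to S/\Gamma$ restricts to an isometry from $B(x_n,r_0)$ onto $B(y_n,r_0)$. Lemma \ref{lemme:Harnack_K}, applied at any $y \in B(x_n,r_0) \subset B(x_n,1)$, gives $K(x_n) \leq C_H K(y)$. Since $K \leq 0$ (Theorem \ref{theoreme:Ishihara}), this reads $\abs{K(y)} \geq \abs{K(x_n)}/C_H \geq \eta/C_H$ throughout the ball.

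We also need a uniform lower bound on the area of these balls. Since $K \leq 0$ and $\exp_{x_n}$ is a diffeomorphism on the ball of radius $r_0$, the Günther–Bishop comparison gives $\Vol B(x_n,r_0) \geq \pi r_0^2$. Alternatively, the curvature of $S$ is bounded in $[-1,0]$ by the Gau\ss{} equation \eqref{éq:Gauss} and Theorem \ref{theoreme:Ishihara}, and comparison with the plane applies. Therefore
\[\int_{S/\Gamma} \abs{K} \geq \sum_n \int_{B(y_n,r_0)} \abs{K} \geq \sum_n \frac{\eta}{C_H}\, \pi r_0^2 = +\infty,\]
which contradicts the finiteness of the total curvature. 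Here finiteness of $\int K$ is the same as finiteness of $\int \abs{K}$, because $K$ has constant sign.

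The main point to check is that Lemma \ref{lemme:Harnack_K}, which is stated on $S$, transfers to the quotient. This is exactly why the positive injectivity radius is needed: it ensures that small balls in $S$ embed isometrically in $S/\Gamma$, so that both the curvature lower bound and the volume lower bound descend. Without this hypothesis, thin parts of the quotient could carry curvature bounded away from zero while having arbitrarily small area.
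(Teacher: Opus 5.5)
Your argument is the same as the paper's (contradiction, disjoint balls of radius below the injectivity radius, Lemma \ref{lemme:Harnack_K} to spread the curvature bound over each ball, a uniform lower bound on their area, and a divergent sum). Your choice $r_0<\min(1,\mathrm{inj})$ is in fact slightly more careful than the paper, which uses balls of radius equal to the injectivity radius. That radius may exceed $1$ or be infinite, while Lemma \ref{lemme:Harnack_K} is only stated on unit balls. One slip: since $K\le 0$, the inequality $K(x_n)\le C_H K(y)$ as printed literally gives $\abs{K(y)}\le \abs{K(x_n)}/C_H$, the opposite of what you wrote. What you need, and what the paper's proof also uses, is the intended form $C_H K(y)\le K(x_n)$, i.e.\ $\abs{K(x_n)}\le C_H\abs{K(y)}$.
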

\begin{proof}
	Let us denote by $R$ the injectivity radius of $S/\Gamma$. We argue by contradiction and suppose given a divergent sequence $(x_n)$ of points of $S$ satisfying $K(x_n) \leq -\epsilon$ for some $\epsilon > 0$. Up to extracting, we can suppose the balls $B(x_n,R)$ disjoint. By Lemma \ref{lemme:Harnack_K}, on each of the balls $B(x_n,R)$, we have $K \leq -\frac{\epsilon}{C_H}$. We deduce
	\[\int_S K \leq \sum_n \int_{B(x_n,R)} K \leq \sum_n -\frac{\epsilon}{C_H} \Vol(B(x_n,R)). \]
	As we have everywhere $-1 \leq K \leq 0$, the volume $\Vol(B(x_n,R))$ is bounded below by a strictly positive universal constant, which proves that the total curvature of $S$ is infinite, thus contradicts the hypothesis of the lemma. 
\end{proof}

\begin{corollaire}\label{corollaire:Moriani_volume}
	Moriani shows that a complete maximal surface $S$ of $\H^{2,2}$ has finite total curvature if, and only if, the boundary of $S$ in $\P(\R^{2,3})$ is union of a finite number $N$ of projective segments. We then have $\int_S K = -\frac{\pi}{2} (N-4)$ \cite[Main Theorem]{Mor25}. Lemma \ref{lemme:rayon_injectivité_gr} shows that we can apply Theorem \ref{theoreme:borne_sup} with $\Gamma = \{\Id\}$ and $S$ such a surface. Theorem \ref{theoreme:borne_sup} proves that there exists a constant $C_{\bar{V}}$ (independent of $S$ and $N$) satisfying
	\[\int_S \bar{V} \leq C_{\bar{V}}\, (N-4). \]
	As in the proof of Theorem \ref{théorème:borne_sup_cocompacte}, we deduce the existence of a constant $C_{\Vol}$ such that for any such surface, we have
	\[\Vol(\Conv(S)) \leq C_{\Vol}\, (N-4). \]
\end{corollaire}

\begin{corollaire}\label{corollaire:cocompact_alpha}
	By Theorem \ref{theoreme:existence_unicité_CTT}, any maximal representation $\rho$ from $\Gamma = \pi_1(\Sigma_g)$ into $\SO_0(2,3)$ defines a $\rho$-invariant maximal surface of $\H^{2,2}$ denoted $S_\rho$. The quotient $S_\rho/\rho(\Gamma)$ is then a compact Riemannian manifold whose sectional curvature is denoted $K_{S_\rho/\rho(\Gamma)}$. Let $\alpha > 0$ be a real number. Since $K$ is exponentially decaying, this is also the case of $\abs{K}^\alpha$ (see proof of Corollary \ref{corollaire:contrôle_nablaII}). Theorem \ref{theoreme:borne_sup} therefore proves the existence of a constant $C_\alpha'$ such that for any maximal representation $\rho : \pi_1(\Sigma_g) \to \SO_0(2,3)$, we have
	\[\int_{S_\rho/\rho(\Gamma)} \abs{K}^\alpha \leq C_\alpha'\, \int_{S_\rho/\rho(\Gamma)} \abs{K}. \]
	We deduce from this, as in the proof of Theorem \ref{théorème:borne_sup_cocompacte}, a bound of the type
	\[\int_{S_\rho/\rho(\Gamma)} \abs{K}^\alpha \leq C_\alpha\, g. \]
\end{corollaire}

\appendix
\newpage

\section{Volume above a point}
\label{annexe:nu}

We state two lemmas of differential geometry used in section \ref{subsection:Chern-Weil}. Given a maximal complete surface $S$ in $\H^{2,q}$ ($q \geq 1$), we denote by $\nu$ the restriction of the exponential map of $\H^{2,q}$ to $\Nor S$. For every real $s > 0$, let us denote by $\Nor^{<s} S$ the set of normal vectors 
$\sf{n} \in \Nor S$ of norm $\norm{\sf{n}} < s$. 
\begin{lemme}[{\cite[Lemma 4.3]{MV23}}]\label{lemme:nu_injective}
	Let $S$ be a complete maximal surface of $\H^{2,2}$. The restriction of $\nu$ to $\Nor^{<\frac{2}{\sqrt3}} S$ is injective. 
\end{lemme}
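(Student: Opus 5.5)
The plan is to argue by contradiction: if two normal vectors have the same image, then a suitable height function on $S$ has two critical points, and we show it has only one. Suppose $\nu(\sf{n}) = \nu(\sf{m}) = p$ with $\sf{n} \in \Nor_x S$, $\sf{m} \in \Nor_y S$, both of norm $<\frac{2}{\sqrt3}$, and $(x,\sf{n}) \neq (y,\sf{m})$. We lift $S$ to $\hat S \subset \Hc^{2,2}$ (Lemma \ref{lemme:relèvement_unique}) and fix a lift $\hat p$ of $p$. Since normal geodesics are timelike,
\[\hat p = \cos(t)\, x + \sin(t)\, \tfrac{\sf{n}}{\norm{\sf{n}}}, \qquad t = \norm{\sf{n}},\]
up to sign, and similarly at $y$. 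We consider the function $f : \hat S \to \R$, $f(z) = \scal{z}{\hat p}$. The computation in the proof of Lemma \ref{lemme:u_harmonique} gives $\grad^S f = (\hat p + f\, z)^{\Tan}$. At $x$ this vanishes, because $\hat p \in \Vect{x} \oplus \Nor_x S$. Hence $x$ is a critical point of $f$, and likewise $y$. If $x = y$, the two normal vectors are different parameters of timelike geodesics through $x$ of length $<\pi/2$, which is impossible. So it suffices to show that $f$ has at most one critical point on $\hat S$.

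For this we use the same Hessian formula: $\Hess f(e_i,e_j) = f\,\scal{e_i}{e_j} + \scal{\II(e_i,e_j)}{\hat p}$. At a critical point $z$ with $\hat p = \cos(t) z + \sin(t) \sf{\nu}$, where $\sf{\nu}$ is a unit normal, we get
\[\Hess f(\sf{v},\sf{v}) = -\cos t + \sin t\, \scal{\II(\sf{v},\sf{v})}{\sf{\nu}}\]
for unit $\sf{v}$. Thus $\Hess f$ is negative definite as soon as $\tan t \cdot \sup_{\sf{v}} \abs{\scal{\II(\sf{v},\sf{v})}{\sf{\nu}}} < 1$. In that case every critical point of $f$ in the relevant region is a nondegenerate local maximum. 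Since $\hat S$ is a disc (Proposition \ref{proposition:complète_implique_entière}) and $f \to -\infty$ at infinity (Corollary \ref{corollaire:sous-variété_simplement_connexe}, as $\scal{z}{\hat p}$ behaves like $\scal{z}{x}$ far away), a mountain-pass / Morse argument then forbids two local maxima without a saddle in between. A saddle is excluded provided the Hessian estimate holds at every critical point. A critical point $z$ of $f$ is exactly a point such that $p = \nu(\sf{n}')$ for some $\sf{n}' \in \Nor_z S$, with $\cos\norm{\sf{n}'} = -f(z)$. So the estimate applies at all critical points with $-f(z) > \cos\frac{2}{\sqrt 3}$. I will need to check that critical points with smaller $-f$ cannot occur, or can be handled by the same sign argument, using that $\Conv(S)$ does not meet $\Vect{z}^\perp$ (Lemma \ref{lemme:épaisseur_pi2}).

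The main obstacle is the constant. The naive bound $\abs{\II(\sf{v},\sf{v})} \leq \norm{A} \leq 1$ (Lemma \ref{lemme:longueurs_axes} together with $K \leq 0$) only gives injectivity up to $t < \pi/4$, which is less than $\frac{2}{\sqrt 3}$. To reach the sharp radius I would instead study the nondegeneracy of $\d\nu$ along the whole segment (no focal points) via Jacobi fields $J(s) = \cos(s)\, \sf{v} + \sin(s)\, B(\sf{v},\sf{\nu})$. I would combine this with the precise structure of the shape operator $B(\cdot,\sf{\nu})$: it is traceless, and the Gau\ss{} equation $K = -1 + \norm{A}^2 + \norm{a}^2 \leq 0$ constrains its eigenvalues $\pm\lambda$ jointly in both normal directions. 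I would optimise this over the ellipse of Lemma \ref{lemme:longueurs_axes}. If that still falls short of $\frac{2}{\sqrt 3}$, I would follow \cite{MV23} and measure the separation along the geodesic using the full bound $\norm{\II}_2^2 \leq 4$ of Theorem \ref{theoreme:Ishihara}, rather than pointwise per-direction bounds.
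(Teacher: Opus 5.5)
You never actually reach the radius $\frac{2}{\sqrt3}$: your Hessian criterion only gives $t<\frac{\pi}{4}$, and what follows is a list of possible strategies. No refinement can close this gap, because with the radius $\frac{2}{\sqrt3}$ the statement is false.

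Take the Barbot surface of Example \ref{exemple:couronne} with $q=2$. In $\R^{2,3}=\R^{1,1}\oplus\R^{1,1}\oplus\R^{0,1}$, let $c(a)=(\sinh a,\cosh a)$ and $\iota(a,b)=\frac{1}{\sqrt2}(c(a),c(b),0)$. Then $N_1=\frac{1}{\sqrt2}(c(a),-c(b),0)$ is a unit normal. For the orthonormal frame $e_1=\sqrt2\,\partial_a$, $e_2=\sqrt2\,\partial_b$ one finds $\II(e_1,e_1)=N_1=-\II(e_2,e_2)$ and $\II(e_1,e_2)=0$, so $\norm{A}=1$. This is exactly the extremal case of your bound $\norm{A}\leq 1$. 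Now
\[\nu(\tau N_1)=\cos\tau\,\iota+\sin\tau\,N_1=\tfrac{1}{\sqrt2}\big((\cos\tau+\sin\tau)c(a),(\cos\tau-\sin\tau)c(b),0\big),\]
which for $\tau=\frac{\pi}{4}$ equals $(c(a),0,0)$ for every $b$. So the whole line $\{a\}\times\R$ of base points, with normal vectors of norm $\frac{\pi}{4}<\frac{2}{\sqrt3}$, is sent to a single point.

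Hence $\frac{\pi}{4}$ is the best possible universal radius. All your proposed routes see this focal point and cannot get past it:
\begin{itemize}
\item your Hessian, which degenerates in the direction $e_2$ exactly at $t=\frac{\pi}{4}$;
\item the Jacobi fields of Lemma \ref{lemme:différentielle_exp_normal};
\item the optimisation over the ellipse, since $\norm{A}=1$ is attained;
\item Theorem \ref{theoreme:Ishihara}, which gives $\norm{\II}_2^2\leq 2$ here, not $4$.
\end{itemize}
The number $\frac{2}{\sqrt3}$ is the paper's bound on $\norm{\II}_\infty$ (Remark \ref{remarque:fonctions_géométriques_axes}). The paper's proof only cites \cite[Lemma 4.3]{MV23} together with that bound, and the constant appears to have been carried over from the second fundamental form to the radius. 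Theorem \ref{theoreme:borne_inf} only needs some universal $s_0>0$. So the statement worth proving is injectivity on $\Nor^{<r}S$ for a universal $r\leq\frac{\pi}{4}$, which is what your argument is naturally set up for.

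Even for that target, one step is missing: coercivity of $f=\scal{\cdot}{\hat p}$ on $\hat S$, i.e. compactness of its superlevel sets, which the mountain-pass step needs. Your justification (that $\scal{z}{\hat p}$ behaves like $\scal{z}{x}$) fails, since $\scal{z}{n}$ is of the same order as $\scal{z}{x}$. On the Barbot surface, with $\hat p=\cos t\,\iota(0,0)+\sin t\,N_1(0,0)$,
\[f(\iota(a,b))=-\tfrac12\big[(\cos t+\sin t)\cosh a+(\cos t-\sin t)\cosh b\big].\]
This is coercive only for $t<\frac{\pi}{4}$ and tends to $+\infty$ for $t>\frac{\pi}{4}$.

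You need either a genuine estimate or a Palais--Smale argument by compactness in $\mathcal{M}^*_{2,2}$. One suitable estimate is $\abs{\scal{z}{n}}\leq-\scal{z}{x}$ for $z\in\hat S$ and $n$ a unit normal at $x$; equivalently, $\hat S$ lies on the negative side of the hyperplanes orthogonal to $\exp_x(\pm\frac{\pi}{4}n)$. It gives $f(z)\leq(\cos t-\sin t)\scal{z}{x}\to-\infty$.

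Such an estimate would also settle the sign issue you pass over with ``up to sign''. Write $t_1,t_2$ for the norms of your two normal vectors. If the lift of $p$ coming from $y$ is $-\hat p$, then $f(y)=\cos t_2>0$ and $y$ is a local minimum of $f$, not a maximum, so the two-maxima argument does not apply. Knowing $f<0$ on $\hat S$ excludes this case.

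On the other hand, your worry about critical points with small $-f$ is unnecessary. The mountain-pass critical point $w$ satisfies $-f(w)\geq\max(-f(x),-f(y))=\max(\cos t_1,\cos t_2)$. So its normal distance to $p$ is at most $\min(t_1,t_2)$, and your Hessian estimate applies there automatically.
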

\begin{proof}
	This is Lemma 4.3 of \cite{MV23}, combined with remark \ref{remarque:fonctions_géométriques_axes} ($\norm{\II}_\infty \leq \frac{2}{\sqrt3}$). 
\end{proof}

The following lemmas are used to control the volume of the convex core of $S$ by an integral over $\Nor S$. The metric of $\Hc^{2,q}$ defines a measure $\mu_\H$ on $\Hc^{2,q}$. The Sasaki metric $g^{\Tan \H^{2,q}}$ (see Section \ref{subsection:geo_diff_Hpq}) restricts to $\Nor S \subset \Tan \H^{2,q}$ into a pseudo-Riemannian metric of signature $(2,q)$, which defines a measure $\mu_{\Nor S}$ on $\Nor S$. Since $\Nor S$ and $\H^{2,q}$ have the same dimension, the image of a negligible subset of $\Nor S$ by the smooth map $\nu$ is a negligible subset of $\H^{2,q}$ : this means that $\nu^* \mu_\H$ is absolutely continuous with respect to $\mu_{\Nor S}$. The Radon-Nidokym theorem defines the derivative $\frac{\nu^* \mu_\H}{\mu_{\Nor S}}$, which is in $\rm{L}^1_{loc}(\Nor S,\mu_{\Nor S})$. If we choose orientations of the manifolds $\Nor S$ and $\H^{2,q}$, we can also define this function as the absolute value of the jacobian of $\nu$. 
\begin{lemme}\label{lemme:jacobienne_minorée}
	There exists a real $c>0$ having the following property : for every complete maximal surface $S$ of $\H^{2,2}$ and every vector $\sf{n} \in \Nor S$ satisfying $\norm{\sf{n}} \leq c$, we have at the point $\sf{n}$ :
	\[\frac{\nu^* \mu_\H}{\mu_{\Nor S}} \geq \frac12. \]
\end{lemme}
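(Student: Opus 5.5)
The plan is to compute the differential of $\nu$ at a normal vector $\sf{n} \in \Nor_x S$ in the splitting of $\Tan_{\sf{n}}(\Nor S)$ into horizontal and vertical parts given by the Sasaki metric. With respect to orthonormal frames of these parts and of $\Tan_{\nu(\sf{n})}\H^{2,2}$, I would write $d\nu_{\sf{n}}$ as a block matrix and show that it is a perturbation of the identity of size $O(\norm{\sf{n}})$, with a constant independent of $S$. Continuity of the determinant then gives a universal $c$ with $\abs{\det d\nu_{\sf{n}}}\geq \frac12$ whenever $\norm{\sf{n}}\leq c$.

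To compute, I lift to $\Hc^{2,2}$. There $\nu(\sf{n}) = \cos(\norm{\sf{n}})\, x + \sin(\norm{\sf{n}})\frac{\sf{n}}{\norm{\sf{n}}}$, since normal geodesics are timelike.

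For vertical directions $\iota_{\bf{vert}}(\sf{m})$ with $\sf{m}\in\Nor_x S$, the differential is that of the exponential map restricted to the fibre, namely $\d(\exp_x)_{\sf{n}}(\sf{m})$. Because the fibre exponential is the explicit map above into the totally geodesic $\H^{0,2}$-piece $\exp_x(\Nor_x S)$, this equals $\sf{m}+O(\norm{\sf{n}})$ after parallel transport to $x$, by Jacobi fields in constant curvature.

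For horizontal directions $\iota_{\bf{hor}}(\sf{u})$ with $\sf{u}\in\Tan_xS$, I take the curve $\psi_{\sf{u},\sf{n}}(t)$ obtained by normal parallel transport along $\exp^S_x(t\sf{u})$. Using equation \eqref{éq:Hpq_ombilique} and the block decomposition of $\nabla$, the ambient derivative of the transported field is $\nabla^{\R}_{\sf{u}}\sf{n} = B(\sf{u},\sf{n}) + \scal{\sf{u}}{\sf{n}}x = B(\sf{u},\sf{n})$, since the normal part vanishes by parallelism. Differentiating the explicit formula then gives
\[ d\nu(\iota_{\bf{hor}}\sf{u}) = \cos(\norm{\sf{n}})\,\sf{u} + \frac{\sin\norm{\sf{n}}}{\norm{\sf{n}}} B(\sf{u},\sf{n}) + (\text{terms from } d\norm{\sf{n}}=0). \]
The norm is constant along parallel transport, so the last terms vanish.

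In all, $d\nu_{\sf{n}}$, transported back to $\Tan_x\H^{2,2}$, equals $\Id + E$ with $\norm{E}\leq C(\norm{\sf{n}} + \norm{B}_\infty\norm{\sf{n}})$. By Theorem \ref{theoreme:Ishihara}, $\norm{B}_\infty$ is bounded by a universal constant.

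The main subtlety is the orthonormal framing. The target metric has signature $(2,2)$, and the Sasaki metric on $\Nor S$ is also of signature $(2,2)$. The Jacobian is the ratio of the two pseudo-Riemannian volume forms, so I must verify that "orthonormal" bases in both are compatible: horizontal lifts should map to spacelike directions and vertical lifts to timelike ones. Then the determinant really is $\det(\Id+E)$, and its absolute value is at least $1-C'\norm{\sf{n}}$. A second point is that the parallel transport identifying $\Tan_{\nu(\sf{n})}$ with $\Tan_x$ is an isometry, so it does not affect the determinant. The final step is to choose $c$ with $1-C'c\geq\frac12$. I expect the bookkeeping of the horizontal derivative, in particular confirming that no $\II$-term of order $1$ appears, to be the only delicate part.
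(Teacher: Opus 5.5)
Your proposal is correct and follows essentially the paper's route: the paper also splits $\Tan_{\sf{n}}(\Nor S)$ into horizontal lifts (normal parallel transport along curves of $S$) and vertical directions. It obtains $d\nu(\bar{\sf{u}}_i) = \cos(\tau)\,\sf{U}_i - \sin(\tau)\sum_j \scal{\sf{n}}{\II(\sf{u}_i,\sf{u}_j)}\,\sf{U}_j$ via Jacobi fields, where you instead differentiate $\cos(\norm{\sf{n}})\,x + \sin(\norm{\sf{n}})\frac{\sf{n}}{\norm{\sf{n}}}$ in $\R^{2,3}$, together with the vertical block $\mathrm{diag}(1,\frac{\sin\tau}{\tau})$. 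The only real difference is that the paper writes the determinant exactly as $\frac{\sin\tau}{\tau}\bigl[\cos^2\tau - \sin^2\tau\,(\scal{\II(\sf{u}_1,\sf{u}_1)}{\sf{n}}^2+\scal{\II(\sf{u}_1,\sf{u}_2)}{\sf{n}}^2)\bigr]$ and bounds it with the uniform bound on $\II$, whereas your perturbation argument $\Id+E$ with $\norm{E}=O(\norm{\sf{n}})$ reaches the same conclusion from the same uniform input (Ishihara's bound) and confirms that the $\II$-terms only enter at order $\sin\tau$.
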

\begin{proof}
	Lemma \ref{lemme:différentielle_exp_normal} states that for every unit $\sf{n} \in \Nor_x S$ and every real $s$ satisfying $0 < s < \pi$, we have
	\begin{align*}
		\frac{\nu^* \mu_\H}{\mu_{\Nor S}} &= \frac{\sin(s)}{s} [\cos(s)^2 - \sin(s)^2 (\scal{\II (e_1,e_1)}{\sf{n}}^2 + \scal{\II (e_1,e_2)}{\sf{n}}^2] \\
		&\geq \frac{\sin(s)}{s} [\cos(s)^2 - \frac83 \sin(s)^2] \tag*{because $\norm{\II}_\infty^2 \leq \frac{4}{3}$ by remark \ref{remarque:fonctions_géométriques_axes}.}
	\end{align*}
	This last expression tends to $1$ when $s$ tends to $0$, which concludes. 
\end{proof}
We deduce from this a comparison between the volume $\bar{\bf{V}}_S$ (see Definition \ref{définition:volume_tranche}) and the integral of $\frac{\nu^* \mu_\H}{\mu_{\Nor S}}$. 
\begin{proposition}\label{proposition:volume_tranche}
	There exists universal constants $C_1,C_2 > 0$ such that for every complete maximal surface $S$ of $\H^{2,2}$ and every $x \in S$, we have:
	\[C_1 \bar{\bf{V}}_S(x) \leq \int_{\Nor_x^0 S} \frac{\nu^* \mu_\H}{\mu_{\Nor S}} \d\!\vol^{\Nor S}(\sf{n}) \leq C_2 \bar{\bf{V}}_S(x). \]
\end{proposition}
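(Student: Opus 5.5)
The plan is to compute the Jacobian of $\nu$ explicitly at a point $\sf{n} = s\sf{n}_1$ with $\sf{n}_1$ unit in $\Nor_x S$, and then integrate over the slice $\Nor_x^0 S$. The formula needed is the one stated in Lemma \ref{lemme:différentielle_exp_normal}:
\[\frac{\nu^* \mu_\H}{\mu_{\Nor S}}(s\sf{n}_1) = \frac{\sin(s)}{s}\, \Bigl|\cos(s)^2 - \sin(s)^2\bigl(\scal{\II(e_1,e_1)}{\sf{n}_1}^2 + \scal{\II(e_1,e_2)}{\sf{n}_1}^2\bigr)\Bigr|, \]
valid for $0 < s < \pi$.

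To obtain it, I would use the Sasaki splitting of $\Tan_{\sf{n}}(\Nor S)$ into horizontal and vertical parts. The vertical directions go to Jacobi fields along the timelike geodesic $t \mapsto \cos(t)x + \sin(t)\sf{n}_1$. In the normal plane, the radial direction has unit stretch and the angular direction is stretched by $\sin(s)/s$; this gives the factor $\frac{\sin s}{s}$. Horizontal vectors $\sf{u} \in \Tan_x S$ go to
\[\cos(s)\,\sf{u} + \sin(s)\bigl(B(\sf{u},\sf{n}_1) + \nabla^{\Nor}_{\sf{u}}\sf{n}_1\bigr) + (\text{normal terms}).\]
The normal terms are absorbed by the vertical block, so only the tangential block matters. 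Its determinant is $\det(\cos(s)\Id - \sin(s) B_{\sf{n}_1})$. Because $S$ is maximal, $B_{\sf{n}_1}$ is symmetric and traceless, so this determinant equals $\cos^2 s - \sin^2 s \,\bigl(\scal{\II(e_1,e_1)}{\sf{n}_1}^2 + \scal{\II(e_1,e_2)}{\sf{n}_1}^2\bigr)$.

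Next I bound this expression on the slice. By Lemma \ref{lemme:épaisseur_pi2}, $\Nor_x^0 S$ lies in the ball of radius $\pi/2$, so $s \in [0,\pi/2]$ there. On that interval $\frac{\sin s}{s} \in [2/\pi, 1]$. Using $\norm{\II}_\infty^2 \le 4/3$ (remark \ref{remarque:fonctions_géométriques_axes}), the bracket is at most $1 + \frac43 \sin^2 s \le 7/3$ in absolute value. This gives the upper bound with $C_2 = 7/3$.

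The lower bound is the main obstacle, because the bracket may vanish or become negative for $s$ near $\pi/2$. I would treat it in three steps:
\begin{itemize}
\item Restrict to the small ball $\norm{\sf{n}} \le c$. There Lemma \ref{lemme:jacobienne_minorée} gives a Jacobian of at least $\frac12$.
\item Show that $\Nor_x^0 S$ is star-shaped with respect to $0$. This holds because $\Conv(S)$ is convex and $\exp_x$ maps rays to geodesic segments issued from $x \in \Conv(S)$.
\item Compare the area of $\Nor_x^0 S$ with that of $\Nor_x^0 S \cap \{\norm{\sf{n}} \le c\}$. Since $\Nor_x^0 S \subset B(0,\pi/2)$ and is star-shaped, its area is at most $(\pi/(2c))^2$ times the area of its intersection with $B(0,c)$. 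Indeed, for a star-shaped region, radially scaling by $c/(\pi/2)$ maps it into its own intersection with $B(0,c)$.
\end{itemize}
Combining these, $C_1 = \frac12 (2c/\pi)^2$ works. If the Jacobian does become negative somewhere, the Radon--Nikodym derivative is still its absolute value, so it remains nonnegative and the lower bound is unaffected.
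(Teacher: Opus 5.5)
Your proposal is correct and follows the paper's proof. The upper bound comes from the explicit, uniformly bounded Jacobian on $\Nor_x^0 S$, which lies in the ball of radius $\frac{\pi}{2}$. The lower bound comes from star-shapedness of $\Nor_x^0 S$ together with the bound $\frac12$ on the Jacobian near the zero section. Your radial-scaling step, giving $C_1 = \frac12\left(\frac{2c}{\pi}\right)^2$, is exactly the argument the paper leaves implicit.
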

\begin{proof}
	Let us remark that $\Nor_x^0 S$ is contained in the ball of centre $0$ and of radius $\frac{\pi}{2}$ (Lemma \ref{lemme:épaisseur_pi2}) and that if $s\sf{n}$ is in $\Nor_x^0 S$, then the segment between $0$ and $s\sf{n}$ is included in $\Nor_x^0 S$ (by convexity of the convex hull of $S$). This observation, combined with Lemma \ref{lemme:jacobienne_minorée}, is sufficient to bound $\int_{\Nor_x^0 S} \frac{\nu^* \mu_\H}{\mu_{\Nor S}} \d \sf{n}$ from below. The upper bound comes from the fact that $\frac{\nu^* \mu_\H}{\mu_{\Nor S}}$ is bounded above (given its expression given by Lemma \ref{lemme:différentielle_exp_normal}). 
\end{proof}
Let $x \in S$ and $(\sf{u}_i)_{1 \leq i \leq p}$ and $(\sf{n}_i)_{1 \leq i \leq q}$ be orthonormal bases of $\Tan_x S$ and $\Nor_x S$ respectively. We also denote $\sf{n}_1 = \sf{n}$. 
\begin{lemme}\label{lemme:différentielle_exp_normal}
	With the previous notations, let $\tau > 0$ be a real number. At the point $\tau \sf{n} \in \Nor S$, the ratio between the measures $\nu^* \mu_\H$ and $\mu_{\Nor S}$ is given by
	\[\frac{\nu^* \mu_\H}{\mu_{\Nor S}} = \left| \frac{\sin(\tau)}{\tau}\right|^{q-1} \left| \cos(\tau)^2 - \sin(\tau)^2 (\scal{\II (\sf{u}_1,\sf{u}_1)}{\sf{n}}^2 + \scal{\II(\sf{u}_1,\sf{u}_2)}{\sf{n}}^2) \right|. \]
\end{lemme}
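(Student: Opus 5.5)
We will compute the Jacobian of $\nu$ at $\tau\sf{n}$ explicitly. We work in the lift $\Hc^{2,q}\subset\R^{2,q+1}$ and use the formula for timelike geodesics from Section \ref{subsection:définition_Hpq}:
\[\nu(y,\sf{m}) = \cos(\norm{\sf{m}})\, y + \frac{\sin(\norm{\sf{m}})}{\norm{\sf{m}}}\, \sf{m}\qquad(y\in S,\ \sf{m}\in\Nor_y S).\]
We choose a frame of $\Tan_{\tau\sf{n}}(\Nor S)$ adapted to the splitting into horizontal and vertical parts. For the vertical part we take $\iota_{\bf{vert}}(\sf{n}_j)$, $1\le j\le q$. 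For the horizontal part, given $\sf{u}_i$ we take a curve $y(t)$ in $S$ with $\dot y(0)=\sf{u}_i$ and the section $\sf{m}(t)=\tau\,\sf{n}(t)$, where $\sf{n}(t)$ is $\nabla^{\Nor}$-parallel along $y(t)$; this curve lies in $\Nor S$ and keeps $\norm{\sf{m}}=\tau$.

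We then compute $\d\nu$ on each frame vector and express the images in the orthonormal frame at $\nu(\tau\sf{n})$. This frame is obtained by transporting $(\sf{u}_1,\sf{u}_2,\sf{n}_1,\dots,\sf{n}_q)$ along the geodesic $s\mapsto \cos s\,x+\sin s\,\sf{n}$; concretely it is $\sf{u}_1,\sf{u}_2$, the vector $-\sin\tau\,x+\cos\tau\,\sf{n}$, and $\sf{n}_2,\dots,\sf{n}_q$.
\begin{itemize}
\item The radial direction $\sf{n}_1=\sf{n}$ is sent to the unit timelike vector $-\sin\tau\, x+\cos\tau\,\sf{n}$.
\item For $j\ge 2$, the vertical vector $\sf{n}_j$ is sent to $\frac{\sin\tau}{\tau}\,\sf{n}_j$, since the derivative of $\sf{m}/\norm{\sf{m}}$ in a direction orthogonal to $\sf{m}$ carries a factor $1/\tau$. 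This gives the factor $\abs{\sin\tau/\tau}^{q-1}$.
\item For the horizontal vectors, equation \eqref{éq:Hpq_ombilique} and the block decomposition of $\nabla$ give $\nabla^\R_{\sf{u}_i}\sf{n}=B(\sf{u}_i,\sf{n})$, because $\scal{\sf{u}_i}{\sf{n}}=0$ and $\nabla^{\Nor}\sf{n}=0$. Hence the image is $\cos\tau\,\sf{u}_i+\sin\tau\,B(\sf{u}_i,\sf{n})$, which is tangent to $S$. Writing $B(\sf{u}_i,\sf{n})=-\sum_k\scal{\II(\sf{u}_i,\sf{u}_k)}{\sf{n}}\sf{u}_k$, the horizontal block is $\cos\tau\,\Id-\sin\tau\,M$, where $M=\bigl(\scal{\II(\sf{u}_i,\sf{u}_k)}{\sf{n}}\bigr)$.
\end{itemize}
Maximality makes $M$ symmetric traceless, $M=\left(\begin{smallmatrix} a&b\\ b&-a\end{smallmatrix}\right)$ with $a=\scal{\II(\sf{u}_1,\sf{u}_1)}{\sf{n}}$ and $b=\scal{\II(\sf{u}_1,\sf{u}_2)}{\sf{n}}$. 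Its determinant is therefore $\cos^2\tau-\sin^2\tau\,(a^2+b^2)$.

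The images of the vertical vectors have no tangential component, while the images of the horizontal vectors lie in $\Tan S$. The matrix of $\d\nu$ is therefore block triangular, indeed block diagonal, and its determinant is the product of the factors above. Since $\mu_\H$ is computed in an orthonormal frame, this gives the numerator of the ratio.

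The main obstacle is the denominator, i.e. the measure $\mu_{\Nor S}$. The horizontal curve used above is not Sasaki-horizontal in $\Tan\H^{2,q}$: its ambient covariant derivative of $\sf{m}$ is $\tau B(\sf{u}_i,\sf{n})$, a vertical component that points into $\Tan S$. The Gram matrix of the chosen frame for the restricted Sasaki metric thus picks up cross terms of order $\tau^2\norm{B}^2$. I will handle this by projecting onto $\bf{hor}\oplus\bf{vert}$. The vertical parts $\iota_{\bf{vert}}(\sf{n}_j)$ are orthogonal to the vertical parts $\iota_{\bf{vert}}(\tau B(\sf{u}_i,\sf{n}))$, because normal and tangent vectors are orthogonal. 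Consequently the Gram matrix is block diagonal, with horizontal block $\Id+\tau^2 B^*B$. I will then check against the normalization of Lemma \ref{lemme:Sasaki_par_parties}, and against the use made in Lemma \ref{lemme:jacobienne_minorée}, that the intended volume form on $\Nor S$ is the product $\pi^*\vol^S\wedge\vol^{\Nor_x S}$, whose Gram determinant is $1$ in this frame. If the literal Sasaki restriction is intended, an extra factor $\det(\Id+\tau^2B^*B)^{-1/2}$ would appear; it is bounded and tends to $1$ as $\tau\to0$, so Lemma \ref{lemme:jacobienne_minorée} and Proposition \ref{proposition:volume_tranche} would still hold.
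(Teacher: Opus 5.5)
Your proof is correct and is essentially the paper's. Both compute the matrix of $d\nu_{\tau\sf{n}}$ in the same adapted frames and obtain the same block-diagonal form; the only difference is that the paper gets the horizontal block from the Jacobi field $J=\cos(t)U+\sin(t)V$ with $\nabla_{t}J(0)=B(\sf{u},\sf{n})$, whereas you differentiate the closed formula for $\nu$ in $\R^{2,q+1}$.

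Your remark on the denominator is well founded. The paper simply asserts that $\iota_{\bf{hor}}(\sf{u}_i)$ and $\iota_{\bf{vert}}(\sf{n}_j)$ form an orthonormal basis of $\Tan_{\tau\sf{n}}(\Nor S)$. However, the vector it actually differentiates, $\partial_s(\tau\sf{n}_s)=\iota_{\bf{hor}}(\sf{u})+\iota_{\bf{vert}}(\tau B(\sf{u},\sf{n}))$, has unit norm only for the product measure $\pi^*\vol^S\wedge\vol^{\Nor_x S}$ used in Lemma~\ref{lemme:Sasaki_par_parties}. Your reading is therefore the one under which the statement and its later uses are consistent.
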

\begin{proof}
With the notations of Section \ref{subsection:geo_diff_Hpq} (definition of the Sasaki metric), we obtain an orthonormal basis of $\Tan_{\tau\sf{n}}(\Nor S)$ by concatenating $(\bar{\sf{u}}_i) = (\iota_{\bf{hor}} (\sf{u}_i))$ and $(\bar{\sf{n}}_i) = (\iota_{\bf{vert}} (\sf{n}_i))$. We also define an orthonormal basis $(\sf{U}_i) \cup (\sf{V}_i)$ of $\Tan_y \H^{2,q}$ by parallel transporting the orthonormal basis $(\sf{u}_i) \cup (\sf{n}_i)$ along the geodesic $t \mapsto \nu(t\sf{n})$, from $t=0$ to $t=\tau$. To prove the lemma, let us compute the matrix of $\d \nu_{\tau \sf{n}}$ in these bases using Jacobi fields. Let $\sf{u} \in \Tan_x S$ and let $\alpha : ]-\epsilon;\epsilon[ \to S$ be a geodesic of $S$ satisfying $\alpha(0) = x$ and $\dot{\alpha}(0) = \sf{u}$. We define a vector field $\sf{n}_s \in \Nor_{\alpha(s)}$ by parallel transport of $\sf{n}$ along $\alpha$. Define the map 
\[F : \R \times ]-\epsilon;\epsilon[ \to \H^{2,q},\ (t,s) \mapsto \exp_{\alpha(s)}(t\sf{n} _s). \]
Equip the bundle $F^* \Tan \Hc^{2,q}$ with the connection $\nabla = F^*\nabla^\H$ and denote its curvature by $\courb$. We thus have bundle morphisms between $F^* \Tan \Hc^{2,q}$ and $\Tan \Hc^{2,q}$, denoted by $F_*$ and $F^*$. We extend $\sf{n}$ and $\sf{u}$ to sections of $F^* \Tan \Hc^{2,q}$, respectively by $N = F^*(\d F (\der[t]))$ and $J = F^*(\d F (\der[s]))$. $F$ is a variation of the geodesic $\gamma : t \mapsto F(t,s)$ by geodesics : $\d F (\der[s])$ therefore defines a Jacobi field along $\gamma$. The Jacobi field equation then writes as : 
	\[\nabla_{\der[t]} \nabla_{\der[t]} J + \courb(\der[s],\der[t]) N = 0. \]
	The curvature tensor $\courb^\H$ of $\Hc^{2,q}$ is given by $\courb^\H(X,\,Y) Z = -\scal Y Z X + \scal X Z Y$ (see section \ref{subsection:équations_fondamentales}). The previous equation therefore becomes $\nabla_{\der[t]} \nabla_{\der[t]} J = -J$. Let us consider this equation at $(t,0)$: it is a linear second-order differential equation in $t$, for which we have the initial conditions $J_{(0,0)} = \sf{u}$ and $(\nabla_{\der[t]} J)_{(0,0)} = B(\sf{u},\sf{n})$. Let us denote by $U$, $V$ the sections of $F^* \Tan \H^{2,q}$ along $t \mapsto (t,0)$ obtained by doing parallel transport of the vectors $\sf{u}$ and $B(\sf{u},\sf{n})$ along $t \mapsto (t,0)$. We therefore have, at the point $(t,0)$:
\begin{equation*}
	J = \cos(t) U + \sin(t) V. 
\end{equation*}
The equality $B(\sf{u},\sf{n}) = \sum_i -\scal{\sf{n}}{\II(\sf{u},\sf{u}_i)} \sf{u}_i$ shows that at $(\tau,0)$, we have $F_*(V) = \sum_i -\scal{\sf{n}}{\II(\sf{u},\sf{u}_i)} \sf{U}_i$. We deduce:
\[(\d\nu)_{\tau \sf{n}}(\bar{\sf{u}}_i) = F_*(J_{(\tau,0)}) = \cos(\tau) \sf{U}_i - \sin(\tau) [\sum_j \scal{\sf{n}}{\II(\sf{u}_i,\sf{u}_j)} \sf{U}_j]. \]
Differentiating the equality $\nu(t\sf{n}) = \cos(t) x + \sin(t) \sf{n}$ with respect to $\sf{n} \in \Nor^1_x S$ with respect to $t \in \R$, we further obtain
\begin{align*}
	(\d\nu)_{\tau \sf{n}}(\bar{\sf{n}}_i) &= \frac{\sin(\tau)}{\tau} \sf{N}_i \tag*{for $i>1$ and} \\
	(\d\nu)_{\tau \sf{n}}(\bar{\sf{n}}_1) &= (d\nu)_{\tau \sf{n}}(\sf{n}) = \sf{N}_1. 
\end{align*}
The ratio between the measures $\mu_{\Nor S}$ and $\nu^* \mu_{\H}$ is the absolute value of the determinant of the matrix of $d\nu_{\tau \sf{n}}$ in the orthonormal bases $(\bar{\sf{u}}_i) \cup (\bar{\sf{n}}_j)$ and $(\sf{U}_i) \cup (\sf{N}_j)$. We have just computed this matrix, equal to
\begin{equation*}
	\begin{pmatrix}
		\cos(\tau) -\sin(\tau) \scal{\sf{n}}{\II(\sf{u}_1,\sf{u}_1)} & -\sin(\tau) \scal{\sf{n}}{\II(\sf{u}_2,\sf{u}_1)} &  &  \\
		-\sin(\tau) \scal{\sf{n}}{\II(\sf{u}_1,\sf{u}_2)} & \cos(\tau) -\sin(\tau) \scal{\sf{n}}{\II_{2,2}} &  &  \\
		&  & 1 &  \\
		&  &  & \frac{\sin(\tau)}{\tau} \Id_{q-1}
	\end{pmatrix}.
\end{equation*}
Taking into account $\II(\sf{u}_1,\sf{u}_1) = - \II(\sf{u}_2,\sf{u}_2)$ and $\II(\sf{u}_1,\sf{u}_2) = \II(\sf{u}_2,\sf{u}_1)$, its determinant is indeed the announced quantity. 
\end{proof}

\section{Control of $\partial D_t$ }
\label{annexe:contrôle_Dt}

Let $S$ be a $\cal{C}^\infty$ surface endowed with a Riemannian metric. 
\begin{definition}\label{définition:bord_C_infini}
	An open set $D$ of $S$ is said to have piecewise $\cal{C}^\infty$ boundary if in a neighbourhood of every point $p \in \partial D$, one can parametrise $\partial D$ by a continuous path $c : ]-\epsilon;\epsilon[ \to S$ such that $c(0) = p$, of class $\cal{C}^\infty$ away from $0$, whose derivative satisfies $\norm{c'} = 1$ away from $0$ and admits non-zero limits at $0^-$ and $0^+$, denoted respectively by $c'_-$ and $c'_+$ and satisfying $-c'_- \neq c'_+$. One moreover requires that the image of $c$ separates a small neighbourhood of $p$ into two connected components: one contained in $D$ and the other not intersecting $D$. We define the exterior angle at $p$ as the angle (exterior to $D$) between the vectors $c'_-$ and $c'_+$. 
\end{definition}

From now on, we assume $S$ complete, denote its distance by $\d^S$ and consider an open set $D$ of $S$ (not necessarily connected) with compact $\cal{C}^\infty$ boundary (and not just piecewise $\cal{C}^\infty$). We shall study $D_t = \{x \in S \mid \d^S(x,D) < t\}$. At each point $x$ of $\partial D$, let us denote by $\sf{n}_x \in \Tan^1_x S$ the outward normal vector to $\partial D$. We define a map
\begin{equation*}\begin{array}{rrcl}
		\gamma : & \partial D \times \R & \longrightarrow & S \\
		\ & (x,t) & \longmapsto & \exp_x(t \sf{n}_x)
\end{array}\end{equation*}
as well as $\gamma_t = \gamma(\cdot,t) : \partial D \to S$. We thus have $D_t = D \cup \gamma(\partial D \times [0;t[)$. The remainder of this appendix is devoted to the proof, technical but elementary, of the following proposition, and to that of its corollary. 
\begin{proposition}\label{proposition:régularité_bord_Dt}
	There exists an open subset $U$ of $\R_+$ with measure zero complement, such that for $t \in U$, $D_t$ has piecewise $\cal{C}^\infty$ boundary. 
\end{proposition}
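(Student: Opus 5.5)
We will study $\partial D_t$ through the normal exponential map $\gamma : \partial D \times \R_{\geq 0} \to S$. Since $\partial D$ is compact and $\cal{C}^\infty$, $\gamma$ is smooth. We have $D_t = D \cup \gamma(\partial D \times [0,t[)$, and every point of $\partial D_t$ is of the form $\gamma(x,t)$ with $t \mapsto \gamma(x,t)$ a minimising geodesic from $\partial D$ on $[0,t]$. For each $x \in \partial D$ we define the cut time $c(x) \in ]0,+\infty]$: the supremum of the $s$ such that $\gamma(x,\cdot)$ realises $\d^S(\cdot,D)$ on $[0,s]$. Standard arguments, exactly as for the cut locus of a point, show that $c$ is continuous and positive on the compact set $\partial D$. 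They also show that $\partial D_t = \gamma_t(\{c \geq t\})$, and that $\gamma$ is injective with invertible differential on $\{(x,s) \mid s < c(x)\}$.

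The plan is then in three steps.

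\textbf{Step 1 (regular part).} On the set $\{c > t\}$, the map $\gamma_t$ is a smooth embedding transverse to the gradient of $\d^S(\cdot,D)$. Its image is therefore a $\cal{C}^\infty$ curve, locally separating $D_t$ from its complement.

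\textbf{Step 2 (generic times).} We discard the bad times. Let $E \subset \R_{>0}$ be the set of critical values of the functions $s$ restricted to the critical/focal locus: the union of focal times (where $\d\gamma$ degenerates) and of the values of $c$ at the points where $\partial D_t$ fails to be locally a union of two transverse smooth arcs. By Sard's lemma applied to the smooth map $(x,s) \mapsto s$ on the degeneracy locus of $\d\gamma$, and to the finite-dimensional stratification of the cut locus, $E$ has measure zero. We take $U = \R_{>0} \setminus \bar E$, after checking that $E$ can be chosen closed. Compactness of $\partial D$ makes the relevant sets closed, so $U$ is open.

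\textbf{Step 3 (local model).} For $t \in U$, every point $p \in \partial D_t$ that is not in Step 1 is reached by finitely many minimising normal geodesics, all non-focal at time $t$. Locally, $\partial D_t$ is then the boundary of the union of finitely many smooth discs $\gamma(V_i \times [0,t[)$ whose boundary arcs $\gamma_t(V_i)$ meet transversally at $p$. For generic $t$ exactly two arcs meet there (points where three or more meet occur at isolated times). This gives a piecewise $\cal{C}^\infty$ parametrisation $c$ with one-sided unit tangents $c'_\pm$ satisfying $-c'_- \neq c'_+$: the arcs are transverse since the two geodesics arrive from distinct directions. It also gives the separation property required in Definition~\ref{définition:bord_C_infini}.

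The main obstacle is Step 2: proving that the set of "bad" $t$ is both closed and null. The structure of the cut locus of a hypersurface in a surface is not a finite graph in general (in the $\cal{C}^\infty$ category it may have infinitely many branches). I would first try to bound it via Sard on the smooth map $\gamma$ (focal values) together with the fact that non-focal cut points where two minimisers meet form a locally smooth curve along which $\d^S(\cdot,D)$ has nonvanishing derivative except at a null set of values. If closedness fails, I would weaken to "complement of measure zero" and take $U$ to be the interior of the good set, arguing that good times form an open set by stability of transverse configurations.
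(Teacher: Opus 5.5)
Your architecture (smooth part from the local diffeomorphism on $\{s<c(x)\}$, Sard along the focal curve $\{\phi=0\}$, exclusion of bad meetings) parallels the paper's. However, Step~2 is not a proof. You define $E$ to contain the times at which $\partial D_t$ is not locally two transverse arcs, and you attribute its nullity to a stratification of the cut locus that, as you note yourself, is not available for $\mathcal{C}^\infty$ data.

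The transversality argument in Step~3 is also wrong. Two distinct minimising geodesics reaching $p$ do satisfy $\dot\gamma(x,t)\neq\dot\gamma(y,t)$, but they may satisfy $\dot\gamma(x,t)=-\dot\gamma(y,t)$, and then the two front arcs are tangent at $p$. For example, let $D$ be the union of two unit discs in $\R^2$ whose centres are at distance $4$. Then $\partial D_1$ is two circles tangent at a point, which is not piecewise $\mathcal{C}^\infty$ in the sense of the definition.

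Proving that such tangency times are negligible is the real content. The missing idea is to forget the cut locus and study all self-intersections of the front, as the paper does in its last step. At pairs where $(x,t)$ and $(y,t)$ are non-focal, the map $G(x,y,t)=(\gamma(x,t),\gamma(y,t))$ is transverse to the diagonal of $S\times S$. Hence $V=G^{-1}(\text{diagonal})\cap\{x\neq y\}$ is a $1$-dimensional submanifold of $(\partial D)^2\times\R$. A direct computation shows that the critical points of $(x,y,t)\mapsto t$ on $V$ are exactly the tangential self-intersections. Sard then makes the bad times null, and closedness follows from the compactness and stability argument you sketch. Your fallback is essentially this, but it has to be run on the genuine manifold $V$, not on the cut locus.

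There is a second gap in Step~3: the claim that for $t\in U$ every minimising geodesic reaching $\partial D_t$ is non-focal at time $t$. Removing the critical values of $t$ on $\{\phi=0\}$ discards only the \emph{degenerate} focal times. Non-degenerate focal points occur for whole intervals of $t$ (take $\partial D$ an ellipse and $D$ its exterior). You therefore need an extra lemma: a non-degenerate focal point satisfies $d(\gamma(x,t),D)<t$. The reason is that for small $\epsilon>0$ the function $y\mapsto t-\epsilon+d\bigl(\gamma(x,t),\gamma(y,t-\epsilon)\bigr)$ has at $y=x$ a critical point with vanishing second derivative and non-zero third derivative, because $\partial_x\phi(x,t)\neq0$ (the fold case). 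So $x$ is not a local minimum.

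The paper treats focal points differently. Its first step asserts a $\mathcal{C}^1$ reparametrisation of $\gamma_t$ at a non-degenerate focal point, and its third step removes the times at which $\gamma_t$ self-intersects at a focal point. But $\partial_x\gamma_t=\phi\,U$ with $\phi(\cdot,t)$ changing sign there, so the front actually has a cusp, with one-sided unit tangents $U$ and $-U$, which the definition excludes. Your strategy of showing that such points never lie on $\partial D_t$ is therefore the right one, once the lemma is proved.
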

\begin{corollaire}\label{corollaire:contrôle_bord_Dt}
	Assume moreover $S$ of finite total curvature ($\int_S \abs{K} < \infty$) and $D$ relatively compact. For $t \leq s$ two elements of $U$, we have
	\begin{equation}\label{éq:périmètre_boule_croissance}
		\Vol(\partial D_s) - \Vol(\partial D_t) \leq \left( 4\pi \Card \pi_0(\bar{D}) + \int_S \abs{K} \right) (s-t).
	\end{equation}
\end{corollaire}

\begin{proof}[Proof of Proposition \ref{proposition:régularité_bord_Dt}]
Let us choose, along $\partial D$, a vector field $\partial_x$ tangent to $\partial D$ of constant norm $\norm{\partial_x} = 1$. We say that $(x,t) \in \partial D \times \R$ is a \emph{focal point}, or that $x$ is a focal point of $\gamma_t$, if $\partial_x \gamma_t (x,t) = 0$. We say that it is a \emph{degenerate} focal point if moreover $\partial_x^2 \gamma_t (x,t) = 0$. If $x \in \partial D$ is not a focal point of $\gamma_t$, then $\gamma_t$ is regular ($\gamma_t' \neq 0$) in a neighbourhood of $x$. We shall first show that if $x$ is a non-degenerate focal point of $\gamma_t$ (such $x$ are isolated on $\partial D$), then in a neighbourhood of $x$, one can parametrise $\gamma_t$ by two $\cal{C}^\infty$ arcs of speed $1$ (step \ref{étape:point_focal_non_dégénéré}), glued together in a $\cal{C}^1$ way at $x$. We shall then show that for $t$ in an open subset $U$ of $\R$ of full measure, the focal points of $\gamma_t$ are non-degenerate (step \ref{étape:points_focaux_génériques}). Finally, we shall show that the self-intersections of $\gamma_t$ involving focal points can occur only for a countable set of $t$ (step \ref{étape:intersection_points_focaux}), then that for generic $t$ the segments parametrising $\partial D_t$ intersect transversely (step \ref{étape:intersection_génériquement_transverse}). Since $\partial D_t$ is made of arcs of $\gamma_t$ bounded by self-intersection points (see Definition \ref{définition:bord_C_infini}), this will complete the proof of Proposition \ref{proposition:régularité_bord_Dt}. 

\etape[Non-degenerate focal points are $\cal{C}^1$]\label{étape:point_focal_non_dégénéré}
Let $(x,t)$ be a non-degenerate focal point. We shall show that $\gamma_t$ admits a $\cal{C}^1$ parametrisation in a neighbourhood of $x$. In particular, the angle at the point $x$ is well defined and zero, and one can reparametrise the curve $\gamma_t$ in a neighbourhood of $x$ by two $\cal{C}^\infty$ arcs of speed $1$. 

Let us define a parametrisation $\alpha : ]-\epsilon;\epsilon[ \to \partial D$ in a neighbourhood of $x$ by integrating $\partial_x$ : we have $\alpha(0) = x$ and $\der[s] \alpha(s) = \partial_x$. The curve $c : ]-\epsilon;\epsilon[ \to \R^2$ defined by $c = \exp_{\gamma(x,t)}^{-1} \circ \gamma_t \circ \alpha$ is of class $\cal{C}^\infty$, and the hypothesis that $(x,t)$ is a non-degenerate focal point means that $c'(0) = 0 \neq c''(0)$. As $c(0) = c'(0) = 0$, the parametrisation $s \mapsto c(\frac{s}{\sqrt{\abs{s}}})$ is indeed of class $\cal{C}^1$, regular (i.e. $c' \neq 0$) up to restricting $\epsilon$, and $\cal{C}^\infty$ on $]-\epsilon;0[$ and $]0;\epsilon[$. 

\etape[For generic $t$, the focal points of $\gamma_t$ are non-degenerate]\label{étape:points_focaux_génériques}
Along the geodesic $\alpha_x = \gamma(x,\cdot)$, define a Jacobi field by $J = \d \gamma(\partial_x)$ : we thus have $J(t) \in \Tan_{\alpha(t)} S$. Let us denote by $U$ the vector field along $\alpha_x$ obtained by parallel transporting $\partial_x = J(0)$, and define
\begin{equation*}
	\begin{array}{rrcl}
		\phi : & \partial D \times \R & \longrightarrow & \R \\
		\ & (x,t) & \longmapsto & \scal{J(t)}{U(t)}
	\end{array}
\end{equation*}
so that $J = \phi U$.  The equation of Jacobi fields ($\ddot{J} = -R(J,\dot{\alpha})\dot{\alpha} = -K J$ with $K$ the sectional curvature) rewrites $\ddt[\phi] = -K \phi$. By the Cauchy-Lipschitz theorem, if $\phi(x,t) = \dt \phi\big|_{(x,t)} = 0$ for some $t \in \R$, then $\phi(x,t) = 0$ for all $t \in \R$, which is impossible since $\phi(x,0) = 1$. In other words, $0$ is a regular value of $\phi(x,\cdot)$ for all $x$, hence of $\phi$. 

Since moreover $\phi$ is $\cal{C}^\infty$, $\phi^{-1}(0)$ is a submanifold of dimension $1$ of $(\partial D) \times \R$. Let us denote by $\pi : \phi^{-1}(0) \to \R,\ (x,t) \mapsto t$ the projection. One easily sees that the set of degenerate focal points $\cal{F} = \{(x,t) \mid \phi(x,t) = \der[x]\phi\big|_{(x,t)} = 0\}$ is the set of critical points of $\pi$, hence $\pi(\cal{F})$ has measure zero by Sard's lemma. Since $\partial D$ is compact, $\pi$ is proper, hence closed. As $\cal{F}$ is closed, $\pi(\cal{F})$ is still closed. We have shown that $\pi(\cal{F})$, that is the set of $t$ such that $\gamma_t$ has degenerate focal points, is a closed set of measure zero, which concludes step \ref{étape:points_focaux_génériques}. We now denote $U = \R \setminus \pi(\cal{F})$. 

\etape[Intersection with a focal point]\label{étape:intersection_points_focaux}
Let $\psi : (\partial D)^2 \times U \to \R^2 \times S^2,\ (x,y,t) \mapsto (\phi(x,t),\phi(y,t),\gamma(x,t),\gamma(y,t))$. We define a submanifold of $\R^2 \times S^2$ by $\cal{F}_1 = \{(\sf{v}_1,\sf{v}_2,p,q) \in \R^2 \times S^2 \mid \sf{v}_1 = 0,\ \sf{v}_2 \neq 0 \text{ and } p=q\}$. A point $\psi(x,y,t)$ is in $\cal{F}_1$ if, and only if, $(x,t)$ is a focal point, $(y,t)$ is not a focal point and $\gamma(x,t) = \gamma(y,t)$. Let us show that $\psi$ intersects $\cal{F}_1$ with constant rank (here zero), in the sense that for all $(x,y,t) \in \psi^{-1}(\cal{F}_1)$ (which implies $x \neq y$), the image of $\d \psi_{(x,y,t)}$ intersects $\Tan_{\psi(x,y,t)} \cal{F}_1$ in a vector subspace of constant rank (here, this is equivalent to $\psi$ transverse to $\cal{F}_1$). Indeed, given such a triple $(x,y,t)$ and denoting $p = \gamma(x,t) = \gamma(y,t)$, we have
\begin{align*}
	\d \psi_{(x,y,t)}(\partial_x,0,0) &= (\der[x]\phi\big|_{(x,t)},0,0,0) \tag*{with $\der[x]\phi\big|_{(x,t)} \neq 0$ since $(x,t)$ is a non-degenerate focal point,}\\
	\d \psi_{(x,y,t)}(0,\partial_x,0) &= (0,\cdot,0,J_y(t)) \tag*{with $J_y(t) \neq 0$ since $(y,t)$ is not a focal point,}\\
	\d \psi_{(x,y,t)}(0,0,\partial_t) &= (\cdot,\cdot,\der[s]\gamma(x,s)\big|_t,\der[s]\gamma(y,s)\big|_t) \tag*{with $\der[s]\gamma(x,s)\big|_t, \der[s]\gamma(y,s)\big|_t \in \Tan_p^1 S$ distinct since $x \neq y$.}
\end{align*}
The tangent space to $\cal{F}_1$ at $(\sf{v}_1,\sf{v}_2,p,p)$ is the set of $(0,\cdot,\sf{u},\sf{u})$ with $\sf{u} \in \Tan_p S$. We deduce from the previous description that this space is in direct sum with the image of $\d \psi$, that is to say that $\psi$ intersects $\cal{F}_1$ with constant rank $0$. Consequently, $\cal{I}_1 = \psi^{-1}(\cal{F}_1)$ is a submanifold of $(\partial D)^2 \times U$ of dimension $0$, that is to say a discrete set, hence countable. 

Similarly, with $\cal{F}_2 = \{(\sf{v}_1,\sf{v}_2,p,q) \in \R^2 \times S^2 \mid \sf{v}_1 \neq 0,\ \sf{v}_2 = 0 \text{ and } p=q\}$, we show that $\cal{I}_2 = \psi^{-1}(\cal{F}_2)$ is a discrete set hence countable of $(\partial D)^2 \times U$ (in fact equal to $\{(y,x,t) \mid (x,y,t) \in \psi^{-1}(\cal{F}_1)\}$). 

Finally, let us denote $\cal{F}_0 = \{(\sf{v}_1,\sf{v}_2,p,q) \in \R^2 \times S^2 \mid \sf{v}_1 = \sf{v}_2 = 0 \text{ and } p=q\}$ as well as $(\partial D)^{(2)} = (\partial D)^2 \setminus \{(x,x) \mid x \in \partial D\}$ and $\psi_0 = \psi_{|(\partial D)^{(2)} \times U}$. As previously, $\psi_0$ intersects $\cal{F}_0$ with constant rank zero hence $\cal{I}_0 = \psi_0^{-1}(\cal{F}_0)$ is a submanifold of $(\partial D)^{(2)} \times U$ of dimension $0$, hence is countable. 

Let us suppose that a sequence $(x_n,y_n,t)$ of points of one of the $\cal{I}_k$ converges in $(\partial D)^2 \times U$ towards $(x,x,t)$. $x$ is a non-degenerate focal point because $t \in U$, which implies (step \ref{étape:points_focaux_génériques}) that $\gamma$ is a homeomorphism between a neighbourhood of $(x,t) \in \partial D \times \R$ and a neighbourhood of $\gamma(x,t) \in S$, thus that $x_n=y_n$ from some rank onwards, which is absurd. The $\cal{I}_k \subset (\partial D)^2 \times U$ therefore has no accumulation point in the diagonal $\{(x,x,t)\}$. This shows that $\cal{I}_0$ is closed as a subset of $(\partial D)^2 \times U$. For $k = 1$ or $2$, the definition of $\cal{F}_k$ implies that the boundary of $\cal{I}_k$ in $(\partial D)^2 \times U$ satisfies $\partial \cal{I}_k \subset \cal{I}_0 \cup \{(x,x,t)\}$, thus $\partial \cal{I}_k \subset \cal{I}_0$. This shows that $\cal{I} = \cal{I}_0 \cup \cal{I}_1 \cup \cal{I}_2$ is a closed subset of $(\partial D)^2 \times U$. it corresponds to the set of $(x,y,t)$ with $x \neq y$, such that $\gamma(x,t) = \gamma(y,t)$ and that $(x,t)$ or $(y,t)$ is a focal point. 

The projection $\pi : (\partial D)^2 \times U \to U,\ (x,y,t) \mapsto t$ is proper because $\partial D$ is compact, so that $\pi(\cal{I})$ is a closed subset of $U$. The $\cal{I}_k$ being countable, $\pi(\cal{I})$ also is. This countable closed set is the set of $t \in U$ such that $\gamma_t$ has a self-intersection at a focal point. We now replace $U$ by $U \setminus \pi(\cal{I})$, which is still an open subset of $\R$ with negligible complement. 

\etape[Non-transverse intersections]\label{étape:intersection_génériquement_transverse}
It remains to show that for generic $t \in U$, $\gamma_t$ self-intersects transversely. We have already eliminated the case of a self-intersection at focal points. As $\gamma_t$ is $\cal{C}^\infty$ and the normal vector to $\gamma_t$ at $\gamma(x,t)$ is $\der[s]\exp(s \sf{n}_x)$, it suffices to show that for generic $t$, one cannot find distinct $x,y \in \partial D$ such that $\der[s]\exp(s \sf{n}_x)\big|_t = -\der[s]\exp(s \sf{n}_y)\big|_t$ (in other words, such that $\gamma_t$ self-intersects non-transversely at $\gamma(x,t)$ and $\gamma(y,t)$). Let us consider
\begin{equation*}
	\begin{array}{rrcl}
		G : & (\partial D)^{(2)} \times \R & \longrightarrow & S^2, \\
		\ & (x,y,t) & \longmapsto & (\gamma(x,t),\gamma(y,t)).
	\end{array}
\end{equation*}
As in the previous step, we consider the submanifold $\Delta = \{(x,x) \mid x \in S\} \subset S^2$ (of codimension 2) and we verify that for $(x,y,t) \in G^{-1}(\Delta)$, the image of $\d G_{(x,y,t)}$ intersects $\Tan \Delta$ transversely. Indeed, thanks to the previous step, neither $(x,t)$ nor $(y,t)$ is a focal point, so that $G^{-1}(\Delta)$ is a submanifold $V$ (of dimension $1$) of $(\partial D)^{(2)} \times \R$, which does not accumulate on $\Delta \times U$ (as in the previous step) thus is even a submanifold of $(\partial D)^2 \times U$. The map $\pi_t : (\partial D)^2 \times \R \to \R, (x,y,t) \mapsto t$ restricts to $V$ and we verify that its critical points are the $(x,y,t)$ such that $t \in U$ and $\gamma_t$ has a non-transverse intersection at $\gamma(x,t) = \gamma(y,t)$. The set of its critical values, as previously, is a measure zero closed subset of $U$, which we subtract from $U$.

We obtain that for $t \in U$, $\gamma_t$ has only non-degenerate focal points, no intersections at focal points and only transverse intersections. One can moreover show (as previously) that the set of $t \in U$ such that $\gamma_t$ self-intersects at three points is discrete ; we subtract this set from $U$. 

For all $t$ in $U$, $\partial D_t$ therefore satisfies the properties of Definition \ref{définition:bord_C_infini}. This shows that $D_t$ has piecewise $\cal{C}^\infty$ boundary, thus completes the proof of Proposition \ref{proposition:régularité_bord_Dt}. 
\end{proof}

\begin{proof}[Proof of Corollary \ref{corollaire:contrôle_bord_Dt}]
Given a relatively compact open subset $D$ of $S$ with $\cal{C}^\infty$ boundary, we have defined the family $(\theta_i)_{i \in I}$, $-\pi < \theta_i < \pi$, of its exterior angles (see Definition \ref{définition:bord_C_infini}). At every smooth point of a regular path used to parametrise the boundary of $D$, we can define the geodesic curvature $k$ of $\partial D$. 
\begin{fait}[Gau\ss-bonnet formula with angles, see \cite{GHL04}]
	We have $\sum_I \theta_i + \int_{\partial D} k = 2\pi \chi(D) - \int_D K$. 
\end{fait}

\begin{remarque}
	Note that $t \mapsto \liminf_{\epsilon \to 0} \frac{\Vol \{x \in S \mid \d^S(x,\partial D_t) \leq \epsilon\}}{2\epsilon}$, called Minkowski content of $\partial D_t$, extends $t \mapsto \Vol(\partial D_t)$ to $\R$. 
\end{remarque}

The function which to $t$ in $U$ associates $\Vol(\partial D_t)$ is well-defined. It is even differentiable, of derivative
\begin{align*}
	\frac{\d \Vol(\partial D_s)}{\d s}\bigg|_{t} &= \sum_{I_t} \theta_i + \int_{\partial D_t} k \tag*{with $(\theta_i)_{i \in I_t}$ the exterior angles of $D_t$}\\
	&= 2\pi \chi(\bar{D_t}) - \int_D K \tag*{by Gau\ss-Bonnet formula}\\
	&\leq 2\pi \Card \pi_0(\bar{D}) + \int_S \abs{K} \tag*{since $\chi(\bar{D_t}) \leq 2 \Card \pi_0(\bar{D_t}) \leq 2 \Card \pi_0(\bar{D})$.}
\end{align*}
Moreover, the function $t \mapsto \Vol(\partial D_t)$ can only decrease in its points of discontinuity. Indeed, we have $\Vol(\partial D_t) = \int_{A_t} \norm{\gamma_t'}$ with $A_t = \{x \in \partial D \mid \gamma_t(x) \notin D_t\}$ a measurable subset of $\partial D$ and $t \mapsto A_t$ decreasing, hence $\Vol(\partial D_s)$ is bounded above, for $s$ in a right neighborhood of $t$, by the continuous function $s \mapsto \int_{A_t} \norm{\gamma_s'}$, which is equal to $\Vol(\partial D_t)$ en $t$. 

The bound from above of the derivative of $t \mapsto \Vol(\partial D_t)$ hence concludes the proof of Corollary \ref{corollaire:contrôle_bord_Dt}. 
\end{proof}

\bibliographystyle{alpha}
\bibliography{bibli}

\end{document}